\newcommand{\RR}{\mathbb{R}}
\newcommand{\NN}{\mathbb{N}}
\newcommand{\PP}{\mathbb{P}}
\newcommand{\QQ}{\mathbb{Q}}
\newcommand{\cA}{\mathcal{A}}
\newcommand{\cN}{\mathcal{N}}
\newcommand{\cM}{\mathcal{M}}
\newcommand{\cT}{\mathcal{T}}
\newcommand{\cL}{\mathcal{L}}
\newcommand{\cV}{\mathcal{V}}
\newcommand{\cI}{\mathcal{I}}
\newcommand{\cB}{\mathcal{B}}
\newcommand{\cF}{\mathcal{F}}
\newcommand{\cH}{\mathcal{H}}
\newcommand{\cO}{\mathcal{O}}
\newcommand{\gothC}{\mathfrak{C}}
\newcommand{\gothH}{\mathfrak{H}}
\newcommand{\norm}[2]{ \left \lVert {#1}  \right \rVert_{#2}}
\newcommand{\module}[1]{\left \lvert {#1} \right \rvert}
\DeclareMathOperator{\Lip}{Lip}
\DeclareMathOperator{\loc}{loc}
\DeclareMathOperator{\diver}{div}
\DeclareMathOperator{\gph}{gph}
\DeclareMathOperator{\dom}{dom}
\DeclareMathOperator{\sur}{sur}
\DeclareMathOperator{\co}{co}
\renewcommand{\epsilon}{\varepsilon}
\newcommand{\WOTarrow}{\xrightarrow{\textnormal{WOT}}}
\renewcommand{\eqref}[1]{\cref{#1}}
\let\svthefootnote\thefootnote
\newcommand\freefootnote[1]{%
  \let\thefootnote\relax%
  \footnotetext{#1}%
  \let\thefootnote\svthefootnote%
}
\theoremstyle{plain}
\newtheorem{thm}{Theorem}[section]
\newtheorem{cor}[thm]{Corollary}
\newtheorem{lem}[thm]{Lemma}
\newtheorem{prop}[thm]{Proposition}
\theoremstyle{definition}
\newtheorem{defi}[thm]{Definition}
\theoremstyle{remark}
\newtheorem{rem}[thm]{Remark}
\crefname{thm}{Theorem}{Theorems}
\crefname{cor}{Corollary}{Corollaries}
\crefname{lem}{Lemma}{Lemmata}
\crefname{prop}{Proposition}{Propositions}
\crefname{def}{Definition}{Definitions}
\crefname{rem}{Remark}{Remarks}
\crefname{ex}{Example}{Examples}
\title{A nonsmooth extension of the Brezzi-Rappaz-Raviart approximation theorem via metric regularity techniques and applications to nonlinear PDEs}
\author{Jules Berry  \thanks{Université Paris-Saclay, CNRS, CentraleSupélec, Laboratoire des signaux et systèmes, 91190, Gif-sur-Yvette, France. Email: {\fontfamily{cmtt}\selectfont jules.berry@centralesupelec.fr}.} \and Olivier Ley \thanks{Univ Rennes, INSA, CNRS, IRMAR - UMR 6625, Rennes F-35000, France. Email: {\fontfamily{cmtt}\selectfont olivier.ley@insa-rennes.fr}.} \and Francisco J. Silva \thanks{Laboratoire XLIM, Université de Limoges, Limoges, 87060, France. Email: {\fontfamily{cmtt}\selectfont francisco.silva@unilim.fr}.}}
\date{\today}
\begin{document}
\freefootnote{This work was partially supported by the ANR (Agence Nationale de la Recherche) through the COSS project ANR-22-CE40-0010 and the Centre Henri Lebesgue ANR-11-LABX-0020-01. The second author was partially supported by the SABOCPR project ANR-25-CE40-3469-01. The third author was partially supported by KAUST through the subaward agreement ORA-2021-CRG10-4674.6 and by the \textit{Ministère de l’Europe et des Affaires étrangères} through the subaward agreement MathAmsud Project 23-MATH-17.}
\maketitle
\justify

\begin{abstract}
  We generalize the Brezzi-Rappaz-Raviart approximation theorem, which allows to obtain existence and a priori error estimates for approximations of solutions to some nonlinear partial differential equations. Our contribution lies in the fact that we typically allow for nonlinearities having merely Lipschitz regularity, while previous results required some form of differentiability. This is achieved by making use of the theory of metrically regular mappings, developed in the context of variational analysis. We apply this generalization to derive quasi-optimal error estimates for finite element approximations to solutions of viscous Hamilton-Jacobi equations and second order mean field game systems.
\end{abstract}
{\small
\begin{adjustwidth}{0.8cm}{0.8cm}
\textbf{Mathematics subject classification.} 65J05, 65N30, 49J52, 49J53, 35Q89.\\
\textbf{Keywords.} Metric regularity, finite element method, error estimates, nonlinear equations, Hamilton-Jacobi equations, mean field games, Nemytskii operators, Clarke's Jacobian.
\end{adjustwidth}
}

\section{Introduction}

In the celebrated paper \cite{BRR1980}, Brezzi, Rappaz and Raviart (BRR for short) introduced a general approach to prove existence and a priori error estimates for finite dimensional approximations of solutions to nonlinear partial differential equations.
As an illustrative example, let us consider the following viscous Hamilton-Jacobi equation
\begin{equation}\label{eq:HJ_intro}
 \begin{cases}
  - \Delta u(x) + H(x,Du) + \lambda u(x) = f(x) \quad & \textnormal{in } \Omega, \\
  u(x) = 0 \quad & \textnormal{on } \partial \Omega,
 \end{cases}
\end{equation}
where $\Omega$ is a bounded domain in $\RR^d$ and $H \colon \Omega \times \RR^d \to \RR$ is the Hamiltonian.
Under appropriate assumptions, a solution to \eqref{eq:HJ_intro} can be obtained as a fixed point of the mapping, which, to some function $v$, associates the unique solution $u$ to
\[
 \begin{cases}
  - \Delta u(x) + \lambda u(x) = f(x) - H(x,Dv(x)) \quad & \textnormal{in } \Omega, \\
  u(x) = 0 \quad & \textnormal{on } \partial \Omega.
 \end{cases}
\]
This fact motivates the following reformulation of the problem. Define a linear operator $T$ as the solution mapping of the equation $- \Delta v + \lambda v = g$ with homogeneous Dirichlet boundary condition and the nonlinear mapping
\[
 G(v) := H(\cdot,Dv) - f.
\]
Then, $u$ is a solution to \eqref{eq:HJ_intro} if and only if
\[
 F(u) := (I + T \circ G)(u) = 0.
\]
Moreover, if one wishes to find Galerkin approximations of a solution to \eqref{eq:HJ_intro}, \textit{i.e.,} find $u_h$ in some finite dimensional subspace $V_h \subset X$ such that
\begin{equation}\label{eq:HJ_galerkin_intro}
 \int_\Omega D u_h(x) \cdot D \phi_h(x) + H(x, Du_h(x)) \phi_h(x) + \lambda u_h(x) \phi_h(x) dx = \int_\Omega f(x) \phi_h(x) dx,
\end{equation}
for all test-functions $\phi_h \in V_h$, then the problem can also be expressed in the form $F_h(u_h) = 0$.

More generally, given some Banach spaces $X$ and $Y$, a mapping $F \colon X \to Y$, approximations $F_h \colon X \to Y$ of $F$, and a solution $\bar x \in X$ to
\begin{equation}\label{eq:main}
 F(\bar x) = 0,
\end{equation}
we are interested in finding solutions $\bar x_h \in X$ to
\begin{equation}\label{eq:main_approx}
 F_h(\bar x_h) = 0
\end{equation}
and to quantify the error $\norm{\bar x - \bar x_h}{X}$.

In its simplest form, the Brezzi-Rappaz-Raviart theorem then roughly\footnote{We refer to \cref{thm:reworked_BRR} and \cref{rem:reworked_BRR}-\ref{item:rem_reworked_BRR_simple} for a precise statement.} states that, if the mappings $F$ and $F_h$ are of class $C^1$, with $dF[\bar x]$ invertible, if
\begin{equation}\label{eq:intro_consistency}
 \lim_{h \to 0} F_h(\bar x) = 0
\end{equation}
and if
\begin{equation}\label{eq:intro_stability}
 \lim_{h \to 0} \norm{dF[\bar x] - dF_h[\bar x]}{\cL(X,Y)} = 0,
\end{equation}
then, for all $h$ small enough, there exists $\bar x_h \in X$ solving \eqref{eq:main_approx} and we have the error estimate
\begin{equation}\label{eq:intro_error}
 \norm{\bar x - \bar x_h}{X} \leq 2 \norm{dF[\bar x]^{-1}}{\cL(X,Y)} \norm{F_h(\bar x)}{Y}.
\end{equation}
In other words, under the consistency assumption \eqref{eq:intro_consistency} and the uniform convergence of the linearized operators \eqref{eq:intro_stability}, there exists a solution to the approximate problems and the error estimate \eqref{eq:intro_error} is proportional to the consistency error. This result has been used in the context of Navier-Stokes equations \cite{BRR1980,GR1986}, Von K\'arm\'an equations \cite{BRR1980}, semilinear elliptic equations \cite{CR1990,CR1997,KP2010} and semilinear parabolic equations \cite{CH2002}.

The standard proof of the BRR theorem relies on a quantitative version of the inverse function theorem, so that the differentiability of the mappings $F$ and $F_h$ plays a major role in the argument. This type of result was first obtained in \cite{BRR1980} in the case of finite dimensional approximations and was later extended to more general settings in subsequent works. These more general results can be found in the book by Girault and Raviart \cite[Section IV.3]{GR1986}, where they are attributed to M. Crouzeix, as well as in the monograph of Crouzeix and Rappaz \cite{CR1990} and in the survey paper by Caloz and Rappaz \cite{CR1997}.

Our goal in this paper is to generalize the BRR theorem to a broad class of merely Lipschitz continuous mappings. In particular to mappings that may fail to be differentiable. Some nondifferentiable problems were already considered by Rappaz \cite{R1984}, but assumptions remained close to differentiability. Our approach starts from the idea that the BRR approximation theorem naturally fits in the context of metric regularity. The theory of metrically regular mappings aims at extending the conclusions of the inverse function theorem beyond the case of differentiable mappings defined between Banach spaces. Standard references on this topic are the books by Mordukhovich \cite{M2005}, Dontchev and Rockafellar \cite{DR2014}, Ioffe \cite{I2017} and Thibault \cite{T2023}. This new perspective allows us to deal with a broader class of mappings $F$ while maintaining tractability.

In our version of the BRR theorem, the differentiability and the invertibility of the differential are therefore replaced by the metric regularity of the mapping $F$. This is the main result of this paper and it is stated in \cref{thm:approximation_main}. This generalization allows us to benefit from the wide range of sufficient conditions for metric regularity existing in the literature. One that seemed especially useful to us in the study of nonlinear partial differential equations is a form of generalized strict differentiability, which is recalled in \cref{thm:graves_variation}.

We provide two applications of our generalized BRR theorem. In the first one, whose purpose is mainly to expose the techniques in a simple setting, we obtain quasi-optimal error estimates for finite element approximations of solutions to the Hamilton-Jacobi equation \eqref{eq:HJ_intro}. We refer to Jensen and Smears \cite{JS2013} and Smears and Süli \cite{SS2014} for more results on finite element approximations of Hamilton-Jacobi-Bellman equations. The second and main application, which results in \cref{cor:quasi_optimal_mfg}, deals with the derivation of quasi-optimal error estimates in the context of a stationary second order mean field game system. We recall that the theory of mean field games was introduced independently by Lasry and Lions \cite{LL2006,LL2006a,LL2007} and Huang, Malhamé and Caines \cite{HMC2006,HCM2007} and deals with the study of dynamic games involving large numbers of identical players. The problem can then typically be written in the form of a coupled system of partial differential equations, called the mean field game system, involving an Hamilton-Jacobi equation similar to \eqref{eq:HJ_intro} and an equation governing the distributions of the players. We refer the reader to \cite{BFY2013,GS2014,ACDPS2020} for general references on the theory and to \cite{A2013,AL2020} for surveys about the numerical analysis of mean field game systems. Finite element approximation of mean field game systems were previously considered by Osborne and Smears \cite{OS2024,OS2025,OS2025a,OS2025b}. In \cite{OS2024} they proved the existence and convergence, without error estimates, of finite element approximations of solutions to the stationary mean field game system. Similar results were obtained in \cite{OS2025} in the case of a time-dependent problem. Quasi-optimal error estimates, under the so-called Lasry-Lions monotonicity conditions were then proved in \cite{OS2025a} and extended to a more general setting in \cite{OS2025b}. Simultaneously, in \cite{BLS2025}, we also proved some error estimates for the stationary mean field game system. The main difference with \cite{OS2025}, is that our result does not require the monotonicity condition but assumes instead the stability of the solution, in the sense of Briani and Cardaliaguet \cite{BC2018}. One of the interesting features of the Brezzi-Rappaz-Raviart theorem is that no uniqueness is required on the solution $\bar x$ of the continuous problem \eqref{eq:main}. This aspect motivated us to use this framework to obtain the results in \cite{BLS2025}, since solutions to mean field game systems are not unique in general if the Lasry-Lions condition does not hold. However, our use of the BRR theorem in \cite{BLS2025} led us to assume smoothness of the Hamiltonian in order to satisfy the differentiability assumption. In this paper, we are able to substantially relax this assumption by requiring merely $C^{1,1}$ Hamiltonians, which could be further relaxed to $C^{1,1}_{\textnormal{loc}}$ with some additional technicalities. This type of regularity naturally appears in the context of optimal control problems. We emphasize that the \cref{cor:quasi_optimal_mfg} holds under a generalized stability condition which is implied by the Lasry-Lions condition. Moreover, in the case of smooth Hamiltonians, the results in this paper yield better error estimates than those of our previous work \cite{BLS2025}. In the monotone setting, our results are similar to those of \cite{OS2025a}, with the difference that we drop a stabilization term that was used in \cite{OS2025a} to obtain a discrete maximum principle. However, we leave the question of existence of stable solutions, in our generalized sense, when monotonicity fails, for future research. Our approach also applies to time-dependent MFG systems, for which the first author obtained semidiscrete error estimates in the case of smooth Hamiltonians \cite{B2025}. We finally mention that Osborne, Smears and Wells recently studied a posteriori error estimates in \cite{OSW2025}. Given the general nature of our result and the many applications of the original BRR theorem, we expect that our generalization may be useful beyond the context of mean field games.

In order to use \cref{thm:graves_variation} as a sufficient condition for metric regularity in the applications mentioned above, we had to study a generalized differential for Nemytskii operators defined on Lebesgue spaces. Loosely speaking, it is defined as the set of all measurable selections inside the Clarke generalized Jacobian of the function defining the Nemytskii operator. Our analysis extends previous results obtained by Ulbrich \cite{U2002,U2011} and establishes new results for this object.

The Brezzi-Rappaz-Raviart theorem allows to prove simultaneously the existence of solutions and the error estimates. In future work, we plan to decouple these two aspects and to focus on the derivation of a priori error estimates. The motivation is that the existence may often be proved by other means, for instance by using a fixed-point theorem. We expect that this decoupling will allow to relax some of the assumptions of the BRR theorem. As for the applications to Hamilton-Jacobi equations and mean field games we plan to investigate the applicability of our approach to other numerical schemes and different settings, such as fully nonlinear problems and equations involving nonlocal operators.

The paper is structured as follows. We start in \cref{section:metric_regularity} by recalling some of the main definitions and results about metric regularity. Our generalized BRR theorem is proved in \cref{section:main_BRR}. \cref{section:nemytskii} is dedicated to the study of the generalized differential for Nemytskii operators defined on Lebesgue spaces which will be useful in the applications, which are treated in \cref{section:application}. Finally, \cref{section:set_valued,section:WOT} recall some facts about set-valued maps and the weak operator topology, respectively.

\paragraph{Notations.}
In all of this work, we denote by $B_X(x,r)$ the closed ball in the Banach space $X$, centered at $x \in X$ with radius $r$. Unless otherwise specified, $X$ and $Y$ will always be Banach spaces. Moreover, for a subset $U \subset X$ and a function $F \colon U \to Y$, we recall that
\[
 \Lip_U(F) := \sup_{\substack{x,\, y \in U \\ x \neq y}} \frac{\norm{F(x) - F(y)}{Y}}{\norm{x - y}{X}}.
\]
Finally, for a set $E \subset \RR^n$, we denote by $\co(E)$ the convex hull of $E$.

\section{Metric regularity of single-valued mappings}
\label{section:metric_regularity}

In this section we expose the aspects of the theory of metrically regular mappings that will be useful for our purpose, providing new quantitative versions of some of them. In order to simplify the presentation, we focus on single valued maps between Banach spaces. Note however that one of the strength of this theory is that many aspects still hold for set-valued maps between metric spaces. Our main reference is \cite{I2017} but similar results can be found in \cite{DR2014}.

Let us start by recalling the Banach-Schauder open mapping theorem.
\begin{thm}[Banach-Schauder, {\cite[Theorem 2.6]{B2011}}]\label{thm:banach_schauder}
 Let $T \in \cL(X,Y)$ be a surjective bounded linear operator. Then there exists $\kappa > 0$ such that
 \begin{equation}\label{eq:open_mapping}
  B_Y(0,\kappa) \subset T(B_X(0,1)).
 \end{equation}
\end{thm}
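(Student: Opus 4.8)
The plan is to run the classical Baire category proof of the open mapping theorem while keeping track of the constant. First, since $T$ is surjective, $Y = \bigcup_{n \in \NN} T(B_X(0,n))$ and hence $Y = \bigcup_{n \in \NN} \overline{T(B_X(0,n))}$. As $Y$ is complete, the Baire category theorem forces one of the closed sets $\overline{T(B_X(0,n_0))}$ to have nonempty interior; dividing by $n_0$ and using linearity of $T$, one obtains $r > 0$ and $y_0 \in Y$ with $B_Y(y_0,r) \subset \overline{T(B_X(0,1))}$. Next I would center this ball at the origin: the set $\overline{T(B_X(0,1))}$ is convex (closure of the linear image of a convex set) and symmetric (because $B_X(0,1) = - B_X(0,1)$), so $B_Y(-y_0,r) \subset \overline{T(B_X(0,1))}$ as well, and writing $y = \tfrac12(y_0+y) + \tfrac12(-y_0+y)$ for $\norm{y}{Y} \le r$ and invoking convexity gives $B_Y(0,r) \subset \overline{T(B_X(0,1))}$. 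Rescaling, $B_Y(0,2^{-k}r) \subset \overline{T(B_X(0,2^{-k}))}$ for every integer $k \ge 0$.

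The core of the argument is then to remove the closure by successive approximation, which will yield \eqref{eq:open_mapping} with $\kappa = r/2$. Given $y$ with $\norm{y}{Y} \le r/2$, one uses the inclusion above with $k=1$ to find $x_1$ with $\norm{x_1}{X} \le 1/2$ and $\norm{y - Tx_1}{Y} < 2^{-2}r$, and then inductively, given $x_1, \dots, x_j$, applies the inclusion with $k = j+1$ to the residual $y - T(x_1 + \cdots + x_j)$ to get $x_{j+1}$ with $\norm{x_{j+1}}{X} \le 2^{-(j+1)}$ and $\norm{y - T(x_1 + \cdots + x_{j+1})}{Y} < 2^{-(j+2)}r$. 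Since $\sum_{j\ge 1}\norm{x_j}{X} \le \sum_{j \ge 1} 2^{-j} = 1$ and $X$ is complete, the partial sums converge to some $x \in B_X(0,1)$, and continuity of $T$ together with $\norm{y - T(x_1 + \cdots + x_j)}{Y} \to 0$ forces $Tx = y$. Hence $B_Y(0,r/2) \subset T(B_X(0,1))$.

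The delicate point is this last step. The Baire category argument only produces a set that is \emph{dense} in a ball; it is the geometric-series device --- and it is precisely here that completeness of the \emph{source} space $X$ enters --- that upgrades density into a genuine inclusion of a slightly smaller ball. Everything else is bookkeeping.
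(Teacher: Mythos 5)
Your proof is correct: it is the classical Baire-category argument (symmetrize and center the ball obtained from Baire, then remove the closure by the geometric-series approximation using completeness of $X$), which is exactly the proof in the reference the paper cites for this statement, since the paper itself gives no proof and simply invokes \cite[Theorem 2.6]{B2011}. The constant tracking ($\kappa = r/2$ with closed balls) is handled correctly, so nothing needs to be added.
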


In the context of \cref{thm:banach_schauder}, the supremum of the constants $\kappa > 0$ such that \eqref{eq:open_mapping} holds is called the \emph{Banach constant}\footnote{Here we follow the terminology from \cite{I2017}.} of $T$, which we denote by $\gothC_{X,Y}(T)$, and we simply write $\gothC(T) = \gothC_{X,Y}(T)$ when the context is clear. In the case where $T$ is not surjective, we use the convention that $\gothC(T) = 0$. Since $T(B_X(0,1)) \subset B_Y \left (0,\norm{T}{\cL(X,Y)} \right)$ it is clear that $\gothC(T) \leq \norm{T}{\cL(X,Y)}$. If $T$ is an isomorphism, then $\gothC(T) = \norm{T^{-1}}{\cL(Y,X)}^{-1}$.
Moreover, we have the following useful characterization for the Banach constant (see \cite[Proposition 1.7]{I2017})
\begin{equation}\label{eq:charac_banach_const}
 \gothC(T) = \inf_{\substack{h \in Y' \\ \norm{h}{Y'} = 1}} \norm{T^\star h}{X'},
\end{equation}
where $T^\star$ is the adjoint of $T$.

\begin{cor}[{\cite[Theorem 1.14]{I2017}}]\label{prop:perturbation_banach_schauder}
 Let $T,\, S \in \cL(X,Y)$. Assume that $T$ is surjective with Banach constant $\gothC(T)$ and that $\norm{S}{\cL(X,Y)} < \gothC(T)$. Then $T + S$ is surjective with Banach constant $\gothC(T + S) \geq \gothC(T) - \norm{S}{\cL(X,Y)}$.
\end{cor}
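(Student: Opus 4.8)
The plan is to deduce the statement from the dual characterization \eqref{eq:charac_banach_const} of the Banach constant, which turns the claim into an elementary estimate on adjoint operators. First I would record the two algebraic facts needed: $(T+S)^\star = T^\star + S^\star$ in $\cL(Y',X')$, and $\norm{S^\star}{\cL(Y',X')} = \norm{S}{\cL(X,Y)}$. Then, for any $h \in Y'$ with $\norm{h}{Y'} = 1$, the triangle inequality in the reverse direction gives
\[
 \norm{(T+S)^\star h}{X'} = \norm{T^\star h + S^\star h}{X'} \geq \norm{T^\star h}{X'} - \norm{S^\star h}{X'} \geq \norm{T^\star h}{X'} - \norm{S}{\cL(X,Y)}.
\]

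Next I would take the infimum over the unit sphere of $Y'$ on both sides. Applying \eqref{eq:charac_banach_const} to $T$ and to $T+S$, this yields
\[
 \gothC(T+S) = \inf_{\norm{h}{Y'}=1} \norm{(T+S)^\star h}{X'} \geq \inf_{\norm{h}{Y'}=1} \norm{T^\star h}{X'} - \norm{S}{\cL(X,Y)} = \gothC(T) - \norm{S}{\cL(X,Y)} > 0,
\]
the last strict inequality being exactly the hypothesis $\norm{S}{\cL(X,Y)} < \gothC(T)$. Since a strictly positive value of the quantity appearing in \eqref{eq:charac_banach_const} forces the operator to be surjective — this is built into the convention $\gothC=0$ for non-surjective operators together with the characterization cited from \cite{I2017}, i.e. ultimately the closed range theorem — we conclude that $T+S$ is surjective, and the displayed chain of inequalities is precisely the asserted bound $\gothC(T+S) \geq \gothC(T) - \norm{S}{\cL(X,Y)}$.

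The only step that is not a pure formality is the implication ``the infimum in \eqref{eq:charac_banach_const} is positive $\Rightarrow$ the operator is onto'', but since \eqref{eq:charac_banach_const} has already been recorded as a characterization (and not merely a formula valid under a standing surjectivity assumption), it may simply be invoked. If one preferred an argument avoiding duality altogether, one could instead proceed directly: fix $\kappa$ with $\norm{S}{\cL(X,Y)} < \kappa < \gothC(T)$ and, given $y \in Y$, build a sequence by $x_0 = 0$ and $T x_{n+1} = y - S x_n$, using the open mapping inclusion $B_Y(0,\kappa) \subset T(B_X(0,1))$ (with scaling) to choose $x_{n+1}$ with $\norm{x_{n+1} - x_n}{X} \leq \kappa^{-1}\norm{S}{\cL(X,Y)}\,\norm{x_n - x_{n-1}}{X}$; since $\kappa^{-1}\norm{S}{\cL(X,Y)} < 1$, the sequence is Cauchy, its limit $x$ satisfies $(T+S)x = y$ and $\norm{x}{X} \leq (\kappa - \norm{S}{\cL(X,Y)})^{-1}\norm{y}{Y}$, hence $B_Y(0,\kappa - \norm{S}{\cL(X,Y)}) \subset (T+S)(B_X(0,1))$, and letting $\kappa \uparrow \gothC(T)$ gives both surjectivity and the bound. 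I expect the duality argument to be the shorter one to write down, so that is the route I would take, mentioning the direct construction only as a remark.
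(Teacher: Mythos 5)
Your duality argument is exactly the paper's proof: the paper also estimates $\norm{(T+S)^\star h}{X'} \geq \norm{T^\star h}{X'} - \norm{S}{\cL(X,Y)} \geq \gothC(T) - \norm{S}{\cL(X,Y)}$ for unit $h \in Y'$ and then takes the infimum via \eqref{eq:charac_banach_const}. Your extra remark on why positivity of the infimum yields surjectivity, and the alternative iterative construction, are fine but not needed; the proposal is correct.
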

\begin{proof}
 Let $h \in Y'$ with $\norm{h}{Y'} = 1$. Then
 \begin{align*}
  \norm{(T+S)^\star h}{X'} & = \sup_{x \in B_X(0,1)} \frac{1}{\norm{x}{X}} \langle (T+S)^\star h, x \rangle_{X',X}  \\
  & \geq \norm{T^\star h}{X'} - \sup_{x \in B_X(0,1)} \frac{1}{\norm{x}{X}} \langle h, Sx \rangle_{Y',Y} \\
  & \geq \gothC(T) - \norm{S}{\cL(X,Y)}.
 \end{align*}
 Taking the infimum over $h$ and using \cref{eq:charac_banach_const}, we obtain the desired inequality. In particular $\gothC(T + S) > 0$, which implies that $T+S$ is surjective.
\end{proof}

The following fact will be useful when considering applications.
\begin{prop}\label{prop:sufficient_banach_const}
 Let $V$ and $W$ be Banach spaces, let $\cA \subset \cL(V,W)$ be bounded and let $T \in \cL(W,V)$ be compact. Set
 \[
  \cB := \left \{ I + T \circ A :\, A \in \cA \right \}.
 \]
 Assume that each $B \in \cB$ is an isomorphism and that $\cA$ is sequentially compact in the weak operator topology\footnote{See \cref{section:WOT}.}. Then
 \[
  \sup_{B \in \cB} \norm{B^{-1}}{\cL(V)} < + \infty.
 \]
 In particular, there exists $\kappa > 0$ such that $\inf_{B \in \cB} \gothC(B) \geq \kappa$.
\end{prop}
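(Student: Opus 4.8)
The plan is to argue by contradiction. Suppose the supremum is infinite; then there is a sequence $B_n = I + T\circ A_n$ with $A_n \in \cA$ and $\norm{B_n^{-1}}{\cL(V)} \to +\infty$. By definition of the operator norm of the inverse, we may pick $w_n \in V$ with $\norm{w_n}{V} = 1$ and $v_n := B_n^{-1} w_n$ satisfying $\norm{v_n}{V} \to +\infty$. Rescaling, set $u_n := v_n / \norm{v_n}{V}$, so $\norm{u_n}{V} = 1$ and
\[
 B_n u_n = u_n + T A_n u_n = \frac{w_n}{\norm{v_n}{V}} \to 0 \quad \text{in } V.
\]
The idea is now to extract convergent subsequences and pass to the limit, using compactness of $T$ to upgrade weak convergence to strong convergence where needed.

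First I would use the hypothesis that $\cA$ is sequentially compact in the weak operator topology to extract a subsequence (not relabeled) such that $A_n \to A$ in the weak operator topology for some $A \in \cL(V,W)$; note $A \in \cA$ if $\cA$ is WOT-closed, but we will only need $A \in \cL(V,W)$ together with the fact that $B := I + T\circ A$ is an isomorphism, which requires $A \in \cA$ — so here I would use that $\cA$ is WOT-sequentially compact \emph{in itself}, giving $A \in \cA$. Since $\cA$ is bounded, $\norm{A_n u_n}{W}$ is bounded; since $T$ is compact, after a further extraction $T A_n u_n \to z$ strongly in $V$ for some $z \in V$. From $B_n u_n \to 0$ we then get $u_n = B_n u_n - T A_n u_n \to -z$ strongly in $V$, so in particular $\norm{z}{V} = 1$ and $u_n \to u := -z$ with $\norm{u}{V} = 1$.

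The remaining point is to identify $z$ with $T A u$. For any $\phi \in V'$ and recalling $T^\star \phi \in W'$, we have $\langle \phi, T A_n u_n \rangle = \langle T^\star \phi, A_n u_n \rangle$. I would like to split this as $\langle T^\star\phi, A_n(u_n - u)\rangle + \langle T^\star\phi, A_n u\rangle$; the second term converges to $\langle T^\star\phi, Au\rangle = \langle \phi, T A u\rangle$ by WOT convergence $A_n u \rightharpoonup A u$ (weakly in $W$, which is what WOT convergence applied to the fixed vector $u$ gives, and pairing against the fixed functional $T^\star\phi$), and the first term is bounded by $\bigl(\sup_{A\in\cA}\norm{A}{}\bigr)\norm{T^\star\phi}{W'}\,\norm{u_n - u}{V} \to 0$. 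Hence $T A_n u_n \rightharpoonup T A u$ weakly in $V$; combined with the strong convergence $T A_n u_n \to z$, we conclude $z = T A u$. Therefore, passing to the limit in $B_n u_n = u_n + T A_n u_n \to 0$ yields $u + T A u = 0$, i.e. $B u = 0$ with $\norm{u}{V} = 1$, contradicting the assumption that $B = I + T\circ A$ is an isomorphism. This proves $M := \sup_{B\in\cB}\norm{B^{-1}}{\cL(V)} < +\infty$. For the last sentence, each $B \in \cB$ is an isomorphism, so $\gothC(B) = \norm{B^{-1}}{\cL(V)}^{-1} \geq 1/M =: \kappa > 0$, as recalled after \cref{thm:banach_schauder}.

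The main obstacle is the bookkeeping in the limit passage: one must be careful that WOT-sequential compactness only provides convergence $A_n x \rightharpoonup A x$ \emph{weakly} for each fixed $x$, so the strong convergence $u_n \to u$ is essential to handle the moving argument $A_n u_n$, and this in turn is bought only through the compactness of $T$. Making sure the two extractions (WOT compactness of $\cA$, then compactness of $T$ applied to the bounded sequence $(A_n u_n)$) are done in the right order, and that $A \in \cA$ so that $B$ is known to be invertible, is the delicate part; everything else is routine.
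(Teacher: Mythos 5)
Your proof is correct and follows essentially the same route as the paper: contradiction, normalization to get $\norm{u_n}{V}=1$ with $B_n u_n \to 0$, WOT-extraction of $A_n \to A \in \cA$, compactness of $T$ to get strong convergence of $T A_n u_n$ and hence of $u_n$, then the split into a fixed-vector term (handled by WOT convergence, equivalently the adjoint pairing $\langle T^\star\phi, A_n u\rangle$ as in \cref{prop:WOT_composition}) and a moving term controlled by boundedness of $\cA$ and the strong convergence $u_n \to u$, contradicting injectivity of $I + T\circ A$. The final deduction $\gothC(B) = \norm{B^{-1}}{\cL(V)}^{-1} \geq \kappa$ also matches the paper.
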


\begin{proof}
 We argue by contradiction. If the conclusion is false, there exists a sequence $(A_n,u_n,v_n) \in \cA \times V \times V$ such that $u_n = \left(I + T \circ A_n \right)^{-1} v_n$, $\norm{v_n}{V} \leq 1$ and $\norm{u_n}{V} \geq n$. Setting $\hat u_n := u_n / \norm{u_n}{V}$ and $\hat v_n := v_n / \norm{u_n}{V}$, we have
 \[
  \hat u_n = \hat v_n - T \circ A_n \hat u_n.
 \]
 Since $\cA$ is bounded, the sequence $A_n \hat u_n$ is bounded in $W$. It then follows from the compactness of $T$ that $T \circ A_n \hat u_n \xrightarrow{n \to \infty} -v$, up to a subsequence, for some $v \in V$. Moreover, since $\norm{\hat v_n}{V} \leq 1/n$, we deduce that $\hat u_n \xrightarrow{n \to \infty} v$ in $V$. In particular $\norm{v}{V} = 1$. From the sequential compactness of $\cA$ in the weak operator topology, there exists $A \in \cA$ such that $A_n \WOTarrow A$ up to another subsequence. Then, for any $h \in B_{V'}(0,1)$, we have
 \begin{align*}
  \langle h, \left(I + T \circ A_n \right)\hat u_n \rangle_{V',V} &= \langle h, \hat u_n \rangle_{V',V} + \langle h, T \circ A_n v \rangle_{V',V} +  \langle h, T \circ A_n (\hat u_n - v) \rangle_{V',V}
 \end{align*}
 From \cref{prop:WOT_composition}, we have $\langle h, T \circ A_n v \rangle_{V',V} \xrightarrow{n \to \infty} \langle h, T \circ A v \rangle_{V',V}$.
Since, in addition,
\[
 \module{\langle h, T \circ A_n (\hat u_n - v) \rangle_{V',V}} \leq \norm{T}{\cL(W,V)} \sup_{A' \in \cA} \norm{A'}{\cL(V,W)} \norm{v - \hat u_n}{V} \xrightarrow{n \to \infty} 0,
\]
we obtain that $\langle h, \left(I + T \circ A \right) v \rangle_{V',V} = 0$.
Since $h$ is arbitrary, we conclude that $v \in \ker_{V} \left(I + T \circ A \right)$. The injectivity of $I + T \circ A$ implies that $v =0$, which contradicts the fact that $\norm{v}{V} =1$. This concludes the proof.
\end{proof}

The notion of metrically regular mappings, which we recall below, extends these ideas to nonlinear mappings.
\begin{defi}[Metric regularity]\label{def:metric_regularity}
 Let $\bar x \in X$. A continuous mapping $F \colon X \to Y$ is \emph{metrically regular}\footnote{Actually, \eqref{eq:linearly_open} defines \emph{openness at linear rate}, a notion that is equivalent to metric regularity. Since metric regularity seems to be a more commonly used terminology, we decided to stick with it. We refer to \cite{I2017,DR2014} for the usual definition of metric regularity.} near $\bar x \in X$ if there exists $\kappa > 0$ and $R > 0$ such that
  \begin{equation}\label{eq:linearly_open}
  B_Y(F(x),\kappa r) \cap B_Y(F(\bar x),R) \subset F \left (B_X(x,r) \right ) \quad \textnormal{for every } 0 < r \leq R \textnormal{ and  } x \in B_X(\bar x, R)
  \end{equation}
  and we define the
  \[
   \sur_{X,Y} (F; \bar x) = \sup \left \{ \kappa > 0 :\textnormal{there exists } R> 0   \textnormal{ such that \eqref{eq:linearly_open} holds} \right \},
  \]
  and we simply write $\sur(F; \bar x)$ for $\sur_{X,Y} (F; \bar x)$.
 Moreover, $F$ is said to be \emph{strongly metrically regular} near $\bar x \in X$ if it is metrically regular near $\bar x$ and there exist neighbourhoods $U$ of $\bar x$ and $V$ of $F(\bar x)$ such that $F^{-1} \cap U$ is single-valued on $V$.
\end{defi}

\begin{prop}[Equivalence of metric regularity and Aubin property, {\cite[Proposition 2.10]{I2017}}]\label{prop:aubin_property}
 Let $F \colon X \to Y$ be continuous. Then $F$ is metrically regular near $\bar x \in X$ if and only if $F^{-1}$ has the Aubin property near $(F(\bar x), \bar x)$, that is, there exist $R, \, K > 0$ such that
 \begin{equation}\label{eq:Aubin_property}
  d(x',F^{-1}(y)) \leq K \norm{y' - y}  \quad \textnormal{whenever } y \in B_Y(F(\bar x),R), \, x' \in B_X(\bar x,R), \, \textnormal{and } x' \in F^{-1}(y').
 \end{equation}
 Moreover, we have $\inf \left \{ K > 0 :\, \textnormal{there exists $R > 0$ such that \eqref{eq:Aubin_property} holds} \right \} = \sur_{X,Y}(F;\bar x)^{-1}$.
\end{prop}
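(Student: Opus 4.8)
The plan is to establish two one-sided quantitative statements and deduce everything from them: (i) if \eqref{eq:linearly_open} holds with rate $\kappa$ on some radius, then \eqref{eq:Aubin_property} holds with constant $K = 1/\kappa$ on a possibly smaller radius; and (ii) if \eqref{eq:Aubin_property} holds with some constant $K$ on some radius, then for every $\kappa < 1/K$ the estimate \eqref{eq:linearly_open} holds with rate $\kappa$ on a suitable radius. These two statements immediately give the equivalence between metric regularity of $F$ near $\bar x$ and the Aubin property of $F^{-1}$ near $(F(\bar x), \bar x)$. For the identity of constants, (i) yields $\inf\{K\} \le 1/\kappa$ for every rate $\kappa$ admissible in \eqref{eq:linearly_open}, hence $\inf\{K\} \le \sur(F;\bar x)^{-1}$; and (ii) yields $\sur(F;\bar x) \ge 1/K$ for every $K$ admissible in \eqref{eq:Aubin_property}, hence $\inf\{K\} \ge \sur(F;\bar x)^{-1}$.

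To prove (i), suppose \eqref{eq:linearly_open} holds with rate $\kappa$ and radius $R_0$. Since $F$ is continuous, I would first choose $R \in (0, R_0]$ with $R \le \kappa R_0/2$ and small enough that $\norm{F(x') - F(\bar x)}{Y} \le \kappa R_0/2$ for every $x' \in B_X(\bar x, R)$; then, by the triangle inequality, $\norm{y - y'}{Y} \le \kappa R_0$ whenever $y \in B_Y(F(\bar x), R)$ and $y' := F(x')$ with $x' \in B_X(\bar x, R)$. Given such $y, x', y'$, if $y = y'$ then $x' \in F^{-1}(y)$ and \eqref{eq:Aubin_property} is trivial; otherwise set $r := \norm{y - y'}{Y}/\kappa \in (0, R_0]$, observe that $y \in B_Y(F(x'), \kappa r) \cap B_Y(F(\bar x), R_0)$, and apply \eqref{eq:linearly_open} at $x'$ to get some $x \in B_X(x', r)$ with $F(x) = y$. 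Hence $d(x', F^{-1}(y)) \le \norm{x - x'}{X} \le r = \norm{y' - y}{Y}/\kappa$, which is \eqref{eq:Aubin_property} with $K = 1/\kappa$ on radius $R$.

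To prove (ii), suppose \eqref{eq:Aubin_property} holds with constant $K$ and radius $R_0$, fix $\kappa < 1/K$, and take $R := R_0$. Let $x \in B_X(\bar x, R)$, $0 < r \le R$, and $y \in B_Y(F(x), \kappa r) \cap B_Y(F(\bar x), R)$. Applying \eqref{eq:Aubin_property} with $x' := x$ and $y' := F(x)$ — which is legitimate since $x \in F^{-1}(F(x))$, and note that the finiteness of the right-hand side forces $F^{-1}(y) \neq \emptyset$ under the convention $d(x, \emptyset) = +\infty$ — one obtains $d(x, F^{-1}(y)) \le K \norm{F(x) - y}{Y} \le K\kappa r < r$. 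Since this inequality is strict, there exists $x^\ast \in F^{-1}(y)$ with $\norm{x - x^\ast}{X} < r$, so $x^\ast \in B_X(x, r)$ and $F(x^\ast) = y$; this shows $y \in F(B_X(x, r))$ and hence \eqref{eq:linearly_open}.

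This is a classical characterization and, once the scheme above is in place, the two implications are routine; the only points requiring care are the following. In step (ii), the distance $d(x, F^{-1}(y))$ need not be attained in an infinite-dimensional space, so one cannot work with $\kappa = 1/K$ directly but must use every $\kappa$ strictly below $1/K$; since $\sur(F;\bar x)$ is a supremum over admissible rates, this costs nothing. In step (i), the form of the Aubin property used here constrains $y$ but not $y' = F(x')$, so one has to shrink the radius using the continuity of $F$ to keep $y'$ near $F(\bar x)$, which is exactly what makes the single application of linear openness at $x'$ legitimate; everything else reduces to triangle inequalities.
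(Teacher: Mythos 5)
Your proof is correct: both directions are carried out with the right quantitative bookkeeping (shrinking the radius via continuity of $F$ in the openness-to-Aubin direction, and working with every $\kappa<1/K$ rather than $\kappa=1/K$ in the converse), and the identity $\inf\{K\}=\sur_{X,Y}(F;\bar x)^{-1}$ follows as you indicate. The paper itself gives no proof — it simply cites \cite[Proposition 2.10]{I2017} — and your argument is the standard one found there, so there is nothing further to compare.
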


We now recall the theorem of Graves, first established in \cite{G1950}, which provides a sufficient condition for metric regularity.
\begin{thm}[Graves, {\cite[Theorem 1.12 and Corollary 1.13]{I2017}}]\label{thm:graves}
 Let $F$ be a continuous mapping from a neighborhood of $\bar x$ in $X$ into $Y$. Suppose that there exists a surjective linear operator $A \in \cL(X,Y)$ and constants $\delta,\, \kappa,\, R >0$ such that $\gothC(A) > \delta + \kappa$ and
 \[
  \norm{F(x) - F(y) - A(x-y)}{Y} \leq \delta \norm{x-y}{X} \quad \textnormal{for all } x,y \in B_X(\bar x,R).
 \]
 Then
 \[
  B_{Y}(F(\bar x),\kappa r) \subset F(B_X(\bar x,r)) \quad \textnormal{for every } 0 < r \leq R
 \]
 and
 \[
  B_{Y}(F(x),\kappa r) \subset F(B_X(x,r)) \quad \textnormal{for every } x \in B_X(\bar x, R/2) \textnormal{ and } 0 < r \leq R/2.
 \]
 In particular, $F$ is metrically regular near $\bar x$ with $\sur(F;\bar x) \geq \kappa$.
\end{thm}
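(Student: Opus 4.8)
The plan is to run the classical Graves--Lyusternik iteration, i.e.\ a modified-Newton scheme in which the (possibly non-invertible) surjection $A$ plays the role of the derivative. First I would fix a constant $c$ with $\delta + \kappa < c < \gothC(A)$; since $c < \gothC(A)$, the definition of the Banach constant gives $B_Y(0,c) \subset A(B_X(0,1))$, so by homogeneity every $w \in Y$ admits some $v \in X$ (not necessarily unique) with $Av = w$ and $\norm{v}{X} \le \norm{w}{Y}/c$. The perturbation estimate in the hypothesis forces $F$ to be defined on $B_X(\bar x, R)$, so I may assume the domain neighbourhood contains this ball. Fix now $0 < r \le R$ and a target $y \in B_Y(F(\bar x),\kappa r)$; I construct a sequence with $x_0 := \bar x$ by choosing, whenever $x_n$ has been defined, some $v_n$ with $A v_n = y - F(x_n)$ and $\norm{v_n}{X} \le \norm{y - F(x_n)}{Y}/c$, and setting $x_{n+1} := x_n + v_n$. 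The engine of the argument is the identity
\[
 y - F(x_{n+1}) = A(x_{n+1}-x_n) - \left(F(x_{n+1}) - F(x_n)\right) = -\left(F(x_{n+1}) - F(x_n) - A(x_{n+1}-x_n)\right),
\]
which, provided $x_n, x_{n+1} \in B_X(\bar x, R)$, combined with the perturbation bound yields $\norm{y - F(x_{n+1})}{Y} \le \delta \norm{v_n}{X} \le \tfrac{\delta}{c}\norm{y - F(x_n)}{Y}$.

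Writing $\theta := \delta/c \in [0,1)$, I would then prove by induction that every $x_n$ lies in $B_X(\bar x, r)$, which is what makes each step legitimate. Indeed, from $\norm{y - F(x_0)}{Y} \le \kappa r$ the recursion above gives $\norm{y - F(x_n)}{Y} \le \theta^n \kappa r$, hence $\norm{v_n}{X} \le \theta^n \kappa r / c$, so
\[
 \norm{x_{n+1} - \bar x}{X} \le \sum_{k=0}^{n} \norm{v_k}{X} \le \frac{\kappa r}{c}\sum_{k \ge 0}\theta^k = \frac{\kappa r}{c-\delta} < r ,
\]
where the final strict inequality is exactly where the assumption $\gothC(A) > \delta + \kappa$ enters. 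Since $\sum_k \norm{v_k}{X} < \infty$, the sequence $(x_n)$ is Cauchy; its limit $x_\infty$ lies in the closed ball $B_X(\bar x, r)$, and the continuity of $F$ together with $\norm{y - F(x_n)}{Y} \le \theta^n \kappa r \to 0$ gives $F(x_\infty) = y$. This establishes $B_Y(F(\bar x),\kappa r) \subset F(B_X(\bar x,r))$ for every $0 < r \le R$.

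For the second display I would re-run the identical iteration from an arbitrary base point $x \in B_X(\bar x, R/2)$, with $0 < r \le R/2$ and $y \in B_Y(F(x),\kappa r)$: the iterates now satisfy $\norm{x_n - x}{X} \le \kappa r/(c-\delta) < r \le R/2$, hence $\norm{x_n - \bar x}{X} < R$, so the perturbation bound is still applicable at every step, and the same computation produces $x_\infty \in B_X(x, r)$ with $F(x_\infty) = y$; thus $B_Y(F(x),\kappa r) \subset F(B_X(x,r))$. Intersecting with $B_Y(F(\bar x),R/2) \subset B_Y(F(\bar x),R)$, this is precisely \eqref{eq:linearly_open} with the radius $R$ replaced by $R/2$, so $F$ is metrically regular near $\bar x$, and by the definition of $\sur(F;\bar x)$ we get $\sur(F;\bar x) \ge \kappa$. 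I do not expect any deep obstacle: the only point that requires genuine care is the radius bookkeeping that keeps every iterate inside $B_X(\bar x, R)$, where the hypothesis is valid — and this is guaranteed precisely by the quantitative gap $\gothC(A) - \delta > \kappa$, which forces the geometric series of step lengths to sum strictly below $r \le R$.
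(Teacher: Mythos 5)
Your proposal is correct: the Graves--Lyusternik iteration is carried out with the right radius bookkeeping (the geometric sum $\kappa r/(c-\delta)<r$ keeping all iterates inside $B_X(\bar x,R)$, and the triangle inequality $\norm{x_n-\bar x}{X}<R$ when restarting from $x\in B_X(\bar x,R/2)$), and the final intersection with $B_Y(F(\bar x),R/2)$ correctly yields \eqref{eq:linearly_open} and $\sur(F;\bar x)\geq\kappa$. The paper gives no proof of this statement, quoting it from Ioffe \cite{I2017}, and your argument is precisely the classical iterative scheme by which the cited result is proved there, so there is nothing to add.
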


The following key result, originally established in \cite{DMO1980}, is a nonlinear extension of \cref{prop:perturbation_banach_schauder}.
\begin{thm}[{Milyutin, \cite[Theorems  2.79]{I2017}}]\label{thm:milyutin}
 Let $F \colon X \to Y$ be continuous and metrically regular near $\bar x$, i.e., there exist $\kappa > 0$ and $R >0$ such that
 \[
  B_Y(F(x), \kappa r) \cap B_Y(F(\bar x), R) \subset F(B_X(x,r)) \quad \textnormal{for all }0 < r \leq R, \, x \in B(\bar x, R).
 \]
 Let also $g \colon X \to Y$ be Lipschitz continuous on a neighborhood $U$ of $\bar x \in X$ with $\Lip_U(g) < \kappa$. Then, for every $\mu \in [\Lip_{U}(g), \kappa )$, we have
 \begin{equation}\label{eq:milyutin_conclusion}
  B_Y((F+g)(x),(\kappa - \mu)r) \cap B_Y((F+g)(\bar x), R/2) \subset (F+g)(B_X(x,r))
 \end{equation}
for all $0 < r \leq R/2$ and $x \in  B(\bar x, R/2)$. In other words, if $F$ is metrically regular near $\bar x$ with $\sur(F;\bar x) \geq \kappa$, then $F+g$ is also metrically regular near $\bar x$ with $\sur(F+g;\bar x) \geq \kappa - \mu$.
\end{thm}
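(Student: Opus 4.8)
Since $F$ is only continuous and metrically regular, there is no linear model to linearize against as in \cref{thm:graves}, so the plan is to prove \eqref{eq:milyutin_conclusion} directly by a Lyusternik--Graves type iteration, using the covering property of $F$ at each step to absorb the perturbation $g$. Write $\lambda := \Lip_U(g)$; after possibly shrinking $R$ we may assume $B_X(\bar x, R) \subset U$, so that $\norm{g(z) - g(z')}{Y} \le \lambda \norm{z - z'}{X}$ for all $z, z' \in B_X(\bar x, R)$. Fix $\mu \in [\lambda, \kappa)$, a point $x \in B_X(\bar x, R/2)$, a radius $0 < r \le R/2$, and $y \in B_Y((F+g)(x), (\kappa - \mu) r) \cap B_Y((F+g)(\bar x), R/2)$; the goal is to produce $z \in B_X(x, r)$ with $F(z) = y - g(z)$.

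Set $z_0 := x$ and construct $(z_k)$ recursively. Given $z_k \in B_X(\bar x, R)$, stop and set $z := z_k$ if $(F+g)(z_k) = y$; otherwise put $\rho_k := \kappa^{-1} \norm{y - g(z_k) - F(z_k)}{Y} > 0$, so that $y - g(z_k) \in B_Y(F(z_k), \kappa \rho_k)$ by construction. Once the feasibility conditions $\rho_k \le R$, $z_k \in B_X(\bar x, R)$ and $y - g(z_k) \in B_Y(F(\bar x), R)$ are checked (see below), the metric regularity of $F$ provides $z_{k+1} \in B_X(z_k, \rho_k)$ with $F(z_{k+1}) = y - g(z_k)$. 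The key point is that $(F+g)(z_{k+1}) - y = g(z_{k+1}) - g(z_k)$, hence
\[
 \kappa \rho_{k+1} = \norm{y - g(z_{k+1}) - F(z_{k+1})}{Y} = \norm{g(z_{k+1}) - g(z_k)}{Y} \le \lambda \norm{z_{k+1} - z_k}{X} \le \lambda \rho_k .
\]
Since $\lambda / \kappa < 1$ the step sizes decay geometrically, and from $\rho_0 \le \kappa^{-1}(\kappa - \mu) r$ and $\mu \ge \lambda$ we get $\sum_{k \ge 0} \rho_k \le \frac{\kappa}{\kappa - \lambda} \rho_0 \le \frac{\kappa - \mu}{\kappa - \lambda} r \le r$. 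Therefore $(z_k)$ is Cauchy; its limit $z$ satisfies $\norm{z - x}{X} \le \sum_k \rho_k \le r$, so $z \in B_X(x, r)$, and passing to the limit in $F(z_{k+1}) = y - g(z_k)$ using continuity of $F$ and $g$ gives $F(z) = y - g(z)$, i.e., $y \in (F+g)(B_X(x, r))$, which is \eqref{eq:milyutin_conclusion}.

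The main obstacle is the feasibility bookkeeping that legitimizes each application of the covering property of $F$. The radius bound $\rho_k \le \rho_0 < r \le R/2 \le R$ is immediate; the estimate $\norm{z_k - \bar x}{X} \le \norm{x - \bar x}{X} + \sum_{j < k} \rho_j \le R/2 + r \le R$ keeps each iterate in $B_X(\bar x, R) \subset U$; and for the targets one writes $y - g(z_k) - F(\bar x) = \bigl(y - (F+g)(\bar x)\bigr) + \bigl(g(\bar x) - g(z_k)\bigr)$ and bounds it by $\norm{y - (F+g)(\bar x)}{Y} + \lambda \norm{\bar x - z_k}{X}$, using the bound just obtained for $\norm{\bar x - z_k}{X}$ and the summability of the $\rho_k$. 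Turning this into an honest induction — establishing feasibility at step $k$ before $z_{k+1}$ is built, and in particular keeping $y - g(z_k)$ within the ball around $F(\bar x)$ where metric regularity is effective (shrinking the radii in \eqref{eq:milyutin_conclusion} by a fixed factor depending only on $\kappa$, if necessary) — is the only genuinely non-formal part of the argument. Finally, the concluding sentence of the theorem is just the translation into moduli: taking $\mu = \Lip_U(g)$ gives $\sur(F + g; \bar x) \ge \kappa - \Lip_U(g)$, and letting $\kappa \uparrow \sur(F; \bar x)$ yields $\sur(F + g; \bar x) \ge \sur(F; \bar x) - \Lip_U(g)$.
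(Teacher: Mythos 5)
The paper does not actually prove \cref{thm:milyutin}; it is quoted from Ioffe's book (and goes back to \cite{DMO1980}), so there is no internal proof to compare with. Your route — the classical Lyusternik--Graves/DMO iteration, building $z_{k+1}$ with $F(z_{k+1}) = y - g(z_k)$, the contraction estimate $\kappa \rho_{k+1} \le \lambda \rho_k$, and the summation $\sum_k \rho_k \le \frac{\kappa-\mu}{\kappa-\lambda} r \le r$ — is exactly the standard proof of this perturbation theorem, and those estimates are correct as you state them.

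However, the step you set aside as ``non-formal bookkeeping'' is a genuine gap with respect to the statement as written. To invoke the covering hypothesis at $z_k$ you must place the target in the prescribed ball, i.e.\ verify $\norm{y - g(z_k) - F(\bar x)}{Y} \le R$. The only available bound is
\[
 \norm{y - g(z_k) - F(\bar x)}{Y} \le \norm{y - (F+g)(\bar x)}{Y} + \lambda \norm{z_k - \bar x}{X} \le \tfrac{R}{2} + \lambda\left(\tfrac{R}{2} + r\right),
\]
and since the hypothesis only gives $\lambda < \kappa$ (with $\kappa$ possibly large), this exceeds $R$ as soon as $\lambda$ is of order $1/2$ or more; so with the radii exactly as in \eqref{eq:milyutin_conclusion} the key application of metric regularity is not justified, and no rearrangement of the same estimate fixes it. Your proposed remedy — shrinking the $X$-side radii by a factor depending on $\kappa$ (it must also absorb $\lambda$, but $\lambda<\kappa$ makes that harmless), and likewise replacing $R$ by a smaller radius so that $B_X(\bar x,R)\subset U$ — does make the iteration go through, but it then proves a weaker inclusion than \eqref{eq:milyutin_conclusion}, with smaller neighbourhoods. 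That weaker version is all that is needed for the final assertion $\sur(F+g;\bar x)\ge \kappa-\mu$ and for every later use of \cref{thm:milyutin} in the paper (where radii are re-chosen anyway), so the substance of your argument is fine; but as a proof of the displayed quantitative inclusion with the constants $R/2$, $R/2$, it is incomplete, and you should either carry out the induction with explicitly shrunken radii and state the result in that form, or add the restriction on $\lambda$ (relative to $R$-normalization) under which the $R/2$ constants survive.
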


In the case where we also assume that $F$ is strongly metrically regular in \cref{thm:milyutin}, it is proved in \cite[Theorem 2.87]{I2017} that the perturbed mapping $F + g$ is also strongly metrically regular. We provide a quantitative version of this fact in the next result.

\begin{thm}[Perturbation of strong metric regularity]\label{thm:milyutin_strong}
  Under the assumptions of \cref{thm:milyutin}, if we also assume that $F$ is strongly metrically regular near $\bar x$, then $F+g$ is also strongly metrically regular near $\bar x$. More precisely, if $\epsilon > 0$ and $0 < \delta \leq \min \{ \kappa R/2, R/2 \}$ are such that $F^{-1} \cap B_X(\bar x, \epsilon)$ is single-valued on $B_Y(F(\bar x), \delta)$, then, $(F +g)^{-1} \cap B_X(\bar x, \eta)$ is single-valued on $B_Y(F(\bar x) + g(\bar x), (1 - \kappa^{-1} \mu) \delta)$, where $\eta := \min \{ \epsilon, \kappa^{-1} \delta \}$.
\end{thm}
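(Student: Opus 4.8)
The plan is to exploit the two conclusions already available for $F+g$. From \cref{thm:milyutin} we already know $F+g$ is metrically regular near $\bar x$ with $\sur(F+g;\bar x) \geq \kappa - \mu$; what remains is the \emph{single-valuedness} of a localized inverse. So the core task is: given a neighborhood on which $F^{-1}$ localizes to a single-valued branch, produce an explicit neighborhood on which $(F+g)^{-1}$ does the same, with the quantitative parameters as announced. First I would set up the candidate: define, for $y$ near $(F+g)(\bar x)$, the map $x \mapsto$ (the unique point of $F^{-1}(y - g(x)) \cap B_X(\bar x,\epsilon)$). The idea is that solving $(F+g)(x) = y$ is the same as solving $F(x) = y - g(x)$, i.e. finding a fixed point of $\Phi_y(x) := s(y - g(x))$, where $s$ denotes the single-valued localization of $F^{-1}$ on $B_Y(F(\bar x),\delta)$ inside $B_X(\bar x,\epsilon)$.

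The key steps, in order. (1) Show $s$ is Lipschitz: this is the Aubin-property half of \cref{prop:aubin_property} applied to the strongly metrically regular $F$; since the localized inverse is single-valued, the Aubin estimate $d(x',F^{-1}(y)) \leq K\norm{y'-y}{}$ upgrades to $\norm{s(y') - s(y)}{X} \leq K \norm{y'-y}{Y}$ for $y,y'$ in a small enough ball, with $K$ any number exceeding $\sur(F;\bar x)^{-1}$, in particular $K < \kappa^{-1}$ is unattainable in general — so here I would be careful and instead take $K$ slightly above $\kappa^{-1}$ but still with $K \Lip_U(g) < 1$, shrinking $\delta$ so that the relevant balls lie in the domain of validity. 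Since $\Lip_U(g) < \kappa$, i.e. $\kappa^{-1}\Lip_U(g) < 1$, such a $K$ exists. (2) Check $\Phi_y$ maps a small ball $B_X(\bar x,\eta)$ into itself for $y \in B_Y((F+g)(\bar x), \rho)$ with $\rho = (1-\kappa^{-1}\mu)\delta$ (or a suitable comparable radius): estimate $\norm{\Phi_y(x) - \bar x}{X} = \norm{s(y-g(x)) - s(F(\bar x))}{X} \leq K\norm{y - g(x) - F(\bar x)}{Y} \leq K(\rho + \Lip_U(g)\eta)$, and choose $\eta := \min\{\epsilon,\kappa^{-1}\delta\}$ together with the smallness of $\delta$ so this is $\leq \eta$; one also needs $y - g(x)$ to stay in $B_Y(F(\bar x),\delta)$, which is the reason the admissible radius for $y$ shrinks by the factor $(1-\kappa^{-1}\mu)$ and why $\delta \leq \min\{R/(2\kappa),R/2\}$ is imposed. (3) Contraction: $\norm{\Phi_y(x_1) - \Phi_y(x_2)}{X} \leq K \norm{g(x_1) - g(x_2)}{Y} \leq K\Lip_U(g)\norm{x_1 - x_2}{X}$ with $K\Lip_U(g) < 1$, so Banach's fixed point theorem gives a unique fixed point in $B_X(\bar x,\eta)$. (4) Translate back: this unique fixed point is the unique element of $(F+g)^{-1}(y) \cap B_X(\bar x,\eta)$, because any such element is automatically a fixed point of $\Phi_y$ (using that $y - g(x) \in B_Y(F(\bar x),\delta)$ and $x \in B_X(\bar x,\epsilon)$, so $F(x) = y - g(x)$ forces $x = s(y-g(x))$). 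This is exactly the asserted single-valuedness of $(F+g)^{-1} \cap B_X(\bar x,\eta)$ on $B_Y(F(\bar x)+g(\bar x),(1-\kappa^{-1}\mu)\delta)$.

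The main obstacle is bookkeeping the radii so that every application of the Lipschitz estimate for $s$ is legitimate, i.e. both arguments genuinely lie in $B_Y(F(\bar x),\delta)$ and both outputs in $B_X(\bar x,\epsilon)$; the perturbation $g$ moves the target by at most $\Lip_U(g)\,\eta \leq \kappa^{-1}\Lip_U(g)\,\delta$, so one needs $\rho + \kappa^{-1}\Lip_U(g)\delta \leq \delta$, which forces $\rho \leq (1 - \kappa^{-1}\Lip_U(g))\delta$; taking $\mu \in [\Lip_U(g),\kappa)$ this is implied by $\rho = (1-\kappa^{-1}\mu)\delta$ as claimed. Getting the Lipschitz constant $K$ of $s$ to be usable requires "$\delta$ small enough", which is exactly the hypothesis phrasing in the statement, so I would simply invoke \cref{prop:aubin_property} to get $K$ with $K\Lip_U(g)<1$ valid on some ball $B_Y(F(\bar x),\delta_0)$ and then work with $\delta \leq \delta_0$. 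Everything else is a routine contraction-mapping argument; no deep new idea is needed beyond the observation that strong metric regularity makes $F^{-1}$ a genuine Lipschitz function locally, turning the perturbed inclusion into a contraction fixed-point problem.
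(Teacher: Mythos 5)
Your proposal is sound in outline but takes a different route from the paper, and one quantitative choice you make would not reproduce the stated constants. The paper sets up no fixed-point iteration at all: existence of a solution to $(F+g)(x)=z$ for every $z\in B_Y(F(\bar x)+g(\bar x),(1-\kappa^{-1}\mu)\delta)$ is read off directly from the conclusion \eqref{eq:milyutin_conclusion} of \cref{thm:milyutin} (applied with $r=\kappa^{-1}\delta$), which is already available; all that remains is uniqueness, which the paper proves by taking two solutions $x_1,x_2\in B_X(\bar x,\eta)$ of $(F+g)(x)=z$, checking via the triangle inequality that $F(x_i)\in B_Y(F(\bar x),\delta)$, and applying the Lipschitz localization $Q=F^{-1}\cap B_X(\bar x,\eta)$ to $F(x_i)=z-g(x_i)$ to obtain $\norm{x_1-x_2}{X}\le\kappa^{-1}\mu\norm{x_1-x_2}{X}$. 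Your map $\Phi_y(x)=s(y-g(x))$ re-proves existence by Banach's fixed point theorem, the classical Lyusternik--Graves-style argument; this is legitimate but duplicates what \cref{thm:milyutin} already gives and is the source of all your radius bookkeeping, while your steps (3)--(4) are exactly the paper's two-point contraction estimate.

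The point to fix is your hedge that the Lipschitz constant of $s$ can only be taken strictly larger than $\kappa^{-1}$. Here $\kappa$ is not $\sur(F;\bar x)$ itself but an explicit openness constant valid on explicit balls (the hypothesis of \cref{thm:milyutin}), so the localized inverse is Lipschitz with constant exactly $\kappa^{-1}$ on $B_Y(F(\bar x),\delta)$ for $\delta$ small enough; this is precisely what the paper uses. If instead you fix some $K>\kappa^{-1}$, your self-mapping condition in step (2) becomes $K(\rho+\mu\eta)\le\eta$ with $\eta=\kappa^{-1}\delta$, i.e. $\rho\le\kappa^{-1}\delta(1-K\mu)/K<(1-\kappa^{-1}\mu)\delta$, and since both sides scale linearly in $\delta$ this deficit is not repaired by taking $\delta$ small: you would only get single-valuedness on a strictly smaller ball than the one claimed. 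With $K=\kappa^{-1}$ your computation closes exactly, since $K(\rho+\mu\eta)=\kappa^{-1}\bigl((1-\kappa^{-1}\mu)\delta+\mu\kappa^{-1}\delta\bigr)=\kappa^{-1}\delta=\eta$.
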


\begin{proof}
 Let $\delta$, $\epsilon$ and $\eta$ be as in the statement and set $Q:= F^{-1} \cap B(\bar x, \eta)$ which is single-valued since $\eta \leq \epsilon$. Using \eqref{eq:milyutin_conclusion} with $r = \kappa^{-1}\delta$, we see that
 \[
  B_Y(F(\bar x) + g(\bar x), (1 - \kappa^{-1} \mu) \delta) \subset \dom (F + g)^{-1}.
 \]
 For any $x_1,\, x_2 \in B_X(\bar x, \eta)$ such that
 \[
  F(x_1) + g(x_1) = F(x_2) + g(x_2) =: z \in B_Y \left ( F(\bar x) + g(\bar x), (1 - \kappa^{-1} \mu)\delta \right ),
 \]
 we have to prove that actually $x_1 = x_2$.

 Notice that
 \begin{align*}
  \norm{F(x_i) - F(\bar x)}{Y} & \leq \norm{F(x_i) + g(x_i) - F(\bar x) - g(\bar x)}{Y} + \norm{g(x_i) - g(\bar x)}{Y} \\
  & \leq (1 - \kappa^{-1} \mu)\delta + \mu \norm{x_i - \bar x}{X} \\
  & \leq (1 - \kappa^{-1} \mu)\delta + \kappa^{-1} \mu \delta \\
  & = \delta,
 \end{align*}
 so that $F(x_i) \in B_Y \left (F(\bar x), \delta \right)$ and $Q(F(x_i))$ is well-defined for $i=1,2$.
 Moreover, from the metric regularity of $F$ and \cref{prop:aubin_property}, we have
 \begin{equation*}\label{eq:milyutin_inverse}
  \norm{Q(y) - Q(y')}{X} \leq \kappa^{-1} \norm{y - y'}{Y} \quad \textnormal{for all } y,\, y' \in B_Y\left (F(\bar x), \delta \right ).
 \end{equation*}
 Then, we have
 \begin{align*}
  \norm{x_1 - x_2}{X}  & = \norm{Q \circ F(x_1) - Q \circ F(x_2)}{X} \leq \kappa^{-1} \norm{F(x_1) - F(x_2)}{Y} \\
  & = \kappa^{-1} \norm{\left (z - g(x_1) \right) - \left (z - g(x_2) \right )}{Y} = \kappa^{-1} \norm{g(x_2) - g(x_1)}{Y} \\
  & \leq \mu \kappa^{-1} \norm{x_1 - x_2}{X}.
 \end{align*}
 Since $\mu \kappa^{-1} < 1$, we conclude that $x_1 = x_2$. This proves that $(F + g)^{-1} \cap B_X(\bar x, \eta)$ is single valued on $B_Y(F(\bar x) + g(\bar x), (1 - \mu \kappa^{-1}) \delta)$.
\end{proof}

The following theorem is an extension of \cref{thm:graves}.
\begin{thm}[{\cite[Theorem 3.8]{CFI2015}}]\label{thm:graves_variation}
 Let $X$ and $Y$ be reflexive Banach spaces and let $ \bar x \in X$. Let further $F \colon X \to Y$ be continuous and let $\cA$ be a bounded convex subset of $\cL(X,Y)$ such that
 \begin{enumerate}[label=(\roman*)]
  \item there exist $\mu > 0$ and $r >0$ such that, for any $x,x' \in B_X(\bar x,r)$, there exists $A \in \cA$ such that
  \begin{equation}\label{eq:graves_variation_diff}
   \norm{F(x') - F(x) - A(x' - x)}{Y} < \mu \norm{x' - x}{X},
  \end{equation}

  \item there exists $\kappa > \mu$ such that $\mathfrak{C}(A) \geq \kappa$ for all $A \in \cA$.
 \end{enumerate}
  Then $F$ is metrically regular near $\bar x$ with $\sur(F; \bar x) \geq \kappa - \mu$.
\end{thm}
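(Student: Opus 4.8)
The plan is to derive the openness-at-linear-rate property \eqref{eq:linearly_open} from Ekeland's variational principle, applied to the residual $\varphi(\cdot) := \norm{F(\cdot) - y}{Y}$ for targets $y$ close to $F(\bar x)$. It is enough to fix $\epsilon \in (0, \kappa - \mu)$, set $\lambda := \kappa - \mu - \epsilon$, and prove that \eqref{eq:linearly_open} holds with $\kappa$ replaced by $\lambda$ for a suitable $R > 0$; letting $\epsilon \downarrow 0$ then gives $\sur(F; \bar x) \geq \kappa - \mu$. Fix also $\kappa'$ with $\mu + \lambda < \kappa' < \kappa$, and pick $R > 0$ so small that $3R \leq r$, with $r$ as in hypothesis (i). Given $x_0 \in B_X(\bar x, R)$, $0 < \rho \leq R$ and $y$ with $\norm{F(x_0) - y}{Y} \leq \lambda \rho$ and $\norm{F(\bar x) - y}{Y} \leq R$, the function $\varphi$ is continuous and nonnegative, so Ekeland's principle (applied at $x_0$, with slope parameter $\lambda$) produces $\hat x \in X$ with $\varphi(\hat x) \leq \varphi(x_0)$, $\norm{\hat x - x_0}{X} \leq \varphi(x_0)/\lambda \leq \rho$ (hence $\hat x \in B_X(\bar x, 2R)$), and
\[
 \varphi(z) \geq \varphi(\hat x) - \lambda \norm{z - \hat x}{X} \quad \text{for all } z \in X .
\]

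It then suffices to prove that $\varphi(\hat x) = 0$: indeed $\hat x$ would solve $F(\hat x) = y$ with $\norm{\hat x - x_0}{X} \leq \rho$, which is exactly \eqref{eq:linearly_open} with rate $\lambda$. Assume, for contradiction, that $v := F(\hat x) - y \neq 0$. I would seek a descent direction for $\varphi$: a vector $h \in B_X(0,1)$ and a small $t > 0$ with $\varphi(\hat x + t h) < \varphi(\hat x) - \lambda t$, which contradicts the inequality above. On the one hand, hypothesis (i) applied to the pair $(\hat x, \hat x + t h)$ (which lies in $B_X(\bar x, 3R) \subseteq B_X(\bar x, r)$) yields some $A \in \cA$ with $\norm{F(\hat x + t h) - F(\hat x) - t A h}{Y} < \mu t$, so that $\varphi(\hat x + t h) \leq \norm{v + t A h}{Y} + \mu t$. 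On the other hand, since $\gothC(A) \geq \kappa > \kappa'$, the Banach--Schauder theorem (\cref{thm:banach_schauder}) provides, for that $A$, a vector $h$ with $\norm{h}{X} \leq 1$ and $A h = -\kappa' v / \norm{v}{Y}$; for such an $h$ and for $t < \norm{v}{Y}/\kappa'$ one gets $\norm{v + t A h}{Y} = \norm{v}{Y} - \kappa' t$, whence $\varphi(\hat x + t h) \leq \varphi(\hat x) - (\kappa' - \mu) t < \varphi(\hat x) - \lambda t$, as wanted.

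The main obstacle is precisely the circular dependence exposed above: the operator $A$ delivered by (i) depends on the step $t h$, whereas the direction $h$ delivered by Banach--Schauder depends on $A$. Breaking this loop is the heart of the proof, and I would do it with a fixed-point argument that produces the direction and the operator simultaneously. For $t > 0$ fixed and small, consider the set-valued map on $B_X(0,1)$ -- which is convex and weakly compact because $X$ is reflexive -- that sends $h$ to the set of those $h' \in B_X(0,1)$ for which there is an $A \in \cA$ linearising $F$ along $(\hat x, \hat x + t h)$ with error at most $\mu t$ and satisfying $A h' = -\kappa' v / \norm{v}{Y}$; hypothesis (i) makes the values nonempty, and the convexity of $\cA$ together with the weak compactness afforded by reflexivity of $X$ and $Y$ is what one uses to bring this map into a form to which a Kakutani--Ky Fan type fixed-point theorem applies (this reduction is the delicate part). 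A fixed point $h$ furnishes a compatible pair $(h, A)$, so the computation of the previous paragraph goes through and contradicts $\varphi(\hat x) > 0$. (Alternatively one may run a Graves-type successive-approximation scheme converging to a solution of $F(x) = y$ located within distance $\norm{F(x_0)-y}{Y}/(\kappa' - \mu)$ of $x_0$; the analogous compatibility between each Newton-type step and its linearising operator is again secured by a fixed-point argument at every stage, and reflexivity plays the same role.)

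Once $\varphi(\hat x) = 0$ is known, we have shown that for every $x_0 \in B_X(\bar x, R)$, every $0 < \rho \leq R$, and every $y \in B_Y(F(x_0), \lambda \rho) \cap B_Y(F(\bar x), R)$ there is $x \in B_X(x_0, \rho)$ with $F(x) = y$; that is, $F$ is metrically regular near $\bar x$ with $\sur(F; \bar x) \geq \lambda = \kappa - \mu - \epsilon$. Since $\epsilon \in (0, \kappa - \mu)$ was arbitrary, $\sur(F; \bar x) \geq \kappa - \mu$, which is the assertion of the theorem.
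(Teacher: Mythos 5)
First, a remark on scope: the paper does not prove this theorem at all — it is imported verbatim from \cite[Theorem 3.8]{CFI2015} — so your attempt can only be compared with that source, and the proof there does not follow your route.

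Your skeleton (Ekeland's variational principle applied to $\varphi=\norm{F(\cdot)-y}{Y}$, followed by a descent direction at the Ekeland point $\hat x$) is the standard one, and your computation showing that a \emph{compatible} pair $(h,A)$ — a single $A\in\cA$ which simultaneously linearizes $F$ along $(\hat x,\hat x+th)$ with error at most $\mu t$ and satisfies $Ah=-\kappa' v/\norm{v}{Y}$ — would contradict $\varphi(\hat x)>0$ is correct. But producing such a pair is precisely the hard content of the theorem, and the fixed-point device you sketch does not deliver it. The map $h\mapsto\Phi(h)$ you define does not have convex values: if $A_1h_1'=w$ and $A_2h_2'=w$ with $A_1,A_2$ admissible for the same $h$, no convex combination of $h_1',h_2'$ need solve $Ah'=w$ for any admissible $A$ (applying $(1-s)A_1+sA_2$ to $(1-s)h_1'+sh_2'$ creates cross terms), so Kakutani--Ky Fan/Fan--Glicksberg is not applicable to $\Phi$ as stated. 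More fundamentally, there is no topology in which the argument can run: compactness of $B_X(0,1)$ forces the weak topology, but the constraint defining $\Phi$ involves $F(\hat x+th)$ and $F$ is only norm continuous, so $\Phi$ has no weak closed-graph or upper-semicontinuity property. Even the natural repair — decoupling into a map on pairs $(h,A)\in B_X(0,1)\times\cA$, which does restore convexity of values — fails, because the condition $Ah'=w$ is not stable under simultaneous convergence of $A$ in the weak operator topology and of $h'$ weakly (one cannot pass to the limit in $\langle y^\star, A_n h_n'\rangle$), and because passing to a WOT-compactification of $\cA$ does not obviously preserve the lower bound $\gothC(A)\geq\kappa$. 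The same unproven compatibility step is hidden in your alternative "Graves-type successive approximation" variant. Symptomatically, the convexity of $\cA$ and the reflexivity of $X$, $Y$ — which are essential hypotheses — enter your argument only inside the step you explicitly leave open. The actual proof in \cite{CFI2015} resolves the interdependence between the chosen direction and the linearizing operator by a genuinely different and more involved argument, not by extracting an exactly compatible pair from a fixed point; as written, the heart of your proposal is an assertion rather than a proof.
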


\begin{rem}
 The condition \eqref{eq:graves_variation_diff} is a generalization of strict differentiability and was called \emph{strict Fréchet pre-differentiability} in \cite{I1981}. We recall that a mapping $F \colon X \to Y$ is strictly differentiable at $\bar x \in X$ if it is Fréchet differentiable at $\bar x$ and if, for every $\mu > 0$, there exists $\delta > 0$ such that
 \[
  \norm{F(x') - F(x) - dF[\bar x](x' - x)}{Y} < \mu \norm{x' - x}{X} \quad \textnormal{for all } x, \, x' \in B_X(\bar x, \delta).
 \]
\end{rem}

We now extend \cref{thm:graves_variation} to obtain a criteria for strong metric regularity.
\begin{cor}\label{cor:strong_metric_regularity}
 Let $X$ and $Y$ be reflexive Banach spaces and let $ \bar x \in X$. Let further $F \colon X \to Y$ be continuous and let $\cA$ be a bounded convex subset of $\cL(X,Y)$ such that
 \begin{enumerate}[label=(\roman*)]
  \item for every $\epsilon > 0$ there is $\delta >0$ such that, for any $x,x' \in B_X(\bar x,\delta)$, there exists $A \in \cA$ such that
  \begin{equation}\label{eq:strong_metric_diff}
   \norm{F(x') - F(x) - A(x' - x)}{Y} < \epsilon \norm{x' - x}{X},
  \end{equation}

  \item each $A \in \cA$ is an isomorphism and there is $\kappa>0$ such that $\norm{A^{-1}}{\cL(Y,X)} \leq \kappa^{-1}$.
  \end{enumerate}
  Then $F$ is strongly metrically regular near $\bar x$.
\end{cor}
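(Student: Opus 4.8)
The plan is to derive metric regularity of $F$ near $\bar x$ from \cref{thm:graves_variation}, and then to promote it to strong metric regularity by showing that $F$ is injective on a sufficiently small ball around $\bar x$; since metric regularity already yields that $F$ maps a neighbourhood of $\bar x$ onto a neighbourhood of $F(\bar x)$, this local injectivity is precisely the only ingredient missing from the definition of strong metric regularity.

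For the first step I would verify the hypotheses of \cref{thm:graves_variation}. Its condition (ii) is immediate from hypothesis (ii) here: for an isomorphism $A$ one has $\gothC(A) = \norm{A^{-1}}{\cL(Y,X)}^{-1} \geq \kappa$, so $\gothC(A) \geq \kappa$ for every $A \in \cA$. For its condition (i), I would simply fix any $\mu \in (0,\kappa)$ and apply hypothesis (i) of the corollary with $\epsilon = \mu$, which produces a radius $r > 0$ for which \eqref{eq:graves_variation_diff} holds. Thus \cref{thm:graves_variation} applies and $F$ is metrically regular near $\bar x$ with $\sur(F;\bar x) \geq \kappa - \mu$ for every $\mu \in (0,\kappa)$, hence even $\sur(F;\bar x) \geq \kappa$. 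In particular we may fix constants $\kappa_0 > 0$ and $R > 0$ such that $B_Y(F(x),\kappa_0 r) \cap B_Y(F(\bar x),R) \subset F(B_X(x,r))$ for all $0 < r \leq R$ and $x \in B_X(\bar x,R)$.

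For the second step — the heart of the matter — I would apply hypothesis (i) with the choice $\epsilon := \kappa/2 < \kappa$ to obtain $\delta_0 \in (0,R]$ such that any two distinct points $x_1,x_2 \in B_X(\bar x,\delta_0)$ admit some $A \in \cA$ with $\norm{F(x_2)-F(x_1)-A(x_2-x_1)}{Y} < \tfrac{\kappa}{2}\norm{x_2-x_1}{X}$ (one may always shrink $\delta_0$, so assuming $\delta_0 \leq R$ costs nothing). If, moreover, $F(x_1)=F(x_2)$, this inequality reads $\norm{A(x_2-x_1)}{Y} < \tfrac{\kappa}{2}\norm{x_2-x_1}{X}$, while hypothesis (ii) gives $\norm{x_2-x_1}{X} \leq \norm{A^{-1}}{\cL(Y,X)}\norm{A(x_2-x_1)}{Y} \leq \kappa^{-1}\norm{A(x_2-x_1)}{Y}$, whence $\norm{x_2-x_1}{X} < \tfrac12\norm{x_2-x_1}{X}$, a contradiction. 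Hence $F$ is injective on $B_X(\bar x,\delta_0)$.

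Finally I would combine the two steps. Set $U := B_X(\bar x,\delta_0)$ and $V := B_Y\big(F(\bar x),\min\{\kappa_0\delta_0,R\}\big)$, which are neighbourhoods of $\bar x$ and of $F(\bar x)$. Applying the metric regularity inclusion at $x=\bar x$ with $r=\delta_0$ gives $V \subset F(U)$, so $F^{-1}(y)\cap U \neq \emptyset$ for every $y \in V$, and the injectivity from the second step makes this intersection a singleton. Therefore $F^{-1}\cap U$ is single-valued on $V$ and, together with the metric regularity already established, $F$ is strongly metrically regular near $\bar x$. I do not anticipate a real obstacle: the argument is short once one observes that the uniform bound $\norm{A^{-1}}{\cL(Y,X)}\le\kappa^{-1}$ in (ii) combined with an $\epsilon$-linearization for $\epsilon<\kappa$ forces local injectivity; the only mild care needed is the bookkeeping of radii (ensuring $\delta_0\le R$ and matching the radius of $V$ to $\delta_0$ through the open-mapping-type inclusion).
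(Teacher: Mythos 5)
Your proposal is correct and follows essentially the same route as the paper: metric regularity via \cref{thm:graves_variation}, then local injectivity from the estimate $\norm{A(x_2-x_1)}{Y} < \epsilon \norm{x_2-x_1}{X} \leq \epsilon\kappa^{-1}\norm{A(x_2-x_1)}{Y}$ with $\epsilon\kappa^{-1}<1$, which is exactly the paper's contradiction. The only (minor, welcome) difference is presentational: you argue injectivity directly on a fixed ball rather than along a sequence, and you explicitly check nonemptiness of $F^{-1}(y)\cap U$ for $y\in V$ via the openness inclusion, a point the paper leaves implicit.
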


\begin{proof}
  The fact that $F$ is metrically regular near $\bar x$ follows from \cref{thm:graves_variation}. In order to establish strong metric regularity, we have to prove that there exist $\delta,\, \eta >0$ such that $F^{-1} \cap B_{Y}(\bar x, \eta)$ is single-valued in $B_Y \left(F(\bar x), \delta \right)$. Assume that it is not the case. Let $\delta > 0$ to be specified later. Then, for every $k$ large enough, there exist $x_k, x_k' \in B_X(\bar x, 2^{-k})$ such that $F(x_k) = F(x_k') \in B_Y \left ( F(\bar x), \delta \right)$ and $x_k \neq x_k'$. Let $\epsilon < \kappa$ and $\delta > 0$ be such that \eqref{eq:strong_metric_diff} holds. Let $k$ be such that $2^{-k} < \delta$. From \eqref{eq:strong_metric_diff}, there is $A \in \cA$ such that
 \begin{align*}
   \norm{A(x_{k} - x_{k}')}{Y} & < \epsilon \norm{x_{k} - x_{k}'}{X} \leq \epsilon \kappa^{-1} \norm{A(x_{k} - x_{k}')}{Y},
 \end{align*}
 which is a contradiction since $\epsilon \kappa^{-1} < 1$.
\end{proof}

\section{Generalized Brezzi-Rappaz-Raviart approximation theorem }
\label{section:main_BRR}
This section contains the main results of the paper. We first provide the main building block of our extensions of the BRR theorem. It contains sufficient conditions for a collection of mapping $F_h \colon X_h \to Y_h$, defined on Banach spaces $X_h \subset X$ for $h>0$, to have zeros close to some reference point $\bar x \in X$. It is the natural generalization of \cite[Theorem 6.1]{CR1997} in the context of metric regularity. We then apply this result, first to revisit the standard forms for the BRR theorem, which can be found in \cite{GR1986,CR1997}, and then to prove extensions of the latter to a large class of nonsmooth problems.

\begin{prop}\label{thm:BRR_most_general}
 Let $X$ and $Y$ be normed vector spaces. For all $h > 0$, let $X_h \subset X$ and $Y_h \subset Y$ be subspaces endowed with the induced norm, let $F_h \colon X_h \to Y_h$ be continuous, and let $\bar x \in X$. Assume that
 \begin{enumerate}[label={\rm (\roman*)}]
  \item there exists a sequence $\hat x_h \in X_h$ such that
  \begin{equation}\label{eq:BRR_most_general_1}
   \lim_{h \to 0} \norm{\bar x - \hat x_h}{X} + \norm{F_h(\hat x_h)}{Y} = 0;
  \end{equation}

  \item there exist constants $h_0, \kappa, R  > 0$ such that, for every $0 < h \leq h_0$, we have
  \begin{equation}\label{eq:BRR_most_general_2}
   B_{Y_h} \left( F_h(\hat x_h), \kappa r \right) \subset F_h \left ( B_{X_h} \left( \hat x_h, r \right) \right) \quad \textnormal{for all } 0 < r \leq R.
  \end{equation}
 \end{enumerate}

 Then there exists $0 < h_1 \leq h_0$ such that, for all $0 < h \leq h_1$, there exists $\bar x_h \in X_h$ satisfying $F_h(\bar x_h) = 0$, and we have the estimate
 \[
  \norm{\bar x - \bar x_h}{X} \leq \left ( 1 + \kappa^{-1} \right) \norm{\bar x - \hat x_h}{X} + \kappa^{-1} \norm{F_h(\hat x_h)}{Y}.
 \]
\end{prop}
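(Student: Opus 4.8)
The plan is to use the openness-type inclusion \eqref{eq:BRR_most_general_2} with the target value $0 \in Y_h$, choosing the radius proportional to the residual $\norm{F_h(\hat x_h)}{Y}$; this is precisely the move that turns a \emph{covering at linear rate} property into an existence-plus-error statement.

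First I would invoke the consistency assumption \eqref{eq:BRR_most_general_1}: since $\norm{F_h(\hat x_h)}{Y} \to 0$ as $h \to 0$, there exists $0 < h_1 \le h_0$ such that $\norm{F_h(\hat x_h)}{Y} \le \kappa R$ for all $0 < h \le h_1$. Fix such an $h$. If $F_h(\hat x_h) = 0$, then $\bar x_h := \hat x_h$ solves $F_h(\bar x_h) = 0$ and the claimed estimate is trivial, so assume $F_h(\hat x_h) \ne 0$ and set $r_h := \kappa^{-1}\norm{F_h(\hat x_h)}{Y} \in (0, R]$. Because $Y_h$ is a subspace we have $0 \in Y_h$, and $\norm{0 - F_h(\hat x_h)}{Y} = \kappa r_h$, hence $0 \in B_{Y_h}(F_h(\hat x_h), \kappa r_h)$. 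Applying \eqref{eq:BRR_most_general_2} with $r = r_h$ then gives $0 \in F_h(B_{X_h}(\hat x_h, r_h))$, i.e. there is $\bar x_h \in B_{X_h}(\hat x_h, r_h)$ with $F_h(\bar x_h) = 0$.

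For the error bound, the triangle inequality gives
\[
 \norm{\bar x - \bar x_h}{X} \le \norm{\bar x - \hat x_h}{X} + \norm{\hat x_h - \bar x_h}{X} \le \norm{\bar x - \hat x_h}{X} + r_h = \norm{\bar x - \hat x_h}{X} + \kappa^{-1}\norm{F_h(\hat x_h)}{Y},
\]
which is in fact slightly stronger than the asserted inequality (since $\norm{\bar x - \hat x_h}{X} \le (1 + \kappa^{-1})\norm{\bar x - \hat x_h}{X}$), so the proof is complete.

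I do not expect a real obstacle here: the content is entirely in recognizing that \eqref{eq:BRR_most_general_2} already encodes solvability of $F_h(\cdot) = y$ for every $y$ in a small ball around $F_h(\hat x_h)$, and that the smallest admissible radius reaching the target $0$ is $\kappa^{-1}\norm{F_h(\hat x_h)}{Y}$. The only points that need a moment of care are making sure this radius is $\le R$ for $h$ small — which is exactly where the consistency \eqref{eq:BRR_most_general_1} enters — and handling the degenerate case $F_h(\hat x_h) = 0$ separately, since \eqref{eq:BRR_most_general_2} is only stated for $r > 0$.
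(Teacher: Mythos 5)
Your proof is correct and follows essentially the same route as the paper's: use the consistency in \eqref{eq:BRR_most_general_1} to ensure the radius is at most $R$, apply the covering inclusion \eqref{eq:BRR_most_general_2} to reach $0$, and conclude with the triangle inequality. The only (harmless) differences are that you take the slightly smaller radius $\kappa^{-1}\norm{F_h(\hat x_h)}{Y}$ instead of $\kappa^{-1}\left(\norm{\bar x - \hat x_h}{X} + \norm{F_h(\hat x_h)}{Y}\right)$, which even yields a marginally sharper estimate, and you treat the degenerate case $F_h(\hat x_h)=0$ explicitly.
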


\begin{proof}
 From \eqref{eq:BRR_most_general_1}, we may choose $0 < h_1 \leq h_0$ such that
 \[
  \bar r_h := \kappa^{-1} \left(\norm{\bar x - \hat x_h}{X} + \norm{F_h(\hat x_h)}{Y}\right) \leq R \quad \textnormal{for all } 0 < h \leq h_1.
 \]
 Then, from \eqref{eq:BRR_most_general_2}, we have $0 \in B_{Y_h}\left( F_h(\hat x_h), \kappa \bar r_h \right) \subset F_h \left (B_{X_h}(\hat x_h, \bar r_h) \right )$,
 so that there exists $\bar x_h \in B_{X_h}(\hat x_h, \bar r_h)$ such that $F_h(\bar x_h) = 0$. It follows that
 \[
  \norm{\bar x_h - \bar x}{X} \leq \norm{\bar x_h - \hat x_h}{X} + \norm{\hat x_h - \bar x}{X} \leq  \left ( 1 + \kappa^{-1} \right) \norm{\bar x - \hat x_h}{X} + \kappa^{-1} \norm{F_h(\hat x_h)}{Y}. \qedhere
 \]
\end{proof}

\subsection{Standard case revisited}
\label{section:BRR}

The following theorem is the standard BRR theorem in the form \cite[Theorem 3.6 p.312]{GR1986}, where we slightly modified the statement (see \cref{rem:reworked_BRR} below). While the usual proof relies on Banach's fixed point theorem, we provide a proof using metric regularity. Following this idea will lead us to the generalization in \cref{thm:approximation_main} below.

\begin{thm}\label{thm:reworked_BRR}
 Let $X$ and $Y$ be Banach spaces, let $\bar x \in X$ and $A \in \cL(X,Y)$ be surjective. In addition, for every $h > 0$, let $(X_h,\norm{\cdot}{X})$ and $(Y_h,\norm{\cdot}{Y})$ be closed subspaces of $X$ and $Y$, respectively, let $F_h \colon X_h \to Y_h$ be continuous and let $\tilde x_h \in X_h$ be given. We assume that
 \begin{enumerate}[label={\rm(\roman*)}]
  \item we have
  \begin{equation}\label{eq:epsilon_h}
  \epsilon_h := \norm{F_h(\tilde x_h)}{Y} + \norm{\bar x - \tilde x_h}{X} \xrightarrow{h \to 0} 0;
 \end{equation}

 \item \label{item:reworked_BRR_simple_ii} there exist linear operators $A_h \in \cL(X_h,Y_h)$ such that, for some $R >0$ and a function $c \colon \RR^*_+ \times \RR^*_+ \to \RR_+$, increasing with respect to both variables with $ \lim_{(h,r) \to 0} c(h,r) = 0$,  we have
 \begin{equation}\label{eq:F_h_diff}
  \norm{F_h(x) - F_h(y) - A_h(x-y)}{Y} \leq c(h,r) \norm{x-y}{X}
 \end{equation}
 for all $x,y \in B_{X_h}(\tilde x_h, r)$ for every $0 < r < R$ and $h > 0$;

 \item \label{item:reworked_BRR_simple_iii} there exist bounded linear operator $\widehat{A}_h \in \cL(X,Y)$, such that the range of the restriction of $\widehat{A}_h$ to $X_h$ is contained in $Y_h$, and satisfying
  \begin{equation}\label{eq:intermediate_3}
  \widehat{A}_h^{-1}(Y_h) \subset X_h,
 \end{equation}
 \end{enumerate}
 \begin{equation}\label{eq:intermediate_1}
  \lim_{h \to 0} \norm{A - \widehat{A}_h}{\cL(X,Y)} = 0
 \end{equation}
 and
 \begin{equation}\label{eq:intermediate_2}
  \lim_{h \to 0} \norm{A_h - \widehat{A}_h}{\cL(X_h,Y_h)} = 0.
 \end{equation}
  Then, for every $0 < h \leq h_0$, with $h_0$ small enough, there exists $\bar x_h \in X$, such that
  \begin{equation}\label{eq:reworked_BRR_sol}
   F_h(\bar x_h) = 0,
  \end{equation}
  we have the estimates
 \begin{equation}\label{eq:sol_F_h_estimate}
   \norm{\bar x - \bar x_h}{X} \leq \left (1 +  \frac{2}{\gothC_{X,Y}(A)} \right ) \norm{\bar x - \tilde x_h}{X} +\frac{2}{\gothC_{X,Y}(A)} \norm{F_h(\tilde x_h)}{Y}
 \end{equation}
 and $\sur_{X_h,Y_h}(F_h;\bar x_h) \geq \frac{1}{2}\gothC_{X,Y}(A)$. Moreover, if $A$ is an isomorphism, there exists $0 < h_1 \leq h_0$ such that $F_h$ is strongly metrically regular near $\bar x_h$. In particular there is a unique $\bar x_h \in B_{X}(\bar x, r_1) \cap X_h$ satisfying \eqref{eq:reworked_BRR_sol} for some $r_1 >0$ and every $0 < h \leq h_1$.
\end{thm}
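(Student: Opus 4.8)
The plan is to obtain the whole statement from the building block \cref{thm:BRR_most_general}, fed with a Graves-type open mapping estimate for $F_h$ near $\tilde x_h$, and then to upgrade to strong metric regularity via \cref{thm:milyutin_strong}. The operators $\widehat A_h$ serve only as a bridge: the algebraic compatibility conditions $\widehat A_h(X_h)\subset Y_h$, \eqref{eq:intermediate_3}, together with \eqref{eq:intermediate_1}--\eqref{eq:intermediate_2}, are exactly what is needed to transfer the Banach constant of $A$ from $X,Y$ down to the subspaces $X_h,Y_h$. Concretely, first note $\gothC_{X,Y}(A)>0$ by \cref{thm:banach_schauder}. Then, for $h$ small, \eqref{eq:intermediate_1} and \cref{prop:perturbation_banach_schauder} give that $\widehat A_h\colon X\to Y$ is surjective with $\gothC_{X,Y}(\widehat A_h)\ge \gothC_{X,Y}(A)-\norm{A-\widehat A_h}{\cL(X,Y)}$. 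Using $\widehat A_h(X)=Y$ and \eqref{eq:intermediate_3} one checks that $\widehat A_h(X_h)=Y_h$ and that $\gothC_{X_h,Y_h}(\widehat A_h|_{X_h})\ge \gothC_{X,Y}(\widehat A_h)$ (any preimage in $B_X(0,1)$ of an element of $Y_h$ already lies in $\widehat A_h^{-1}(Y_h)\subset X_h$). A second application of \cref{prop:perturbation_banach_schauder} inside $\cL(X_h,Y_h)$, together with \eqref{eq:intermediate_2}, then yields, for $h$ small, that $A_h$ is surjective with
\[
  \gothC_{X_h,Y_h}(A_h)\ \ge\ \kappa_h\ :=\ \gothC_{X,Y}(A)-\norm{A-\widehat A_h}{\cL(X,Y)}-\norm{A_h-\widehat A_h}{\cL(X_h,Y_h)}\ \xrightarrow{\ h\to 0\ }\ \gothC_{X,Y}(A).
\]

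Next, fix $\kappa:=\tfrac12\gothC_{X,Y}(A)$. Since $c$ tends to $0$ jointly at the origin and $\kappa_h\to \gothC_{X,Y}(A)$, we may choose $\rho\in(0,R)$ and $h_0>0$ so that, for all $0<h\le h_0$, one has $c(h,\rho)<\tfrac14\gothC_{X,Y}(A)$ and $\kappa_h\ge \tfrac34\gothC_{X,Y}(A)$, hence $\gothC_{X_h,Y_h}(A_h)\ge\kappa_h> c(h,\rho)+\kappa$. By \eqref{eq:F_h_diff}, $\norm{F_h(x)-F_h(y)-A_h(x-y)}{Y}\le c(h,\rho)\norm{x-y}{X}$ for $x,y\in B_{X_h}(\tilde x_h,\rho)$, so \cref{thm:graves}, applied in $X_h,Y_h$ around $\tilde x_h$, gives
\[
  B_{Y_h}(F_h(\tilde x_h),\kappa r)\subset F_h\bigl(B_{X_h}(\tilde x_h,r)\bigr)\quad\text{for }0<r\le\rho,
\]
as well as $B_{Y_h}(F_h(x),\kappa r)\subset F_h(B_{X_h}(x,r))$ for $x\in B_{X_h}(\tilde x_h,\rho/2)$ and $0<r\le\rho/2$. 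The first inclusion is hypothesis (ii) of \cref{thm:BRR_most_general} with $\hat x_h=\tilde x_h$, while hypothesis (i) is \eqref{eq:epsilon_h}; that proposition therefore produces $h_1\in(0,h_0]$ such that for $0<h\le h_1$ there is $\bar x_h\in X_h$ with $F_h(\bar x_h)=0$ and $\norm{\bar x-\bar x_h}{X}\le(1+\kappa^{-1})\norm{\bar x-\tilde x_h}{X}+\kappa^{-1}\norm{F_h(\tilde x_h)}{Y}$, which is \eqref{eq:sol_F_h_estimate} since $\kappa^{-1}=2/\gothC_{X,Y}(A)$. Moreover $\bar x_h\in B_{X_h}(\tilde x_h,\kappa^{-1}\epsilon_h)$, so for $h$ small $\bar x_h$ lies well inside $B_{X_h}(\tilde x_h,\rho/2)$, and the second inclusion above shows $F_h$ is metrically regular near $\bar x_h$ with $\sur_{X_h,Y_h}(F_h;\bar x_h)\ge\kappa=\tfrac12\gothC_{X,Y}(A)$.

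Suppose now $A$ is an isomorphism, so $\gothC_{X,Y}(A)=\norm{A^{-1}}{\cL(Y,X)}^{-1}$. By the Neumann series $\widehat A_h$ is an isomorphism for $h$ small; with Step 1 this makes $\widehat A_h|_{X_h}\colon X_h\to Y_h$ a bijection with $\norm{(\widehat A_h|_{X_h})^{-1}}{\cL(Y_h,X_h)}\le\norm{\widehat A_h^{-1}}{\cL(Y,X)}$, and then \eqref{eq:intermediate_2} and the Neumann series again give that $A_h\colon X_h\to Y_h$ is an isomorphism with $\norm{A_h^{-1}}{\cL(Y_h,X_h)}$ uniformly bounded in $h$. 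In particular $A_h$ is globally strongly metrically regular with rate $\gothC_{X_h,Y_h}(A_h)\ge\kappa_h$ and arbitrary regularity radius. Writing $F_h=A_h+g_h$ with $g_h:=F_h-A_h$, \eqref{eq:F_h_diff} shows $g_h$ is Lipschitz on $B_{X_h}(\tilde x_h,\rho)$ with constant $\le c(h,\rho)<\tfrac14\gothC_{X,Y}(A)\le\gothC_{X_h,Y_h}(A_h)$, hence on a neighbourhood of $\bar x_h$. \cref{thm:milyutin_strong} then applies and gives that $F_h$ is strongly metrically regular near $\bar x_h$; since the bounds on $\gothC_{X_h,Y_h}(A_h)$, $\norm{A_h^{-1}}{}$ and $\Lip(g_h)$ are uniform in $h$, its quantitative form provides a radius $\eta_0>0$ independent of $h$ such that $\bar x_h$ is the only zero of $F_h$ in $B_{X_h}(\bar x_h,\eta_0)$. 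As $\norm{\bar x-\bar x_h}{X}\le(1+2/\gothC_{X,Y}(A))\epsilon_h\to 0$, setting $r_1:=\eta_0/2$ and shrinking $h_1$ so that this quantity is $<\eta_0/2$ forces any solution of \eqref{eq:reworked_BRR_sol} lying in $B_X(\bar x,r_1)$ (necessarily an element of $X_h$) to equal $\bar x_h$.

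The genuinely delicate point is Step 1: the conclusions quantify metric regularity of $F_h$ in terms of $\gothC_{X,Y}(A)$, yet $F_h$ and $A_h$ act between the possibly very different spaces $X_h,Y_h$, so the transfer of surjectivity and of the Banach constant through $\widehat A_h$ — and keeping every constant uniform in $h$ across the three successive perturbation arguments — is where the care lies. Once Step 1 is in place, the rest is just a matter of feeding the right inclusions into \cref{thm:BRR_most_general}, \cref{thm:graves} and \cref{thm:milyutin_strong}; note in particular that no reflexivity of $X_h,Y_h$ is needed, since \cref{thm:graves_variation} and \cref{cor:strong_metric_regularity} are not used here.
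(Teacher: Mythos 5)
Your proof is correct and follows essentially the same route as the paper: transferring the Banach constant from $A$ to $A_h$ through $\widehat A_h$ by two applications of \cref{prop:perturbation_banach_schauder} together with the inclusion argument based on \eqref{eq:intermediate_3} is exactly the paper's \cref{prop:A_h_surjective}, and combining Graves' theorem around $\tilde x_h$ with \cref{thm:BRR_most_general} reproduces \cref{prop:F_h_metric_regular} and the existence/estimate step. The only divergence is the uniqueness part, where the paper deduces local injectivity of $F_h$ directly from \eqref{eq:F_h_diff} and the uniform bound on $\norm{A_h^{-1}}{\cL(Y_h,X_h)}$, whereas you route through \cref{thm:milyutin_strong} with $F_h = A_h + g_h$; both arguments hinge on the same uniform invertibility of $A_h$ and are equally valid.
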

\begin{rem}\label{rem:reworked_BRR}
\,
\begin{enumerate}
\item \label{item:rem_reworked_BRR_simple} Usually, the mappings $F_h$ are approximations of some mapping $F \colon X \to Y$ such that $F(\bar x) = 0$. In many cases, \cref{thm:reworked_BRR} is then applied with $X_h = X$, $Y_h = Y$, $\tilde x_h = \bar x$ and $\widehat{A}_h = A_h$. When both $F$ and $F_h$ are differentiable, natural candidates for $A$ and $A_h$ are given by the differentials $dF[\bar x]$ and $dF_h[\bar x]$. This setting corresponds to the simple case described in the introduction. However, in general, it is not always clear how one can identify the operator $A$ and $A_h$.

\item Actually, in \cite{BRR1980}, the authors were considering parametric equations of the form
\begin{equation}\label{eq:parametric}
 F(\sigma, x) = 0 \quad \textnormal{for all } \sigma \in \Sigma,
\end{equation}
for some compact subset $\Sigma \subset \RR$. Hence, they studied nonsingular branches of solutions $(\bar x_\sigma)_{\sigma \in \Sigma}$, that is, such that $dF[\sigma,\bar x_\sigma]$ is an isomorphism for every $\sigma \in \Sigma$. Their investigation was then extended to problems with bifurcations by Brezzi, Rappaz and Raviart \cite{BRR1981,BRR1982}, Rappaz and Raugel \cite{RR1982} and Descloux and Rappaz \cite{DR1982}. A review of the results about bifurcations can also be found in \cite{CR1990,CR1997}.

 \item Actually, as one can check in the proof below, it would be enough to assume (i) and (ii) in \cref{thm:reworked_BRR} and that
 \begin{equation}\label{eq:unif_sujectivity}
  \inf_{0 < h < h_1} \gothC(A_h) \geq \kappa > 0
 \end{equation}
 for some $h_1 > 0$ to obtain the first conclusion. This would make the statement closer to the original \cite[Theorem 3.6 p.312]{GR1986}. The operators $A$ and $\widehat{A}_h$ are used as tools to verify \eqref{eq:unif_sujectivity}.

 \item We provide an alternative proof of \cref{thm:reworked_BRR} at the end of \cref{section:generalized_BRR}.
\end{enumerate}
\end{rem}

\begin{lem}\label{prop:A_h_surjective}
  Under the assumptions of \cref{thm:reworked_BRR}, for every $0 < \epsilon < \gothC_{X,Y}(A)$ there exists $h_0 > 0$ such that for every $0 < h \leq h_0$ the operator $A_h$ is surjective and has Banach constant
  \[
   \gothC_{X_h,Y_h}(A_h) \geq \gothC_{X,Y}(A) - \epsilon.
  \]
\end{lem}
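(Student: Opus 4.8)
The plan is to chain together the three convergence hypotheses \eqref{eq:intermediate_1}, \eqref{eq:intermediate_2} and the structural condition \eqref{eq:intermediate_3}, using \cref{prop:perturbation_banach_schauder} twice: first to pass from $A$ to the restriction $\widehat{A}_h|_{X_h}$, and then from $\widehat{A}_h|_{X_h}$ to $A_h$. The only subtle point is that \cref{prop:perturbation_banach_schauder} is a statement about operators between a \emph{fixed} pair of Banach spaces, whereas here the domain $X_h$ and codomain $Y_h$ vary with $h$; so I must be careful that each perturbation argument takes place entirely within $\cL(X_h,Y_h)$.

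First I would record the effect of hypothesis \ref{item:reworked_BRR_simple_iii}. By assumption the range of $\widehat{A}_h|_{X_h}$ is contained in $Y_h$, so $\widehat{A}_h|_{X_h}$ is a well-defined element of $\cL(X_h,Y_h)$, and \eqref{eq:intermediate_3} says precisely that $\widehat{A}_h$ maps no element of $X\setminus X_h$ into $Y_h$. The key claim is then:
\begin{equation*}
 \gothC_{X_h,Y_h}\bigl(\widehat{A}_h|_{X_h}\bigr) \;\geq\; \gothC_{X,Y}\bigl(\widehat{A}_h\bigr).
\end{equation*}
To see this, suppose $\widehat{A}_h$ is surjective $X\to Y$ with Banach constant $\gamma := \gothC_{X,Y}(\widehat{A}_h)$, so $B_Y(0,\gamma')\subset \widehat{A}_h(B_X(0,1))$ for every $0<\gamma'<\gamma$. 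Given $y\in Y_h$ with $\norm{y}{Y}<\gamma'$, pick $x\in B_X(0,1)$ with $\widehat{A}_h x = y$; since $y\in Y_h$, the inclusion \eqref{eq:intermediate_3} forces $x\in X_h$, hence $x\in B_{X_h}(0,1)$ with $\widehat{A}_h|_{X_h}x=y$. This shows $B_{Y_h}(0,\gamma')\subset \widehat{A}_h|_{X_h}(B_{X_h}(0,1))$, and letting $\gamma'\uparrow\gamma$ gives the claim. (If $\widehat{A}_h$ is not surjective the claim is vacuous, but in fact we will only need it once we know $\widehat{A}_h$ is close to $A$.)

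Now I assemble the estimate. Fix $0<\epsilon<\gothC_{X,Y}(A)$ and pick intermediate thresholds, say $\epsilon_1+\epsilon_2<\epsilon$. By \eqref{eq:intermediate_1} there is $h_0'>0$ with $\norm{A-\widehat{A}_h}{\cL(X,Y)}<\epsilon_1$ for $0<h\leq h_0'$; by \cref{prop:perturbation_banach_schauder} applied with $T=A$, $S=\widehat{A}_h-A$, the operator $\widehat{A}_h=A+(\widehat{A}_h-A)$ is surjective $X\to Y$ with $\gothC_{X,Y}(\widehat{A}_h)\geq \gothC_{X,Y}(A)-\epsilon_1$. The claim above then gives $\gothC_{X_h,Y_h}(\widehat{A}_h|_{X_h})\geq \gothC_{X,Y}(A)-\epsilon_1$; in particular $\widehat{A}_h|_{X_h}$ is surjective onto $Y_h$. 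Next, by \eqref{eq:intermediate_2} there is $h_0''>0$ with $\norm{A_h-\widehat{A}_h}{\cL(X_h,Y_h)}<\epsilon_2$ for $0<h\leq h_0''$; shrinking further so that $\epsilon_2<\gothC_{X,Y}(A)-\epsilon_1$, another application of \cref{prop:perturbation_banach_schauder}, this time with $T=\widehat{A}_h|_{X_h}$ and $S=A_h-\widehat{A}_h|_{X_h}$ inside $\cL(X_h,Y_h)$, yields that $A_h$ is surjective $X_h\to Y_h$ with
\begin{equation*}
 \gothC_{X_h,Y_h}(A_h)\;\geq\;\gothC_{X_h,Y_h}\bigl(\widehat{A}_h|_{X_h}\bigr)-\epsilon_2\;\geq\;\gothC_{X,Y}(A)-\epsilon_1-\epsilon_2\;>\;\gothC_{X,Y}(A)-\epsilon.
\end{equation*}
Taking $h_0=\min\{h_0',h_0''\}$ (after the shrinking) finishes the proof, and one can absorb the strict inequality by renaming $\epsilon$ if an equality-form bound $\geq\gothC_{X,Y}(A)-\epsilon$ is preferred.

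The only real obstacle is the first inequality relating the Banach constant of $\widehat{A}_h$ on all of $X$ to that of its restriction — i.e. using \eqref{eq:intermediate_3} correctly — since everything else is a routine double application of the already-proved perturbation corollary. I do not expect any difficulty beyond keeping track of which space each operator norm and Banach constant is computed in.
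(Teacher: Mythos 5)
Your proposal is correct and follows essentially the same route as the paper: a first application of \cref{prop:perturbation_banach_schauder} to get surjectivity and a lower Banach constant for $\widehat{A}_h$ on $X$, the transfer of that constant to the restriction $\widehat{A}_h|_{X_h}\in\cL(X_h,Y_h)$ via \eqref{eq:intermediate_3} (the paper writes this as $B_{Y_h}(0,\kappa)=B_Y(0,\kappa)\cap Y_h\subset \widehat{A}_h(B_X(0,1))\cap Y_h\subset \widehat{A}_h(B_{X_h}(0,1))$, which is exactly your restriction claim), and a second application of the perturbation corollary inside $\cL(X_h,Y_h)$ using \eqref{eq:intermediate_2}. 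The only cosmetic difference is your split $\epsilon_1+\epsilon_2<\epsilon$ versus the paper's $\epsilon/2+\epsilon/2$.
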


\begin{proof}
Before entering into the details of the proof, let us briefly describe the main idea. Since $A$ is surjective and $\widehat{A}_h$ is an approximation of $A$, we will apply \cref{prop:perturbation_banach_schauder} to deduce that $\widehat{A}_h$ is also surjective. But since $A_h$ approximates $\widehat{A}_h$, we will similarly obtain that $A_h$ is surjective.

Let $0 < \epsilon < \mathfrak{C}_{X,Y}(A)$. Set, for every $h > 0$,
\[
 \delta_h := \norm{A - \widehat{A}_h}{\cL(X,Y)} \quad \textnormal{and} \quad \eta_h := \norm{A_h - \widehat{A}_h}{\cL(X_h,Y_h)}.
\]
 From \eqref{eq:intermediate_1} we may choose $h_0 > 0$ such that $\delta_h < \epsilon/2$ for every $0 < h \leq h_0$. Since $\widehat{A}_h = A + (\widehat{A}_h - A)$ and $A$ is surjective, it follows from \cref{prop:perturbation_banach_schauder} that $\gothC_{X,Y}(\widehat{A}_h) \geq \gothC_{X,Y}(A) - \delta_h$, i.e., for every $\kappa < \gothC_{X,Y}(A) - \delta_h$, the following inclusion holds
 \begin{equation}\label{eq:A_h_surjective_3}
  B_Y(0,\kappa) \subset \widehat{A}_h (B_X(0,1)).
 \end{equation}
 In particular $\widehat{A}_h$ is surjective. Moreover \eqref{eq:A_h_surjective_3} and \eqref{eq:intermediate_3} imply that, for every $\kappa < \gothC_{X,Y}(A) - \epsilon/2$,
 \begin{align*}
  B_{Y_h}(0,\kappa) = B_Y(0,\kappa) \cap Y_h \subset \widehat{A}_h(B_X(0,1)) \cap Y_h \subset \widehat{A}_h(B_{X_h}(0,1)).
 \end{align*}
 so that we also have $\gothC_{X_h,Y_h}(\widehat{A}_h)\geq \gothC_{X,Y}(A) - \epsilon/2$. We can then choose $0 < h_1 \leq h_0$ so that $\eta_h < \epsilon/2 < \mathfrak C_{X,Y}(A) - \epsilon/2$ for every $0 < h \leq h_1$. We deduce from \eqref{eq:intermediate_2} and \cref{prop:perturbation_banach_schauder} that
 \[
  \gothC_{X_h,Y_h}(A_h) \geq \gothC_{X_h,Y_h}(\widehat{A}_h) - \epsilon/2 \geq \gothC_{X,Y}(A) - \epsilon,
 \]
 i.e., we have that for every $\kappa < \gothC_{X,Y}(A) - \epsilon$ we have $ B_{Y_h}(0,\kappa) \subset A_h(B_{X_h}(0,1))$ and $A_h$ is surjective.
\end{proof}

\begin{lem}\label{prop:F_h_metric_regular}
 Let $\kappa < \gothC_{X,Y}(A)$. Under the assumption of \cref{thm:reworked_BRR} there exists $h_0 >0$ and $0 < R_0 \leq R$ such that the following inclusion holds for every $0 < h \leq h_0$,
 \[
  B_{Y_h}(F_h(x_h),\kappa r) \subset F_h(B_{X_h}(x_h, r)) \quad \textnormal{for every } x_h \in B_{X_h}(\tilde x_h,r) \textnormal{ and } 0 < r < R_0.
  \]
  In particular $\sur_{X_h,Y_h}(F_h;\tilde x_h) \geq \kappa$.
\end{lem}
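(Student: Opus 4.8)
The plan is to combine the uniform surjectivity of the operators $A_h$ from \cref{prop:A_h_surjective} with the nonlinear approximation hypothesis \eqref{eq:F_h_diff} via the Graves theorem (\cref{thm:graves}), applied with $F_h$ playing the role of $F$, $\tilde x_h$ playing the role of $\bar x$, and $A_h$ playing the role of the linearizing operator $A$. First, fix $\kappa < \gothC_{X,Y}(A)$ and choose an intermediate constant, say $\kappa'$, with $\kappa < \kappa' < \gothC_{X,Y}(A)$; set $\delta := \kappa' - \kappa > 0$. By \cref{prop:A_h_surjective} applied with $\epsilon := \gothC_{X,Y}(A) - \kappa'$, there is $h_0 > 0$ such that for all $0 < h \leq h_0$ the operator $A_h$ is surjective with $\gothC_{X_h,Y_h}(A_h) \geq \kappa' = \delta + \kappa$.

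Next I would produce the radius $R_0$. Since $c(h,r)$ is increasing in both variables with $\lim_{(h,r)\to 0} c(h,r) = 0$, I can shrink $h_0$ (keeping the surjectivity conclusion, which only improves as $h$ decreases) and choose $0 < R_0 \leq R$ such that $c(h_0, R_0) < \delta$; then, by monotonicity, $c(h,r) < \delta$ for every $0 < h \leq h_0$ and $0 < r < R_0$. Consequently, for each such $h$, hypothesis \eqref{eq:F_h_diff} gives
\[
 \norm{F_h(x) - F_h(y) - A_h(x-y)}{Y} \leq c(h,R_0) \norm{x-y}{X} \leq \delta \norm{x-y}{X} \quad \textnormal{for all } x,y \in B_{X_h}(\tilde x_h, R_0),
\]
where we used $c(h,R_0)<\delta$ (the strict inequality from monotonicity, or we simply take $R_0$ slightly smaller). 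Thus the hypotheses of \cref{thm:graves} are satisfied for the map $F_h \colon B_{X_h}(\tilde x_h, R_0) \to Y_h$ with linearizing operator $A_h$, constants $\delta$, $\kappa$, and radius $R_0$, since $\gothC_{X_h,Y_h}(A_h) \geq \delta + \kappa$.

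Applying the second conclusion of \cref{thm:graves} (the version uniform in the base point, stated on $B_{X_h}(\tilde x_h, R_0/2)$), we obtain
\[
 B_{Y_h}(F_h(x_h), \kappa r) \subset F_h(B_{X_h}(x_h, r)) \quad \textnormal{for every } x_h \in B_{X_h}(\tilde x_h, R_0/2) \textnormal{ and } 0 < r \leq R_0/2.
\]
Renaming $R_0/2$ as the $R_0$ of the statement (and shrinking it once more below $R$ if needed) gives exactly the claimed inclusion; it also covers the case $x_h = \tilde x_h$, $r < R_0$, so in particular $\sur_{X_h,Y_h}(F_h;\tilde x_h) \geq \kappa$ by the definition of the surjection modulus. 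One mild technical point to keep honest: Graves's theorem is stated for maps defined on a neighborhood of a point in a Banach space, whereas $F_h$ is only defined on the closed subspace $X_h$; since $X_h$ is itself a Banach space (a closed subspace of $X$) and $A_h \in \cL(X_h, Y_h)$ with $Y_h$ closed, the theorem applies verbatim within the pair $(X_h, Y_h)$, and this is the only place where the closedness assumptions on $X_h, Y_h$ from \cref{thm:reworked_BRR} are used in this lemma. The main (minor) obstacle is simply the bookkeeping of constants and radii — ensuring that a single choice of $h_0$ and $R_0$ makes both the uniform Banach-constant bound from \cref{prop:A_h_surjective} and the smallness $c(h,r)<\delta$ hold simultaneously — which is handled by the monotonicity of $c$ and by noting that \cref{prop:A_h_surjective}'s conclusion persists under shrinking $h_0$.
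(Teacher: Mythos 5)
Your proof is correct and follows essentially the same route as the paper: \cref{prop:A_h_surjective} gives the uniform lower bound on $\gothC_{X_h,Y_h}(A_h)$, the monotonicity of $c$ yields a simultaneous choice of $h_0$ and $R_0$, and the base-point-uniform conclusion of Graves' theorem (\cref{thm:graves}) applied within the pair $(X_h,Y_h)$ gives the inclusion after halving the radius. The only microscopic point---\cref{thm:graves} requires the strict inequality $\gothC_{X_h,Y_h}(A_h) > \delta + \kappa$ while you only assert $\geq$---is already absorbed by your strict bound $c(h,R_0) < \delta$ (apply Graves with $c(h_0,R_0)$ in place of $\delta$), exactly as the paper does via its condition $\kappa + c(h,r) < K$.
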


\begin{proof}
 Let $\eta \in (0,1)$ and let $h_0>0$ be such that the conclusion of \cref{prop:A_h_surjective} holds for $\epsilon = (1 - \eta)\gothC_{X,Y}(A)$. Notice that according to \cref{prop:A_h_surjective} we have $\gothC_{X_h,Y_h}(A_h) \geq \eta \gothC_{X,Y}(A) =: K$ for every $0 < h \leq h_0$. Then, since $ c$ is increasing with respect to both $h$ and $r$ and that $\lim_{(h,r )\to 0} c(h,r) = 0$,  for every $\kappa < K$, we can, up to taking a smaller $h_0$, choose $R_0 >0$ such that $\kappa + c(h,r) < K$ for every $0 <h \leq h_0$ and $0 < r < R_0$. Using \eqref{eq:F_h_diff}, we can apply Graves' \cref{thm:graves} to the function $F_h$ and the linear operator $A_h$ to deduce that
 \[
 B_{Y_h}(F_h(x_h),\kappa r) \subset F_h(B_{X_h}(x_h,r)) \quad \textnormal{for every } x_h \in B_{X_h}(\tilde x_h,R_0/2) \textnormal{ and } 0 < r <R_0/2. \qedhere
 \]
\end{proof}

\begin{proof}[Proof of \cref{thm:reworked_BRR}]
   Let $h_0$ and $R_0$ be such that the conclusion of \cref{prop:F_h_metric_regular} for  $\kappa = \mathfrak C_{X,Y} (A)/2$. The existence of $\bar x_h$ and the estimate \eqref{eq:sol_F_h_estimate} follow from \cref{prop:F_h_metric_regular,thm:BRR_most_general}. Moreover, for $h_0$ small enough, we have $\bar x_h \in B_{X_h}(\tilde x_h,R_0/2)$, so that $B_{X_h}(\bar x_h, R_0/2) \subset B_{X_h}(\tilde x_h, R_0)$ and $B_{Y_h}(F_h(x_h),\kappa r) \subset F_h(B_{X_h}(x_h,r))$ for every $x_h \in B_{X_h}(\bar x_h,R_0/2)$ and $0 < r <R_0/2$.
   In particular, $\sur_{X_h,Y_h}(F_h; \bar x_h) \geq \gothC_{X,Y}(A) /2$.

   For strong metric regularity, notice first that, if $A$ is an isomorphism, we can use \cref{eq:intermediate_1} and \eqref{eq:intermediate_2} to obtain that, for every $0 < h \leq h_0$, each $A_h$ is also an isomorphism with $\norm{A_h^{-1}}{\cL(X_h,Y_h)} \leq K$ for some constant $K > 0$ independent of $h$. Recall that $R_0 \leq R$. Let $0 < r \leq R_0/2$ and $0 < h_1 \leq h_0$ to be fixed later. For $0 < h \leq h_1$, let $x_h,\, y_h \in B_{X_h}(\bar x_h, r) \subset  B_{X_h}(\tilde x_h, R)$ such that $F_h(x_h) = F_h(y_h)$. Using \eqref{eq:F_h_diff} we obtain
   \[
    \norm{A_h(x_h - y_h)}{X_h} \leq c(h,r) \norm{x_h - y_h}{X_h} \leq K c(h_1,r) \norm{A_h(x_h - y_h)}{X_h}.
   \]
   Since $\lim_{(r,h) \to 0} \tilde c(h,r) = 0$, using the injectivity of $A_h$, we deduce that $x_h = y_h$ for $r$ and $h_1$ small enough.
    Using \eqref{eq:sol_F_h_estimate} and \eqref{eq:epsilon_h}, we may choose $h_1$ small enough so that $\bar x \in B_X(\bar x_h,r/2)$ for all $0 < h \leq h_1$. Then $F_h$ is injective on $B_X(\bar x, r/2) \cap X_h \subset B_{X_h}(\bar x_h,r)$ and $\bar x_h \in B_X(\bar x, r/2) \cap X_h$. This proves that $\bar x_h$ is the unique zero of $F_h$ on $B_X(\bar x, r/2) \cap X_h$.
\end{proof}

\subsection{Generalized BRR theorem}
\label{section:generalized_BRR}
In this section we generalize \cref{thm:reworked_BRR} by using tools from the theory of metrically regular mappings. The main feature of this extension is that it may be applied when we do not have the linear operators $A_h$ at hand.

\begin{thm}\label{thm:approximation_main}
 Let $X,Y$ be two Banach spaces and, for every $h > 0$, let $(X_h,\norm{\cdot}{X})$ and $(Y_h,\norm{\cdot}{Y})$ be closed subspaces of $X$ and $Y$, respectively. Let also $F \colon X \to Y$ be continuous and, for each $h >0$, let $F_h \colon X_h \to Y_h$ be continuous. Finally, let and $\bar x \in X$ and $\tilde x_h \in X_h$ be given. Assume that
 \begin{enumerate}[label=(\roman*)]
  \item \label{item:approximation_main_1} $F$ is metric regular at $\bar x \in X$;
  \item \label{item:approximation_main_2} we have
  \begin{equation}
   \lim_{h \to 0} \norm{\bar x - \tilde x_h}{X} + \norm{F_h(\tilde x_h)}{Y} = 0;
  \end{equation}

  \item \label{item:approximation_main_3} there are continuous maps $\widehat F_h \colon X \to Y$, with $\widehat F_h(X_h) \subset Y_h$ and $\widehat F_h^{-1}(Y_h) \subset X_h$, a function $L \colon \RR^*_+ \times \RR_+^* \to \RR_+$, increasing with respect to both variables and such that $\lim_{(h,r) \to 0} L(h,r) = 0$, and $R > 0$, such that the mappings $\Phi_h := \widehat F_h - F$ and $\Psi_h = F_h - \widehat F_h$ satisfy
  \begin{equation}\label{eq:approximation_main_Phi}
   \norm{\Phi_h(x) - \Phi_h(x')}{Y} \leq L(h,r) \norm{x-x'}{X} \quad \textnormal{for every } x,x' \in B_{X}(\tilde x_h, r), \, 0 < r < R,
  \end{equation}
  and
  \begin{equation}\label{eq:approximation_main_Psi}
   \norm{\Psi_h(x) - \Psi_h(x')}{Y} \leq L(h,r) \norm{x-x'}{X} \quad \textnormal{for every } x,x' \in B_{X_h}(\tilde x_h, r),\, 0 < r < R,
  \end{equation}
   for every $h>0$.
 \end{enumerate}
 Then, for some $h_1 > 0$, there exists $\bar x_h \in X_h$ such that
 \begin{equation}\label{eq:approximation_1}
  F_h(\bar x_h) = 0
 \end{equation}
 for all $0 < h < h_1$, and we have the estimates
 \begin{equation}\label{eq:approximation_2}
  \norm{\bar x - \bar x_h}{X} \leq \left (1 + \frac{2}{\sur_{X,Y}(F; \bar x)} \right ) \norm{\bar x - \tilde x_h}{X} + \frac{2}{\sur_{X,Y}(F; \bar x)} \norm{F_h(\tilde x_h)}{Y},
 \end{equation}
 and $\sur_{X_h,Y_h}(F_h,\bar x_h) \geq \frac{1}{2} \sur_{X,Y}(F, \bar x)$.
 Moreover, if $F$ is strongly metrically regular near $\bar x$, then $F_h$ is strongly metrically regular near $\bar x_h$ and there exists a neighborhood $\mathcal{O}$ of $\bar x$ in $X$, independent of $h$, such that $\bar x_h \in \mathcal{O}$ and is the only element in $\mathcal{O}$ satisfying \eqref{eq:approximation_1}.
\end{thm}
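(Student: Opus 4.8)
The plan is to derive the statement from \cref{thm:BRR_most_general}, applied with $\hat x_h=\tilde x_h$, once one has shown that for all small $h$ the map $F_h\colon X_h\to Y_h$ is metrically regular near $\tilde x_h$ with a surjection rate and a radius that are independent of $h$. I would obtain this through three moves. \textbf{Step 1 (perturb in the big spaces).} Set $\kappa_0:=\sur_{X,Y}(F;\bar x)>0$ and fix $\mu\in(0,\kappa_0)$. Since $\tilde x_h\to\bar x$ and $\Phi_h=\widehat F_h-F$ is Lipschitz on a fixed small ball around $\bar x$ with constant bounded by $L(h,r)\to0$, \cref{thm:milyutin} applied to $F$ and the perturbation $\Phi_h$ yields, for $h$ small, that $\widehat F_h=F+\Phi_h$ is metrically regular near $\bar x$ with $\sur(\widehat F_h;\bar x)\ge\kappa_0-\mu$, the corresponding linear openness inclusion holding on a ball $B_X(\bar x,R_1)$ with $R_1$ independent of $h$. \textbf{Step 2 (restrict to the small spaces).} Using $\widehat F_h(X_h)\subset Y_h$ and $\widehat F_h^{-1}(Y_h)\subset X_h$ exactly as in the proof of \cref{prop:A_h_surjective}: for $x_h\in X_h$, intersect $B_Y(\widehat F_h(x_h),(\kappa_0-\mu)r)\subset\widehat F_h(B_X(x_h,r))$ with $Y_h$; a point $y\in Y_h$ of the left side equals $\widehat F_h(x')$ with $x'\in B_X(x_h,r)$, whence $x'\in\widehat F_h^{-1}(Y_h)\subset X_h$ and so $x'\in B_{X_h}(x_h,r)$. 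Since $\tilde x_h\to\bar x$ and, by continuity of $F$ and smallness of $\Lip(\Phi_h)$, also $\widehat F_h(\tilde x_h)\to\widehat F_h(\bar x)$, this shows that $\widehat F_h|_{X_h}\colon X_h\to Y_h$ is metrically regular near $\tilde x_h$ with rate $\ge\kappa_0-\mu$ and a uniform radius. \textbf{Step 3 (perturb in the small spaces).} Apply \cref{thm:milyutin} again, now inside $X_h\to Y_h$, to $\widehat F_h|_{X_h}$ and $\Psi_h=F_h-\widehat F_h$, whose Lipschitz constant on $B_{X_h}(\tilde x_h,r)$ is $\le L(h,r)$: this gives $\sur_{X_h,Y_h}(F_h;\tilde x_h)\ge\kappa_0-2\mu$ and, for $0<r\le\widehat R$ with $\widehat R>0$ uniform in $h$, the clean inclusion $B_{Y_h}(F_h(\tilde x_h),(\kappa_0-2\mu)r)\subset F_h(B_{X_h}(\tilde x_h,r))$ (the intersection with the small ball around $F_h(\tilde x_h)$ supplied by \cref{thm:milyutin} being harmless for $r$ small).

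Taking $\mu=\kappa_0/4$ so that $\kappa_0-2\mu=\kappa_0/2$, the hypotheses of \cref{thm:BRR_most_general} hold with $\kappa=\kappa_0/2$, producing $\bar x_h\in X_h$ with $F_h(\bar x_h)=0$ and the estimate \eqref{eq:approximation_2}. By the proof of \cref{thm:BRR_most_general} we have $\bar x_h\in B_{X_h}(\tilde x_h,\bar r_h)$ with $\bar r_h\to0$; hence for $h$ small $\bar x_h$ lies in the ball around $\tilde x_h$ on which the Step 3 inclusion holds for every centre, and since $\|F_h(\bar x_h)-F_h(\tilde x_h)\|=\|F_h(\tilde x_h)\|\to0$ one deduces, as at the end of the proof of \cref{thm:reworked_BRR}, that $B_{Y_h}(F_h(\bar x_h),\tfrac{\kappa_0}{2}r)\cap B_{Y_h}(F_h(\bar x_h),\rho)\subset F_h(B_{X_h}(\bar x_h,r))$ for some uniform $\rho>0$, i.e.\ $\sur_{X_h,Y_h}(F_h;\bar x_h)\ge\tfrac12\sur_{X,Y}(F;\bar x)$.

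For the strong metric regularity statement I would rerun the three steps with \cref{thm:milyutin_strong} in place of \cref{thm:milyutin}. In Steps 1 and 3 the quantitative single-valuedness neighbourhoods provided by \cref{thm:milyutin_strong} depend only on the strong metric regularity data of the unperturbed map and on the uniform bound $\mu$ on the Lipschitz constant of the perturbation, so they stay uniform in $h$. In Step 2 single-valuedness of the inverse transfers for free: two points of $X_h$ sent by $\widehat F_h$ to the same point of $Y_h$ are, in particular, two points of $X$ sent to the same point of $Y$; one only has to use the continuity of the fixed map $F$ together with $\Lip(\Phi_h)\to0$ to guarantee that $\widehat F_h$ maps a fixed small ball around $\bar x$ into the $Y$-neighbourhood where its inverse is single-valued. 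Transferring strong metric regularity from $\tilde x_h$ to the nearby point $\bar x_h$ as above, and observing that every solution of \eqref{eq:approximation_1} automatically lies in $X_h$, we obtain that $\bar x_h$ is, for $h$ small, the unique zero of $F_h$ in a fixed ball $\mathcal O=B_X(\bar x,\rho_0)$ with $\rho_0$ independent of $h$.

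The main obstacle is the uniformity bookkeeping: one must track the surjection rates, the radii of the linear openness inclusions, and---in the strong case---the sizes of the single-valuedness neighbourhoods, and check that all of them survive the two passages between $(X,Y)$ and $(X_h,Y_h)$ and the two perturbation steps. The complication that the natural base point $\bar x$ of $F$ need not belong to $X_h$, which forces one to work near $\tilde x_h$ and to compare neighbourhoods of $\bar x$ with neighbourhoods of $\tilde x_h$, is where most of the technical care is required.
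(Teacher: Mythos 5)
Your proposal is correct and follows essentially the same route as the paper's proof: the decomposition $F_h=F+\Phi_h+\Psi_h$ via $\widehat F_h$, two applications of \cref{thm:milyutin} with the subspace-restriction argument (using $\widehat F_h(X_h)\subset Y_h$ and $\widehat F_h^{-1}(Y_h)\subset X_h$) in between, then \cref{thm:BRR_most_general} for existence and the estimate, the transfer of the surjection rate from $\tilde x_h$ to $\bar x_h$, and \cref{thm:milyutin_strong} for the strong case. The only difference is minor bookkeeping (you recentre from $\bar x$ to $\tilde x_h$ after the first Milyutin step rather than before, as the paper does), which does not change the argument.
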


\begin{proof}
 We first write, for every $x \in X_h$,
 \[
  \widehat F_h(x) = F(x) + \widehat F_h(x) - F(x) = F(x) + \Phi_h(x).
 \]
 By \ref{item:approximation_main_1}, there exist  $R'>0$ and $0 < \kappa \leq \sur_{X,Y}(F;\bar x)$ such that
 \[
  B_Y(F(x),\kappa r) \cap B_Y(F(\bar x),R') \subset F\left ( B_X(x,r) \right ) \quad \textnormal{for every } x \in B_X(\bar x,r) \textnormal{ and } 0 < r < R'.
 \]
 Without loss of generality, we may assume that $R'= R$. Using \ref{item:approximation_main_2}, we may choose $h_1 > 0$ small enough so that $\tilde x_h \in B_X(\bar x, R/2)$ and $F(\tilde x_h) \in B_Y(F(\bar x),R/2)$ for every $0 < h \leq h_1$.
 This implies that $B_Y(F(x),\kappa r) \cap B_Y(F(\tilde x_h),R/2) \subset F \left ( B_X(x,r) \right )$ for every $x \in B(\tilde x_h,r)$ and $0 < r < R/2$, for all $0 < h \leq h_1$. We may also choose $h_1$ small enough, choosing a smaller $R$ if necessary, so that \ref{item:approximation_main_3} holds for $L(h,r) < \kappa/4$ for all $0 < h \leq h_1$ and $0 < r \leq R$. With this choice of $h_1$ we can apply \cref{thm:milyutin} to deduce that we may choose $h_1$ and $0 < R_1 \leq R/4$ so that $B_Y(\widehat F_h(x),3\kappa r/4) \cap B_Y(\widehat F_h(\tilde x_h), R_1) \subset \widehat F_h \left ( B_X(x,r) \right )$ for all $x \in B_X(\tilde x_h,R_1)$, $0 < r < R_1$ and $0 < h \leq h_1$. Since
 \begin{align*}
  B_{Y_h}(\widehat F_h(x),3\kappa r/4) \cap B_{Y_h}(\widehat F_h(\tilde x_h), R_1) & = B_{Y}(\widehat F_h(x),3 \kappa r/4) \cap B_Y(\widehat F_h(\tilde x_h), R_1) \cap Y_h \\
  & \subset  \widehat F_h \left ( B_X(x,r) \right ) \cap Y_h
 \end{align*}
 for every $x \in B_X(\tilde x_h,R_1) \cap X_h$ and $0 < r < R_1$ and $\widehat F_h^{-1}(Y_h) \subset X_h$, we deduce that
 \begin{equation*}
  B_{Y_h}(\widehat F_h(x),3\kappa r/4) \cap B_{Y_h}(\widehat F_h(\tilde x_h), R_1) \subset \widehat F_h \left ( B_{X_h}(x,r) \right )
 \end{equation*}
 for every $x \in B_{X_h}(\tilde x_h,R_1)$ and $ 0 < r < R_1$.
 In other words, $\sur_{X_h,Y_h}(\hat F_{h}; \tilde x_h) \geq \frac{3\kappa}{4}$. We can then write $F_h = \widehat F_h + F_h - \widehat F_h = \widehat F_h + \Psi_h$. Another application of \cref{thm:milyutin} yields
 \begin{equation}\label{eq:approximation_main_4}
  B_{Y_h}( F_h(x_h), \kappa r /2) \cap B_{Y_h}( F_h(\tilde x_h), R_2) \subset  F_h(B_{X_h}(x,r))
 \end{equation}
 for every $x_h \in B_{X_h}(\tilde x_h, R_2)$ and $0 < r < R_2 < R/8$. We may then choose $R_3 = 2R_2/\kappa$ so that $B_{Y_h}( F_h(\tilde x_h), \kappa R_3 /2) \subset B_{Y_h}( F_h(\tilde x_h), R_2)$ and $B_{Y_h}( F_h(\tilde x_h), \kappa r /2) \subset  F_h(B_{X_h}(\tilde x_h,r))$ for all $0 < r \leq \min\{R_2, R_3 \}$.
  The existence of $\bar x_h$ satisfying \eqref{eq:approximation_1} and estimate \eqref{eq:approximation_2} follow from \cref{thm:BRR_most_general}.

  In order to prove that $F_h$ is metrically regular at $\bar x_h$ for all $h > 0$ sufficiently small, notice first, using \cref{thm:BRR_most_general}, that
  \[
   \norm{\bar x_h - \tilde x_h}{X} \leq \norm{\bar x_h - \bar x}{X} + \norm{\bar x - \tilde x_h}{X} \leq C \left( \norm{\bar x - \tilde x_h}{X} + \norm{F_h(\tilde x_h)}{Y} \right) \xrightarrow{h \to 0} 0.
  \]
  Setting $R_4 := \min \{R_2, \,R_3 \}$, it follows that $\bar x_h \in B_{X_h}\left (\tilde x_h, R_4/2 \right)$ and  $B_{X_h}\left(\bar x_h, R_4/2 \right) \subset B_{X_h} \left( \tilde x_h, R_4 \right)$ for all $h > 0$ small enough. We then deduce from \eqref{eq:approximation_main_4} that
  \begin{equation}\label{eq:approximation_main_5}
    B_{Y_h}( F_h(x_h), \kappa r /2) \cap B_{Y_h}( F_h(\tilde x_h), R_4) \subset  F_h(B_{X_h}(x_h,r)) \quad \textnormal{for every $x_h \in B_{X_h}(\bar x_h, R_4/2)$.}
  \end{equation}
  Since $F_h(\tilde x_h) \xrightarrow{h \to 0} 0 = F_h(\bar x_h)$, we also have that $F_h(\bar x_h) \in B_{Y_h}\left(F_h(\tilde x_h), R_4/2 \right)$ for all $h > 0$ sufficiently small, so that $B_{Y_h} \left( F_h(\bar x_h), R_4/2 \right) \subset B_{Y_h}\left(F_h( \tilde x_h), R_4 \right)$. It then follows from \eqref{eq:approximation_main_5} that
  \begin{equation*}
    B_{Y_h}( F_h(x_h), \kappa r /2) \cap B_{Y_h}( F_h(\bar x_h), R_4/2) \subset  F_h(B_{X_h}(x_h,r)) \quad \textnormal{for every $x_h \in B_{X_h}(\bar x_h, R_4/2)$}
  \end{equation*}
  for all $h > 0$ small enough. From \cref{def:metric_regularity}, this proves that $F_h$ is metrically regular at $\bar x_h$ for all $h > 0$ sufficiently small with $\sur_{X_h,Y_h}(F_h,\bar x_h) \geq \frac{1}{2} \sur_{X,Y}(F, \bar x)$.
 As for local uniqueness, an application of \cref{thm:milyutin_strong} shows that $F_h$ is also strongly metrically regular whenever $F$ is.
\end{proof}

We now show how \cref{thm:reworked_BRR} may be deduced from \cref{thm:approximation_main}.
\begin{proof}[Alternative proof of \cref{thm:reworked_BRR}]
  Define $\bar F \colon X \to Y$ by $\bar F(x) = A(x - \bar x)$ so that $\bar F (\bar x) = F(\bar x) = 0$ and $\sur_{X,Y}(\bar F; \bar x) = \gothC_{X,Y}(A) > 0$.  Similarly, define for every $h > 0$ the mapping $\widehat F_h \colon X \to Y$ by $\widehat F_h(x) = \widehat A_h(x - \bar x)$. It is then easy to verify that assumptions \ref{item:reworked_BRR_simple_ii} and \ref{item:reworked_BRR_simple_iii} in \cref{thm:reworked_BRR} imply assumption \ref{item:approximation_main_3} in \cref{thm:approximation_main}. The result then directly follows from \cref{thm:approximation_main}.
\end{proof}

\subsection{The case of weakly nonlinear problems}

The following corollary provides sufficient conditions to apply \cref{thm:approximation_main} that are simple to check. It corresponds to \cite[Theorem IV.3.3]{GR1986} under more general assumptions. This result can, for instance, be applied to the finite element approximation of stationary Navier-Stokes equations \cite{BRR1980,GR1986}, Von K\'arm\'an equations \cite{BRR1980}, semilinear elliptic equations \cite{CR1990,CR1997,KP2010}, semilinear parabolic equations \cite{CH2002} or second order stationary mean field games \cite{BLS2025}.
\begin{cor}\label{cor:first_example}
 Let $V$ and $W$ be Banach spaces, with $V$ reflexive, $T \in \cL(W,V)$ and $G \colon V \to W$ be continuous. Define $F \colon V \to V$ by $F = I + T \circ G$, where $I$ is the identity operator on $V$, and assume that there exists $\bar x \in V$ such that $F(\bar x) = 0$. In addition, for every $h > 0$, let $T_h \in \cL(W,V)$ and set $F_h = I + T_h \circ G$. We assume that
 \begin{enumerate}[label={\rm(\roman*)}]
  \item \label{item:cor:first_example_i} we have
  \[
   \lim_{h \to 0} \norm{(T - T_h)w}{V} = 0 \quad \textnormal{for all } w \in W,
  \]

  \item  \label{item:cor:first_example_ii} there exists a bounded convex subset $\cM \subset \cL(V,W)$ satisfying
  \begin{equation}
   \inf_{M \in \cM} \gothC(I + T\circ M) \geq \kappa > 0,
  \end{equation}
  and such that for every $\epsilon > 0$, there exists $\delta > 0$ such that, for any $x,y \in B_V(\bar x, \delta)$ there is $M \in \cM$ satisfying
  \begin{equation}\label{eq:first_example_prediff}
   \norm{G(x) - G(y) - M(x-y)}{W} \leq \epsilon \norm{x - y}{V};
  \end{equation}

  \item  \label{item:cor:first_example_iii} there exist a function $L \colon \RR^*_+ \times \RR_+^* \to \RR_+$, increasing with respect to both variables and such that $\lim_{(h,r) \to 0} L(h,r) = 0$, and $R > 0$, such that the mapping $\Phi_h := (T - T_h) \circ G$ satisfies
  \[
   \norm{\Phi_h(x) - \Phi_h(y)}{Y} \leq L(h,r) \norm{x-y}{X} \quad \textnormal{for every } x,y \in B_{X}(\bar x, r), \, 0 < r < R.
  \]
 \end{enumerate}
Then, for every $0 < h \leq h_0$, there exists $\bar x_h \in V$ satisfying $ F_h(\bar x_h) = 0$, for some $h_0 > 0$ and we have the estimates
 \[
  \norm{\bar x - \bar x_h}{V} \leq \frac{2}{\kappa} \norm{(T-T_h)G(\bar x)}{V}
 \]
 and $\sur_{V,V}(F_h,\bar x_h) \geq \kappa / 2$.
Moreover, if for each $M \in \cM$, the operator $I + T \circ M$ is an isomorphism on $V$, then $F_h$ is strongly metrically regular near $\bar x_h$ and there is a neighborhood $\cO$ of $\bar x$, independent of $h$, such that this solution is unique in $\cO$.
\end{cor}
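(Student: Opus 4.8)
The plan is to obtain \cref{cor:first_example} as a direct application of \cref{thm:approximation_main}, run with $X = Y = V$ and $X_h = Y_h = V$ for every $h$, with the auxiliary mappings chosen to be $\widehat F_h := F$ and the reference points $\tilde x_h := \bar x$. With these choices the requirements $\widehat F_h(X_h) \subset Y_h$ and $\widehat F_h^{-1}(Y_h) \subset X_h$ are vacuous, the perturbation $\Phi_h = \widehat F_h - F$ vanishes identically (so \eqref{eq:approximation_main_Phi} holds for any $L$), and $\Psi_h = F_h - \widehat F_h = (T_h - T)\circ G$ is exactly the negative of the mapping denoted $\Phi_h$ in hypothesis \ref{item:cor:first_example_iii}, so \eqref{eq:approximation_main_Psi} holds with the $L$ and $R$ supplied there. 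The consistency condition \ref{item:approximation_main_2} is then immediate: since $\tilde x_h = \bar x$ and $F(\bar x) = \bar x + T G(\bar x) = 0$, one gets $F_h(\bar x) = \bar x + T_h G(\bar x) = (T_h - T)G(\bar x)$, whose norm tends to $0$ by \ref{item:cor:first_example_i} applied to the fixed vector $w = G(\bar x)$, while $\norm{\bar x - \tilde x_h}{V} = 0$.

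The one substantive step is verifying hypothesis \ref{item:approximation_main_1}, the metric regularity of $F = I + T\circ G$ near $\bar x$, together with the sharp lower bound $\sur(F;\bar x) \ge \kappa$ that the stated error estimate requires. For this I would apply \cref{thm:graves_variation} with the set $\cA := \{ I + T\circ M :\, M \in \cM \} \subset \cL(V,V)$, which is convex as an affine image of $\cM$ and bounded since $T$ is bounded and $\cM$ is bounded. The key identity is
\[
 F(x') - F(x) - (I + T\circ M)(x'-x) = T\bigl(G(x') - G(x) - M(x'-x)\bigr),
\]
so hypothesis \ref{item:cor:first_example_ii} gives, for each $\epsilon > 0$, a radius $\delta$ on which \eqref{eq:graves_variation_diff} holds with $\mu = \norm{T}{\cL(W,V)}\,\epsilon$; since also $\gothC(I + T\circ M) \ge \kappa$ for every $M \in \cM$, \cref{thm:graves_variation} yields $\sur(F;\bar x) \ge \kappa - \norm{T}{\cL(W,V)}\,\epsilon$, and letting $\epsilon \to 0^+$ gives $\sur(F;\bar x) \ge \kappa$ (the degenerate case $T = 0$, where $F = I$, being trivial). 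Feeding this, $\tilde x_h = \bar x$, and the value $F_h(\tilde x_h) = (T_h - T)G(\bar x)$ into the conclusion of \cref{thm:approximation_main} produces $\bar x_h$ with $F_h(\bar x_h) = 0$, the estimate $\norm{\bar x - \bar x_h}{V} \le \tfrac{2}{\kappa}\norm{(T - T_h)G(\bar x)}{V}$ from \eqref{eq:approximation_2}, and $\sur(F_h;\bar x_h) \ge \kappa/2$.

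For the final clause, under the extra hypothesis that each $I + T\circ M$ is an isomorphism of $V$, I would upgrade the metric regularity of $F$ to strong metric regularity by invoking \cref{cor:strong_metric_regularity}, again with $\cA = \{I + T\circ M :\, M \in \cM\}$: its hypothesis (i) is the same "$\epsilon$--$\delta$" estimate used above (the displayed identity plus \ref{item:cor:first_example_ii}, after absorbing the factor $\norm{T}{\cL(W,V)}$), and its hypothesis (ii) comes for free, because for an isomorphism $A$ one has $\gothC(A) = \norm{A^{-1}}{\cL(V)}^{-1}$, so \ref{item:cor:first_example_ii} translates into the uniform bound $\norm{(I + T\circ M)^{-1}}{\cL(V)} \le \kappa^{-1}$ — no compactness and no appeal to \cref{prop:sufficient_banach_const} is needed at this stage. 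Strong metric regularity of $F$ near $\bar x$ then triggers the corresponding conclusion of \cref{thm:approximation_main}, giving strong metric regularity of $F_h$ near $\bar x_h$ and an $h$-independent neighborhood $\cO$ of $\bar x$ in which $\bar x_h$ is the unique solution of $F_h = 0$. I do not expect a real obstacle here: the only points demanding care are the bookkeeping needed to keep the two objects both called $\Phi_h$ apart, and making sure the constant extracted from \cref{thm:graves_variation} is the full $\inf_{M}\gothC(I + T\circ M) \ge \kappa$ (achieved by sending $\mu \to 0$), since that is precisely what yields the constant $2/\kappa$ rather than $2/(\kappa - \mu)$ in the error bound.
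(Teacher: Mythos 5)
Your proposal is correct and follows essentially the same route as the paper: apply \cref{thm:approximation_main} with $X = Y = X_h = Y_h = V$ and $\tilde x_h = \bar x$, verify metric regularity of $F$ (with $\sur(F;\bar x) \geq \kappa$) via \cref{thm:graves_variation} applied to $\cA = \{I + T\circ M : M \in \cM\}$ using the identity $F(x')-F(x)-(I+T\circ M)(x'-x) = T(G(x')-G(x)-M(x'-x))$, handle condition \ref{item:approximation_main_3} through hypothesis \ref{item:cor:first_example_iii}, and obtain strong metric regularity from \cref{cor:strong_metric_regularity}. Your explicit choice $\widehat F_h = F$ (so $\Phi_h = 0$, $\Psi_h = (T_h - T)\circ G$) is only a bookkeeping variant of the paper's treatment and changes nothing substantive.
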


\begin{rem}
 The assumption \ref{item:cor:first_example_iii} in \cref{cor:first_example} holds in the following cases:
 \begin{enumerate}
  \item When $G$ is locally Lipschitz continuous and $\norm{T - T_h}{\cL(W,V)} \xrightarrow{h \to 0} 0$. Indeed, in this case we have that
  \[
    \Lip_{B_V(\bar x, r)}(\Phi_h) \leq \norm{T - T_h}{\cL(W,V)} \Lip_{B_V(\bar x,r)}(G) =: L(h,r).
  \]

  \item More generally, when $G$ is locally Lipschitz continuous and there is a subspace $Z \subset W$, such that $G(x) - G(y) \in Z$ for every $x,y \in V$, and if $\norm{T - T_h}{\cL(Z,V)} \xrightarrow{h \to 0} 0$.
  This typically happens when $G(x) = \tilde G(x) + \phi$, where $\tilde G \colon V \to Z$ is locally Lipschitz continuous and $\phi \in W$.
 \end{enumerate}
\end{rem}

\begin{proof}
 We are going to apply \cref{thm:approximation_main} with $X = X_h = Y = Y_h = V$ and $\tilde x_h = \bar x$.

 First, it is clear that \ref{item:cor:first_example_i} implies \cref{thm:approximation_main}-\ref{item:approximation_main_2}. We then check that $F$ is metrically regular near $\bar x$. Set
 \[
  \cA = \left \{ I + T \circ M : M \in \cM \right \}.
 \]
 We claim that $\cA$ satisfies the assumptions of \cref{thm:graves_variation}. Indeed, it is a bounded convex subset of $\cL(V,V)$. Moreover, for $0 < \sigma < \kappa$, using \eqref{eq:first_example_prediff} with $\epsilon = \sigma / (1 + \norm{T}{\cL(W,V)})$, we deduce that there exists $r > 0$ such that, for any $x,y \in B_V(\bar x, r)$, there is a $A \in \cA$ such that
 \begin{equation}
  \norm{F(x) - F(y) - A(x-y)}{V} < \sigma \norm{x - y}{V}.
 \end{equation}
 It then follows from \cref{thm:graves_variation} that $F$ is metrically regular near $\bar x$ with $\sur(F;\bar x) \geq \kappa - \sigma$. Since $\sigma$ is arbitrary, we conclude that $\sur(F;\bar x) \geq \kappa$. Note also that, when each $M \in \cM$ is an isomorphism, strong metric regularity follows from \cref{cor:strong_metric_regularity}. Finally, it is clear that \ref{item:cor:first_example_iii} implies \cref{thm:approximation_main}-\ref{item:approximation_main_3}, with $\Phi_h = \Psi_h$.
\end{proof}

The next result is similar to \cref{cor:first_example} but we now also approximate the mapping $G$ by some $G_h$ and not only $T$ as in the first example. It is a generalization of \cite[Theorem IV.3.8]{GR1986}, where it was used to study finite element approximations of Navier-Stokes equations. A similar result was used in \cite{MQ1982} to study pseudo-spectral approximations to the viscous Burgers' equations.

\begin{cor}\label{cor:second_example}
 Let $V$ and $W$ be Banach spaces, with $V$ reflexive, $T \in \cL(W,V)$ and $G \colon V \to W$ be continuous. Define $F \colon V \to V$ by $F = I + T \circ G$, where $I$ is the identity operator on $V$, and assume that there exists $\bar x \in V$ such that $F(\bar x) = 0$. In addition, for every $h > 0$, let $(V_h, \norm{\cdot}{V})$ be closed subspaces of $V$ and $(W_h, \norm{\cdot}{W_h})$ be such that $W \hookrightarrow W_h$. Let also $T_h \in \cL(W_h,V_h)$ and $G_h \colon V_h \to W_h$ be a continuous map. Define $F_h \colon V_h \to V_h$  by $F_h = I + T_h \circ G_h$. We assume that
 \begin{enumerate}[label={\rm(\roman*)}]
 \item \label{item:second_example_i} there exists $\Lambda > 0$ such that
 \begin{equation*}\label{eq:second_example_1}
  \sup_{h > 0} \norm{T_h}{\cL(W_h,V_h)} \leq \Lambda;
 \end{equation*}

 \item \label{item:second_example_ii} we have
\begin{equation*}\label{eq:second_example_2}
 \lim_{h \to 0} \norm{(T - T_h)w}{V} = 0 \quad \textnormal{for all } w \in W;
 \end{equation*}

 \item \label{item:second_example_iii} for every $h > 0$ there exists $\pi_h \in \cL(V,V_h)$ such that
 \begin{equation*}\label{eq:second_example_3}
  \lim_{h \to 0} \norm{\bar x - \pi_h \bar x}{V} = 0;
 \end{equation*}

 \item \label{item:second_example_iv} we have
 \begin{equation*}\label{eq:second_example_4}
   \lim_{h \to 0} \norm{G(\pi_h (\bar x)) - G_h(\pi_h(\bar x))}{W_h} = 0;
 \end{equation*}

 \item  \label{item:cor:first_example_v} there exists a bounded convex subset $\cM \subset \cL(V,W)$ satisfying
  \begin{equation*}
   \inf_{M \in \cM} \gothC(I + T\circ M) \geq \kappa > 0,
  \end{equation*}
  and such that for every $\epsilon > 0$, there exists $\delta > 0$ such that, for any $x,y \in B_V(\bar x, \delta)$ there is $M \in \cM$ satisfying
  \begin{equation*}
   \norm{G(x) - G(y) - M(x-y)}{W} \leq \epsilon \norm{x - y}{V};
  \end{equation*}

 \item \label{item:second_example_vi} there exists a function $L \colon \RR^*_+ \times \RR_+^* \to \RR_+$, increasing with respect to both variables and such that $\lim_{(h,r) \to 0} L(h,r) = 0$, and $R > 0$, such that the mappings $\Phi_h := (T - T_h) \circ G$ and $Q_h := G - G_h$ satisfy
  \begin{align}
   \norm{\Phi_h(x) - \Phi_h(y)}{V} \leq L(h,r) \norm{x-y}{V} & \quad \textnormal{for every } x,y \in B_{V}(\pi_h \bar x, r), \, 0 < r < R, \label{eq:second_example_Phi} \\
   \norm{Q_h(x) - Q_h(y)}{W_h} \leq L(h,r) \norm{x-y}{V_h} & \quad \textnormal{for every } x,y \in B_{V_h}(\pi_h \bar x, r), \, 0 < r < R. \label{eq:second_example_Psi}
  \end{align}
 \end{enumerate}
Then for every $0 < h \leq h_0$ there exists $\bar x_h \in X$ such that $F_h(\bar x_h) = 0$
for some $h_0 > 0$ and we have the estimates
\[
 \norm{\bar x - \bar x_h}{V} \leq \frac{\kappa + 4}{\kappa} \norm{\bar x - \pi_h \bar x}{X} + \frac{2}{\kappa} \left ( \norm{(T-T_h)G(\bar x)}{V} + \Lambda \norm{G(\bar x) - G_h(\pi_h \bar x))}{W_h} \right )
\]
and $\sur_{V_h,V_h}(F_h,\bar x_h) \geq \kappa / 2$.
 Furthermore, if, for each $M \in \cM$, the operator $I + T \circ M$ is an isomorphism on $V$, then $F_h$ is strongly metrically regular near $\bar x_h$ and there is a neighborhood $\cO$ of $\bar x$, independent of $h$, such that this solution is unique in $\cO$.
\end{cor}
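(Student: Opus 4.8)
The plan is to deduce the statement from the generalized Brezzi--Rappaz--Raviart theorem \cref{thm:approximation_main}, applied with $X = Y = V$, $X_h = Y_h = V_h$, reference point $\bar x$, approximate base points $\tilde x_h := \pi_h \bar x$, the mappings $F = I + T\circ G$ and $F_h = I + T_h\circ G_h$, and the intermediate mappings
\[
 \widehat F_h := I + T_h\circ G \colon V \longrightarrow V,
\]
where $G$ is viewed as valued in $W_h$ through the continuous embedding $W \hookrightarrow W_h$; this is well defined and continuous because $T_h\circ G$ takes values in $V_h \subset V$. The reason for this choice is that it decouples the total error into a term involving only $T - T_h$ and one involving only $G - G_h$: indeed $\Phi_h := \widehat F_h - F = -(T - T_h)\circ G$ and $\Psi_h := F_h - \widehat F_h = -T_h\circ(G - G_h)$.

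First I would check the hypotheses of \cref{thm:approximation_main}. The metric regularity of $F$ near $\bar x$, with $\sur(F;\bar x) \geq \kappa$, follows exactly as in the proof of \cref{cor:first_example}: $\cA := \{I + T\circ M :\, M \in \cM\}$ is bounded and convex in $\cL(V,V)$; for each $0 < \sigma < \kappa$, hypothesis \ref{item:cor:first_example_v} with $\epsilon = \sigma/\norm{T}{\cL(W,V)}$ yields the approximation inequality \eqref{eq:graves_variation_diff} with rate $\sigma$, and $\gothC(A)\geq\kappa>\sigma$ for all $A\in\cA$, so \cref{thm:graves_variation} applies, and letting $\sigma\to 0$ gives $\sur(F;\bar x)\geq\kappa$; if moreover each $I + T\circ M$ is an isomorphism, then $\norm{(I + T\circ M)^{-1}}{\cL(V)} = \gothC(I + T\circ M)^{-1} \leq \kappa^{-1}$, and \cref{cor:strong_metric_regularity} shows that $F$ is strongly metrically regular near $\bar x$. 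For assumption \ref{item:approximation_main_3}: the inclusions $\widehat F_h(V_h)\subset V_h$ and $\widehat F_h^{-1}(V_h)\subset V_h$ both follow from the identity $\widehat F_h(x) - x = T_h G(x) \in V_h$, valid for all $x\in V$; the Lipschitz bound for $\Phi_h$ on balls of $V$ centered at $\pi_h\bar x$ is precisely \eqref{eq:second_example_Phi}; and, for $x,y \in B_{V_h}(\pi_h\bar x,r)$, one has $\norm{\Psi_h(x)-\Psi_h(y)}{V} \leq \norm{T_h}{\cL(W_h,V_h)}\norm{Q_h(x)-Q_h(y)}{W_h} \leq \Lambda L(h,r)\norm{x-y}{V}$ by \ref{item:second_example_i} and \eqref{eq:second_example_Psi}, so \ref{item:approximation_main_3} holds after replacing $L$ by $(1+\Lambda)L$.

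Next I would handle consistency and the error estimate. Since $F(\bar x)=0$ forces $\bar x = -T G(\bar x)$, one gets the decomposition
\[
 F_h(\pi_h\bar x) = (\pi_h\bar x - \bar x) + (T_h - T)G(\bar x) + T_h\bigl(G_h(\pi_h\bar x) - G(\bar x)\bigr),
\]
hence $\norm{F_h(\pi_h\bar x)}{V} \leq \norm{\bar x - \pi_h\bar x}{V} + \norm{(T - T_h)G(\bar x)}{V} + \Lambda\norm{G(\bar x) - G_h(\pi_h\bar x)}{W_h}$. Each term on the right tends to $0$ as $h\to 0$: the first by \ref{item:second_example_iii}, the second by \ref{item:second_example_ii}, and the third by writing $G(\bar x) - G_h(\pi_h\bar x) = (G(\bar x) - G(\pi_h\bar x)) + (G(\pi_h\bar x) - G_h(\pi_h\bar x))$ and using the continuity of $G$ with \ref{item:second_example_iii}, together with \ref{item:second_example_iv}; combined with \ref{item:second_example_iii}, this establishes \cref{thm:approximation_main}-\ref{item:approximation_main_2}. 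Then \cref{thm:approximation_main} provides $\bar x_h \in V_h$ with $F_h(\bar x_h)=0$, the bound $\sur_{V_h,V_h}(F_h;\bar x_h)\geq\kappa/2$, and $\norm{\bar x - \bar x_h}{V} \leq (1 + 2/\kappa)\norm{\bar x - \pi_h\bar x}{V} + (2/\kappa)\norm{F_h(\pi_h\bar x)}{V}$; substituting the bound on $\norm{F_h(\pi_h\bar x)}{V}$ above yields the stated estimate, the coefficient $(\kappa+4)/\kappa$ in front of $\norm{\bar x - \pi_h\bar x}{V}$ coming from $1 + 2/\kappa + 2/\kappa$. Finally, when every $I + T\circ M$ is an isomorphism on $V$, the strong metric regularity of $F$ feeds into the last conclusion of \cref{thm:approximation_main}, giving strong metric regularity of $F_h$ near $\bar x_h$ and a neighborhood $\cO$ of $\bar x$, independent of $h$, in which $\bar x_h$ is the only solution.

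The main obstacle is structural rather than analytic: one must exhibit an intermediate mapping $\widehat F_h$ that simultaneously respects the subspace inclusions $\widehat F_h(V_h)\subset V_h$ and $\widehat F_h^{-1}(V_h)\subset V_h$ required by \cref{thm:approximation_main} and separates the two error sources so that hypotheses \ref{item:second_example_i}--\ref{item:second_example_vi} apply directly. The choice $\widehat F_h = I + T_h\circ G$ does both; everything else is routine, the only mildly delicate point being the decomposition of $F_h(\pi_h\bar x)$ needed to recover the precise constants.
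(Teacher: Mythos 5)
Your proposal is correct and follows essentially the same route as the paper's proof: apply \cref{thm:approximation_main} with $X=Y=V$, $X_h=Y_h=V_h$, $\tilde x_h = \pi_h\bar x$ and the intermediate map $\widehat F_h = I + T_h\circ G$, verifying metric (and strong metric) regularity of $F$ exactly as in \cref{cor:first_example} and the Lipschitz conditions for $\Phi_h$, $\Psi_h$ from \ref{item:second_example_i} and \ref{item:second_example_vi}. The only difference is that you spell out the decomposition of $F_h(\pi_h\bar x)$ and the resulting constants $(\kappa+4)/\kappa$ and $2/\kappa$ explicitly, which the paper leaves implicit.
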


\begin{rem}
 In order to clarify the connection of the result above to \cite[Theorem IV.3.8]{GR1986}, we recall that in \cite{GR1986} it is assumed that $G$ is continuously differentiable, that there are a function $L$, satisfying similar assumptions to the ones in \ref{item:second_example_vi}, and bounded linear operators $D_h \in \cL(V_h,W_h)$ such that
 \begin{equation}
  \norm{G_h(x) - G_h(y) - D_h(x-y)}{W_h} \leq L(h,r)\norm{x-y}{V_h} \textnormal{for every } x,y \in B_{V_h}(\tilde x_h, r),
 \end{equation}
 and that $\norm{dG(\bar x) - D_h}{\cL(V_h,W_h)} \xrightarrow{h \to 0} 0$.
 We then notice that
 \begin{align*}
  \norm{Q_h(x) - Q_h(y)}{W_h} & \leq \norm{G(x) - G(y) - dG[\bar x](x - y)}{W_h} + \norm{G_h(x) - G_h(y) - D_h(x - y)}{W_h} \\ & \quad + \norm{(dG[\bar x] - D_h)(x-y)}{W_h} \\
  & \leq C_h \norm{G(x) - G(y) - dG[\bar x](x - y)}{W} \\ & \quad + \left (L(h,r) + \norm{dG[\bar x] - D_h}{\cL(V_h,W_h)} \right ) \norm{x-y}{V_h} \\
  & \leq  \left (C_hc(r) + L(h,r) + \norm{dG[\bar x] - D_h}{\cL(V_h,W_h)} \right ) \norm{x-y}{V_h},
 \end{align*}
 where $C_h$ is the constant of the continuous embedding $W \hookrightarrow W_h$, $c \colon \RR_+ \to \RR_+$ is a nondecreasing function satisfying $c(0^+) = 0$ and
 \[
  \norm{G(x) - G(y) - dG[\bar x]}{W} \leq c(r) \norm{x - y}{V}
 \]
  for all $x, \, y \in B_V(\bar x, r)$.
 Setting $\tilde L(h,r) :=  \left (C_hc(r) + L(h,r) + \norm{dG[\bar x] - D_h}{\cL(V_h,W_h)} \right)$, we observe that $\lim_{(h,r) \to 0} \tilde L(h,r) = 0$ as soon as $\sup_{h > 0} C_h < + \infty$. Hence, it seems to us that \ref{item:second_example_vi} is a reasonable generalization of the assumptions made in \cite{GR1986}.

\end{rem}

\begin{rem}
 In both \cref{cor:first_example,cor:second_example}, one may drop the reflexivity assumption on $V$ if the set
 \[
  \cA := \left \{ I + T \circ M : M \in \cM \right \}
 \]
 is compact in the topology of $\cL(V)$. In this case \cref{thm:graves_variation} has to be replaced by \cite[Theorem 3.9]{CFI2015}, see also \cite[Theorem 6.65]{I2017}.
\end{rem}

\begin{proof}
 We are going to apply \cref{thm:approximation_main} with $X = Y = V$, $X_h = Y_h = V_h$ and $\tilde x_h = \pi_h \bar x$.

 Notice first that \cref{thm:approximation_main}-\ref{item:approximation_main_2} holds by combining \ref{item:second_example_ii}, \ref{item:second_example_iii}, \ref{item:second_example_iv} and the continuity of $G$. Moreover, \cref{thm:approximation_main}-\ref{item:approximation_main_1} (and strong metric regularity) is proved exactly as in the proof of \cref{cor:first_example}.

 It remains to check that \cref{thm:approximation_main}-\ref{item:approximation_main_3} holds. We set $\widehat F_h = I + T_h \circ G$. Clearly $\widehat F_h(V_h) \subset V_h$. Conversely, if there is $x \in V$ such that $\widehat F_h(x) = y_h \in V_h$, then $x = y_h - T_h \circ G(x) \in V_h$. Hence $\widehat F_h^{-1}(V_h) \subset V_h$.  Notice that \eqref{eq:approximation_main_Phi} is a direct consequence of \eqref{eq:second_example_Phi}. Moreover, for any $x,y \in B_{V_h}(\tilde x_h,r)$, using \ref{item:second_example_i} and \ref{item:second_example_vi}, we deduce
 \begin{align*}
  \norm{\Phi_h(x) - \Phi_h(y)}{V_h} \leq \norm{T_h}{\cL(W_h,W)} \norm{Q_h(x) - Q_h(y)}{W_h} \leq \Lambda L(h,r) \norm{x - y}{V_h}.
 \end{align*}
 So that \eqref{eq:approximation_main_Psi} holds.
\end{proof}

\section{Generalized differential for Nemytskii operators on Lebesgue spaces}
\label{section:nemytskii}

In this section we consider a generalized differential for the Nemytskii operator over $L^p$ spaces. The construction is based on Clarke's generalized Jacobian and we prove that it inherits some of its main properties. It seems that this construction was first introduced in \cite{U2002}, see also \cite[Section 3.3]{U2011}. The main result of this section is \cref{thm:nemytskii_prediff} below, which will be useful to apply our extensions of the BRR theorem in the applications considered in \cref{section:application}. The reader may admit \cref{thm:nemytskii_prediff} and continue directly with the next section. The necessary facts are recalled in \cref{section:set_valued}.

\subsection{Clarke's generalized differentials}
\label{section:clarke}

We first recall the definition of Clarke's subdifferential and its extensions. We refer the reader to \cite[Chapter 2]{C1990} for more information on this topic. For a locally Lipschitzian function $f \colon \RR^d \to \RR$, its \emph{Clarke generalized directional derivative} $f^\circ \colon \RR^d \times \RR^d \to \RR$ is defined by
  \begin{equation}
   f^\circ(z,\upsilon) = \limsup_{\substack{y \to z \\ \tau \searrow 0}} \frac{f(y + \tau \upsilon) - f(y)}{\tau}.
  \end{equation}
The \emph{Clarke's subdifferential} of $f$ at $z \in \RR^d$ is then defined as the set
\begin{equation}\label{eq:subdiff_def}
 \partial^C f(z) = \left \{ \xi \in \RR^d : f^\circ(z,\upsilon) \geq \xi \cdot \upsilon \, \textnormal{ for all } \upsilon \in \RR^d \right \},
\end{equation}
but is also characterized by
\begin{equation}\label{eq:clarke_characterization}
 \partial^C f(z) = \co \left \{ \xi \in \RR^{d} : \exists (z_n)_{n \geq 0} \subset \RR^d \setminus \mathcal{N}, \, \xi = \lim_{n \to \infty} \nabla f(z_n) \right \},
\end{equation}
where $\cN \subset \RR^d$ is a negligible set\footnote{The set $\mathcal{N}$ is known to exist because of Rademarcher's theorem \cite[Theorem 3.2]{EG2015}. Although it may seem like this definition depends on the choice of the set $\mathcal{N}$, it was proved in \cite[Theorem 4]{W1981} that this is not the case.} such that $f$ is differentiable on $\RR^d \setminus \cN$.
The latter has a natural extension to vectorial functions $f \colon \RR^d \to \RR^n$, in this case its \emph{Clarke generalized Jacobian}, also denoted by $\partial^C f$, is defined through
\begin{equation}\label{eq:clarke_jacobian}
 \partial^C f(z) = \co \left \{ M \in \RR^{n \times d} : \exists (z_n)_{n \geq 0} \subset \RR^d \setminus \mathcal{N}, \, M = \lim_{n \to \infty} Jf(z_n) \right \},
\end{equation}
where $Jf$ denotes the Jacobian matrix of $f$.

For a locally Lipschitz function $f \colon \RR^d \to \RR$, it follows from \cite[Proposition 2.1.2]{C1990} that the support function\footnote{See \cref{defi:support_function}.} of $\partial^C f$ satisfies
\[
 \sigma_{\partial^Cf(z)}(\upsilon) = f^\circ(z,\upsilon) \quad \textnormal{for all $z,\, \upsilon \in \RR^d$.}
\]
In the case of vectorial functions, we have the following characterization for the support function of Clarke's generalized Jacobian.
\begin{thm}[{\cite{I2002}}]\label{thm:imbert}
  Let $f \colon \RR^d \to \RR^n$, with $n > 1$, be locally Lipschitz continuous. Then, for every $z_0 \in \RR^d$ and $M \in \RR^{n \times d}$, the support function of $\partial^C f$ is characterized by
  \[
   \sigma_{\partial^C f(z_0)}(M) = \limsup_{\substack{z \to z_0 \\ \epsilon \searrow 0}} \frac{1}{\epsilon^d} \int_{\partial P_\epsilon(z)} f(y) \cdot M \nu(y) \cH^{d-1}(dy),
  \]
  where
  \[
   P_\epsilon(z) := \left \{ z + \epsilon \sum_{i=1}^d  t_i e_i : \, t_i \in [0,1] \quad \textnormal{for all } 1 \leq i \leq d \right \}
  \]
  is a $d$-dimensional hypercube, $\nu$ is the outer normal vector to $P_\epsilon(z)$ and $\cH^{d-1}$ is the $(d-1)$-dimensional Hausdorff measure.
\end{thm}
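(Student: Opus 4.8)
The plan is, after fixing $M \in \RR^{n\times d}$, to convert the boundary integral in the statement into an average of the scalar function $h(y) := \langle Jf(y), M\rangle = \sum_{i,j} M_{ij}\partial_j f_i(y)$ (defined wherever $Jf(y)$ exists, so that $h \in L^\infty_{\loc}(\RR^d)$) over the cube $P_\epsilon(z)$, by means of the Gauss--Green formula, and then to identify the resulting upper limit with the support function of $\partial^C f(z_0)$. For the first reduction, note that each $f_i$ lies in $W^{1,\infty}_{\loc}(\RR^d)$ and is continuous, and that $P_\epsilon(z)$ is a bounded Lipschitz domain, so the Gauss--Green formula (see, e.g., \cite{EG2015}) gives $\int_{P_\epsilon(z)}\partial_j f_i\,dy = \int_{\partial P_\epsilon(z)} f_i\,\nu_j\,\cH^{d-1}(dy)$ for all $i,j$; multiplying by $M_{ij}$, summing over $i,j$, and dividing by $\epsilon^d = |P_\epsilon(z)|$ gives
\[
 \frac{1}{\epsilon^d}\int_{\partial P_\epsilon(z)} f(y)\cdot M\nu(y)\,\cH^{d-1}(dy) = \frac{1}{\epsilon^d}\int_{P_\epsilon(z)} h(y)\,dy .
\]
It therefore suffices to prove that $\limsup_{z\to z_0,\,\epsilon\searrow 0}\frac{1}{\epsilon^d}\int_{P_\epsilon(z)} h\,dy = \sigma_{\partial^C f(z_0)}(M)$.

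Next I would express the support function as an essential upper limit. The set $\Omega$ of limiting Jacobians occurring in \eqref{eq:clarke_jacobian} is compact --- bounded by the local Lipschitz constant of $f$, and closed by a diagonal argument --- so, since a set and its convex hull have the same support function, $\sigma_{\partial^C f(z_0)}(M) = \max_{A\in\Omega}\langle A, M\rangle = \limsup_{y\to z_0,\,y\notin\cN} h(y)$. By \cite{W1981}, the generalized Jacobian $\partial^C f(z_0)$ is unchanged when $\cN$ is enlarged by a further negligible set; since $Jf \in L^\infty_{\loc}$ is approximately continuous a.e., we may enlarge $\cN$ so that every $y \in \RR^d\setminus\cN$ is a point of approximate continuity of $Jf$, hence of $h$. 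For this representative one checks that $\operatorname{ess\,sup}_{B(z_0,r)} h = \sup\{h(y) : y \in B(z_0,r)\setminus\cN\}$ for every $r>0$ --- ``$\le$'' because an approximate-continuity point $y$ satisfies $h(y) = \lim_{\rho\to 0}\frac{1}{|B(y,\rho)|}\int_{B(y,\rho)} h \le \operatorname{ess\,sup}_{B(z_0,r)} h$, and ``$\ge$'' because any positive-measure subset of $B(z_0,r)$ meets $\cN^{\setcomp}$. Taking the infimum over $r$ yields $\sigma_{\partial^C f(z_0)}(M) = S$, where $S := \inf_{r>0}\operatorname{ess\,sup}_{B(z_0,r)} h$ is the essential upper limit of $h$ at $z_0$.

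It then remains to show $\limsup_{z\to z_0,\,\epsilon\searrow 0}\frac{1}{\epsilon^d}\int_{P_\epsilon(z)} h = S$. For ``$\le$'': as soon as $z$ is close to $z_0$ and $\epsilon$ is small, $P_\epsilon(z) \subset B(z_0,r)$, so $\frac{1}{\epsilon^d}\int_{P_\epsilon(z)} h \le \operatorname{ess\,sup}_{B(z_0,r)} h$; passing to the upper limit and then letting $r\to 0$ gives the bound. For ``$\ge$'': fix $\eta,r > 0$; since $\operatorname{ess\,sup}_{B(z_0,r)} h > S-\eta$, the set $\{h > S-\eta\}\cap B(z_0,r)$ has positive measure and hence contains a Lebesgue point $y$ of $h$; the cubes $P_\epsilon(y-\tfrac\epsilon2(1,\dots,1))$ are centred at $y$ and shrink nicely to it, so the Lebesgue differentiation theorem gives $\frac{1}{\epsilon^d}\int_{P_\epsilon(y-\frac\epsilon2(1,\dots,1))} h \to h(y) > S-\eta$ as $\epsilon\searrow 0$, while $y-\tfrac\epsilon2(1,\dots,1)$ stays within $O(r+\epsilon)$ of $z_0$. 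Letting $r,\eta,\epsilon\to 0$ along a diagonal produces cubes whose averages eventually exceed $S - 2\eta$, whence the upper limit is $\ge S$. Combining this with the two preceding steps proves the theorem.

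The main obstacle is the second step, and precisely the identity $\limsup_{y\to z_0,\,y\notin\cN} h(y) = S$: the definition of the generalized Jacobian only supplies the pointwise upper limit over points of differentiability, whereas the quantity in the statement, once written as a volume integral, is insensitive to the behaviour of $Jf$ on negligible sets. Bridging this gap is exactly the content of the invariance of $\partial^C f$ under enlargement of the exceptional null set, established in \cite{W1981}, which allows us to compute with the approximately continuous representative of $Jf$. The remaining ingredients --- the Gauss--Green formula on a cube, the Lebesgue differentiation theorem for the regular family of cubes, and the joint passage to the limit in $(z,\epsilon)$ --- are routine.
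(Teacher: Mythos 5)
The paper never proves this statement: it is quoted verbatim from Imbert \cite{I2002}, so there is no internal proof to compare against. Your argument is a correct self-contained derivation, and it follows what is essentially the natural (and, to the best of my knowledge, Imbert's own) route: Gauss--Green on the cube turns the boundary flux into the solid average of $h=\langle Jf,M\rangle$, which is legitimate since each $f_i\in W^{1,\infty}_{\loc}$ and the trace of a Lipschitz function on $\partial P_\epsilon(z)$ is just its restriction; the compactness of the set of limiting Jacobians plus $\sigma_C=\sigma_{\co C}$ identifies $\sigma_{\partial^C f(z_0)}(M)$ with $\limsup_{y\to z_0,\,y\notin\cN}h(y)$; the null-set invariance of the generalized Jacobian lets you enlarge $\cN$ by the non-Lebesgue points of $h$, which is exactly what converts that pointwise upper limit into the essential upper limit $S$ (both inequalities in your $\operatorname{ess\,sup}$ versus $\sup$ over $B(z_0,r)\setminus\cN$ check out, the ``$\le$'' using that small balls around interior points stay inside $B(z_0,r)$); and the Lebesgue differentiation theorem for the regular family of concentric cubes, applied at a Lebesgue point of $\{h>S-\eta\}\cap B(z_0,r)$, gives the matching lower bound for the joint limsup in $(z,\epsilon)$, the upper bound being immediate from $P_\epsilon(z)\subset B(z_0,r)$. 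Two small points worth stating precisely if you write this up: the invariance under enlargement of the exceptional null set is needed for the \emph{vectorial} generalized Jacobian, i.e.\ \cite[Proposition 2.6.4]{C1990} (the paper's footnote attributes the scalar statement to \cite[Theorem 4]{W1981}), and the differentiation along cubes $P_\epsilon(y-\tfrac{\epsilon}{2}(1,\dots,1))$ should be justified by the standard ``nicely shrinking sets'' version of the Lebesgue theorem; neither is a gap, just a citation to make explicit. Note also that your proof nowhere uses $n>1$, consistent with the fact that the restriction in the statement only serves to separate this case from the scalar formula $\sigma_{\partial^C f(z)}(\upsilon)=f^\circ(z,\upsilon)$ quoted earlier in the paper.
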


We also recall the properties of the generalized Jacobian which will be used below.
\begin{prop}[{\cite[Proposition 2.6.2]{C1990}}]\label{prop:clarke_basic_properties}
 Let $f \colon \RR^d \to \RR^n$ be locally Lipschitz continuous. Then
 \begin{enumerate}[label={\rm(\roman*)}]
  \item for every $z \in \RR^d$, the set $\partial^C f(z)$ is non-empty, convex and compact;
  \item \label{item:clarke_semicontinuity} the set-valued map $\RR^d \ni z \mapsto \partial^C f(z) \subset \RR^{n \times d}$ is upper semicontinuous. More precisely, for every $\bar z \in \RR^d$ and $\epsilon > 0$, there is $\delta > 0$ such that
  \[
   \partial^C f(z) \subset \partial^C f(\bar z) + B_{\RR^{n \times d}}(0,\epsilon) \quad \textnormal{whenever } \module{z - \bar z} < \delta.
  \]
 \end{enumerate}
\end{prop}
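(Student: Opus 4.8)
The plan is to reduce both statements to properties of the ``limiting Jacobian'' set
\[
 K(z) := \left \{ M \in \RR^{n \times d} : \exists\, (z_k)_k \subset \RR^d \setminus \cN \textnormal{ with } z_k \to z \textnormal{ and } Jf(z_k) \to M \right \},
\]
where $\cN$ is the fixed negligible set outside of which $f$ is differentiable, so that $\partial^C f(z) = \co K(z)$ by \eqref{eq:clarke_jacobian}. Once $K(z)$ is shown to be nonempty and compact, everything reduces to elementary compactness in the finite-dimensional space $\RR^{n \times d}$.

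\textbf{Part (i).} Fix $z$ and $\rho > 0$, and let $\ell$ be a Lipschitz constant of $f$ on $B_{\RR^d}(z,\rho)$, so that $\module{Jf(w)} \leq \ell$ at every differentiability point $w \in B_{\RR^d}(z,\rho)$. Since a Lebesgue-negligible set has empty interior, $\RR^d \setminus \cN$ is dense, so there is a sequence $z_k \to z$ in $\RR^d \setminus \cN$; extracting a convergent subsequence of $(Jf(z_k))_k$ (bounded by $\ell$) via Bolzano--Weierstrass gives $K(z) \neq \emptyset$, hence $\partial^C f(z) \neq \emptyset$. The same bound shows $K(z)$ is bounded. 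For closedness, if $K(z) \ni M_m \to M$, pick for each $m$ a point $w_m \in \RR^d \setminus \cN$ with $\module{w_m - z} < 1/m$ and $\module{Jf(w_m) - M_m} < 1/m$; then $w_m \to z$ and $Jf(w_m) \to M$, so $M \in K(z)$. Thus $K(z)$ is compact, and $\partial^C f(z) = \co K(z)$ is convex by construction and compact because, by Carath\'eodory's theorem, it is the image of the compact set $\Delta_{nd} \times K(z)^{nd+1}$ (with $\Delta_{nd}$ the standard simplex) under the continuous barycentric map $(\lambda, M_0, \dots, M_{nd}) \mapsto \sum_{i} \lambda_i M_i$.

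\textbf{Part (ii).} I argue by contradiction, which automatically yields the quantitative form. If the conclusion fails for some $\bar z$ and some $\epsilon > 0$, there exist $z_j \to \bar z$ and $M_j \in \partial^C f(z_j)$ with $d(M_j, \partial^C f(\bar z)) \geq \epsilon$. For $j$ large all $z_j$ lie in a fixed ball around $\bar z$, so $\module{M_j} \leq \ell$, and up to a subsequence $M_j \to M$ with $d(M, \partial^C f(\bar z)) \geq \epsilon$. By Carath\'eodory, $M_j = \sum_{i=0}^{nd} \lambda_i^j M_i^j$ with $\lambda_i^j \geq 0$, $\sum_i \lambda_i^j = 1$ and $M_i^j \in K(z_j)$; passing to a further subsequence, $\lambda_i^j \to \lambda_i \geq 0$ with $\sum_i \lambda_i = 1$ and $M_i^j \to M_i$ for each $i$. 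The crucial step is that $M_i \in K(\bar z)$: this is the moving-base-point version of the closedness argument, i.e.\ for each $j$ choose $w_i^j \in \RR^d \setminus \cN$ with $\module{w_i^j - z_j} < 1/j$ and $\module{Jf(w_i^j) - M_i^j} < 1/j$, so that $w_i^j \to \bar z$ and $Jf(w_i^j) \to M_i$. Hence $M = \sum_i \lambda_i M_i \in \co K(\bar z) = \partial^C f(\bar z)$, contradicting $d(M, \partial^C f(\bar z)) \geq \epsilon$.

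The only genuinely delicate points are the two diagonal extractions — the closedness of $K(z)$ and the upper semicontinuity $K(z_j) \ni M_i^j \to M_i \Rightarrow M_i \in K(\bar z)$ — which hinge on the density of $\RR^d \setminus \cN$ and on a careful interleaving of the two limiting processes $w \to z$ and $Jf(w) \to M$; all the rest is definitional or routine finite-dimensional compactness. Alternatively, one may simply invoke \cite[Proposition 2.6.2]{C1990}. The same argument also establishes the analogous statement for the scalar object in \eqref{eq:clarke_characterization}.
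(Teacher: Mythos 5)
Your proof is correct. The paper does not prove this statement at all---it simply cites \cite[Proposition 2.6.2]{C1990}---and your argument is essentially the standard one found there: local boundedness and closedness of the set $K(z)$ of limiting Jacobians, Carath\'eodory's theorem to get compactness of the convex hull, and a diagonal (moving-base-point) extraction for the upper semicontinuity. One minor remark: the displayed formula \eqref{eq:clarke_jacobian} in the paper omits the requirement $z_n \to z$, which you correctly reinstate in your definition of $K(z)$; with that intended reading, both delicate steps you flag (closedness of $K(z)$ and $K(z_j) \ni M_i^j \to M_i \in K(\bar z)$) are handled correctly.
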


\begin{prop}[Vectorial mean value theorem, {\cite[Proposition 2.6.5]{C1990}}]\label{prop:clarke_mean_value}
Let $f \colon \RR^d \to \RR^n$ be locally Lipschitz continuous. Then, for any $x,y \in \RR^d$, one has
\[
 f(x) - f(y) \in \co \left ( \partial^C f([x,y]) \right )(x-y).
\]
\end{prop}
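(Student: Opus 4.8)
The plan is to reduce the vectorial statement to a one‑dimensional mean value inequality by a separation argument, and then to invoke (or reprove) Lebourg's scalar mean value theorem for the Clarke subdifferential. Throughout, $[x,y]$ denotes the segment joining $x$ and $y$ and $\partial^C f([x,y]) = \bigcup_{z\in[x,y]}\partial^C f(z)$.

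First I would check that $E := \co\big(\partial^C f([x,y])\big)(x-y) = \{M(x-y): M\in\co(\partial^C f([x,y]))\}$ is a compact convex subset of $\RR^n$. Indeed $\partial^C f([x,y])$ is bounded (because $f$ is locally Lipschitz, so $\partial^C f$ is locally bounded, and $[x,y]$ is compact) and closed (because $\partial^C f$ has compact values and is upper semicontinuous by \cref{prop:clarke_basic_properties}), hence compact; its convex hull is then compact and convex by Carathéodory's theorem, and $E$ is the image of this set under the linear map $M\mapsto M(x-y)$. Since $E$ is nonempty, closed and convex, $f(x)-f(y)\in E$ if and only if $\langle\zeta, f(x)-f(y)\rangle\le\sigma_E(\zeta)$ for every $\zeta\in\RR^n$, where, the convex hull not changing the supremum of a linear functional, $\sigma_E(\zeta)=\sup_{M\in\partial^C f([x,y])}\langle\zeta, M(x-y)\rangle$.

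So fix $\zeta\in\RR^n$ and put $g:=\langle\zeta, f(\cdot)\rangle\colon\RR^d\to\RR$, which is locally Lipschitz. Using the standard scalarization rule $\partial^C g(z)=\zeta^\top\partial^C f(z):=\{\zeta^\top M: M\in\partial^C f(z)\}$ — a direct consequence of the characterization \eqref{eq:clarke_jacobian} together with the fact, recalled in the footnote there, that the construction does not depend on the negligible set chosen — one gets, for $z\in[x,y]$ and $\xi=\zeta^\top M\in\partial^C g(z)$, that $\langle\xi, x-y\rangle=\langle\zeta, M(x-y)\rangle$, whence
\[
 \sup_{z\in[x,y]}\ \sup_{\xi\in\partial^C g(z)}\langle\xi, x-y\rangle \;=\; \sup_{M\in\partial^C f([x,y])}\langle\zeta, M(x-y)\rangle \;=\; \sigma_E(\zeta).
\]
It then suffices to produce $c\in[x,y]$ and $\xi\in\partial^C g(c)$ with $g(x)-g(y)=\langle\xi,x-y\rangle$: this gives $\langle\zeta, f(x)-f(y)\rangle=g(x)-g(y)=\langle\xi, x-y\rangle\le\sigma_E(\zeta)$, and letting $\zeta$ range over $\RR^n$ finishes the proof. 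This existence statement is exactly Lebourg's scalar mean value theorem for $g$ on $[x,y]$; to prove it I would set $\psi(t):=g\big(y+t(x-y)\big)+t\big(g(y)-g(x)\big)$ for $t\in[0,1]$, note that $\psi$ is Lipschitz with $\psi(0)=\psi(1)$, hence attains an interior extremum at some $t_0\in(0,1)$ (trivially if $\psi$ is constant), and observe that $0\in\partial^C\psi(t_0)$ — which is immediate from the definition \eqref{eq:subdiff_def}, since at a local minimum $\psi^\circ(t_0,v)\ge\limsup_{\tau\searrow 0}\tau^{-1}\big(\psi(t_0+\tau v)-\psi(t_0)\big)\ge 0$ for all $v$, and at a local maximum one argues with $-\psi$. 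A chain rule along the affine path $t\mapsto y+t(x-y)$ then yields $\partial^C\psi(t_0)\subset\{\langle\xi, x-y\rangle+(g(y)-g(x)): \xi\in\partial^C g(c)\}$ with $c:=y+t_0(x-y)\in[x,y]$, and $0\in\partial^C\psi(t_0)$ produces the desired $\xi$.

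The only genuinely delicate step is this last chain rule along the segment (equivalently, controlling $\partial^C(g\circ\phi)$ for the affine path $\phi(t)=y+t(x-y)$). One cannot simply differentiate $g$ along the line: the set where $f$ — hence $g$ — fails to be differentiable, although Lebesgue‑negligible in $\RR^d$, may meet $[x,y]$ in a set of full one‑dimensional measure, so $\psi'$ need not coincide with any genuine directional derivative of $g$. The argument must therefore be carried out at the level of generalized directional derivatives: from the affine (in particular $C^1$) character of $\phi$ and the local Lipschitz continuity of $f$ one shows $(g\circ\phi)^\circ(t_0,1)\le g^\circ(\phi(t_0), x-y)=\sigma_{\partial^C g(\phi(t_0))}(x-y)$ — using the scalar support‑function identity recalled in the excerpt — and then passes back from directional derivatives to subdifferentials via the description of a closed convex set as the intersection of its supporting half‑spaces. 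Everything else (Hahn--Banach separation, Carathéodory's theorem, compactness of $\partial^C f([x,y])$, Fermat's rule, the sum rule for a smooth perturbation) is routine.
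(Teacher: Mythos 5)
Your proof is correct: the paper itself does not prove this proposition (it is quoted from \cite[Proposition 2.6.5]{C1990}), and your argument — separation via the support function of the compact convex set $\co\left(\partial^C f([x,y])\right)(x-y)$, scalarization of the generalized Jacobian, and Lebourg's mean value theorem along the segment with the chain rule for the affine path — is essentially the standard proof in that reference. The only remark worth making is that your scalarization step needs just the inclusion $\partial^C\langle\zeta,f\rangle(z)\subseteq\zeta^\top\partial^C f(z)$ (which holds, in fact with equality, by the gradient-limit characterization \eqref{eq:clarke_jacobian} and the independence of the exceptional null set), so the argument is sound as written.
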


\begin{lem}\label{lem:graph_clarke}
 Let $(\Omega, \cF)$ be a measurable space and let $f \colon \Omega \times \RR^d \to \RR^n$ be such that
 \begin{enumerate}[label={\rm (\roman*)}]
  \item the function $f_z \colon \omega \mapsto f(\omega,z)$ is $(\cF, \cB(\RR^d))$-measurable for every $z \in \RR^d$,
  \item the function $f_\omega \colon z \mapsto f(\omega,z)$ is locally Lipschitz continuous for every $\omega \in \Omega$.
 \end{enumerate}
 Then the set-valued map $\Omega \times \RR^d \ni (\omega,z) \mapsto \partial^C f(\omega,z) \subset \RR^{n \times d}$ is $\cF \otimes \cB(\RR^d)$-measurable, where $\partial^C f(\omega,z) := \partial^C f_\omega(z)$ for every $(\omega,z) \in \Omega \times \RR^d$. Moreover, if $\cF$ is complete with respect to some measure $\mu$, then
 \[
  \Omega \ni \omega \mapsto \gph(\partial^C f(\omega,\cdot)) \subset \RR^d \times \RR^{n \times d} \quad \textnormal{is $\cF$-measurable.}
 \]
\end{lem}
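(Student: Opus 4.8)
The plan is to prove the two assertions separately. For the joint measurability of $(\omega,z)\mapsto\partial^C f(\omega,z)$, I would reduce it to a scalar statement through support functions; for the $\cF$-measurability of $\omega\mapsto\gph(\partial^C f(\omega,\cdot))$ I would invoke the measurable projection theorem, and this is precisely where the $\mu$-completeness of $\cF$ enters.

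\textbf{Joint measurability.} Since each value $\partial^C f(\omega,z)$ is nonempty, compact and convex by \cref{prop:clarke_basic_properties}, the support-function characterization of measurability for compact-convex-valued multimaps (recalled in \cref{section:set_valued}) reduces the problem to showing that, for each $M$ in a fixed countable dense subset of $\RR^{n\times d}$, the scalar map $(\omega,z)\mapsto\sigma_{\partial^C f(\omega,z)}(M)$ is $\cF\otimes\cB(\RR^d)$-measurable. When $n\ge 2$ I would use \cref{thm:imbert}, which writes this as $\limsup_{z'\to z,\,\epsilon\searrow 0}g_\omega(z',\epsilon)$ with $g_\omega(z',\epsilon)=\epsilon^{-d}\int_{\partial P_\epsilon(z')}f(\omega,y)\cdot M\nu(y)\,\cH^{d-1}(dy)$; when $n=1$ I would use instead $\sigma_{\partial^C f_\omega(z)}(\upsilon)=f_\omega^\circ(z,\upsilon)=\limsup_{y\to z,\,\tau\searrow 0}\tau^{-1}\big(f(\omega,y+\tau\upsilon)-f(\omega,y)\big)$ (recalled after \eqref{eq:clarke_jacobian}). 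The key observation is that, for fixed $\omega$, the relevant auxiliary function ($g_\omega$, resp. the difference quotient) is continuous in its auxiliary variables on $\RR^d\times(0,\infty)$ — because $f_\omega$ is continuous and, in the first case, after the change of variables $y=z'+\epsilon w$ the integration runs over the fixed unit-cube boundary. Hence the $\limsup$ is unchanged if the auxiliary variables are restricted to rationals, so that
\[
 \sigma_{\partial^C f_\omega(z)}(M)=\inf_{k\ge 1}\ \sup\big\{g_\omega(z',\epsilon):(z',\epsilon)\in\QQ^d\times\QQ_{>0},\ |z'-z|<1/k,\ \epsilon<1/k\big\},
\]
a countable infimum of countable suprema.

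It then remains to check that, for each fixed rational $(z',\epsilon)$, the map $\omega\mapsto g_\omega(z',\epsilon)$ is $\cF$-measurable: this holds because it is a pointwise limit of Riemann sums, each a finite linear combination of the maps $\omega\mapsto f(\omega,y)$, which are $\cF$-measurable by hypothesis (in the case $n=1$ the difference quotient is directly $\cF$-measurable in $\omega$). Consequently each map $(\omega,z)\mapsto g_\omega(z',\epsilon)\,\mathbf{1}_{\{|z'-z|<1/k\}}\mathbf{1}_{\{\epsilon<1/k\}}$ — a product of a function of $\omega$ alone and a Borel function of $z$ alone — is $\cF\otimes\cB(\RR^d)$-measurable, and so is the resulting countable $\inf_k\sup_{(z',\epsilon)}$. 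This proves the scalar claim, hence the first assertion.

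\textbf{Measurability of the graph.} Write $H(\omega,z):=\partial^C f(\omega,z)$; by the first part $H$ is an $\cF\otimes\cB(\RR^d)$-measurable multimap with compact values, and for each $\omega$ the set $\gph(\partial^C f(\omega,\cdot))$ is nonempty and closed in $\RR^d\times\RR^{n\times d}$ (closedness from the upper semicontinuity and local boundedness in \cref{prop:clarke_basic_properties}). Since $(\omega,z,M)\mapsto d(M,H(\omega,z))$ is measurable in $(\omega,z)$ and continuous in $M$, it is $\cF\otimes\cB(\RR^d\times\RR^{n\times d})$-measurable, so its zero set $E=\{(\omega,z,M):M\in H(\omega,z)\}$ lies in $\cF\otimes\cB(\RR^d\times\RR^{n\times d})$. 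For any open $W\subset\RR^d\times\RR^{n\times d}$ one has
\[
 \big\{\omega:\gph(\partial^C f(\omega,\cdot))\cap W\ne\emptyset\big\}=\operatorname{proj}_\Omega\big(E\cap(\Omega\times W)\big),
\]
and since $\cF$ is $\mu$-complete the measurable projection theorem (\cref{section:set_valued}) puts this set in $\cF$; thus $\omega\mapsto\gph(\partial^C f(\omega,\cdot))$ is $\cF$-measurable. The main obstacle is the first assertion — specifically, turning the continuum $\limsup$ of \cref{thm:imbert} (or of $f^\circ$) into a countable operation: one must use both the continuity of the boundary integral in the auxiliary variables (to pass to rationals) and its measurability in $\omega$ for fixed rational data (a Fubini/Riemann-sum argument); everything else is bookkeeping with product $\sigma$-algebras, and the second assertion is then a routine application of measurable projection.
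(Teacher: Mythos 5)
Your proof is correct and follows essentially the same route as the paper: joint measurability is reduced via the support-function criterion (\cref{prop:support_measurable}) to the scalar maps $(\omega,z)\mapsto\sigma_{\partial^C f(\omega,z)}(M)$, which are treated through the generalized directional derivative when $n=1$ and through \cref{thm:imbert} (after rescaling to the unit cube) when $n>1$, and the second assertion is obtained by placing the graph in $\cF\otimes\cB(\RR^d)\otimes\cB(\RR^{n\times d})$ and then using the completeness of $\cF$. Two small remarks. First, the ``measurable projection theorem'' you cite is not actually stated in \cref{section:set_valued}; the paper instead applies the converse direction of \cref{thm:graph_measurability} to the map $\omega\mapsto\gph(\partial^C f(\omega,\cdot))$, which is precisely the projection-type tool you need, so your argument stands once the citation is corrected (your preliminary step, identifying the set $E$ as the graph of the jointly measurable multimap and checking closedness of each $\gph(\partial^C f(\omega,\cdot))$ via \cref{prop:clarke_basic_properties}, matches the paper's first application of \cref{thm:graph_measurability}). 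Second, in your countable reduction of the $\limsup$, the supremum of $g_\omega(z',\epsilon)\,\mathbf{1}_{\{|z'-z|<1/k\}}\mathbf{1}_{\{\epsilon<1/k\}}$ over all rational pairs equals $\max\{\sup_{\textnormal{window}} g_\omega,\,0\}$, which can differ from the desired supremum when the latter is negative; assign the value $-\infty$ outside the window (or restrict the index set to the window) to get the exact identity — a cosmetic fix that does not affect the argument, which otherwise supplies the countability and $\omega$-measurability details (rational auxiliary variables, Riemann-sum/Fubini argument) that the paper leaves implicit after invoking \cref{prop:caratheodory_measurable}.
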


\begin{proof}
Let us first prove the measurability of $(\omega,z) \mapsto \partial^C f(\omega,z)$.
Notice that $f$ is a Carathéodory function, so that \cref{prop:caratheodory_measurable} implies that it is ($\cF \otimes \cB(\RR^d), \cB(\RR^{n})$)-measurable.

 Let us start by the case $n=1$. Since
 \[
  f^\circ(\omega,z;\upsilon) = \limsup_{\substack{\module{h} \to 0 \\ \tau \searrow 0}} \frac{f(\omega, z +h + \tau \upsilon) - f(\omega,z +h)}{\tau},
 \]
 we have that, for every $\upsilon \in \RR^d$, the map $(\omega,z) \mapsto f^\circ(\omega,z;\upsilon)$ is $\cF \otimes \cB(\RR^d)$-measurable.
 Since $f^\circ(\omega,z;\upsilon) = \sigma_{\partial^C f(\omega,z)}(\upsilon)$ and $\partial^Cf (\omega,z)$ is compact for every $(\omega,z) \in \Omega \times \RR^d$, the conclusion follows from \cref{prop:support_measurable}.

 We turn to the case $n > 1$. Using the properties of Hausdorff measures \cite[Theorem 2.2 p.84]{EG2015}, we have
 \[
  \frac{1}{\epsilon^d} \int_{\partial P_\epsilon(z)} f(\omega,y) \cdot M \nu(y) \cH^{d-1}(dy) = \frac{1}{\epsilon} \int_{\partial P_1(0)} f(\omega,z + \epsilon y) \cdot M \nu(z + \epsilon y) \cH^{d-1}(dy),
 \]
 where the notations are those of \cref{thm:imbert}. Then, using \cref{thm:imbert}, we get
 \[
  \sigma_{\partial^C f(\omega,z_0)}(M) = \limsup_{\substack{h \to 0 \\ \epsilon \searrow 0}} \frac{1}{\epsilon} \int_{\partial P_1(0)} f(\omega,z_0 + h + \epsilon y) \cdot M \nu(z_0 + h + \epsilon y) \cH^{d-1}(dy)
 \]
 and it follows that $\Omega \times \RR^d \ni (\omega,z) \mapsto \sigma_{\partial^C f(\omega,z)}(M) \in \RR$ is $\cF \otimes \cB(\RR^d)$-measurable for every $M \in \RR^{n \times d}$. Again, the conclusion follows from \cref{prop:support_measurable}.

 We turn to the measurability of $\omega \mapsto \gph(\partial^C f(\omega,\cdot)) \subset \RR^d \times \RR^{n \times d}$. From \cref{thm:graph_measurability} We have that
 \[
  \gph(\partial^Cf(\cdot,\cdot)) = \gph(\omega \mapsto \gph(\partial^C f(\omega,\cdot))) \subset \cF \otimes \cB(\RR^d) \otimes \cB(\RR^{n\times d}).
 \]
 The conclusion then follows from another application of \cref{thm:graph_measurability}, using the completeness of $\cF$.
\end{proof}

\subsection{Generalized differential for Nemytskii operators}
\label{section:Nemytskii}

We now turn to the definition and the analysis of the generalized differential for Nemytskii operators defined on Lebesgue spaces. We fix a complete finite measure space $(\Omega, \cF, \mu)$  and we consider a function $H \colon \Omega \times \RR^d \to \RR^n$. We list below all the assumptions on $H$ that will be used in this section.
\begin{enumerate}[label={\bf(N\arabic*)}]
 \item \label{item:nemytskii_lip} The function $H$ is $(\cF \otimes \cB(\RR^d), \cB(\RR^n))$-measurable and $z \mapsto H(x,z)$ is locally Lipschitz continuous for every $x \in \Omega$.

 \item \label{item:nemytskii_growth} There exist positive constants $C_H$ and $ 1 < q \leq p < \infty$ such that
 \begin{equation}\label{eq:nemytskii_growth}
 \module{H(x,z)} \leq C_H \left ( 1 + \module{z}^{p/q} \right ) \quad \textnormal{for all } (x,z) \in \Omega \times \RR^d
\end{equation}
and
\begin{equation}\label{eq:nemytskii_clarke_growth}
 \left ( M \in \partial^C H(x,z) \right ) \Longrightarrow  \left (\module{M} \leq C_H \left ( 1 + \module{z}^{\frac{p}{q}-1} \right ) \right ) \quad \textnormal{for all } (x,z) \in \Omega \times \RR^d,
\end{equation}
where $\partial^C H$ denotes Clarke's generalized Jacobian with respect to the second variable.
\end{enumerate}

For $1 < q \leq p < \infty$ we define the \emph{Nemytskii operator}\footnote{Also called superposition operator.} $\gothH \colon L^p(\Omega;\RR^d) \to L^q(\Omega;\RR^n)$,  defined by
\begin{equation}\label{eq:nemytskii}
  \gothH [u](x) = H(x,u(x)) \quad \textnormal{for every } x \in \Omega.
\end{equation}
Clearly $\gothH$ is well defined as soon as \eqref{eq:nemytskii_growth} holds.

We also recall the following fact.
\begin{prop}[{\cite[Theorem 3.7]{AZ1990}}]\label{prop:nemytskii_continous}
 If \ref{item:nemytskii_lip} and \eqref{eq:nemytskii_growth} hold, then $\gothH \colon  L^p(\Omega;\RR^d) \to L^q(\Omega;\RR^n)$ is continuous.
\end{prop}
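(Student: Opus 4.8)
The plan is to reproduce the classical argument for continuity of superposition operators, resting on the growth bound \eqref{eq:nemytskii_growth} and the dominated convergence theorem; the reference \cite{AZ1990} proves this in greater generality, but the special case stated here admits a short self-contained proof. First I would check that $\gothH$ maps $L^p(\Omega;\RR^d)$ into $L^q(\Omega;\RR^n)$: if $u$ is $\cF$-measurable, then so is $x \mapsto H(x,u(x))$, being the composition of the $(\cF,\, \cF \otimes \cB(\RR^d))$-measurable map $x \mapsto (x,u(x))$ with the jointly measurable function $H$ from \ref{item:nemytskii_lip}; and by \eqref{eq:nemytskii_growth} together with $(a+b)^q \le 2^{q-1}(a^q + b^q)$ one gets $\module{H(x,u(x))}^q \le 2^{q-1} C_H^q\bigl(1 + \module{u(x)}^p\bigr)$, which is $\mu$-integrable since $\mu$ is finite and $u \in L^p(\Omega;\RR^d)$. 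Hence $\gothH[u] \in L^q(\Omega;\RR^n)$.

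For continuity, let $u_k \to u$ in $L^p(\Omega;\RR^d)$ and put $a_k := \norm{\gothH[u_k] - \gothH[u]}{L^q}$. By the subsequence principle it suffices to show that every subsequence of $(a_k)$ has a further subsequence tending to $0$. Given such a subsequence, I would apply the classical lemma underlying the Riesz--Fischer theorem to extract a further subsequence, still denoted $(u_k)$, together with a function $g \in L^p(\Omega)$ such that $u_k \to u$ $\mu$-a.e.\ and $\module{u_k} \le g$ $\mu$-a.e. Since $z \mapsto H(x,z)$ is continuous for every $x$ (the local Lipschitz continuity in \ref{item:nemytskii_lip} is more than enough), it follows that $H(x,u_k(x)) \to H(x,u(x))$ for $\mu$-a.e.\ $x$.

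To conclude I would invoke dominated convergence: by \eqref{eq:nemytskii_growth},
\[
 \module{H(x,u_k(x)) - H(x,u(x))}^q \le \Bigl( \module{H(x,u_k(x))} + \module{H(x,u(x))} \Bigr)^q \le \Bigl( 2 C_H \bigl(1 + g(x)^{p/q}\bigr) \Bigr)^q
\]
for $\mu$-a.e.\ $x$, and the right-hand side lies in $L^1(\Omega)$ because $g \in L^p(\Omega)$ entails $g^{p/q} \in L^q(\Omega)$, hence $\bigl(1 + g^{p/q}\bigr)^q \in L^1(\Omega)$. The dominated convergence theorem then gives $a_k \to 0$ along the chosen subsequence, which is the desired conclusion. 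There is no genuine obstacle here: the only point requiring care is the extraction step, namely the standard fact that $L^p$-convergence yields a subsequence converging $\mu$-a.e.\ and dominated by a fixed element of $L^p$, after which the growth bound reduces everything to an application of the dominated convergence theorem.
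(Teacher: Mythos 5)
Your proof is correct. Note that the paper does not prove this statement at all: it is quoted directly from the literature (Appell--Zabrejko, cited as \cite[Theorem 3.7]{AZ1990}), so there is no internal argument to compare against; your argument -- verify that $\gothH[u]\in L^q$ via the growth bound \eqref{eq:nemytskii_growth} and joint measurability from \ref{item:nemytskii_lip}, then prove sequential continuity by the subsequence principle, extracting an a.e.\ convergent subsequence dominated by some $g\in L^p$ and applying dominated convergence -- is exactly the classical proof of that cited result, and all steps (including the implicit fact that $\module{u}\leq g$ a.e., inherited in the limit) are sound.
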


We also have the following result.
\begin{lem}\label{lem:clarke_measurable}
 Assume \ref{item:nemytskii_lip} and let $u \colon \Omega \to \RR^d$ be any measurable mapping. Then $\partial^C H(\cdot, u(\cdot))$ is measurable with non-empty, convex and compact values.
\end{lem}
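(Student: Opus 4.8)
The goal is to show that for a measurable $u \colon \Omega \to \RR^d$, the set-valued map $x \mapsto \partial^C H(x, u(x))$ is measurable with nonempty, convex, compact values. The plan is as follows.

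\textbf{Values.} The fact that each value $\partial^C H(x, u(x)) = \partial^C H_x(u(x))$ is nonempty, convex and compact is immediate from \cref{prop:clarke_basic_properties}(i) applied to the locally Lipschitz function $z \mapsto H(x,z)$ (which is locally Lipschitz for every $x \in \Omega$ by \ref{item:nemytskii_lip}) at the point $u(x)$.

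\textbf{Measurability.} The key is to combine \cref{lem:graph_clarke} with the measurability of $u$. First I would note that, by \ref{item:nemytskii_lip}, the function $H$ satisfies the hypotheses of \cref{lem:graph_clarke}: it is jointly measurable (in particular $H_z$ is measurable for each fixed $z$) and $H_x$ is locally Lipschitz for each $x$. Hence \cref{lem:graph_clarke} gives that $(x,z) \mapsto \partial^C H(x,z)$ is $\cF \otimes \cB(\RR^d)$-measurable as a set-valued map into $\RR^{n \times d}$. Now the map $x \mapsto (x, u(x))$ from $(\Omega, \cF)$ to $(\Omega \times \RR^d, \cF \otimes \cB(\RR^d))$ is measurable, since $u$ is measurable and $\Omega \ni x \mapsto x$ is trivially measurable. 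Therefore the composition $x \mapsto \partial^C H(x, u(x))$ is the composition of an $\cF$-measurable map with an $\cF \otimes \cB(\RR^d)$-measurable set-valued map, which is $\cF$-measurable. One should check that ``measurability of a set-valued map'' is stable under such composition with a measurable single-valued map on the domain; this is a standard fact (e.g. via the graph characterization: the graph of $x \mapsto \partial^C H(x,u(x))$ is the preimage of $\gph(\partial^C H(\cdot,\cdot))$ under the measurable map $(x,w) \mapsto (x,u(x),w)$) and I expect the paper's \cref{section:set_valued} to provide it.

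\textbf{Main obstacle.} The only genuine subtlety is the bookkeeping around the notion of measurability for set-valued maps and verifying that the relevant composition/preimage statement has been recorded (presumably in \cref{section:set_valued}) — the hard part is really just invoking \cref{lem:graph_clarke} with the correct measurable-space structure and then composing with $u$. Everything else is immediate from the cited properties of Clarke's generalized Jacobian. Once the composition fact is in hand, the proof is essentially one line: apply \cref{lem:graph_clarke}, precompose with $x \mapsto (x,u(x))$, and read off nonemptiness/convexity/compactness of the values from \cref{prop:clarke_basic_properties}.
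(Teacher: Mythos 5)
Your proposal is correct, but it routes the measurability through a different composition mechanism than the paper does. The paper's proof is a one-liner invoking \cref{prop:measurability_tools}-\ref{item:measurability_composition}: it takes $G(\omega,z)=\partial^C H(\omega,z)$ and $F(\omega)=\{u(\omega)\}$, and so it needs the upper semicontinuity of $z\mapsto\partial^C H(\omega,z)$ from \cref{prop:clarke_basic_properties}-\ref{item:clarke_semicontinuity} together with the \emph{second} conclusion of \cref{lem:graph_clarke}, namely the $\cF$-measurability of $\omega\mapsto\gph(\partial^C H(\omega,\cdot))$, which is where completeness of $\cF$ enters. You instead use only the \emph{first} conclusion of \cref{lem:graph_clarke} (joint $\cF\otimes\cB(\RR^d)$-measurability of the set-valued map) and precompose with $x\mapsto(x,u(x))$. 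The composition fact you flag as the "only subtlety" is indeed not recorded in \cref{section:set_valued}, but it is even easier than your graph-based sketch: by the (weak) measurability definition, for every open $O\subset\RR^{n\times d}$ the set $\{x:\partial^C H(x,u(x))\cap O\neq\varnothing\}$ is the preimage, under the $(\cF,\cF\otimes\cB(\RR^d))$-measurable map $x\mapsto(x,u(x))$, of $\{(x,z):\partial^C H(x,z)\cap O\neq\varnothing\}\in\cF\otimes\cB(\RR^d)$, so no completeness and no upper semicontinuity are needed at this step; your graph route also works, using \cref{thm:graph_measurability} in both directions and the completeness assumed in \cref{section:Nemytskii}. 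In short: the paper leans on the appendix tool it had already catalogued, while your argument is slightly more self-contained and uses marginally weaker ingredients; both are valid, and the treatment of the values via \cref{prop:clarke_basic_properties} is identical.
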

\begin{proof}
 This follows from \cref{prop:measurability_tools}-\ref{item:measurability_composition} combined with \cref{prop:clarke_basic_properties} and \cref{lem:graph_clarke}.
\end{proof}

Our goal in this section is to define a generalized differential for the Nemytskii operator $\gothH$. For this, we assume that \ref{item:nemytskii_lip} holds, so that Clarke's generalized Jacobian for $H$ with respect to the second variable exists, and we consider the following object, for every $u \in L^p(\Omega; \RR^d)$,
\begin{equation}\label{eq:Nemytskii_subdiff}
 \partial^\circ \gothH[u] = \left \{ \begin{array}{c} A : \Omega \to \RR^{n \times d} \textnormal{ is } (\cF,\cB(\RR^{n \times d}))\textnormal{-measurable and } \\ A(x) \in \partial^C H(x, u(x)) \textnormal{ for }\mu \textnormal{-a.e. } x \in \Omega \end{array}\right \}.
\end{equation}
Notice that $\partial^\circ \gothH [u] = \partial^\circ \gothH [v]$  whenever $u = v$ $\mu$-almost everywhere.

\begin{prop}\label{prop:nemytskii_basic_properties}
 Let $1 < q \leq p < \infty$, assume \ref{item:nemytskii_lip}, \ref{item:nemytskii_growth} and set
 \begin{equation}\label{eq:nemytskii_exponent}
  r = \begin{cases}
            pq/(p-q) & \quad \textnormal{if } q < p ,\\
            \infty & \quad \textnormal{if } p = q.
           \end{cases}
 \end{equation}
 Then, $\partial^\circ \gothH$ defines a set-valued map from $L^p(\Omega;\RR^d)$ to $L^{r}(\Omega;\RR^{n \times d})$ with $\dom \partial^\circ \gothH = L^p(\Omega; \RR^d)$. Moreover, for every $u \in L^p(\Omega,\RR^d)$, the set $\partial^\circ \gothH [u]$ is convex and closed for the strong topology of $L^{r}(\Omega;\RR^{n \times d})$. Moreover, if $q<p$, it is sequentially compact as a subset\footnote{Recall that since $\frac{1}{q} = \frac{1}{p} + \frac{1}{r}$, any $\xi \in L^r(\Omega;\RR^{n \times d})$ may be identified with the linear operator $L^p(\Omega;\RR^d) \ni v \mapsto \xi v \in L^q(\Omega;\RR^n)$, which is bounded as a consequence of Hölder's inequality.} of $\cL(L^p(\Omega,\RR^d), L^q(\Omega,\RR^n))$ endowed with the weak operator topology.
\end{prop}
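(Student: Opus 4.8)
The plan is to treat the three assertions in turn. \emph{Step 1 (well-definedness and domain).} Fix $u \in L^p(\Omega;\RR^d)$. Nonemptiness of $\partial^\circ\gothH[u]$ follows by invoking a measurable selection theorem: by \cref{lem:clarke_measurable} the multifunction $x \mapsto \partial^C H(x,u(x))$ is measurable with nonempty, convex, compact values, hence admits a measurable selection (see \cref{section:set_valued}), which by definition belongs to $\partial^\circ\gothH[u]$. That every $A \in \partial^\circ\gothH[u]$ lies in $L^r$ is where the choice of $r$ is used: when $q<p$ one has $(p/q-1)\,r = p$, so from \eqref{eq:nemytskii_clarke_growth} and convexity of $t\mapsto t^r$ we get $|A(x)|^r \le 2^{r-1}C_H^r(1+|u(x)|^p)$ for $\mu$-a.e.\ $x$, which is $\mu$-integrable since $u\in L^p$ and $\mu(\Omega)<\infty$; when $p=q$ the exponent $p/q-1$ vanishes and \eqref{eq:nemytskii_clarke_growth} gives $|A(x)|\le 2C_H$ a.e., so $A\in L^\infty$. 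In both cases $\dom\partial^\circ\gothH = L^p(\Omega;\RR^d)$.

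\emph{Step 2 (convexity and strong closedness).} Convexity is immediate from the pointwise convexity of $\partial^C H(x,u(x))$ given by \cref{prop:clarke_basic_properties}: a convex combination of measurable selections is again a measurable selection. For closedness in the strong topology of $L^r$, given $A_k\to A$ in $L^r$ I would pass to a subsequence converging $\mu$-a.e.\ (no subsequence is needed when $r=\infty$) and use that $\partial^C H(x,u(x))$ is closed, being compact by \cref{prop:clarke_basic_properties}, to conclude that $A(x)\in\partial^C H(x,u(x))$ for $\mu$-a.e.\ $x$; a suitable representative of $A$ is then a measurable selection, so $A\in\partial^\circ\gothH[u]$.

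\emph{Step 3 (weak operator sequential compactness when $q<p$).} Each $A\in\partial^\circ\gothH[u]\subset L^r$ is identified with the multiplication operator $v\mapsto Av$ in $\cL(L^p(\Omega;\RR^d),L^q(\Omega;\RR^n))$, and by Step 1 the norms $\|A\|_{L^r}$, hence the operator norms, are bounded uniformly over $A\in\partial^\circ\gothH[u]$ by a constant depending only on $C_H$, $\mu(\Omega)$ and $\|u\|_{L^p}$. Since $q<p$ forces $1<r<\infty$, the space $L^r(\Omega;\RR^{n\times d})$ is reflexive, so a given sequence $(A_k)\subset\partial^\circ\gothH[u]$ has a subsequence $(A_{k_j})$ converging weakly in $L^r$ to some $A$; since $\partial^\circ\gothH[u]$ is convex and strongly closed by Step 2, it is weakly closed, so $A\in\partial^\circ\gothH[u]$. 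It remains to check that $A_{k_j}\to A$ in the weak operator topology: for $v\in L^p(\Omega;\RR^d)$ and $w$ in the dual $L^{q'}(\Omega;\RR^n)$ of $L^q(\Omega;\RR^n)$, one writes $\int_\Omega w(x)\cdot\bigl(A_{k_j}(x)v(x)\bigr)\,d\mu(x) = \int_\Omega \langle A_{k_j}(x),\, w(x)v(x)^{\mathsf T}\rangle\,d\mu(x)$, where $\langle\cdot,\cdot\rangle$ denotes the Euclidean inner product on $\RR^{n\times d}$; a check of Hölder exponents gives $1/q'+1/p = 1-1/r = 1/r'$, so that $wv^{\mathsf T}\in L^{r'}(\Omega;\RR^{n\times d}) = (L^r(\Omega;\RR^{n\times d}))'$, and the weak convergence $A_{k_j}\rightharpoonup A$ in $L^r$ yields $\langle w,A_{k_j}v\rangle\to\langle w,Av\rangle$, which is exactly convergence in the weak operator topology (see \cref{section:WOT}). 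Hence $\partial^\circ\gothH[u]$ is sequentially compact for that topology.

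The routine ingredients are the measurable selection and the elementary growth estimate. The step requiring genuine care is Step 3: one must match Hölder exponents so that the test objects $wv^{\mathsf T}$ land exactly in $(L^r)'$, which is what makes weak $L^r$-convergence coincide with weak operator convergence on this particular set, and one must observe that weak closedness of $\partial^\circ\gothH[u]$ is obtained for free from its convexity together with the strong closedness established in Step 2, which is what keeps the limit inside the set.
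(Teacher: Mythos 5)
Your proof is correct and follows essentially the same route as the paper: measurable selection (via \cref{lem:clarke_measurable} and the Kuratowski--Ryll-Nardzewski theorem) for nonemptiness, the growth bound \eqref{eq:nemytskii_clarke_growth} with the identity $(p/q-1)r=p$ for membership in $L^r$, and pointwise convexity/closedness of $\partial^C H(x,u(x))$ for convexity and strong closedness. The only difference is that in Step 3 you re-derive inline (reflexivity of $L^r$, Mazur, and the Hölder matching $1/q'+1/p=1/r'$) what the paper delegates to \cref{prop:WOT_Lp_compact}, whose proof is the same argument.
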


\begin{proof}
 Let $u \in L^p(\Omega;\RR^d)$. From \cref{lem:clarke_measurable} we know that, for every $x \in \Omega$, the set $\partial^C H(x,u(x))$ is non-empty, convex and compact and that the set-valued mapping $\Omega \ni x \mapsto \partial^C H(x,u(x))$ is measurable. We can apply \cref{thm:KRN} to conclude that $\partial^\circ \gothH [u]$ is non-empty. Since $u$ is arbitrary, we have $\dom \partial^\circ \gothH = L^p(\Omega; \RR^d)$. Let now $A \in \partial^\circ \gothH [u]$. Using the growth condition \eqref{eq:nemytskii_clarke_growth} we have
 \[
  \module{A(x)}^{pq/(p-q)} \leq C \left ( 1 + \module{u(x)}^{p} \right ) \quad \textnormal{for a.e. $x \in \Omega$},
 \]
 so that $\partial^\circ \gothH [u] \subset L^{pq/(p-q)}(\Omega;\RR^{n \times d})$ when $q < p$. Moreover, it is also clear from \eqref{eq:nemytskii_clarke_growth} that $A \in L^\infty(\Omega; \RR^{n \times d})$ when $p =q$. The fact that $\partial^\circ \gothH [u]$ is convex and closed follows from the definition and the fact that $\partial^C H(x,u(x))$ is convex and closed for every $x \in \Omega$.
  The sequential compactness in the weak operator topology is a direct consequence of \cref{prop:WOT_Lp_compact}.
\end{proof}

The following result is a form of mean valued theorem for Nemytskii operators.
\begin{thm}[Mean value theorem]\label{thm:nemytskii_mean_value}
  Let $1 < q \leq p < \infty$, let $r$ be defined by \eqref{eq:nemytskii_exponent} and assume \ref{item:nemytskii_lip}, \ref{item:nemytskii_growth}. For every $u,v \in L^p(\Omega;\RR^d)$ we have
  \begin{equation}\label{eq:nemytskii_mean_value}
   \gothH[u] - \gothH[v] \in \Theta[u,v] (u - v) \quad \textnormal{in } L^{q}(\Omega;\RR^n),
  \end{equation}
  where
  \[
   \Theta[u,v] = \left \{ A \in L^{r}(\Omega;\RR^{n\times d}) : A(x) \in \co \left (\partial^C H(x,[u(x), v(x)]) \right ) \textnormal{ for a.e. } x \in \Omega \right \},
  \]
  with $[u(x), v(x)] = \co \left( \{ u(x), v(x) \} \right)$ and $\partial^C H(x,[u(x), v(x)]) = \bigcup_{z \in [u(x), v(x)]} \left \{\partial^C H(x,z) \right\}$.
  Furthermore, if $A \in \Theta[u,v]$ is such that $\gothH[u] - \gothH[v] = A(u-v)$, then there exist measurable functions $\lambda_1, \dots, \lambda_{nd+1} \colon \Omega \to [0,1]$ and measurable selections $A_1, \dots, A_{nd+1} \colon \Omega \to \RR^{n \times d}$ in $\partial^C H(\cdot,[u(\cdot), v(\cdot)])$ such that that
 \[
  \sum_{i=1}^{nd+1} \lambda_i(x) = 1 \quad \textnormal{and} \quad  A = \sum_{i=1}^{nd+1} \lambda_i A_i \quad \textnormal{a.e. in $\Omega$}.
 \]
\end{thm}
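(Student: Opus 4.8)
The plan is to run the argument pointwise in $x \in \Omega$ using the finite-dimensional vectorial mean value theorem, and then to lift it to $L^q$ via two measurable selection arguments, the second combined with Carathéodory's theorem. \textit{Step 1 (pointwise mean value theorem).} I would fix measurable representatives of $u$ and $v$. For every $x \in \Omega$, the map $z \mapsto H(x,z)$ is locally Lipschitz by \ref{item:nemytskii_lip}, so \cref{prop:clarke_mean_value} gives
\[
  H(x,u(x)) - H(x,v(x)) \in \co\bigl(\partial^C H(x,[u(x),v(x)])\bigr)\,(u(x)-v(x)).
\]
Since $\partial^C H(x,\cdot)$ is upper semicontinuous with compact values (\cref{prop:clarke_basic_properties}) and $[u(x),v(x)]$ is compact, the image $\partial^C H(x,[u(x),v(x)])$ is compact, hence so is its convex hull. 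Consequently the set
\[
  \Gamma(x) := \bigl\{ M \in \co(\partial^C H(x,[u(x),v(x)])) : M(u(x)-v(x)) = H(x,u(x)) - H(x,v(x)) \bigr\}
\]
is non-empty and compact for every $x \in \Omega$.

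\textit{Step 2 (measurable selection of $A$, proof of \eqref{eq:nemytskii_mean_value}).} The key technical point is that $x \mapsto \partial^C H(x,[u(x),v(x)])$ is a measurable closed-valued multifunction. From \cref{lem:graph_clarke} the graph of $\partial^C H(\cdot,\cdot)$ is $\cF \otimes \cB(\RR^d) \otimes \cB(\RR^{n\times d})$-measurable, the segment multifunction $x \mapsto [u(x),v(x)] = \co\{u(x),v(x)\}$ is measurable, and then $\{(x,M) : M \in \partial^C H(x,[u(x),v(x)])\}$ is the image of the measurable set $\{(x,z,M) : z \in [u(x),v(x)],\ M \in \partial^C H(x,z)\}$ under the projection $(x,z,M) \mapsto (x,M)$; this image is measurable using the completeness of $(\Omega,\cF,\mu)$ and the measurability results recalled in \cref{section:set_valued}. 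Taking convex hulls preserves measurability of the multifunction, and intersecting its graph with the measurable set $\{(x,M) : M(u(x)-v(x)) = H(x,u(x))-H(x,v(x))\}$ shows that $\Gamma$ is a measurable multifunction with non-empty closed values. By \cref{thm:KRN}, $\Gamma$ admits a measurable selection $A \colon \Omega \to \RR^{n\times d}$. Since $|z| \le |u(x)| + |v(x)|$ for $z \in [u(x),v(x)]$, the growth bound \eqref{eq:nemytskii_clarke_growth} yields $|M| \le C_H(1 + (|u(x)|+|v(x)|)^{p/q-1})$ for all $M \in \partial^C H(x,[u(x),v(x)])$, and the same bound holds on the convex hull; arguing as in the proof of \cref{prop:nemytskii_basic_properties} gives $A \in L^r(\Omega;\RR^{n\times d})$. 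Thus $A \in \Theta[u,v]$ and $\gothH[u] - \gothH[v] = A(u-v)$, which proves \eqref{eq:nemytskii_mean_value}.

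\textit{Step 3 (Carathéodory decomposition).} Let $A \in \Theta[u,v]$ be any element with $\gothH[u] - \gothH[v] = A(u-v)$. For a.e.\ $x$, $A(x)$ lies in the convex hull of $\partial^C H(x,[u(x),v(x)]) \subset \RR^{n\times d} \cong \RR^{nd}$, so Carathéodory's theorem provides $\lambda_1(x),\dots,\lambda_{nd+1}(x) \in [0,1]$ with $\sum_i \lambda_i(x) = 1$ and $A_1(x),\dots,A_{nd+1}(x) \in \partial^C H(x,[u(x),v(x)])$ such that $A(x) = \sum_i \lambda_i(x) A_i(x)$. To make these choices measurable, I would apply \cref{thm:KRN} again, this time to the multifunction
\[
  x \mapsto \Bigl\{ (\lambda, A_1,\dots,A_{nd+1}) \in \Delta \times \partial^C H(x,[u(x),v(x)])^{\,nd+1} : \textstyle\sum_{i=1}^{nd+1} \lambda_i A_i = A(x) \Bigr\},
\]
where $\Delta := \{ \lambda \in [0,1]^{nd+1} : \sum_i \lambda_i = 1\}$. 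This multifunction is non-empty by Carathéodory and closed-valued, and its graph is measurable by the arguments of Step 2 together with the measurability of $A$. A measurable selection furnishes the required functions $\lambda_i$ and $A_i$, each $A_i$ being a measurable selection of $\partial^C H(\cdot,[u(\cdot),v(\cdot)])$ by construction.

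\textit{Main obstacle.} The pointwise step and the $L^r$-estimate are routine given \cref{prop:clarke_mean_value}, \cref{prop:clarke_basic_properties} and the growth conditions \ref{item:nemytskii_growth}. The real work is the measurability bookkeeping: verifying that $x \mapsto \partial^C H(x,[u(x),v(x)])$ and its convex hull are measurable multifunctions — which relies on \cref{lem:graph_clarke}, the measurability of the segment multifunction, the completeness of the measure space, and a measurable projection argument — and then checking that the two multifunctions to which \cref{thm:KRN} is applied have measurable graphs. Once these facts are in place, both selections are immediate.
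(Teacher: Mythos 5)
Your overall architecture is the same as the paper's: pointwise application of \cref{prop:clarke_mean_value}, a measurable selection to produce $A$, the growth bound \eqref{eq:nemytskii_clarke_growth} to place $A$ in $L^r$, and a Carath\'eodory decomposition for the final statement. The differences are in the selection machinery, and one of your substitutions is shakier than what the paper does. For the measurability of $x \mapsto \partial^C H(x,[u(x),v(x)])$, the paper combines \cref{lem:graph_clarke} with the composition result \cref{prop:measurability_tools}-\ref{item:measurability_composition} (upper semicontinuity of $\partial^C H(x,\cdot)$ plus measurability of the graph map), then takes convex hulls via \ref{item:measurablility_convex_hull}, and finally selects $A$ with the implicit measurable selection theorem \cref{thm:implicit_measurability} applied to the Carath\'eodory map $(x,M)\mapsto M(u(x)-v(x))$. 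You instead argue that the graph $\{(x,M): M\in \partial^C H(x,[u(x),v(x)])\}$ is the projection of a product-measurable set onto $\Omega\times\RR^{n\times d}$ and invoke completeness; but completeness of $\cF$ does not make the product $\sigma$-algebra $\cF\otimes\cB(\RR^{n\times d})$ complete, so the measurable projection theorem does not apply in the form you need, and this step is not covered by the tools recalled in \cref{section:set_valued}. The gap is easily repaired: replace the projection argument by exactly the composition lemma \cref{prop:measurability_tools}-\ref{item:measurability_composition}, after which your route (graph measurability plus \cref{thm:graph_measurability}, then \cref{thm:KRN}) and the paper's route (\cref{thm:implicit_measurability}) are interchangeable. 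For the last assertion, the paper simply cites a measurable-selection version of Carath\'eodory's theorem (\cite[Theorem 8.2.15]{AF1990}), whereas you re-derive it by another selection argument on the multifunction of $(\lambda,A_1,\dots,A_{nd+1})$; that is workable (its measurability follows from \cref{prop:measurability_tools}-\ref{item:measurability_product} and \cref{thm:implicit_measurability}), just longer than necessary.
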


\begin{proof}
 Let $u,\, v \in L^p(\Omega; \RR^d)$. From \cref{prop:clarke_mean_value} we have
 \[
  H(x, u(x)) - H(x, v(x)) \in \co \left ( \partial^C H(x, [u(x),v(x)]) \right ) (u(x) - v(x)) \quad \textnormal{for a.e. } x \in \Omega.
 \]
 Let us consider the  set-valued map $\Phi \colon \Omega \rightrightarrows \RR^{n \times d}$ defined by
 \[
  \Phi(x) = \left \{ M \in \co \left ( \partial^C H(x, [u(x),v(x)]) \right ) : H(x, u(x)) - H(x, v(x)) = M(u(x) - v(x)) \right \}.
 \]
 We claim that the mapping
 \begin{equation}\label{eq:nemytskii_mean_value_1}
  \Omega \ni x \mapsto \co \left ( \partial^C H(x, [u(x),v(x)]) \right )
 \end{equation}
 is measurable and has closed values. Indeed, from properties \ref{item:measurability_reunion} and \ref{item:measurablility_convex_hull} in \cref{prop:measurability_tools}, we know that the set-valued map $\Omega \ni x \mapsto [u(x),v(x)] = \co \left ( \{u(x) \} \cup \{v(x) \} \right)$ is measurable. Since, by \cref{prop:clarke_basic_properties}, the mapping $z \mapsto \partial^C H(x, z)$ is upper semicontinuous for every $x \in \Omega$ and closed valued, we deduce from \cref{lem:graph_clarke} and \cref{prop:measurability_tools}-\ref{item:measurability_composition} that $\Omega \ni x \mapsto \partial^C H(x,[u(x),v(x)])$ is measurable. Another applications of \cref{prop:measurability_tools}-\ref{item:measurablility_convex_hull} yields the measurability of \eqref{eq:nemytskii_mean_value_1}, while the fact that it has closed values follows from \cite[Proposition 3 p.42]{AC1984}. This proves the claim.

 Moreover, the mapping $(x,M) \mapsto  M(u(x) - v(x))$ is measurable with respect to $x$ and continuous with respect to $M$, so that it is a Carathéodory map. We can therefore apply \cref{thm:implicit_measurability} to obtain a measurable function $A \colon \Omega \to \RR^{n \times d}$ such that $A(x) \in \Phi(x)$ for every $x \in \Omega$. The fact that $A \in L^{r}(\Omega;\RR^{n \times d})$ then follows from \cref{eq:nemytskii_clarke_growth}. The last statement is a direct consequence of a version of Carathéodory's theorem for measurable selections \cite[Theorem 8.2.15]{AF1990}.
\end{proof}

We now prove that the set-valued map $\partial^\circ \mathfrak{H}$ is upper-semicontinuous.
\begin{prop}\label{prop:nemytskii_osc}
 Let $1 < q < p < \infty$, set $r = pq/(p-q)$ and assume \ref{item:nemytskii_lip}-\ref{item:nemytskii_growth}. For every $\epsilon >0$ and $\bar u \in L^p(\Omega, \RR^d)$, there exists $\delta > 0$ such that, for all $u,v \in B_{L^p}(\bar u, \delta)$ and for every measurable selection $A$ in $\partial^C H(\cdot, [u(\cdot), v(\cdot)])$, we have
 \[
  A \in  \partial^\circ \gothH [\bar u] + B_{L^r}(0,\epsilon).
 \]
 In particular $\partial^\circ \gothH$ is upper-semicontinuous from $L^p(\Omega,\RR^d)$ to $L^r(\Omega, \RR^{n \times d})$.
\end{prop}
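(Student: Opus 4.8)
The plan is to argue by contradiction, reduce the statement to a sequential one, and then combine the pointwise upper semicontinuity of Clarke's generalized Jacobian (\cref{prop:clarke_basic_properties}) with a uniform $L^r$-domination coming from the growth bound \eqref{eq:nemytskii_clarke_growth}.

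Suppose the conclusion fails. Then there are $\epsilon_0 > 0$, sequences $u_n,v_n \in L^p(\Omega;\RR^d)$ with $\norm{u_n - \bar u}{L^p} + \norm{v_n - \bar u}{L^p} \to 0$, and measurable selections $A_n$ with $A_n(x) \in \partial^C H(x,[u_n(x),v_n(x)])$ for $\mu$-a.e.\ $x$, such that $\inf_{B \in \partial^\circ\gothH[\bar u]} \norm{A_n - B}{L^r} \geq \epsilon_0$ for all $n$. First I note that each $A_n$ lies in $L^r(\Omega;\RR^{n\times d})$: choosing $\zeta_n(x)$ on the segment $[u_n(x),v_n(x)]$ with $A_n(x) \in \partial^C H(x,\zeta_n(x))$, the bound \eqref{eq:nemytskii_clarke_growth} together with the identity $(p/q-1)\,r = p$ gives $\module{A_n(x)}^r \leq C\bigl(1 + \max(\module{u_n(x)},\module{v_n(x)})^p\bigr)$, which is integrable; also $\partial^\circ\gothH[\bar u]\neq\emptyset$ by \cref{prop:nemytskii_basic_properties}, so the infimum is meaningful. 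After passing to a subsequence (relabelled) with $\sum_n(\norm{u_n-\bar u}{L^p}+\norm{v_n-\bar u}{L^p})<\infty$, the function $g := \module{\bar u} + \sum_n(\module{u_n-\bar u}+\module{v_n-\bar u})$ lies in $L^p(\Omega;\RR)$, and it satisfies $g\geq\module{\bar u}$, $\module{u_n},\module{v_n}\leq g$ a.e., while $u_n \to \bar u$ and $v_n \to \bar u$ $\mu$-a.e.

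For each $n$ let $B_n(x)$ be the metric projection of $A_n(x)$ onto the nonempty convex compact set $\partial^C H(x,\bar u(x))$ (\cref{prop:clarke_basic_properties}). Since $x\mapsto\partial^C H(x,\bar u(x))$ is measurable with closed values (\cref{lem:clarke_measurable}) and $(x,y)\mapsto\module{A_n(x)-y}$ is a Carathéodory integrand, $B_n$ is measurable by a standard measurable-selection argument (the argmin of a Carathéodory function over a measurable closed-valued multifunction, as in the proof of \cref{thm:nemytskii_mean_value}), hence $B_n\in\partial^\circ\gothH[\bar u]$. For a.e.\ $x$ we have $\module{\zeta_n(x)-\bar u(x)}\leq\max(\module{u_n(x)-\bar u(x)},\module{v_n(x)-\bar u(x)})\to 0$, so the upper semicontinuity of $z\mapsto\partial^C H(x,z)$ in \cref{prop:clarke_basic_properties} gives $\module{A_n(x)-B_n(x)} = d(A_n(x),\partial^C H(x,\bar u(x))) \to 0$ a.e. Moreover $\module{\zeta_n(x)}\leq g(x)$ and $\module{\bar u(x)}\leq g(x)$, so \eqref{eq:nemytskii_clarke_growth} yields $\module{A_n(x)},\module{B_n(x)}\leq C_H(1+g(x)^{p/q-1})$ and therefore $\module{A_n-B_n}^r\leq C(1+g^p)\in L^1(\Omega)$, a dominating function independent of $n$. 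Dominated convergence then gives $\norm{A_n-B_n}{L^r}\to 0$, whence $\inf_{B\in\partial^\circ\gothH[\bar u]}\norm{A_n-B}{L^r}\leq\norm{A_n-B_n}{L^r}\to 0$, contradicting the lower bound $\epsilon_0$. The "in particular'' assertion follows by taking $v=u$ in the first part: a measurable selection in $\partial^C H(\cdot,u(\cdot))$ is exactly an element of $\partial^\circ\gothH[u]$, so the first part reads $\partial^\circ\gothH[u]\subset\partial^\circ\gothH[\bar u]+B_{L^r}(0,\epsilon)$ once $\norm{u-\bar u}{L^p}$ is small enough, which is the claimed upper semicontinuity.

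The routine ingredients are the extraction of an a.e.-convergent, $L^p$-dominated subsequence and the measurability of the projections $B_n$. The only computation carrying real weight is the exponent identity $(p/q-1)r = p$: it is precisely what converts the growth bound \eqref{eq:nemytskii_clarke_growth} into an $n$-independent $L^r$-dominating function built from a single $g\in L^p$, which is what makes dominated convergence in $L^r$ applicable. I expect the measurability of $B_n$ to be the only point requiring a little care, and it is covered by the measurable-selection results recalled in the appendix.
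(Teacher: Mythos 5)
Your argument is correct and follows essentially the same route as the paper's proof: contradiction, pointwise projection of $A_n(x)$ onto $\partial^C H(x,\bar u(x))$, almost-everywhere convergence via the upper semicontinuity of the Clarke Jacobian, and dominated convergence in $L^r$ using the growth bound and the identity $(p/q-1)r=p$. The only cosmetic differences are that you build the $L^p$-dominating function by hand from a summable subsequence (the paper cites the standard partial-converse to dominated convergence) and justify measurability of the projections by a selection argument where the paper invokes \cref{prop:measurability_tools}-\ref{item:measurability_projection}.
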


\begin{proof}
 Assume that the result is false. Then, there exists $\epsilon > 0$, such that for every $k \geq 1$, there exist $u_k, v_k \in B_{L^p}(\bar u, k^{-1})$ and measurable selections $A_k$ in $\partial^C H(\cdot, [u_k(\cdot), v_k(\cdot)])$ such that $d(\partial^\circ \gothH[\bar u], A_k) \geq \epsilon$, where the distance is measured in $L^{r}(\Omega; \RR^{n \times d})$. Clearly the sequences $u_k$ and $v_k$ converge to $\bar u$ in $L^p(\Omega; \RR^d)$.

 For each $x \in \Omega$ we define the projection $\bar A_k(x) := \Pi_{\partial^C H(x, \bar u(x))}(A_k(x))$.
 From \cref{prop:measurability_tools}-\ref{item:measurability_projection}, we know that each $\bar A_k$ is measurable. Since, in addition, we have $\bar A_k (x) \in \partial^C H(x, \bar u(x))$ for every $x \in \Omega$, we obtain that $\bar A_k \in \partial^\circ \gothH[\bar u]$.

 From the convergence of $u_k$ and $v_k$ to $\bar u$ in $L^p(\Omega;\RR^d)$ and \cite[Theorem 4.9]{B2011} we may extract subsequences, without change of notations, such that the convergence holds $\mu$-almost everywhere and there exists $g \in L^{p}(\Omega;\RR)$ such that
 \begin{equation}\label{eq:nemytskii_osc_domination}
  \max \left \{ \module{u_k}, \module{v_k} \right \} \leq g \quad \mu \textnormal{-almost everywhere.}
 \end{equation}
 By definition of $A_k$, for $\mu$-a.e. $x \in \Omega$, there exists $t^k_x \in [0,1]$ such that
 \[
  A_k(x) \in \partial^C H(x, t^k_x u_k(x) + (1 - t^k_x) v_k(x)).
 \]
  It follows from \eqref{eq:nemytskii_clarke_growth} and \eqref{eq:nemytskii_osc_domination} that
  \[
   \module{A_k} \leq C_H \left (1 + g^{\frac{p}{q} - 1} \right) \quad \mu \textnormal{-almost everywhere.}
  \]
  Then, since $t^k_x u_k(x) + (1 - t^k_x) v_k(x)$ converges to $\bar u(x)$ for $\mu$-a.e. $x \in \Omega$, and using the upper semicontinuity in \cref{prop:clarke_basic_properties}, for $\mu$-a.e. $x \in \Omega$ and every $\eta > 0$, there exists $K = K(x,\eta)> 0$ such that, for every $k \geq K$, we have
 \[
  A_k(x) \in \partial^C H(x, \bar u(x)) + B_{\RR^{n \times d}}(0,\eta).
 \]
 In particular, since $\bar A_k(x)$ is defined as the projection of $A_k(x)$ on $\partial^C H(x, \bar u(x))$, we have
 \[
  \module{\bar A_k(x) - A_k(x)} \leq \eta \quad \textnormal{for all } k \geq K.
 \]
 Hence, if we define $\varphi_k := \bar A_k - A_k$, we have that $\varphi_k$ converges $\mu$-a.e. to $0$. Moreover, from \eqref{eq:nemytskii_clarke_growth},
 \[
  \module{\varphi_k(x)} \leq \module{\bar A_k(x)} + \module{A_k(x)} \leq C_H \left ( 2 + \module{\bar u(x)}^{\frac{p}{q} - 1} + g(x)^{\frac{p}{q} - 1} \right) \quad \mu \textnormal{-almost everywhere},
 \]
 so that we can apply Lebesgue's convergence theorem to deduce that $\varphi_k$ converges to $0$ in $L^{r}(\Omega;\RR^{n\times d})$. This contradicts $d(\partial^\circ \gothH[\bar u], A_k) \geq \epsilon$ and concludes the proof.
\end{proof}

\begin{rem}\label{rem:nemytskii_osc}
 \cref{prop:nemytskii_osc} does not hold for $1 < p=q < \infty$ and $r = \infty$. Indeed, for $d = n = 1$, assume that $\mu$ is atomless and let $H(x,z) = H(z) = \module{z}$, in this case it is well known that
 \[
  \partial^C H(z) = \partial^C H(z) = \begin{cases}
                                            \{-1\} & \quad \textnormal{for } z < 0, \\
                                            [-1,1] & \quad \textnormal{for } z = 0, \\
                                            \{1\} & \quad \textnormal{for } z > 0.
                                           \end{cases}
 \]
 We build a counterexample to the upper-semicontinuity of $\partial^\circ \gothH$ in this case. For any $0 < \delta < \mu(\Omega)^{1/p}$, we consider $E_\delta \in \cF$ with $\mu(E_\delta) = \delta^p$. Notice that $E_\delta$ exists since we assume that $\mu$ is atomless (see \cite[Corollary 1.12.10]{B2007}). Let also
 \[
  \bar u (x) = 1/2 \quad \textnormal{for all } x \in \Omega \quad \textnormal{and} \quad  u_\delta(x) = \begin{cases}
                                                                                              1/2 & \quad \textnormal{if } x \notin E_\delta, \\
                                                                                              -1/2 & \quad \textnormal{if } x \in E_\delta.
                                                                                             \end{cases}
 \]
 Then $\norm{\bar u - u_\delta}{L^p} < \delta$, $\partial^\circ \gothH[\bar u] = \{ \bar \xi \}$ and $\partial^\circ \gothH[u_\delta] = \{\xi_\delta \}$ in $L^\infty(\Omega;\RR)$, where
 \[
  \bar \xi (x) = 1 \quad \textnormal{for all } x \in \Omega \quad \textnormal{and} \quad \xi_\delta(x) = \begin{cases}
                                                                                                  1 \quad & \textnormal{if } x \notin E_\delta, \\
                                                                                                  -1 \quad & \textnormal{if } x \in E_\delta.
                                                                                                 \end{cases}
 \]
 Hence, we have $d(\xi_\delta, \partial^\circ \gothH[\bar u]) = \norm{\bar \xi - \xi_\delta}{L^\infty} = 2$.

\end{rem}

We are now able to prove the main result of this section.
\begin{thm}\label{thm:nemytskii_prediff}
 Let $1 < q < p < \infty$ and assume \ref{item:nemytskii_lip}-\ref{item:nemytskii_growth}. Then, for every $\epsilon > 0$ and $\bar u \in L^p(\Omega;\RR^d)$, there exists $\delta > 0$ such that for every $u,v \in B_{L^p}(\bar u , \epsilon)$, there exists $M \in \partial^\circ \gothH[\bar u]$ satisfying
 \begin{equation}\label{eq:nemytskii_prediff}
  \norm{\gothH[u] - \gothH[v] - M (u - v)}{L^q} < \epsilon \norm{u - v}{L^p}.
 \end{equation}
\end{thm}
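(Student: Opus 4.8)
The plan is to obtain \eqref{eq:nemytskii_prediff} by combining the mean value theorem \cref{thm:nemytskii_mean_value} with the uniform upper semicontinuity \cref{prop:nemytskii_osc}; the restriction $1 < q < p < \infty$ is exactly what is needed for the latter (so that $r := pq/(p-q) < \infty$), and indeed \cref{rem:nemytskii_osc} shows the statement would fail for $p = q$. Write $N := nd + 1$, fix $\epsilon > 0$ and $\bar u \in L^p(\Omega;\RR^d)$. First I would apply \cref{prop:nemytskii_osc} with accuracy $\epsilon/(2N)$ in place of $\epsilon$, obtaining a radius $\delta > 0$ such that, for every $u, v \in B_{L^p}(\bar u, \delta)$ and every measurable selection $B$ of $\partial^C H(\cdot, [u(\cdot), v(\cdot)])$, one can decompose $B = M + \phi$ with $M \in \partial^\circ \gothH[\bar u]$ and $\norm{\phi}{L^r} \leq \epsilon/(2N)$.

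Next, given $u, v \in B_{L^p}(\bar u, \delta)$, I would invoke \cref{thm:nemytskii_mean_value} to get $A \in \Theta[u,v]$ with $\gothH[u] - \gothH[v] = A(u-v)$, together with the Carathéodory-type decomposition $A = \sum_{i=1}^{N} \lambda_i A_i$ a.e., where the $\lambda_i \colon \Omega \to [0,1]$ are measurable with $\sum_i \lambda_i = 1$ a.e.\ and the $A_i$ are measurable selections of $\partial^C H(\cdot, [u(\cdot), v(\cdot)])$. Applying the previous step to each $A_i$, I would write $A_i = M_i + \phi_i$ with $M_i \in \partial^\circ \gothH[\bar u]$ and $\norm{\phi_i}{L^r} \leq \epsilon/(2N)$, and set $M := \sum_{i=1}^{N} \lambda_i M_i$. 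This $M$ is measurable, and for a.e.\ $x$ the value $M(x)$ is a convex combination, with coefficients $\lambda_i(x)$, of the points $M_i(x) \in \partial^C H(x, \bar u(x))$; since $\partial^C H(x, \bar u(x))$ is convex (\cref{prop:clarke_basic_properties}), we get $M(x) \in \partial^C H(x, \bar u(x))$ a.e., hence $M \in \partial^\circ \gothH[\bar u]$.

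It then remains to estimate. Since $A - M = \sum_{i=1}^{N} \lambda_i \phi_i$ a.e.\ and $0 \leq \lambda_i \leq 1$, one has $\norm{A - M}{L^r} \leq \sum_{i=1}^{N} \norm{\phi_i}{L^r} \leq \epsilon/2$. Because $\gothH[u] - \gothH[v] = A(u-v)$ and $\tfrac{1}{q} = \tfrac{1}{r} + \tfrac{1}{p}$, Hölder's inequality yields
\[
 \norm{\gothH[u] - \gothH[v] - M(u-v)}{L^q} = \norm{(A - M)(u-v)}{L^q} \leq \norm{A - M}{L^r}\,\norm{u-v}{L^p} \leq \tfrac{\epsilon}{2}\,\norm{u-v}{L^p},
\]
which is strictly less than $\epsilon \norm{u-v}{L^p}$ whenever $u \neq v$, giving \eqref{eq:nemytskii_prediff}.

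I expect the only genuinely delicate point to be the check that the averaged map $M = \sum_i \lambda_i M_i$ still belongs to $\partial^\circ \gothH[\bar u]$: this hinges on the pointwise convexity of Clarke's generalized Jacobian and on the measurability of both the $\lambda_i$ and the selections $A_i$, which is supplied by the measurable-selection version of Carathéodory's theorem invoked in \cref{thm:nemytskii_mean_value}. Everything else — the mean value representation, the uniformity of \cref{prop:nemytskii_osc} over $(u,v)$, and Hölder's inequality — assembles routinely, so no new difficulty beyond careful bookkeeping of these measurable objects is anticipated.
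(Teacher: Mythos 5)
Your proposal is correct and follows essentially the same route as the paper's proof: apply \cref{prop:nemytskii_osc} with accuracy $\epsilon$ divided by the Carath\'eodory number $nd+1$ (you use $\epsilon/(2N)$, which in addition yields the strict inequality cleanly), decompose $A=\sum_i\lambda_iA_i$ via \cref{thm:nemytskii_mean_value}, replace each $A_i$ by a nearby $M_i\in\partial^\circ\gothH[\bar u]$, use convexity of $\partial^C H(x,\bar u(x))$ to keep $M=\sum_i\lambda_iM_i$ in $\partial^\circ\gothH[\bar u]$, and conclude by H\"older with $\tfrac1q=\tfrac1r+\tfrac1p$. No gaps.
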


\begin{proof}
  Let $\epsilon >0 $ and set $r = pq/(p-q)$. According to \cref{prop:nemytskii_osc}, there exists $\delta > 0$ such that, for all $w_1,\, w_2 \in B_{L^p}(\bar u, \delta)$ and every measurable selection $A$ in $\partial^C H(\cdot, [w_1(\cdot), w_2(\cdot)])$, we have
  \begin{equation}\label{eq:prederivative_1}
   \ A \in  \partial^\circ \gothH[\bar u] + B_{L^r} \left ( 0, \frac{\epsilon}{nd+1} \right ).
  \end{equation}
  Let $u,v \in B_{L^p}(\bar u, \delta)$. From \cref{thm:nemytskii_mean_value}, there exist measurable functions $\lambda_1, \dots, \lambda_{nd+1} \colon \Omega \to [0,1]$, measurable selections $A_1, \dots, A_{nd+1} \colon \Omega \to \RR^{n \times d}$ in $\partial^C H(\cdot, [u(\cdot), v(\cdot)])$ and $A \in L^r(\Omega, \RR^{n \times d})$ such that
 \[
  \sum_{i=1}^{nd+1} \lambda_i(x) = 1, \quad A = \sum_{i=1}^{nd+1} \lambda_i A_i
 \]
and
 \begin{equation}\label{eq:prederivative_2}
  \gothH[u] - \gothH[v] = A \left (u - v \right).
 \end{equation}
 We can apply \eqref{eq:prederivative_1} to $A_i$ to obtain that
 \[
  A_i \in \partial^\circ \gothH [\bar u] + B_{L^r} \left (0, \frac{\epsilon}{nd+1} \right ) \quad \textnormal{for all } 1 \leq i \leq nd+1.
 \]
 It follows that there exists $M_i \in \partial^\circ \gothH [\bar u]$, such that $\norm{A_i - M_i}{L^{r}} \leq \epsilon/(nd+1)$.
 Notice now that we have
 \[
  \sum_{i = 1}^{nd+1} \lambda_i(x) M_i(x) \in \partial^C H(x, \bar u(x)) \quad \textnormal{for $\mu$-a.e. $x \in \Omega$}
 \]
 because $\partial^C H(x, \bar u(x))$ is convex and $M_i(x) \in \partial^C H(x, \bar u(x))$ for $\mu$-a.e. $x \in \Omega$. Therefore, if we set $M = \sum_{i=1}^{nd+1} \lambda_i M_i$, then we have $M \in \partial^\circ \gothH[\bar u]$ and
 \[
  \norm{A - M}{L^{r}} \leq \sum_{i=1}^{nd+1} \norm{A_i - M_i}{L^{r}}  \leq \epsilon.
 \]
 Finally, recalling \eqref{eq:prederivative_2},
 \begin{align*}
  & \norm{\gothH [u] - \gothH [v] - M(u-v)}{L^q} = \norm{\gothH [u] - \gothH [v] - (A - A + M) (u-v)}{L^q} \\
  & \qquad = \norm{(A - M)(u-v)}{L^q}\leq \norm{A - M}{L^{r}} \norm{u - v}{L^p} \\
  & \qquad \leq \epsilon \norm{u - v}{L^p}. \qedhere
 \end{align*}
\end{proof}

\begin{rem}\label{rem:nemytskii_prediff}
 The proof of \cref{thm:nemytskii_prediff} fails for $p = q$ because \cref{prop:nemytskii_osc} is not true in this case, see \cref{rem:nemytskii_osc}. Nevertheless, we do not expect that its conclusion holds in general for $p=q$. Indeed, in addition to the assumptions in \cref{thm:nemytskii_prediff}, assume that $\mu$ is atomless, that $z \mapsto H(x,z)$ is continuously differentiable for every $x \in \Omega$ and that $d = n = 1$. In this case, we have $\partial^C H(x,z) = \partial^C H(x,z) = \{ \partial_z H(x,z) \}$ for every $(x,z) \in \Omega \times \RR$ (see \cite[Proposition 2.2.4]{C1990}). Then, \eqref{eq:nemytskii_prediff} implies that for every $\epsilon > 0$, there is $\delta > 0$ such that, for every $\norm{h}{L^p} < \delta$, we have
 \[
  \norm{\gothH[\bar u + h] - \gothH[\bar u] - \partial_z H(\cdot,\bar u(\cdot))h}{L^p} < \epsilon \norm{h}{L^p}.
 \]
 The latter implies the Fréchet differentiability of $\gothH \colon L^p(\Omega;\RR) \to L^p(\Omega; \RR)$. However, it then follows from \cite[Theorem 3.12]{AZ1990} that $H$ must be of the following form
 \[
  H(x,z) = a(x) + b(x)z \quad \textnormal{for all } (x,z) \in \Omega \times \RR,
 \]
 where $a \in L^p(\Omega;\RR)$ and $b \in L^\infty(\Omega; \RR)$.

\end{rem}

\section{Applications to finite element approximations of nonlinear elliptic PDEs}
\label{section:application}

In this section we consider two applications of our generalized BRR theorem. We first consider a simple application to a viscous Hamilton-Jacobi equation where the nonlinearity is nonsmooth. The second application concerns a stationary second order mean field game system, extending the results from \cite{BLS2025}.

\subsection{Viscous Hamilton-Jacobi equations}
\label{section:HJ}
In this section we apply our abstract result to the following elliptic Dirichlet problem
\begin{equation}\label{eq:HJ}
 \begin{cases}
  - \Delta u(x) + H(x,Du) + \lambda u(x) = f(x) \quad & \textnormal{in } \Omega, \\
  u(x) = 0 \quad & \textnormal{on } \partial \Omega,
 \end{cases}
\end{equation}
where $\Omega$ is a bounded Lipschitz domain in $\RR^d$. The final result is \cref{thm:approximation_HJB} below. Since our aim is to provide an illustrative example, we do not consider the most general assumptions. We refer to \cite[Lemma 4.6]{OS2024} for a result dealing with the existence and uniqueness of solutions to \eqref{eq:HJ}.

In the context of stochastic optimal control problems, it is well known that \eqref{eq:HJ} is the equation satisfied by the value function of an infinite horizon discounted problem. In this case the Hamiltonian takes the form
\[
  H(x,p) = \sup_{a \in A} \left \{- a \cdot p - \ell(x,a) \right \},
\]
where $A \subset \RR^d$ is the set of controls and $\ell \colon \RR^d \times \RR^d \to \RR$ is a continuous running cost. In the case where $A$ is compact and convex and $\ell$ is strictly convex with respect to $a$, then the supremum is achieved by a unique element in $A$ for every $(x,p) \in \RR^d \times \RR^d$. It then follows from Danskin's theorem \cite{Danskin1966} that $H$ is of class $C^1$ with respect to $p$ and the standard version of the BRR theorem applies. Below we show that our generalization allows to consider cases where Danskin's theorem does not apply.

\subsubsection{Setting of the problem}

The assumptions are the following.
\begin{enumerate}[label={\bf(HJ\arabic*)}]
 \item \label{h:HJ_data} We assume that $\lambda \geq 0$, that $f \in L^2(\Omega)$ and that $H \colon \bar \Omega \times \RR^d \to \RR$ is continuous and satisfies
\begin{equation}\label{eq:Hamiltonian_Lipschitz}
 \module{H(x,z_1) - H(x,z_2)} \leq C_H \module{z_1 - z_2} \quad \textnormal{for all } z_1, z_2 \in \RR^d, \, x \in \Omega
\end{equation}
for some $C_H > 0$.

\item \label{h:HJ_reg} We assume that $\Omega$ is such that, for every $g \in L^2(\Omega)$, the unique weak solution $v \in H^1_0(\Omega)$ to
\begin{equation}\label{eq:poisson}
\begin{cases}
 - \Delta v(x) + \lambda v(x) = g(x) \quad & \textnormal{in } \Omega, \\
 v(x) = 0 \quad & \textnormal{on } \partial \Omega,
 \end{cases}
\end{equation}
belongs to $H^2(\Omega)$ and there exists $K_\Omega > 0$ such that $\norm{v}{H^2} \leq K_\Omega \norm{g}{L^2}$.
\end{enumerate}

\begin{rem}\,
\begin{enumerate}
 \item As a consequence of \eqref{eq:Hamiltonian_Lipschitz}, we get
\begin{equation}\label{eq:Hamiltonian_clarke_growth}
 \xi \in \partial^C H(x,z) \Longrightarrow \module{\xi} \leq C_H,
\end{equation}
where Clarke's subdifferential is considered only with respect to the second variable.

\item Our results may be extended to the case of locally Lipschitzian Hamiltonians under the growth conditions
\begin{gather*}
  \module{H(x,p)} \leq C_H \left(1 + \module{p}^2 \right), \\
  \xi \in \partial^C H(x,p) \Longrightarrow \module{\xi} \leq C_H\left(1 + \module{p} \right).
 \end{gather*}

\item  It is known that \ref{h:HJ_reg} holds for instance when the boundary of $\Omega$ has $C^{1,1}$ regularity \cite[Theorem 2.2.2.3]{G1985}, $\Omega$ is convex \cite[Theorem 3.2.1.2]{G1985} or for $d=2$ with $\Omega$ polygonal \cite[Theorem 4.3.1.4]{G1985}.
\end{enumerate}
\end{rem}

Notice that, up to increasing the size of the constant $C_H$ in \eqref{eq:Hamiltonian_Lipschitz}, we have
\[
 \module{H(x,z)} \leq C_H \left ( 1 + \module{z} \right) \quad \textnormal{for all } (x,z) \in  \Omega \times \RR^d.
\]
 In turn, from \eqref{eq:Hamiltonian_Lipschitz}, \eqref{eq:Hamiltonian_clarke_growth} and the continuity of $H$, we deduce that \ref{item:nemytskii_lip} and \ref{item:nemytskii_growth} hold for any $p \geq q$, where we choose $\Omega$ endowed with Lebesgue $\sigma$-algebra and the $d$-dimensional Lebesgue measure for the measure space. Thus, it follows from \cref{prop:nemytskii_continous} that the Nemytskii operator $\gothH \colon L^p(\Omega; \RR^d) \to L^q(\Omega; \RR)$, defined by \eqref{eq:nemytskii}, is well-defined and continuous for every $1 < q \leq p < \infty$. In particular, using Sobolev's inequality, if we choose $q \in (1,2]$ if $d = 1,2$ and $q \in (\frac{2d}{d+2}, 2]$ for $d \geq 3$, we have that $\gothH \colon L^2(\Omega;\RR^d) \to H^{-1}(\Omega)$ is well defined and continuous, where $H^{-1}(\Omega)$ denotes the dual space of $H^1_0(\Omega)$.

Let us reformulate \eqref{eq:HJ}. We define the continuous mapping $G \colon H_0^1(\Omega) \to H^{-1}(\Omega)$ by
\[
 G(v) = \gothH[Dv] - f
\]
and we consider $T \in \cL(H^{-1}(\Omega), H^1_0(\Omega))$ defined by $Tg = v$ where $v$ solves \eqref{eq:poisson}. Defining $F \colon H_0^1(\Omega) \to H_0^1(\Omega)$ by $F := \left (I + T \circ G \right)$, we have that $u \in H^1_0(\Omega)$ is a weak solution to \eqref{eq:HJ} if and only if $F(u) = 0$.

We now introduce approximations of $F$. We consider a collection $(V_h)_{h>0}$ of finite dimensional $H^1_0(\Omega)$-conformal approximation spaces, i.e. such that $V_h \subset H^1_0(\Omega)$ for every $h>0$. The goal is, given a solution $u \in H^2(\Omega)$ to \eqref{eq:HJ}, to find a finite dimensional approximation $u_h \in V_h$, close to $u$ in $H^1(\Omega)$, and satisfying
\begin{equation}\label{eq:HJ_galerkin}
 \int_\Omega D u_h(x) \cdot D \phi_h(x) + H(x, Du_h(x)) \phi_h(x) + \lambda u_h(x) \phi_h(x) dx = \int_\Omega f(x) \phi_h(x) dx  \quad \textnormal{for all } \phi_h \in V_h.
\end{equation}
We assume that the approximation spaces satisfy the following properties :
\begin{align}
 \lim_{h \to 0} \inf_{v_h \in V_h} \norm{v - v_h}{H^{1}_0} = 0 \quad &  \textnormal{for all } v \in H^1_0(\Omega), \label{eq:approximation_property} \\
\inf_{v_h \in V_h} \norm{v - v_h}{H^{1}_0} \leq \eta (h) \norm{v}{H^2} \quad & \textnormal{for all } v \in H^2(\Omega)\label{eq:strong_approximation_property},
\end{align}
where we assume that $\eta$ is increasing and that $\lim_{h \to 0} \eta(h) = 0$.

\begin{rem}
 Assumptions \eqref{eq:approximation_property} and \eqref{eq:strong_approximation_property} hold for instance when $d \leq 3$, $\Omega$ is polyhedral and $V_h$ is the finite element space induced by $\PP^k$-Lagrange finite elements on a collection $(\cT_h)_{h>0}$ of shape regular\footnote{See \cite[Definition 1.107]{EG2004}.} triangulations of $\Omega$, where each $\cT_h$ is made up of elements of diameter at most $h$. In this case we have $\eta(h) = O(h)$ \cite[Corollary 1.110]{EG2004}.
\end{rem}

For every $h>0$, using the Lax-Milgram theorem, we define a linear operator $T_h$ by setting $T_h g = v_h$, for every $g \in H^{-1}(\Omega)$, where $v_h \in V_h$ is such that
\[
 \int_\Omega D v_h \cdot D \phi_h + \lambda v_h \phi_h \ dx = \langle g, \phi_h \rangle_{H^{-1},H^{1}_0} \quad \textnormal{for all } \phi_h \in V_h.
\]
From \cite[Proposition 2.19]{EG2004} we know that $T_h \in \cL(H^{-1}(\Omega),H^1_0(\Omega))$ with
\[
 \norm{T_h}{\cL(H^{-1}(\Omega),H^1_0(\Omega))} \leq 2 \max \{1, C_P(\Omega) \} \quad \textnormal{for all } h > 0,
\]
where $C_P(\Omega) > 0$ is the Poincaré constant associated to $\Omega$.
Defining $F_h \colon H_0^{1}(\Omega) \to H_0^1(\Omega)$ by $F_h : = I + T_h \circ G$, we notice that finding $u_h \in V_h$ such that \eqref{eq:HJ_galerkin} holds is equivalent to solving $F_h(v) = 0$ in $V_h$. In order to achieve this we are therefore going to apply \cref{cor:first_example} with $V = H^{1}_0(\Omega)$, $W = H^{-1}(\Omega)$, and $F$ and $F_h$ defined as above.

\subsubsection{Application of \cref{cor:first_example}}

First, it follows from \eqref{eq:Hamiltonian_Lipschitz} and the continuous embedding $L^q(\Omega; \RR) \hookrightarrow H^{-1}(\Omega)$ that $G$ is (globally) Lipschitz continuous and we denote by $L_G$ its Lipschitz constant. The same continuous embedding, with $q$ as above but different from $2$, and \cref{thm:nemytskii_prediff}\footnote{The restriction on $q$ comes from the fact that \cref{thm:nemytskii_prediff} fails in general for $p = q = 2$, see \cref{rem:nemytskii_prediff}.} also imply that, for any $\epsilon > 0$, there is $\delta > 0$ such that, for any $v_1, v_2 \in B_{H^1(\Omega)}(u,\delta)$, there is $\xi \in \partial^\circ \gothH[u]$ such that
\begin{equation}\label{eq:G_prediff}
 \norm{G(v_1) - G(v_2) - \xi \cdot (Dv_1 - Dv_2)}{H^{-1}} < \epsilon \norm{v_1 - v_2}{H^1}.
\end{equation}

For a solution $u \in H^2(\Omega) \cap H^1_0(\Omega)$ to \eqref{eq:HJ} and $\xi \in \partial^\circ \gothH [u]$, where $\partial^\circ \gothH[u]$ is defined in \eqref{eq:Nemytskii_subdiff}, the linear problem
\begin{equation}\label{eq:HJ_lin}
 \begin{cases}
  - \Delta v + \xi \cdot Dv + \lambda v = 0 \quad & \textnormal{in } \Omega, \\
  v = 0 \quad & \textnormal{on } \partial \Omega.
 \end{cases}
\end{equation}
will play a major role to prove metric regularity.

\begin{prop}[{\cite[Corollary 8.2]{GT2001}}]\label{prop:hj_lin_injective}
 Assume \ref{h:HJ_data} and let $u \in H^1(\Omega)$. For every $\xi \in \partial^\circ \gothH [u]$, $v = 0$ is the unique weak solution to \eqref{eq:HJ_lin}.
\end{prop}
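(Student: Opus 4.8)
The plan is to reduce the statement to the classical weak maximum principle for uniformly elliptic operators with bounded lower-order coefficients. The only property of $\xi$ that is actually needed is a pointwise bound: since $\xi \in \partial^\circ \gothH[u]$, the map $\xi$ is a measurable selection taking values, for a.e. $x \in \Omega$, inside a Clarke generalized Jacobian of $z \mapsto H(x,z)$, so \eqref{eq:Hamiltonian_clarke_growth} gives $\module{\xi(x)} \leq C_H$ for a.e. $x \in \Omega$, i.e. $\xi \in L^\infty(\Omega;\RR^d)$. Since $\Omega$ is bounded, $\xi$ then lies in $L^q(\Omega;\RR^d)$ for every $q \in [1,\infty)$, which is more than enough integrability for the divergence-form $W^{1,2}$-theory of \cite[Chapter 8]{GT2001} to apply to \eqref{eq:HJ_lin}.

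Next I would recall the weak formulation: a function $v \in H^1_0(\Omega)$ is a weak solution of \eqref{eq:HJ_lin} if and only if
\[
 \int_\Omega Dv \cdot Dw + (\xi \cdot Dv)\,w + \lambda v w \, dx = 0 \qquad \text{for all } w \in H^1_0(\Omega).
\]
The associated bilinear form is bounded on $H^1_0(\Omega) \times H^1_0(\Omega)$, but, because of the drift term $\xi \cdot Dv$, it is in general \emph{not} coercive; hence Lax--Milgram does not apply directly and uniqueness for the Dirichlet problem is not automatic. Recognising that one must argue via the maximum principle rather than via an energy estimate is the only real subtlety here; the rest is a direct citation once the $L^\infty$ bound on $\xi$ is in hand.

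Finally I would invoke the weak maximum principle. Writing \eqref{eq:HJ_lin} as $\Delta v - \xi \cdot Dv - \lambda v = 0$, the operator is uniformly elliptic, has bounded first-order coefficients, and has zeroth-order coefficient $-\lambda \leq 0$ since $\lambda \geq 0$ by \ref{h:HJ_data}. Thus \cite[Corollary 8.2]{GT2001} applies to the weak solution $v \in H^1_0(\Omega)$ and yields $\sup_\Omega v \leq \sup_{\partial\Omega} v^+ = 0$. Applying the same conclusion to $-v$, which is a weak solution of the same homogeneous problem, gives $\sup_\Omega (-v) \leq 0$ as well. Therefore $v = 0$ almost everywhere in $\Omega$; since $v \equiv 0$ is trivially a weak solution, it is the unique one, which is the claim.
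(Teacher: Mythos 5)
Your proof is correct and follows essentially the same route as the paper, whose entire argument is the citation of \cite[Corollary 8.2]{GT2001}: you supply exactly the details that citation presupposes, namely that \eqref{eq:Hamiltonian_clarke_growth} forces any $\xi \in \partial^\circ \gothH[u]$ to be an $L^\infty$ drift bounded by $C_H$, that the zeroth-order coefficient $-\lambda \leq 0$ meets the sign condition of the weak maximum principle, and that applying it to $\pm v$ gives $v = 0$. The remark that the bilinear form need not be coercive, so one must use the maximum principle rather than Lax--Milgram, correctly identifies why this citation (and not an energy argument) is the right tool.
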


Notice that any element $\xi \in \partial^\circ \gothH [u] \subset L^\infty(\Omega)$ may be identified with an element on $\cL(H^1(\Omega), L^2(\Omega))$ by setting $\xi(v) = \xi \cdot Dv$.
With this identification we define the collection $\cA \subset \cL(H^1_0(\Omega), H^1_0(\Omega))$ as
\begin{equation}\label{eq:HJ_cA}
 \cA = \left \{ I + T \circ \xi : \xi \in \partial^\circ \gothH [Du] \right \}.
\end{equation}
From \cref{prop:nemytskii_basic_properties} and \eqref{eq:Hamiltonian_clarke_growth}, we deduce that that $\cA$ is bounded and convex.

\begin{lem}\label{lem:HJ_metric_regular}
 Assume \ref{h:HJ_data}, \ref{h:HJ_reg}, let  $u \in H^1(\Omega)$ and let $\cA$ be defined by \eqref{eq:HJ_cA}. Then each $A \in \cA$ is an isomorphism and the exists $\kappa > 0$ such that $\inf_{A \in \cA} \gothC(A) \geq \kappa$.
\end{lem}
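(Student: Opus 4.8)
The plan is to prove the two assertions separately: first that every $A \in \cA$ is an isomorphism of $H^1_0(\Omega)$, and then that the Banach constants $\gothC(A)$ are bounded below uniformly in $A \in \cA$, the latter being a direct application of \cref{prop:sufficient_banach_const}.

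For the first assertion, fix $\xi \in \partial^\circ \gothH[Du]$ and put $A = I + T \circ \xi$. If $Av = 0$ then $v = -T(\xi \cdot Dv)$, which means precisely that $v$ is the weak solution in $H^1_0(\Omega)$ of $-\Delta v + \xi \cdot Dv + \lambda v = 0$, i.e. of \eqref{eq:HJ_lin}; hence $v = 0$ by \cref{prop:hj_lin_injective}, so $A$ is injective. For surjectivity I would invoke the Fredholm alternative. Here \ref{h:HJ_reg} enters: the solution operator factors as $T|_{L^2(\Omega)} = \iota \circ S$ with $S \colon L^2(\Omega) \to H^2(\Omega) \cap H^1_0(\Omega)$ bounded by the a priori estimate and $\iota \colon H^2(\Omega) \cap H^1_0(\Omega) \hookrightarrow H^1_0(\Omega)$ compact by Rellich--Kondrachov, so $T|_{L^2(\Omega)}$ is compact; since every $\xi \in \partial^\circ\gothH[Du] \subset L^\infty(\Omega;\RR^d)$ induces a bounded operator $v \mapsto \xi \cdot Dv$ from $H^1_0(\Omega)$ to $L^2(\Omega)$, the operator $T \circ \xi$ is compact on $H^1_0(\Omega)$, so $A = I + T \circ \xi$ is Fredholm of index zero. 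Together with injectivity this gives that $A$ is bijective, hence an isomorphism by the open mapping theorem.

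For the uniform lower bound on $\gothC$, I would apply \cref{prop:sufficient_banach_const} with $V = H^1_0(\Omega)$, $W = L^2(\Omega)$, the operator $T|_{L^2(\Omega)}$ in place of $T$, and $\partial^\circ\gothH[Du]$, identified with a subset of $\cL(H^1_0(\Omega), L^2(\Omega))$ via $\xi \mapsto (v \mapsto \xi \cdot Dv)$, in place of $\cA$; then the set $\cB$ produced by the proposition is exactly our $\cA$. The set $\partial^\circ\gothH[Du]$ is bounded in $\cL(H^1_0(\Omega),L^2(\Omega))$ by \eqref{eq:Hamiltonian_clarke_growth}, $T|_{L^2(\Omega)}$ is compact, and each element of $\cA$ is an isomorphism by the previous step, so the only hypothesis left to check is the sequential compactness of $\partial^\circ\gothH[Du]$ in the weak operator topology. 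Given $(\xi_n) \subset \partial^\circ\gothH[Du]$, which is bounded in $L^\infty(\Omega;\RR^d) = L^1(\Omega;\RR^d)'$, I would pass to a subsequence converging weakly-$\star$ to some $\xi$; since $\mu$ is finite this convergence also takes place weakly in $L^2(\Omega;\RR^d)$, and since $\partial^\circ\gothH[Du]$ is convex (\cref{prop:nemytskii_basic_properties}) and closed in $L^2(\Omega;\RR^d)$ — a strongly convergent subsequence converges a.e., and each $\partial^C H(x,Du(x))$ is closed — it is weakly closed, so $\xi \in \partial^\circ\gothH[Du]$. Finally, for $v \in H^1_0(\Omega)$ and $w \in L^2(\Omega)$ one has $w\,Dv \in L^1(\Omega;\RR^d)$, so $\int_\Omega w\,(\xi_n - \xi)\cdot Dv\,dx \to 0$, which is exactly convergence of $\xi_n$ to $\xi$ in the weak operator topology of $\cL(H^1_0(\Omega),L^2(\Omega))$. \cref{prop:sufficient_banach_const} then yields $\sup_{A \in \cA}\norm{A^{-1}}{\cL(H^1_0(\Omega))} < +\infty$ and, a fortiori, $\inf_{A \in \cA}\gothC(A) \geq \kappa$ for some $\kappa > 0$.

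I expect the only genuinely non-routine point to be the stability of $\partial^\circ\gothH[Du]$ under weak-$\star$ limits used in the last step: one needs that a weak-$\star$ limit of measurable selections of $x \mapsto \partial^C H(x,Du(x))$ is again such a selection, which is exactly where the convexity and strong closedness of $\partial^\circ\gothH[Du]$ — and not merely its measurability — come into play. Everything else reduces to the Fredholm alternative, elliptic regularity through \ref{h:HJ_reg}, and the compact Sobolev embeddings.
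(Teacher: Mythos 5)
Your proof is correct, and the first half (trivial kernel via \cref{prop:hj_lin_injective}, compactness of $T$ on $L^2(\Omega)$, Fredholm alternative, open mapping theorem) is exactly the paper's argument. Where you diverge is the uniform lower bound on the Banach constants: the paper does not use \cref{prop:sufficient_banach_const} here at all. Instead it argues directly and quantitatively: writing $w = (I + T\circ\xi)v$ as $-\Delta v + \xi\cdot Dv + \lambda v = (-\Delta + \lambda I)w$ and invoking the a priori estimate of \cite[Corollary 8.7]{GT2001}, it gets $\norm{v}{H^1} \leq C' \norm{w}{H^1}$ with $C'$ depending only on $C_H$ and $\lambda$ (uniform over $\xi$ because $\module{\xi} \leq C_H$ a.e.), so $\kappa = 1/C'$ is explicit and no compactness of the coefficient set is needed. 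Your route through \cref{prop:sufficient_banach_const} is the softer, contradiction-based alternative; it requires the extra verification you supply — sequential weak operator compactness of $\partial^\circ\gothH[Du]$ acting as $v \mapsto \xi\cdot Dv$, which you correctly obtain by weak-$\star$ extraction in $L^\infty$ (note this needs separability of $L^1(\Omega)$, harmless here, and is not covered by \cref{prop:WOT_Lp_compact} since that proposition excludes $r=\infty$) together with convexity and strong, hence weak, closedness of $\partial^\circ\gothH[Du]$ in $L^2$, and testing against $h\,Dv \in L^1$. What you lose relative to the paper is the explicit dependence of $\kappa$ on $C_H$ and $\lambda$; what you gain is independence from the sharp elliptic estimate of Gilbarg--Trudinger. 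Amusingly, your argument is essentially the one the paper itself uses later for the mean field game system in \cref{lem:mfg_banach_constant}, where a direct uniform estimate is not available.
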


\begin{proof}
Notice that for every $A \in \cA$, where $A = I + T \circ \xi$ with $\xi \in \partial^\circ \gothH[Du]$, we have $v \in \ker A$ if and only if $v$ is a weak solution to \eqref{eq:HJ_lin} . It follows from \cref{prop:hj_lin_injective} that we have $\ker A = \{ 0 \}$ for every $A \in \cA$. Since $T$ is a compact operator, a simple application of Fredholm's alternative then shows that $A$ is an isomorphism for every $A \in \cA$. We claim that there exists a positive constant $\kappa > 0 $, depending on $C_H$ and $\lambda$, such that
\[
 \norm{A^{-1}}{\cL(H^1_0,H^1_0)} \leq \kappa^{-1} \quad \textnormal{for all } A \in \cA.
\]
Indeed, let $v,w \in H^1_0(\Omega)$ be such that $w = (I + T \circ \xi)(v)$. This is equivalent to
\[
 -\Delta v + \xi \cdot Dv + \lambda v = (- \Delta + \lambda I)w.
\]
It then follows from \cite[Corollary 8.7]{GT2001} that
\[
 \norm{v}{H^1} \leq C \norm{(- \Delta + \lambda I)w}{H^{-1}} \leq C'\norm{w}{H^1},
\]
where the constants depend on $C_H$ and $\lambda$. The claim is proved by setting $\kappa^{-1} = C'$.
In particular this implies $\gothC(A) = \norm{A^{-1}}{\cL(H^1_0,H^1_0)}^{-1} \geq \kappa$ for all $A \in \cA$.
\end{proof}

We are now in position to apply \cref{cor:first_example}, which yields the following theorem.

\begin{thm}\label{thm:approximation_HJB}
 Assume \ref{h:HJ_data}, \ref{h:HJ_reg} and let $(V_h)_{h > 0}$ be a collection of $H^1_0(\Omega)$-conformal approximation spaces, satisfying \eqref{eq:approximation_property} and \eqref{eq:strong_approximation_property}. Then, for any weak solution $u \in H^2(\Omega) \cap H^1_0(\Omega)$ to \eqref{eq:HJ}, there exists $h_0 > 0$ and a neighborhood $\cO$ of $u$ in $H^1_0(\Omega)$ such that, for every $0 < h \leq h_0$, there is a unique $u_h \in \cO \cap V_h$ satisfying \eqref{eq:HJ_galerkin} and we have the error estimate
 \[
  \norm{u - u_h}{H^1} \leq C \inf_{v_h \in V_h} \norm{u - v_h}{H^1}.
 \]
 In particular
 \[
  \norm{u - u_h}{H^1} = O(\eta(h)).
 \]
\end{thm}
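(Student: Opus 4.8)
The plan is to deduce the theorem from \cref{cor:first_example}, applied with $V = H^1_0(\Omega)$, $W = H^{-1}(\Omega)$, $\bar x = u$, and with $T$, $T_h$, $G$, $F = I + T\circ G$ and $F_h = I + T_h \circ G$ as introduced above. Since $V$ is a Hilbert space it is reflexive, $T \in \cL(W,V)$ by \ref{h:HJ_reg}, and $G$ is continuous. Note that any zero $u_h$ of $F_h$ automatically belongs to $V_h$, because $u_h = -T_h G(u_h)$ and the range of $T_h$ is contained in $V_h$, and that $F_h(u_h) = 0$ is exactly \eqref{eq:HJ_galerkin}; hence, once the hypotheses of \cref{cor:first_example} are verified, its conclusions give directly the existence of $u_h \in V_h$ solving \eqref{eq:HJ_galerkin}, its uniqueness in a neighbourhood $\cO$ of $u$ independent of $h$, and an estimate of $\norm{u - u_h}{H^1}$ that I will then rewrite in the stated form.

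I would first check \ref{item:cor:first_example_i}: for $g \in H^{-1}(\Omega)$, the element $T_h g$ is precisely the Galerkin approximation in $V_h$ of $T g$ for the continuous coercive bilinear form $a(v,w) = \int_\Omega Dv\cdot Dw + \lambda\, v w$, so Céa's lemma yields $\norm{(T - T_h)g}{H^1} \leq C \inf_{v_h \in V_h} \norm{Tg - v_h}{H^1}$, which tends to $0$ by \eqref{eq:approximation_property}. For \ref{item:cor:first_example_ii} I would take $\cM = \partial^\circ \gothH[Du]$, identified (as in the definition of $\cA$) with a subset of $\cL(H^1_0(\Omega), H^{-1}(\Omega))$ via $\xi \mapsto (v \mapsto \xi\cdot Dv)$, so that $\{I + T\circ M : M \in \cM\}$ is the set $\cA$ of \eqref{eq:HJ_cA}; its boundedness and convexity follow from \cref{prop:nemytskii_basic_properties} and \eqref{eq:Hamiltonian_clarke_growth}, while \cref{lem:HJ_metric_regular} provides both the uniform lower bound $\inf_{M \in \cM}\gothC(I + T\circ M) \geq \kappa > 0$ and the fact that each $I + T\circ M$ is an isomorphism, the latter being what triggers the strong metric regularity and the uniqueness. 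The remaining approximate-linearization estimate in \ref{item:cor:first_example_ii} is \eqref{eq:G_prediff}: with $q$ chosen as in the setting above (in particular $q \neq 2$ and $q > \tfrac{2d}{d+2}$), \cref{thm:nemytskii_prediff} applied with $p = 2$ at $\bar u = Du$ supplies, for $v_1, v_2$ close to $u$ in $H^1_0(\Omega)$, some $\xi \in \partial^\circ\gothH[Du]$ making $\norm{\gothH[Dv_1] - \gothH[Dv_2] - \xi(Dv_1 - Dv_2)}{L^q}$ arbitrarily small relative to $\norm{v_1 - v_2}{H^1}$, and the continuous embedding $L^q(\Omega) \hookrightarrow H^{-1}(\Omega)$ converts this into the bound in $H^{-1}$.

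For \ref{item:cor:first_example_iii} I would invoke the second situation of the remark following \cref{cor:first_example}: by \eqref{eq:Hamiltonian_Lipschitz} and Hölder's inequality ($q < 2$, $\Omega$ bounded), $G$ is globally Lipschitz from $H^1_0(\Omega)$ into $Z := L^q(\Omega)$ with $G(x) - G(y) \in Z$ for all $x,y$, so it suffices to prove $\norm{T - T_h}{\cL(L^q(\Omega), H^1_0(\Omega))} \to 0$. This follows from the strong convergence $T_h \to T$ on $H^{-1}(\Omega)$ established above, the uniform bound $\sup_{h>0}\norm{T_h}{\cL(H^{-1},H^1_0)} < \infty$, and the compactness of the embedding $L^q(\Omega) \hookrightarrow H^{-1}(\Omega)$ — valid precisely because $q > \tfrac{2d}{d+2}$, equivalently $H^1_0(\Omega) \hookrightarrow L^{q'}(\Omega)$ is compact — together with the elementary fact that a uniformly bounded sequence of operators converging pointwise to $0$ converges to $0$ in operator norm once precomposed with a compact operator.

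It then remains to rewrite the error estimate. \cref{cor:first_example} gives $u_h \in V_h$ with $F_h(u_h) = 0$, $\norm{u - u_h}{H^1} \leq \tfrac{2}{\kappa}\norm{(T - T_h)G(u)}{H^1}$, strong metric regularity of $F_h$ near $u_h$, and uniqueness in a neighbourhood $\cO$ of $u$ independent of $h$. Since $F(u) = 0$ we have $TG(u) = -u$, hence $(T - T_h)G(u) = -\bigl(u - w_h\bigr)$, where $w_h := -T_h G(u) \in V_h$ satisfies $a(w_h - u,\phi_h) = 0$ for all $\phi_h \in V_h$ and is thus the Galerkin/Ritz projection of $u$; Céa's lemma gives $\norm{(T - T_h)G(u)}{H^1} \leq C\inf_{v_h \in V_h}\norm{u - v_h}{H^1}$, and since $u \in H^2(\Omega)$, \eqref{eq:strong_approximation_property} yields $\norm{u - u_h}{H^1} = O(\eta(h))$. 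The point I expect to be delicate is hypothesis \ref{item:cor:first_example_iii}, namely the operator-norm convergence $\norm{T - T_h}{\cL(L^q, H^1_0)} \to 0$: this is exactly why $q$ must be taken strictly between $\tfrac{2d}{d+2}$ and $2$ — strictly below $2$ so that \cref{thm:nemytskii_prediff} is available (it fails at $p = q = 2$, see \cref{rem:nemytskii_prediff}), and strictly above $\tfrac{2d}{d+2}$ so that $L^q(\Omega) \hookrightarrow H^{-1}(\Omega)$ is compact.
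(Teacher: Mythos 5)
Your proposal is correct and follows the paper's strategy: same reduction of \eqref{eq:HJ_galerkin} to $F_h(u_h)=0$, same application of \cref{cor:first_example} with $V=H^1_0(\Omega)$, $W=H^{-1}(\Omega)$, same verification of \ref{item:cor:first_example_i} via C\'ea's lemma and \eqref{eq:approximation_property}, and same verification of \ref{item:cor:first_example_ii} via $\cM=\partial^\circ\gothH[Du]$, \cref{thm:nemytskii_prediff} with $q\neq 2$, and \cref{lem:HJ_metric_regular}; the final rewriting of the error bound (using $TG(u)=-u$, C\'ea, and \eqref{eq:strong_approximation_property}) is also the paper's. The one genuinely different step is the verification of \ref{item:cor:first_example_iii}. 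The paper exploits that $G(w_1)-G(w_2)\in L^2(\Omega)$ (by \eqref{eq:Hamiltonian_Lipschitz}) and then uses the elliptic regularity \ref{h:HJ_reg} together with \eqref{eq:strong_approximation_property} and C\'ea's lemma to get the quantitative bound $\norm{\Phi_h(w_1)-\Phi_h(w_2)}{H^1}\leq CK_\Omega L_G\,\eta(h)\norm{w_1-w_2}{H^1}$, i.e.\ $Z=L^2(\Omega)$ with an explicit rate $L(h,r)=CK_\Omega L_G\eta(h)$. You instead take $Z=L^q(\Omega)$ with $\tfrac{2d}{d+2}<q<2$ and deduce $\norm{T-T_h}{\cL(L^q,H^1_0)}\to 0$ qualitatively from the pointwise convergence $T_h\to T$ on $H^{-1}(\Omega)$, the uniform bound on $\norm{T_h}{\cL(H^{-1},H^1_0)}$, and the compactness of $L^q(\Omega)\hookrightarrow H^{-1}(\Omega)$ (Schauder duality of the compact embedding $H^1_0\hookrightarrow L^{q'}$); this is a valid argument, and since \cref{cor:first_example} only needs $L(h,r)\to 0$ (take the running supremum in $h$ to make it increasing), it suffices. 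The trade-off: the paper's route yields an explicit rate for the Lipschitz constant of $\Phi_h$ and is shorter given that \ref{h:HJ_reg} is assumed anyway, while yours avoids invoking $H^2$-regularity and \eqref{eq:strong_approximation_property} in this step (so it is slightly more robust, e.g.\ if only \eqref{eq:approximation_property} were available), at the price of losing the rate there — which is harmless, since the final $O(\eta(h))$ estimate comes from applying C\'ea and \eqref{eq:strong_approximation_property} to $(T-T_h)G(u)$ alone, exactly as you do.
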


\begin{proof}
  From \cref{lem:HJ_metric_regular} and \eqref{eq:G_prediff} we deduce that \cref{cor:first_example}-\ref{item:cor:first_example_ii} is satisfied. Moreover, using Céa's lemma \cite[Lemma 2.28]{EG2004} we deduce
\begin{equation}\label{eq:Cea}
 \norm{(T - T_h)g}{H^1} \leq C \inf_{v_h \in V_h} \norm{(Tg) - v_h}{H^{1}_0} \quad \textnormal{for all } g \in H^{-1}(\Omega),\, h > 0,
\end{equation}
where the constant $C$ is independent of $h$. Combining \eqref{eq:approximation_property} and \eqref{eq:Cea} we obtain
\begin{equation}
 \lim_{h \to 0} \norm{(T-T_h)g}{H^1} = 0 \quad \textnormal{for all } g \in H^{-1}(\Omega),
\end{equation}
so that \cref{cor:first_example}-\ref{item:cor:first_example_i} holds. Set, for each $h > 0$, $\Phi_h := (T-T_h) \circ G$. Then, for any $w_1 , w_2 \in H^{1}_0(\Omega)$, using the fact that $G(w_1) - G(w_2) \in L^2(\Omega)$, \eqref{eq:Cea}, \eqref{eq:strong_approximation_property} and \ref{h:HJ_reg}, we have
\begin{align*}
 &\norm{\Phi_h(w_1) - \Phi_h (w_2)}{H^1} = \norm{(T - T_h)(G(w_1) - G(w_2))}{H^1} \\
 & \qquad \leq C \inf_{v_h \in V_h} \norm{T(G(w_1) - G(w_2)) - v_h}{H^1}
 \leq C K_\Omega \eta(h) \norm{G(w_1) - G(w_2)}{L^2} \\
 & \qquad  \leq C K_\Omega L_G \eta(h) \norm{w_1 - w_2}{H^1}.
\end{align*}
It follows that \cref{cor:first_example}-\ref{item:cor:first_example_iii} holds with $L(h,r) = C K_\Omega L_G \eta(h)$. Hence, all the assumptions of \cref{cor:first_example} are satisfied. It follows that there exists a neighborhood $\mathcal{O}$ of $u$ in $H^1_0(\Omega)$ and a unique $u_h \in \mathcal{O}$ such that $F_h(u_h) = 0$. Moreover we have the estimate
\[
 \norm{u - u_h}{H^1} \leq \norm{\left(T - T_h \right) G(u)}{H^1}.
\]
The conclusion then follows from Céa's lemma \cite[Lemma 2.28]{EG2004} and \eqref{eq:strong_approximation_property}.
\end{proof}

\subsection{Second order stationary mean field games}

Our second application concerns mean field games, and more precisely the following stationary system
\begin{equation}\label{eq:mfg}
\begin{cases}
 - \Delta u + H(x,Du) + \lambda u = F[m] \quad &  \textnormal{in } \Omega, \\
 - \Delta m - \diver \left (m H_p(x,Du) \right ) + \lambda m = \lambda m_0 \quad & \textnormal{in } \Omega, \\
  u = m = 0 \quad & \textnormal{on } \partial \Omega, \\
\end{cases}
\end{equation}
where $\Omega \subset \RR^d$ is a bounded Lipschitz domain, $H \colon \Omega \times \RR^d \to \RR$, $H_p(x,p) := \partial_p H(x,p)$, $F \colon L^2(\Omega) \to L^2(\Omega)$, $\lambda > 0$ and $m_0 \in L^2(\Omega)$. Mean field games systems similar to \eqref{eq:mfg} were introduced in \cite{LL2007} to study Nash equilibria of large symmetric dynamics games. System \eqref{eq:mfg} was first studied in \cite{BFY2013} and we refer the reader to \cite{BFY2013,BLS2025} for its interpretation. We also refer to \cite[Theorem 3.3]{OS2024} for a result dealing with the existence of solutions to \eqref{eq:mfg}.

As discussed at the beginning of \cref{section:HJ}, for an Hamiltonian of the form
\[
  H(x,p) = \sup_{a \in A} \left \{- a \cdot p - \ell(x,a) \right \},
\]
if $A$ compact and convex and $\ell$ is strictly convex with respect to $a$, then $H$ is of class $C^1$ with respect to $p$. However, we do not expect $C^2$ regularity in general, so that the standard BRR theorem does not apply because the second equation in \eqref{eq:mfg} is not differentiable. However, $C^{1,1}$ regularity may be achieved if $\ell$ is uniformly strongly convex \cite[Example 4.34]{BS2000}, for instance. Below, we show that our generalization may be applied in this setting.

\subsubsection{Setting of the problem}

Let us start by stating our assumptions for this application.

\begin{enumerate}[label={\bf{(MFG\arabic*)}}]
 \item \label{item:MFG_1} We assume that the Hamiltonian $H$ convex with respect to the second variable and of class $C^{1,1}$, \textit{i.e.}, differentiable with respect to the second variable with both $H$ and $H_p$ continuous, and that there exists $C_H > 0$ such that
\begin{align}
 \module{H_p(x,p)} & \leq C_H \quad \textnormal{for all } (x,p) \in \Omega \times \RR^d, \\
 \module{H_p(x,p) - H_p(x,q)} & \leq C_H \module{p - q} \quad \textnormal{for all } x \in \Omega,\, p,q \in \RR^d. \label{eq:mfg_hamiltonian_lipschitz}
\end{align}
In addition, we also assume that $m_0 \in L^2(\Omega)$, with $m_0 \geq 0$ and $m_0 \neq 0$, and that $F$ is Lipschitz continuous.

 \item \label{item:MFG_3} We assume that, for each $r > 2$, the mapping $F$ is continuously differentiable from $L^r(\Omega)$ to $L^2(\Omega)$ and $\norm{dF[m]}{\cL(L^r,L^2)} \leq C_F$ for all $m \in L^r(\Omega)$.
\end{enumerate}

\begin{rem}
 Our results may be extended to the case of Hamiltonians of class $C^{1,1}_{\loc}$ under the growth conditions
 \begin{gather*}
  \module{H(x,p)} \leq C_H \left(1 + \module{p}^2 \right), \\
  \module{H_p(x,p)} \leq C_H\left(1 + \module{p} \right), \\
  \xi \in \partial^C H_p(x,p) \Longrightarrow \module{\xi} \leq C_H.
 \end{gather*}
 In the case of smooth Hamiltonians this setting was considered in the first author's PhD thesis \cite[Chapter 3]{B_thesis} and the special case $H(x,p) = \frac{\module{p}^2}{2}$ was considered in \cite{BLS2025}.
\end{rem}

\begin{rem}
 Assumption \ref{item:MFG_3} holds for instance in the case of a local coupling $F[m](x) = f(x,m(x))$ where $f \colon\Omega \times \RR \to \RR$ is a Carathéodory function wich is $C^1$ with respect to the second variable and satisfies
 \begin{align*}
  \module{\partial_m f(x,m)} & \leq C_F
 \end{align*}
 for every $(x,m) \in \Omega \times \RR$. Similarly, the assumption also holds for a nonlocal coupling
 \[
  F[m](x) = f(x,k*m(x))
 \]
 where $k$ is a smooth convolution kernel and $f$ is as above.
\end{rem}

\begin{defi}
 A pair $(u,m) \in H_0^1(\Omega) \times L^2(\Omega)$ is a weak solution to \eqref{eq:mfg} if
 \begin{equation}
 \int_\Omega Du(x) \cdot D \phi(x) + H(x,Du) \phi(x) +\lambda u(x) \phi(x) \, dx = \int_\Omega F[m](x) \phi(x) \, dx
\end{equation}
for all $\phi \in H_0^1(\Omega)$ and
\begin{equation}
 \int_{\Omega} \left (-\Delta \psi(x) + H_p(x,Du) \cdot D \psi(x) + \lambda \psi(x) \right ) m(x) \, dx = \lambda \int_\Omega m_0(x) \psi(x) \, dx
\end{equation}
for all $\psi \in H^2(\Omega) \cap H^1_0(\Omega)$.
\end{defi}

For each $s \in (0,1]$, we define the Hilbert space
\[
 \cV_s := H^{1+s}(\Omega) \cap H^1_0(\Omega)
\]
endowed with the $H^{1+s}(\Omega)$ norm.
We recall the following fact.
\begin{lem}\label{lem:very_weak_isom}
 Assume \ref{h:HJ_reg}. For every {$f \in \cV_1'$} there exists a unique $u \in L^2(\Omega)$ such that
 \begin{equation}\label{eq:very_weak_formulation}
  \int_{\Omega} \left( - \Delta \psi + \lambda \psi \right ) u \, dx = \langle f, \psi \rangle_{\cV_1',\cV_1} \quad \textnormal{for all } \psi \in \cV_1.
 \end{equation}
 Moreover, there exists $C >0$ such that
 \[
  \norm{u}{L^2} \leq C \norm{f}{\cV_1'}.
 \]
\end{lem}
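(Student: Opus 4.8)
The plan is to use a standard duality (transposition) argument based on the observation that, under \ref{h:HJ_reg}, the operator $\cL \colon H^2(\Omega) \cap H^1_0(\Omega) \to L^2(\Omega)$ defined by $\cL \psi := -\Delta \psi + \lambda \psi$ is a Banach space isomorphism.

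First I would check this isomorphism property. Given $g \in L^2(\Omega)$, let $v \in H^1_0(\Omega)$ be the unique weak solution to \eqref{eq:poisson}; by \ref{h:HJ_reg} we have $v \in H^2(\Omega)$ with $\norm{v}{H^2} \leq K_\Omega \norm{g}{L^2}$, and since $v \in H^2(\Omega)$ the weak formulation of \eqref{eq:poisson} combined with integration by parts against $\phi \in H^1_0(\Omega)$ (dense in $L^2(\Omega)$) yields $\cL v = g$ in $L^2(\Omega)$. Hence $\cL$ is surjective and $\norm{\cL^{-1} g}{H^2} \leq K_\Omega \norm{g}{L^2}$. Injectivity follows from the uniqueness of the weak solution: if $\cL v = 0$ with $v \in H^2(\Omega) \cap H^1_0(\Omega)$, then $v$ is the weak solution to \eqref{eq:poisson} with $g = 0$, so $v = 0$.

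Next I would construct $u$ via the Riesz representation theorem. The linear functional $g \mapsto \langle f, \cL^{-1} g \rangle_{(H^2)',H^2}$ on $L^2(\Omega)$ is bounded, with norm at most $K_\Omega \norm{f}{(H^2)'}$, since $\module{\langle f, \cL^{-1} g \rangle_{(H^2)',H^2}} \leq \norm{f}{(H^2)'} \norm{\cL^{-1} g}{H^2} \leq K_\Omega \norm{f}{(H^2)'} \norm{g}{L^2}$. Therefore there exists a unique $u \in L^2(\Omega)$ such that $\int_\Omega u g \, dx = \langle f, \cL^{-1} g \rangle_{(H^2)',H^2}$ for every $g \in L^2(\Omega)$, and $\norm{u}{L^2} \leq K_\Omega \norm{f}{(H^2)'}$, which gives the claimed estimate with $C = K_\Omega$. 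Choosing $g = \cL \psi$ for an arbitrary $\psi \in H^2(\Omega) \cap H^1_0(\Omega)$ gives exactly \eqref{eq:very_weak_formulation}.

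Finally, for the uniqueness of $u$ in the statement: if $u_1, u_2 \in L^2(\Omega)$ both satisfy \eqref{eq:very_weak_formulation}, then $\int_\Omega (u_1 - u_2) \cL \psi \, dx = 0$ for all $\psi \in H^2(\Omega) \cap H^1_0(\Omega)$, and by the surjectivity of $\cL$ onto $L^2(\Omega)$ this forces $\int_\Omega (u_1 - u_2) g \, dx = 0$ for every $g \in L^2(\Omega)$, hence $u_1 = u_2$. There is no genuine obstacle in this argument; the only point deserving a little care is the verification that \ref{h:HJ_reg}, phrased as an $H^2$-regularity bound for weak solutions of \eqref{eq:poisson}, indeed upgrades to the statement that $\cL$ is boundedly invertible from $H^2(\Omega) \cap H^1_0(\Omega)$ onto $L^2(\Omega)$, which is precisely what makes the transposition scheme work.
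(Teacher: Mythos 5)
Your argument is correct; the only difference from the paper is the mechanism used for existence. The paper proves uniqueness exactly as you do (testing against solutions of $-\Delta\phi+\lambda\phi=\xi$ produced by \ref{h:HJ_reg}), but for existence it approximates $f$ by a sequence $f_n\in L^2(\Omega)$ converging in $(H^2(\Omega))'$, solves the strong problem $-\Delta u_n+\lambda u_n=f_n$, derives the a priori bound $\norm{u_n}{L^2}\leq C\norm{f_n}{(H^2)'}$ by the same duality computation, and passes to a weak $L^2$ limit. You instead repackage \ref{h:HJ_reg} as the statement that $\cL:=-\Delta+\lambda$ is a Banach isomorphism from $H^2(\Omega)\cap H^1_0(\Omega)$ onto $L^2(\Omega)$ and obtain $u$ in one stroke as the Riesz representative of the bounded functional $g\mapsto\langle f,\cL^{-1}g\rangle_{(H^2)',H^2}$, recovering \eqref{eq:very_weak_formulation} by taking $g=\cL\psi$. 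This transposition route is slightly more direct: it avoids the approximating sequence and the weak-compactness step (and, implicitly, the density of $L^2(\Omega)$ in $(H^2(\Omega))'$ that the paper's argument uses), and it exhibits the constant explicitly as $C=K_\Omega$; the paper's version, on the other hand, requires nothing beyond the weak formulation and elementary weak convergence, and avoids having to verify that the weak solution of \eqref{eq:poisson} with $H^2$ regularity actually satisfies $\cL v=g$ pointwise a.e.\ (a step you correctly justify by integration by parts and density). Both proofs rest on the same duality identity and on \ref{h:HJ_reg}, so either is acceptable.
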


\begin{proof}
 We first prove uniqueness. Let $u \in L^2(\Omega)$ be such that
 \begin{equation}\label{eq:very_weak}
  \int_{\Omega} \left ( - \Delta \phi(x) + \lambda \phi(x) \right ) u(x) \, dx = 0 \quad \textnormal{for all } \phi \in \cV_1.
 \end{equation}
 Let $\xi \in L^2(\Omega)$ and consider, using \ref{h:HJ_reg}, the unique solution $\phi \in H^2(\Omega) \cap H^1_0(\Omega) = \cV_1$ to $- \Delta \phi + \lambda \phi = \xi$ with homogeneous Dirichlet boundary condition. From \eqref{eq:very_weak} we deduce that
 \[
  \int_{\Omega} \xi(x) u(x) \, dx = 0.
 \]
 Since $\xi$ is arbitrary, we conclude that $u = 0$. This proves uniqueness.

 We now turn to the proof of existence. Since $L^2(\Omega)$ is dense in $\cV_1$, we also have that $L^2(\Omega) = \left(L^2(\Omega)\right)'$ is dense in $\cV_1'$.
 Let $(f_n)_{n \in \NN}$ be a sequence in $L^2(\Omega)$ converging to $f$ in $\cV_1'$. Using \ref{h:HJ_reg}, for each $n \in \NN$, let $u_n$ be the unique element in $H^2(\Omega) \cap H^1_0(\Omega) = \cV_1$ such that $-\Delta u_n + \lambda u_n = f_n$. Let also $\xi \in L^2(\Omega)$ and consider $\phi \in H^2(\Omega) \cap H^1_0(\Omega)$ such that $-\Delta \phi + \lambda \phi = \xi$. From \ref{h:HJ_reg}, we have $\norm{\phi}{\cV_1} \leq C \norm{\xi}{L^2}$. Using $u_n$ as a test-function in the equation satisfied by $\phi$ we deduce that
 \[
  \int_{\Omega} \xi(x) u_n(x) \, dx = \int_{\Omega} f_n(x) \phi(x) \, dx \leq \norm{f_n}{\cV_1'} \norm{\phi}{\cV_1} \leq C \norm{f_n}{\cV_1'} \norm{\xi}{L^2}.
 \]
 By duality we conclude that $\norm{u_n}{L^2} \leq C \norm{f_n}{\cV_1'}$. It follows that $u_n$ converges weakly, up to a subsequence, to a some $u \in L^2(\Omega)$ satisfying \eqref{eq:very_weak_formulation}. This concludes the proof.
\end{proof}

Let $(V_h)_{h>0}$ be a collection of $H^1_0(\Omega) \cap W^{1,\infty}(\Omega)$-conformal approximation spaces and assume that
\begin{align}
 \lim_{h \to 0} \inf_{v_h \in V_h} \norm{v - v_h}{H^1} = 0 \quad &  \textnormal{for all } v \in H^1(\Omega), \label{eq:approximation_property_mfg} \\
\inf_{v_h \in V_h} \norm{v - v_h}{H^1} \leq Ch \norm{v}{H^2} \quad & \textnormal{for all } v \in H^2(\Omega)\label{eq:strong_approximation_property_mfg}, \\
\inf_{v_h \in V_h}\norm{v - v_h}{L^2} \leq C h \norm{v}{H^1} \quad & \textnormal{for all } v \in H^1(\Omega),\label{eq:aubin_nitsche}
\end{align}
for some $C > 0$.
We also assume that $(V_h)_{h > 0}$ satisfy the inverse estimate
\begin{equation}\label{eq:inverse_estimate}
 \norm{v_h}{W^{k,p}} \leq Ch^{l - k + d\min{\{0,1/p - 1/q\}}} \norm{v_h}{W^{l,q}}
\end{equation}
for all $k,l \in \{0 , 1\}$ with $l \leq k$, $1 \leq p,q \leq \infty$ and $v_h \in V_h$. Finally, we assume that, for all $h >0$, there exists a linear operators $\cI_h \colon W^{1,1}(\Omega) \to V_h$ such that
\begin{equation}\label{eq:approximation_interpolation}
 \norm{\cI_h v}{W^{k,r}} \leq C \norm{v}{W^{k,r}} \quad \textnormal{for all } v \in W_0^{1,r}(\Omega)
\end{equation}
for every $k \in \{0,1\}$ and $r \in [0,+\infty)$ and such that $\cI_h v_h = v_h$ for all $v_h \in V_h$.

\begin{rem}
The main example of spaces $(V_h)_{h>0}$ satisfying \eqref{eq:approximation_property_mfg}, \eqref{eq:strong_approximation_property_mfg}, \eqref{eq:aubin_nitsche}, \eqref{eq:inverse_estimate} and \eqref{eq:approximation_interpolation} is provided by $\PP^k$-Lagrange finite element spaces on quasi-uniform meshes. Indeed, let $\cI_h$ denote the Scott-Zhang interpolation operator \cite{SZ1990}, which is a projection operator onto $V_h$ according to \cite[Theorem 2.1]{SZ1990}. The stability property \eqref{eq:approximation_interpolation} is then given in \cite[Corollary 4.1]{SZ1990}. Moreover, \eqref{eq:approximation_property_mfg}, \eqref{eq:strong_approximation_property_mfg} and \eqref{eq:aubin_nitsche} follow from \cite[Theorem 4.1]{SZ1990}. Finally, the inverse estimate \eqref{eq:inverse_estimate} can be found in \cite[Corollary 1.141]{EG2004}.
\end{rem}

 The goal is to find approximations $(u_h,m_h) \in V_h \times V_h$ satisfying
\begin{gather}
 \int_\Omega Du_h \cdot D \phi_h + H(x,Du_h) \phi_h +\lambda u_h \phi_h \, dx = \int_\Omega F[m_h](x) \phi_h(x) \, dx \\
 \int_{\Omega} D m_h \cdot D \psi_h + m_h H_p(x,Du_h) \cdot D \psi_h + \lambda m_h \psi_h \, dx = \int_\Omega m_0(x) \psi_h(x) \, dx
\end{gather}
for all $(\phi_h, \psi_h) \in V_h \times V_h$.
\\

Using the Lax-Milgram theorem, we may define a linear operator $S_1 \in \cL(H^{-1}(\Omega), H_0^1(\Omega))$ by setting $S_1f = u$ where $u \in H_0^1(\Omega)$ is the unique solution to
\begin{equation}\label{eq:S1}
 \int_{\Omega} Du \cdot D\phi + \lambda u \phi\, dx = \langle f, \phi \rangle_{H^{-1}(\Omega), H^1(\Omega)} \quad \textnormal{for all } \phi \in H_0^1(\Omega).
\end{equation}
Similarly, using \cref{lem:very_weak_isom}, we may also define a linear operator $S_2 \in \cL(\cV_1', L^2(\Omega))$ by $S_2 g = v$, with $v \in L^2(\Omega)$ being the unique solution to
\begin{equation}\label{eq:S2}
 \int_{\Omega} \left( - \Delta \psi(x) + \lambda \psi(x) \right) v(x) \, dx = \langle g, \psi \rangle \quad \textnormal{for all } \psi \in \cV_1.
\end{equation}
Notice that $S_1 = S_2$ on $H^{-1}(\Omega)$. We then define $T \in \cL(H^{-1}(\Omega) \times \cV_1', H^1(\Omega) \times L^2(\Omega))$ by $T(f,g) = (S_1f, S_2g)$. We also define $G \colon H^1(\Omega) \times L^2(\Omega) \to L^2(\Omega) \times H^{-1}(\Omega)$ by
\[
 G(v,\rho) := \begin{pmatrix}
               H(\cdot, Dv) - F[\rho] \\
               -\diver \left (\rho H_p(\cdot, Dv) \right ) - \lambda m_0
              \end{pmatrix}.
\]
It is then clear that $(u,m) \in H^1(\Omega) \times L^2(\Omega)$ is a weak solution to \eqref{eq:mfg} if and only if
\begin{equation}
\Upsilon (u,m) := \left (I + T \circ G \right)(u,m) = 0.
\end{equation}

Let $2 < r < \infty$ and let $\mathfrak{H} \colon L^r(\Omega;\RR^d) \to L^2(\Omega)$ be the Nemytskii operator associated to $H$. Notice that $H_p$ is a Carathéodory function and that we can also define the corresponding Nemytskii operator $\mathfrak{H_p} \colon L^r(\Omega, \RR^d) \to L^2(\Omega, \RR^d)$ by setting
\[
 \mathfrak{H_p}[w](x) = H_p(x,w(x)) \quad \textnormal{for } x \in \Omega.
\]
Following \cref{section:Nemytskii}, we denote by $\partial^\circ \mathfrak{H_p}$ its generalized differential.

\begin{lem}\label{lem:mfg_nemytskii}
 Let $r > 2$. The Nemytskii operator $\mathfrak{H}$ is continuously differentiable from $L^r(\Omega,\RR^d)$ to $L^2(\Omega)$ with
\[
 d \mathfrak{H}[w] = \mathfrak{H_p}[w] \quad \textnormal{for every } w \in L^r(\Omega,\RR^d).
\]
Moreover, $\partial^\circ \mathfrak{H_p}[w]$ is bounded in $L^\infty(\Omega, \RR^{d \times d})$ for every $w \in L^r(\Omega,\RR^d)$.
\end{lem}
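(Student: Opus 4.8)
The plan is to proceed in three steps: identify the candidate differential and check it is a bounded operator, prove Fréchet differentiability at each point, and then establish continuity of the differential together with the $L^\infty$-bound on $\partial^\circ\mathfrak{H_p}$. Throughout, $\Omega$ has finite Lebesgue measure, and by \ref{item:MFG_1} the map $H(x,\cdot)$ is of class $C^{1,1}$ with gradient $H_p(x,\cdot)$ which is $C_H$-Lipschitz and bounded by $C_H$; in particular $\mathfrak{H_p}[w]\in L^\infty(\Omega;\RR^d)$ for every measurable $w$. Set
\[
 s:=\frac{2r}{r-2}\in(2,+\infty),\qquad\text{so that }\ \frac1s+\frac1r=\frac12 .
\]
Hölder's inequality gives $\norm{\phi\cdot v}{L^2}\le\norm{\phi}{L^s}\norm{v}{L^r}$ for $\phi\in L^s(\Omega;\RR^d)$ and $v\in L^r(\Omega;\RR^d)$; since $\mathfrak{H_p}[w]\in L^\infty(\Omega)\subset L^s(\Omega)$, the assignment $v\mapsto\mathfrak{H_p}[w]\cdot v$ defines an element of $\cL\big(L^r(\Omega;\RR^d),L^2(\Omega)\big)$, which is our candidate for $d\mathfrak{H}[w]$.

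Next I would prove Fréchet differentiability of $\mathfrak{H}$ at a fixed $w$. Let $v_n\to 0$ in $L^r(\Omega;\RR^d)$ with $v_n\neq0$. Applying the fundamental theorem of calculus to $t\mapsto H(x,w(x)+tv_n(x))$, the remainder $\rho_n:=\mathfrak{H}[w+v_n]-\mathfrak{H}[w]-\mathfrak{H_p}[w]\cdot v_n$ satisfies, for a.e. $x$,
\[
 \rho_n(x)=\int_0^1\big(H_p(x,w(x)+tv_n(x))-H_p(x,w(x))\big)\cdot v_n(x)\,dt ,
\]
hence $\module{\rho_n(x)}\le\theta_n(x)\module{v_n(x)}$ where $\theta_n(x):=\sup_{t\in[0,1]}\module{H_p(x,w(x)+tv_n(x))-H_p(x,w(x))}$. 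By \eqref{eq:mfg_hamiltonian_lipschitz} and the boundedness of $H_p$, one has $0\le\theta_n\le\min\{2C_H,\,C_H\module{v_n}\}$. Passing to a subsequence along which $v_n\to0$ a.e., we get $\theta_n\to0$ a.e., and since $\theta_n\le 2C_H\in L^s(\Omega)$, dominated convergence yields $\theta_n\to0$ in $L^s(\Omega)$. Then Hölder's inequality gives
\[
 \frac{\norm{\rho_n}{L^2}}{\norm{v_n}{L^r}}\le\frac{\norm{\theta_n v_n}{L^2}}{\norm{v_n}{L^r}}\le\frac{\norm{\theta_n}{L^s}\norm{v_n}{L^r}}{\norm{v_n}{L^r}}=\norm{\theta_n}{L^s}\xrightarrow{n\to\infty}0 .
\]
Since every subsequence of the original sequence admits a further subsequence along which this holds, $\norm{\rho_n}{L^2}=o(\norm{v_n}{L^r})$, i.e. $d\mathfrak{H}[w]$ exists and acts as $v\mapsto\mathfrak{H_p}[w]\cdot v$. (Alternatively, one may invoke the general theory of superposition operators, e.g. \cite{AZ1990}.)

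It remains to show that $w\mapsto d\mathfrak{H}[w]$ is continuous from $L^r(\Omega;\RR^d)$ to $\cL\big(L^r(\Omega;\RR^d),L^2(\Omega)\big)$ and that $\partial^\circ\mathfrak{H_p}[w]$ is bounded in $L^\infty(\Omega;\RR^{d\times d})$. For continuity, the Hölder bound above gives $\norm{d\mathfrak{H}[w]-d\mathfrak{H}[w']}{\cL(L^r,L^2)}\le\norm{\mathfrak{H_p}[w]-\mathfrak{H_p}[w']}{L^s}$, so it suffices that $\mathfrak{H_p}\colon L^r(\Omega;\RR^d)\to L^s(\Omega;\RR^d)$ is continuous; when $s\le r$ this is \cref{prop:nemytskii_continous} (using $\module{H_p}\le C_H$ as growth bound), and in the remaining range $2<r<4$ it follows from a subsequence plus dominated convergence argument, since $H_p$ is bounded and $\mu(\Omega)<\infty$ (any $L^r$-convergent sequence has a subsequence converging a.e., along which $H_p(\cdot,w_n)\to H_p(\cdot,w)$ a.e. with the $L^s$-dominating constant $C_H$). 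Finally, by \eqref{eq:mfg_hamiltonian_lipschitz} the map $z\mapsto H_p(x,z)$ is $C_H$-Lipschitz, hence $\norm{JH_p(x,z)}{}\le C_H$ at every point of differentiability; passing to limits and convex hulls in \eqref{eq:clarke_jacobian} gives $\module{M}\le cC_H$ for every $M\in\partial^C H_p(x,z)$ and every $(x,z)\in\Omega\times\RR^d$, with a dimensional constant $c$, exactly as \eqref{eq:Hamiltonian_clarke_growth} was derived from \eqref{eq:Hamiltonian_Lipschitz}. Consequently any measurable selection $A\in\partial^\circ\mathfrak{H_p}[w]$ satisfies $\module{A(x)}\le cC_H$ a.e., so $\partial^\circ\mathfrak{H_p}[w]$ is bounded in $L^\infty(\Omega;\RR^{d\times d})$.

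The main obstacle is the case $2<r<4$. There the naive pointwise estimate $\module{\rho_n(x)}\lesssim\module{v_n(x)}^2$ only bounds $\norm{\rho_n}{L^2}$ by a multiple of $\norm{v_n}{L^4}^2$, which need not be $o(\norm{v_n}{L^r})$; circumventing this is precisely what the factorization $\module{\rho_n}\le\theta_n\module{v_n}$ with the exponent $s=2r/(r-2)$ achieves, exploiting that $\theta_n$ is \emph{both} uniformly bounded and convergent to $0$ a.e. The same device is also what forces one to measure $d\mathfrak{H}$ as a map into $\cL(L^r,L^2)$ (rather than merely $\cL(L^r,L^r)$) and underlies the continuity statement.
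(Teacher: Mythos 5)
Your proof is correct, but it takes a different route from the paper: for the differentiability statement the paper simply invokes the general theory of superposition operators, citing \cite[Theorem 3.12]{AZ1990} (the alternative you mention parenthetically), while you give a self-contained argument — identifying the candidate derivative $v\mapsto \mathfrak{H_p}[w]\cdot v$ as an element of $\cL(L^r,L^2)$ via H\"older with $s=2r/(r-2)$, establishing Fr\'echet differentiability through the fundamental theorem of calculus plus a subsequence/dominated-convergence argument on $\theta_n$, and proving continuity of $w\mapsto d\mathfrak{H}[w]$ by reducing it to continuity of $\mathfrak{H_p}\colon L^r\to L^s$ (via \cref{prop:nemytskii_continous} when $s\le r$, and by dominated convergence otherwise; in fact the dominated-convergence argument covers all $r>2$ at once, so the case split is unnecessary). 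What your approach buys is transparency: it makes explicit where the strict inequality $r>2$ and the two properties of $H_p$ (uniform bound and Lipschitz continuity) enter, and why the derivative must be measured in $\cL(L^r,L^2)$ rather than $\cL(L^r,L^r)$; what the paper's citation buys is brevity. For the second assertion, your argument (bounding the Jacobian of $H_p$ by $C_H$ at points of differentiability and passing to limits and convex hulls in \eqref{eq:clarke_jacobian}) is exactly the paper's one-line justification that the $L^\infty$ bound on $\partial^\circ\mathfrak{H_p}[w]$ is a consequence of \eqref{eq:mfg_hamiltonian_lipschitz}.
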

\begin{proof}
 The differentiability of the Nemytskii operator follows from \cite[Theorem 3.12]{AZ1990} and the boundedness of $\partial^\circ \mathfrak{H_p}[w]$ in $L^\infty(\Omega, \RR^{d \times d})$ is a consequence of \eqref{eq:mfg_hamiltonian_lipschitz}.
\end{proof}

\subsubsection{Application of \cref{cor:first_example}}

Assuming that $(u,m)$ is a weak solution to \eqref{eq:mfg} with $m \in L^\infty(\Omega)$ and let $r > 2$, we may define the subset
\[
 \cA[u,m] \subset \cL \left (W_0^{1,r}(\Omega) \times L^r(\Omega), L^2(\Omega) \times H^{-1}(\Omega) \right )
\]
by
\begin{equation}\label{eq:mfg_def_prediff}
 \cA[u,m] := \left \{ A(v,\rho) = \begin{pmatrix}
                             \mathfrak{H_p}[Du] \cdot Dv - dF[m](\rho)  \\
                            - \diver \left( \rho \mathfrak{H_p}[Du] + m \xi Dv \right)
                         \end{pmatrix}
                         : \, \xi \in \partial^\circ \mathfrak{H_p}[Du] \right \}.
\end{equation}
Notice that $\cA[u,m]$ is convex and bounded.

\begin{lem}\label{lem:mfg_prediff}
 Assume that \ref{item:MFG_1} and \ref{item:MFG_3} hold and consider $4 \leq r \leq 6$ and $(u,m) \in W_0^{1,r}(\Omega) \times L^\infty(\Omega)$. Then, for every $\epsilon > 0$, there exists $\delta > 0$ such that, for all $(u_1,m_1), (u_2,m_2) \in B_{W_0^{1,r} \times L^r}\left ((u,m), \delta \right)$, there exists $A \in \cA[u,m]$ satisfying
 \[
  \norm{G(u_1,m_1) - G(u_2,m_2) - A(u_1 - u_2,m_1 - m_2)}{L^2 \times H^{-1}} \leq \epsilon \norm{(u_1,m_1) - (u_2,m_2)}{W^{1,r} \times L^r}.
 \]
\end{lem}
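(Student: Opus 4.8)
The plan is to estimate the two coordinates of $G(u_1,m_1) - G(u_2,m_2) - A(u_1-u_2,m_1-m_2)$ separately, the first in $L^2(\Omega)$ and the second in $H^{-1}(\Omega)$, the operator $A \in \cA[u,m]$ being produced by choosing a suitable $\xi \in \partial^\circ\mathfrak{H_p}[Du]$ via \cref{thm:nemytskii_prediff}. Write $\phi := (u_1-u_2,m_1-m_2)$, so that $\norm{Du_1-Du_2}{L^r}$ and $\norm{m_1-m_2}{L^r}$ are both dominated by $\norm{\phi}{W^{1,r}\times L^r}$, and, once $\xi$ is fixed, set $\rho := \mathfrak{H_p}[Du_1] - \mathfrak{H_p}[Du_2] - \xi\,(Du_1-Du_2)$. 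The first coordinate equals
\[
 \bigl(\mathfrak{H}[Du_1] - \mathfrak{H}[Du_2] - \mathfrak{H_p}[Du]\,(Du_1-Du_2)\bigr) - \bigl(F[m_1] - F[m_2] - dF[m](m_1-m_2)\bigr).
\]
Since $\mathfrak{H}\colon L^r(\Omega;\RR^d)\to L^2(\Omega)$ is of class $C^1$ with $d\mathfrak{H}[w]=\mathfrak{H_p}[w]$ by \cref{lem:mfg_nemytskii}, and $F\colon L^r(\Omega)\to L^2(\Omega)$ is of class $C^1$ by \ref{item:MFG_3}, I would write both brackets through the fundamental theorem of calculus along the segments $[Du_2,Du_1]\subset B_{L^r}(Du,\delta)$ and $[m_2,m_1]\subset B_{L^r}(m,\delta)$, and use the continuity of $d\mathfrak{H}$ at $Du$ and of $dF$ at $m$, to bound this coordinate in $L^2(\Omega)$ by $\omega(\delta)\norm{\phi}{W^{1,r}\times L^r}$ with $\omega(\delta)\to0$ as $\delta\to0$; in particular it is $<\tfrac{\epsilon}{2}\norm{\phi}{W^{1,r}\times L^r}$ once $\delta$ is small.

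For the second coordinate, which is $-\diver(w)$ with $w := m_1\mathfrak{H_p}[Du_1] - m_2\mathfrak{H_p}[Du_2] - (m_1-m_2)\mathfrak{H_p}[Du] - m\,\xi\,(Du_1-Du_2)$, I would first note that because $r\ge4$ one has $L^{r/2}(\Omega)\hookrightarrow L^2(\Omega)$, so $\diver$ maps $L^2(\Omega;\RR^d)$ — hence also $L^{r/2}(\Omega;\RR^d)$ — boundedly into $H^{-1}(\Omega)$, and it suffices to bound $\norm{w}{L^2}$. The algebraic core is the identity
\[
 w = (m_1-m_2)\bigl(\mathfrak{H_p}[Du_1] - \mathfrak{H_p}[Du]\bigr) + (m_2-m)\,\xi\,(Du_1-Du_2) + (m_2-m)\,\rho + m\,\rho .
\]
Each of the first three terms is a product of two functions bounded in $L^r(\Omega)$ one of which has norm $\le\delta$ (using \eqref{eq:mfg_hamiltonian_lipschitz} to get $\norm{\mathfrak{H_p}[Du_1]-\mathfrak{H_p}[Du]}{L^r}\le C_H\delta$, the bound $\norm{\xi}{L^\infty}\le C_H$ from \cref{lem:mfg_nemytskii}, the crude estimate $\norm{\rho}{L^r}\le 2C_H\norm{Du_1-Du_2}{L^r}$, and $\norm{m_2-m}{L^r}\le\delta$); by Hölder's inequality these lie in $L^{r/2}(\Omega;\RR^d)$ with norm $\le CC_H\delta\,\norm{\phi}{W^{1,r}\times L^r}$. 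For the last term I would use $\norm{m\,\rho}{L^2}\le\norm{m}{L^\infty}\norm{\rho}{L^2}$ together with \cref{thm:nemytskii_prediff} applied to $\mathfrak{H_p}\colon L^r(\Omega;\RR^d)\to L^2(\Omega;\RR^d)$ — licit because \ref{item:MFG_1} and \eqref{eq:mfg_hamiltonian_lipschitz} make \ref{item:nemytskii_lip}--\ref{item:nemytskii_growth} hold for $H_p$ with exponents $1<2<r$ — which, given any $\epsilon'>0$, yields $\delta'>0$ such that whenever $\norm{u_i-u}{W^{1,r}}<\delta'$ one can pick $\xi\in\partial^\circ\mathfrak{H_p}[Du]$ with $\norm{\rho}{L^2}<\epsilon'\norm{Du_1-Du_2}{L^r}$; this $\xi$ is the one used to define $A$ through \eqref{eq:mfg_def_prediff}. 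Choosing $\epsilon'$ with $C\norm{m}{L^\infty}\epsilon'<\tfrac{\epsilon}{4}$ and then $\delta\le\delta'$ small enough that the first three terms contribute less than $\tfrac{\epsilon}{4}\norm{\phi}{W^{1,r}\times L^r}$, and combining with the first coordinate, gives the claim.

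The main obstacle is the genuinely nonsmooth contribution $m_2(\mathfrak{H_p}[Du_1]-\mathfrak{H_p}[Du_2])$: since $\mathfrak{H_p}$ is only Lipschitz it cannot be linearized in $L^r$ with a small remainder, so the generalized differential $\partial^\circ\mathfrak{H_p}[Du]$ can only be brought in after isolating the $L^\infty$ part $m$ of the coefficient and pairing it with the remainder $\rho$ via \cref{thm:nemytskii_prediff}, while the small $L^r$ part $m_2-m$ absorbs the crude Lipschitz bound on $\rho$. Keeping the target of the continuity-equation coordinate in $H^{-1}(\Omega)$ rather than $L^2(\Omega)$ and using $r\ge4$ — so that products of two $L^r$ functions land in $L^{r/2}\hookrightarrow L^2\hookrightarrow H^{-1}$ — is exactly what makes the divergence-form term tractable; the range $r\le6$ is not needed for this lemma beyond ensuring the ambient setting is consistent.
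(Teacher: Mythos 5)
Your proof is correct and takes essentially the same route as the paper: the same choice of $\xi$ via \cref{thm:nemytskii_prediff} applied to $\mathfrak{H_p}\colon L^r(\Omega;\RR^d)\to L^2(\Omega;\RR^d)$, strict differentiability of $\mathfrak{H}$ and $F$ for the Hamilton--Jacobi coordinate, and the same algebraic decomposition of the divergence term (yours merely splits $(m_2-m)\left(\mathfrak{H_p}[Du_1]-\mathfrak{H_p}[Du_2]\right)$ into the two pieces $(m_2-m)\xi(Du_1-Du_2)+(m_2-m)\rho$) estimated by H\"older with $r\geq 4$ so that the product lands in $L^2\hookrightarrow H^{-1}$. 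Your closing observation that the upper bound $r\leq 6$ plays no role in this particular lemma is also accurate.
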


\begin{proof}
 Let $\epsilon > 0$ and fix $(u,m)$ as in the statement. Then, using \cref{thm:nemytskii_prediff}, there exists $\delta_1 > 0$ such that, for every $u_1,u_2 \in B_{W_0^{1,r}}(u,\delta_1)$, there is $\xi \in \partial^\circ \mathfrak{H_p}[Du]$ such that
 \begin{align}
  \norm{\mathfrak{H_p}[Du_1] - \mathfrak{H_p}[Du_2] - \xi(Du_1 - Du_2)}{L^2} & \leq \frac{\epsilon}{5 (1 + \norm{m}{L^\infty})} \norm{Du_1 - Du_2}{L^r} \nonumber \\
  & \leq \frac{\epsilon}{5 (1 + \norm{m}{L^\infty})} \norm{u_1 - u_2}{W^{1,r}}.\label{eq:mfg_prediff_1}
 \end{align}
 Moreover, from \cref{lem:mfg_nemytskii}, $\mathfrak{H}$ is continuously differentiable with $d\mathfrak{H}[Du] = \mathfrak{H_p}[Du]$, it is in particular strictly differentiable, so that there exists $\delta_2 > 0$ such that
 \begin{equation}\label{eq:mfg_prediff_2}
  \norm{\mathfrak{H}[Du_1] - \mathfrak{H}[Du_2] - \mathfrak{H_p}[Du] \cdot \left (Du_1 - Du_2 \right)}{L^2} \leq \frac{\epsilon}{5} \norm{u_1 - u_2}{W^{1,r}}
 \end{equation}
 for every $u_1, u_2 \in B_{W_0^{1,r}}(u,\delta_2)$. Similarly, using \ref{item:MFG_3}, there is $\delta_3 > 0$ such that
 \begin{equation}\label{eq:mfg_prediff_3}
  \norm{F[m_1] - F[m_2] - dF[m](m_1 - m_2)}{L^2} \leq \frac{\epsilon}{5} \norm{m_1 - m_2}{L^r}
 \end{equation}
 for every $m_1, m_2 \in B_{L^r}(m,\delta_3)$. We have that
 \begin{align}
  & \norm{\diver \left (m_1 \mathfrak{H_p}[Du_1] - m_2 \mathfrak{H_p}[Du_2] - (m_1 - m_2) \mathfrak{H_p}[Du] - m \xi (Du_1 - Du_2) \right )}{H^{-1}} \nonumber \\
  & \qquad \leq \norm{m_1 \mathfrak{H_p}[Du_1] - m_2 \mathfrak{H_p}[Du_2] - (m_1 - m_2) \mathfrak{H_p}[Du] - m \xi (Du_1 - Du_2)}{L^2} \nonumber \\
  & \qquad  \leq \norm{(m_1 - m_2) \left( \mathfrak{H_p}[Du_1] - \mathfrak{H_p}[Du]\right )}{L^2} + \norm{(m_2 - m) \left ( \mathfrak{H_p}[Du_1] - \mathfrak{H_p}[Du_2] \right )}{L^2} \\
  & \qquad \qquad + \norm{m \left(\mathfrak{H_p}[Du_1] - \mathfrak{H_p}[Du_2] - \xi(Du_1 - Du_2) \right )}{L^2}. \label{eq:mfg_prediff_4}
 \end{align}
  From \ref{item:MFG_1}, we get
  \[
    \norm{\mathfrak{H_p}[Dv_1] - \mathfrak{H_p}[Dv_2]}{L^r} \leq C_H \norm{Dv_1 - Dv_2}{L^r}.
  \]
 In particular, using Hölder's inequality and the fact that $r \geq 4$, we have
 \begin{equation}\label{eq:mfg_prediff_5}
 \begin{split}
  \norm{(m_1 - m_2) \left( \mathfrak{H_p}[Du_1] - \mathfrak{H_p}[Du]\right )}{L^2} & \leq \module{\Omega}^{\frac{r-2}{2r}} \norm{m_1 - m_2}{L^r} \norm{ \mathfrak{H_p}[Du_1] - \mathfrak{H_p}[Du]}{L^r} \\
  & \leq C_H \module{\Omega}^{\frac{r-2}{2r}} \norm{m_1 - m_2}{L^r} \norm{Du_1 -Du}{L^r} \\
  & \leq \frac{\epsilon}{5} \norm{m_1 - m_2}{L^r}
  \end{split}
 \end{equation}
 and
 \begin{equation}
 \label{eq:mfg_prediff_6}
 \begin{split}
  \norm{(m_2 - m) \left ( \mathfrak{H_p}[Du_1] - \mathfrak{H_p}[Du_2] \right )}{L^2} & \leq \module{\Omega}^{\frac{r-2}{2r}} \norm{m_2 - m}{L^r} \norm{ \mathfrak{H_p}[Du_1] - \mathfrak{H_p}[Du_2]}{L^r} \\
  & \leq C_H \module{\Omega}^{\frac{r-2}{2r}} \norm{m_2 - m}{L^r} \norm{Du_1 - Du_2}{L^r} \\
  & \leq \frac{\epsilon}{5} \norm{ Du_1 - Du_2}{L^r}
  \end{split}
 \end{equation}
 whenever $(u_1,m_1), (u_2,m_2) \in B_{W_0^{1,r} \times L^r}((u,m),\delta_4)$ with $\delta_4 := \left(C_H \module{\Omega}^{\frac{r-2}{2r}} \right)^{-1}$.
Setting $\delta = \min \left \{\delta_1, \delta_2, \delta_3, \delta_4\right \}$ and combining \eqref{eq:mfg_prediff_1}, \eqref{eq:mfg_prediff_2}, \eqref{eq:mfg_prediff_3}, \eqref{eq:mfg_prediff_4}, \eqref{eq:mfg_prediff_5} and \eqref{eq:mfg_prediff_6}, we conclude that
\begin{equation}
 \norm{G(u_1,m_1) - G(u_2,m_2) - A(u_1 - u_2,m_1 - m_2)}{L^2 \times H^{-1}} \leq \epsilon \norm{(u_1,m_1) - (u_2,m_2)}{W^{1,r} \times L^r}
\end{equation}
for every $(u_1,m_1), (u_2,m_2) \in B_{W_0^{1,r} \times L^r}((u,m),\delta)$.
\end{proof}

In the case where $d \leq 3$ we have the continuous embedding $H^2(\Omega) \times H^1(\Omega) \hookrightarrow W^{1,r}(\Omega) \times L^r(\Omega)$ for every $1 \leq r \leq 6$. Using \ref{h:HJ_reg}, we may define
\[
 S_1 \in \cL \left (L^2(\Omega), W_0^{1,r}(\Omega) \right) \quad \textnormal{and} \quad S_2 \in \cL \left (H^{-1}(\Omega), L^r(\Omega) \right )
\]
according to \eqref{eq:S1}-\eqref{eq:S2} and set
\[
 T \in \cL \left (L^2(\Omega) \times H^{-1}(\Omega), W_0^{1,r}(\Omega) \times L^r(\Omega) \right )
\]
as before.

\begin{lem}\label{lem:mfg_banach_constant}
 Assume that $d \leq 3$, that \ref{h:HJ_reg}, \ref{item:MFG_1} and \ref{item:MFG_3} hold, and choose $2 < r < 6$. Let $(u,m) \in H^2(\Omega) \times L^\infty(\Omega)$. If
 \begin{equation}\label{eq:mfg_kernel_trivial}
  \ker_{W_0^{1,r} \times L^r} \left (I + T \circ A \right) = \{0\} \quad \textnormal{for every } A \in \cA[u,m],
 \end{equation}
 then, for every $A \in \cA[u,m]$, the linear operator $I + T \circ A$ is surjective (hence bijective) from $W_0^{1,r}(\Omega) \times L^r(\Omega)$ onto itself and there exists $\kappa > 0$ such that
 \begin{equation}\label{eq:mfg_banach_const}
  \inf_{A \in \cA[u,m]} \mathfrak{C}(I + T \circ A) \geq \kappa.
 \end{equation}
 In particular
 \[
  \sup_{A \in \cA[u,m]} \norm{\left (I + T \circ A \right)^{-1}}{\cL(W_0^{1,r} \times L^r)} \leq \kappa^{-1}.
 \]
\end{lem}

\begin{proof}
  We recall that since $d \leq 3$, the embedding
  \[
   H^{2}(\Omega) \times H^{1}(\Omega) \hookrightarrow W^{1,r}(\Omega) \times L^r(\Omega)
  \]
  is compact according to the Rellich-Kondrachov theorem. Moreover, each operator $A \in \cA[u,m]$ maps $W^{1,r}(\Omega) \times L^r(\Omega)$ to $L^2(\Omega) \times \left(H^{1}(\Omega) \right)'$. We deduce that
  \[
   R(T \circ A) \subset H^{2}(\Omega) \times H^{1}(\Omega),
  \]
  so that $T\circ A$ is a bounded and compact linear operator from $W_0^{1,r}(\Omega) \times L^r(\Omega)$ into itself.
 The surjectivity of $I + T \circ A$ then follows from \eqref{eq:mfg_kernel_trivial} and Fredholm's alternative.

 For \eqref{eq:mfg_banach_const}, we claim that the set $\cA$ is sequentially compact in the weak operator topology of $\cL(X,Y)$, with
 \[
  X = W_0^{1,r}(\Omega) \times L^r(\Omega) \quad \textnormal{and} \quad Y:= L^2(\Omega) \times H^{-1}(\Omega).
 \]
Since $r > 2$, there exists $1 < q < \infty$ such that $1/r + 1/q = 1/2$. Since $\partial^\circ \mathfrak{H_p}[Du]$ is bounded in $L^q(\Omega;\RR^{d \times d})$, we deduce from \cref{prop:nemytskii_basic_properties} that $\partial^\circ \mathfrak{H_p}[Du]$ is sequentially compact for the weak topology of $\cL(L^r(\Omega,\RR^d), L^{2}(\Omega,\RR^d))$. Using \cref{prop:WOT_composition}, it follows that the linear operators
\[
 W_0^{1,r}(\Omega) \ni v \mapsto m \xi Dv \in L^2(\Omega) \quad \textnormal{for } \xi \in \cA[u,m]
\]
is sequentially compact in the weak operator topology. Another application of \cref{prop:WOT_composition} then implies the sequential compactness of the collection of linear operators
 \[
   W_0^{1,r}(\Omega) \ni v \mapsto \diver \left (m \xi Dv \right ) \in H^{-1}(\Omega) \quad \textnormal{for all } \xi \in \partial^\circ \mathfrak{H_p}[u]
 \]
 in the weak operator topology. The claim then easily follows.
 From \cref{prop:sufficient_banach_const}, we conclude that $\inf_{A \in \cA[u,m]} \gothC(I + T \circ A) \geq \kappa > 0$ which concludes the proof.
\end{proof}

Consider $S_h \in \cL(V_h', V_h)$ be defined by $S_h f = v_h$, where $v_h$ is the unique element in $V_h$ such that
\[
 \int_\Omega D v_h \cdot D \phi_h + \lambda v_h \phi_h \, dx = \left \langle f, \phi_h \right \rangle_{V_h', V_h}.
\]
Set $T_h(f,g) = (S_h f,S_h g)$ for all $f,g \in V_h'$ and $\Upsilon_h(v,\rho) = \left (I + T_h \circ G \right ) (v,\rho)$. We can apply our generalized BRR theorem to obtain the following first result.

\begin{thm}\label{thm:mfg_error}
 Assume that $d \leq 3$, that \ref{h:HJ_reg}, \ref{item:MFG_1} and \ref{item:MFG_3} hold, and choose $4 \leq r < 6$. Let $(V_h)_{h > 0}$ be a collection of $W^{1,\infty}(\Omega)$-conformal approximation spaces, satisfying \eqref{eq:approximation_property_mfg},\eqref{eq:strong_approximation_property_mfg} and \eqref{eq:aubin_nitsche}. Let also $(u,m) \in \left(H^2(\Omega)\cap H^1_0(\Omega)\right) \times L^\infty(\Omega)$ be a weak solution to \eqref{eq:mfg} such that
  \begin{equation}\label{eq:mfg_kernel}
  \ker_{W_0^{1,r} \times L^r} \left (I + T \circ A \right) = \{0\} \quad \textnormal{for every } A \in \cA[u,m].
 \end{equation}
 Then, there exists $h_0 > 0$ and a neighborhood $\cO$ of $(u,m)$ in $W_0^{1,r}(\Omega) \times L^r(\Omega)$ such that, for every $0 < h \leq h_0$, there is a unique $(u_h,m_h) \in \cO \cap (V_h \times V_h)$ satisfying
 \[
  \Upsilon_h(u_h,m_h) = 0
 \]
 and we have the error estimate
 \[
 \norm{(u-u_h, m-m_h)}{W^{1,r} \times L^r} \leq K \norm{\left (T - T_h \right) G(u,m)}{W^{1,r} \times L^r}
\]
for some $K > 0$. In particular,
 \[
  \norm{u - u_h}{W^{1,r}} + \norm{m - m_h}{L^r} \leq K h^\theta,
 \]
 where $\theta \in (0,1/4]$ is defined by
 \[
  \frac{1}{r} = \frac{\theta}{2} + \frac{1-\theta}{6}.
 \]
\end{thm}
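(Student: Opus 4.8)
The plan is to apply \cref{cor:first_example} with $V := W_0^{1,r}(\Omega) \times L^r(\Omega)$ (reflexive, since $1 < r < \infty$), $W := L^2(\Omega) \times H^{-1}(\Omega)$, the operators $T$ and $T_h$ and the map $G$ introduced above, and $\bar x := (u,m)$. First one checks that $(u,m) \in V$: since $d \le 3$ and $r \le 6$, one has $H^2(\Omega) \cap H^1_0(\Omega) \hookrightarrow W_0^{1,r}(\Omega)$ and $L^\infty(\Omega) \hookrightarrow L^r(\Omega)$. The candidate set is $\cM := \cA[u,m]$, which is convex and bounded in $\cL(V,W)$.

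Next, I would verify the three hypotheses of \cref{cor:first_example}. Hypothesis \ref{item:cor:first_example_ii} is exactly the content of \cref{lem:mfg_prediff} (pre-differentiability of $G$ at $(u,m)$ relative to $\cA[u,m]$, valid since $4 \le r \le 6$) combined with \cref{lem:mfg_banach_constant}, which, under the kernel hypothesis \eqref{eq:mfg_kernel}, yields both $\inf_{A \in \cA[u,m]} \gothC(I + T \circ A) \ge \kappa > 0$ and that each $I + T \circ A$ is an isomorphism; the latter is what delivers strong metric regularity, hence the local uniqueness in the final statement. For \ref{item:cor:first_example_i}, writing $(T - T_h)(f,g) = (S_1 f - S_h f,\, S_1 g - S_h g)$: for $f \in L^2(\Omega)$ the function $S_1 f$ lies in $H^2(\Omega)$ by \ref{h:HJ_reg}, so Céa's lemma with \eqref{eq:approximation_property_mfg} gives $\norm{S_1 f - S_h f}{H^1} \to 0$ while $\norm{S_1 f - S_h f}{W^{1,6}}$ stays bounded (see the consistency estimate below), so $\norm{S_1 f - S_h f}{W^{1,r}} \to 0$ by Lyapunov interpolation; and for $g \in H^{-1}(\Omega)$, Céa's lemma with \eqref{eq:approximation_property_mfg} and $H^1(\Omega) \hookrightarrow L^r(\Omega)$ give $\norm{S_1 g - S_h g}{L^r} \to 0$. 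For \ref{item:cor:first_example_iii}, one uses that for $x,y \in B_V((u,m),\rho)$ one may write $G(x) - G(y) = (f_{xy},\, -\diver \zeta_{xy})$ with $\norm{f_{xy}}{L^2} + \norm{\zeta_{xy}}{L^2} \le C(\rho)\norm{x-y}{V}$ — this uses the Lipschitz continuity of $H(x,\cdot)$, $H_p(x,\cdot)$ and $F$ together with Hölder's inequality and the embeddings available when $d \le 3$ and $r \ge 4$ — and then the estimate $\norm{(T-T_h)(f,-\diver\zeta)}{V} \le C h^\theta(\norm{f}{L^2} + \norm{\zeta}{L^2})$ established in the last paragraph; this gives \ref{item:cor:first_example_iii} with $L(h,\rho) = C(\rho) h^\theta$.

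Granting these, \cref{cor:first_example} produces, for $h$ small, a solution $(u_h,m_h) \in V_h \times V_h$ of $\Upsilon_h(u_h,m_h) = 0$, unique in a fixed neighborhood $\cO$ of $(u,m)$ in $V$, with $\sur_{V_h,V_h}(\Upsilon_h;(u_h,m_h)) \ge \kappa/2$ and
\[
 \norm{(u - u_h,\, m - m_h)}{W^{1,r} \times L^r} \le \tfrac{2}{\kappa}\, \norm{(T - T_h)G(u,m)}{W^{1,r} \times L^r},
\]
which is the asserted estimate with $K = 2/\kappa$.

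The main work, and the expected obstacle, is the consistency estimate $\norm{(T-T_h)G(u,m)}{W^{1,r}\times L^r} = O(h^\theta)$. Write $G(u,m) = (f,g)$ with $f = H(\cdot,Du) - F[m] \in L^2(\Omega)$ and $g = -\diver \zeta - \lambda m_0$, where $\zeta = m\, H_p(\cdot,Du) \in L^\infty(\Omega;\RR^d) \subset L^2$ (using $m \in L^\infty$ and $\module{H_p} \le C_H$). For the first component, $S_1 f \in H^2(\Omega)$ by \ref{h:HJ_reg}; Céa's lemma and \eqref{eq:strong_approximation_property_mfg} give $\norm{S_1 f - S_h f}{H^1} = O(h)$, while decomposing $S_h f = \cI_h S_1 f + (S_h f - \cI_h S_1 f)$ and combining the stability \eqref{eq:approximation_interpolation} of $\cI_h$, the interpolation bound $\norm{S_1 f - \cI_h S_1 f}{H^1} = O(h\norm{S_1 f}{H^2})$ (itself deduced from \eqref{eq:strong_approximation_property_mfg}, the stability of $\cI_h$ and $\cI_h v_h = v_h$), the inverse estimate \eqref{eq:inverse_estimate} and $H^2(\Omega) \hookrightarrow W^{1,6}(\Omega)$ gives $\norm{S_1 f - S_h f}{W^{1,6}} = O(1)$; Lyapunov interpolation between the exponents $2$ and $6$ then yields $\norm{S_1 f - S_h f}{W^{1,r}} \le \norm{S_1 f - S_h f}{H^1}^\theta \norm{S_1 f - S_h f}{W^{1,6}}^{1-\theta} = O(h^\theta)$, with $\theta$ as in the statement. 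For the second component, split $S_2 g - S_h g = (S_1(-\diver\zeta) - S_h(-\diver\zeta)) + (S_1(-\lambda m_0) - S_h(-\lambda m_0))$; the last summand is $O(h)$ in $H^1$, hence in $L^r$, because $-\lambda m_0 \in L^2$ forces $S_1(-\lambda m_0) \in H^2$; for the first summand, $\norm{S_1(-\diver\zeta)}{H^1} + \norm{S_h(-\diver\zeta)}{H^1} \le C\norm{\zeta}{L^2}$, the Aubin--Nitsche duality argument (licit since the dual problem is $H^2$-regular by \ref{h:HJ_reg}) gives $\norm{S_1(-\diver\zeta) - S_h(-\diver\zeta)}{L^2} = O(h\norm{\zeta}{L^2})$, and $H^1(\Omega) \hookrightarrow L^6(\Omega)$ gives $\norm{S_1(-\diver\zeta) - S_h(-\diver\zeta)}{L^6} = O(\norm{\zeta}{L^2})$, so interpolation yields $O(h^\theta\norm{\zeta}{L^2})$ in $L^r$. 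Summing, $\norm{(T-T_h)G(u,m)}{W^{1,r}\times L^r} = O(h^\theta)$, with an implicit constant depending only on the data and on $\norm{(u,m)}{(H^2\cap H^1_0)\times L^\infty}$. I expect the delicate point to be precisely this upgrading of the crude $H^1$ bounds into quantitative $W^{1,r}$ and $L^r$ bounds — it forces one to juggle Céa's lemma, the Aubin--Nitsche trick, the inverse estimate \eqref{eq:inverse_estimate} and the stability of $\cI_h$ simultaneously — and the range $\theta \in (0,1/4]$ (coming from $4 \le r < 6$) is exactly what the interpolation inequality between $2$ and $6$ delivers; the same mechanism is what makes hypothesis \ref{item:cor:first_example_iii} workable.
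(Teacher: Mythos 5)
Your proposal is correct and follows essentially the same route as the paper: you apply \cref{cor:first_example} with $V = W_0^{1,r}(\Omega)\times L^r(\Omega)$, $W = L^2(\Omega)\times H^{-1}(\Omega)$ and $\cM = \cA[u,m]$, verify hypothesis (ii) via \cref{lem:mfg_prediff} and \cref{lem:mfg_banach_constant} (the isomorphism property giving strong metric regularity and uniqueness), and obtain the $O(h^\theta)$ consistency and stability bounds by the same chain of Céa, Aubin--Nitsche, stability of $\cI_h$ plus the inverse estimate, and interpolation between the exponents $2$ and $6$. The only cosmetic difference is that the paper deduces the operator-norm bound $\norm{T-T_h}{\cL(L^2\times H^{-1},\,W^{1,r}\times L^r)} \leq C h^\theta$ from the interpolation theorem and then uses the (local) Lipschitz continuity of $G$, whereas you interpolate the Lebesgue norms function-by-function and decompose $G(x)-G(y)$ explicitly, which amounts to the same estimates.
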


\begin{rem}
 More generally, the conclusion of \cref{thm:mfg_error} remains true if we replace \eqref{eq:mfg_kernel} by
 \[
  \textnormal{$\Upsilon$ is strongly metrically regular at $(u,m)$.}
 \]
\end{rem}

\begin{proof}[Proof of \cref{thm:mfg_error}]
 Using Céa's lemma \cite[Lemma 2.28]{EG2004}, \eqref{eq:strong_approximation_property_mfg} and \ref{h:HJ_reg}, we have
 \[
  \norm{\left(S_1 - S_h\right)g}{H^1} \leq C \inf_{v_h \in V_h} \norm{S_1 g - v_h}{H^1} \leq C h \norm{g}{L^2}
 \]
 for all $g \in L^2(\Omega)$.
 Similarly, using the Aubin-Nitsche lemma \cite[Lemma 2.31]{EG2004} and \eqref{eq:aubin_nitsche}, we also have
 \[
  \norm{\left(S_2 - S_h \right)\mu}{L^2} \leq C h \norm{\left(S_2 - S_h \right)\mu}{H^1} \leq C h \norm{\mu}{H^{-1}}
 \]
for all $\mu \in H^{-1}(\Omega)$. Moreover, using \eqref{eq:approximation_interpolation} and the inverse estimates \eqref{eq:inverse_estimate} we have
\begin{align*}
 \norm{S_1 g - S_h g}{W^{1,6}} & \leq \norm{S_1 g - \cI_h \circ S_1 g}{W^{1,6}} + \norm{\cI_h \circ S_1 g - S_h g}{W^{1,6}} \\
 & \leq C \left(\norm{S_1 g}{W^{1,6}} + h^{d\left(1/6 - 1/2 \right)} \norm{\cI_h \circ \left(S_1 - S_h \right)g}{H^1} \right) \\
 & \leq C \left( \norm{S_1 g}{H^2} + h^{-d/3} \norm{\left(S_1 - S_h \right)g}{H^1} \right) \\
 & \leq C \left(1 + h^{1 -d/3} \right) \norm{g}{L^2}.
\end{align*}
Since we assume $d \leq 3$, it follows that for any $h_0 > 0$, there exists $C>0$ such that
\[
  \norm{\left(S_1  - S_h\right) g}{W^{1,6}} \leq C \norm{g}{L^2}
\]
for all $0 < h \leq h_0$. Since $r \in [4,6)$, there exists $\theta \in (0,1/4)$ such that $\frac{1}{r} = \frac{\theta}{2} + \frac{1-\theta}{6}$.
By interpolation \cite[Theorem 2.6]{L2018}, we obtain that $\norm{\left(S_1 - S_h \right)}{\cL(L^2,W^{1,r})} \leq Ch^\theta$. A similar argument yields $\norm{\left(S_2 - S_h \right)}{\cL(H^{-1},L^r)} \leq Ch^\theta$.
We therefore obtain
\begin{equation}\label{eq:mfg_error_1}
 \norm{T-T_h}{\cL(L^2 \times H^{-1},W^{1,r} \times L^r)} \leq C h^\theta
\end{equation}
and in particular $\norm{T-T_h}{\cL(L^2 \times H^{-1},W^{1,r} \times L^r)} \xrightarrow{h \to 0} 0$.
Moreover, setting $\Phi_h = \Upsilon - \Upsilon_h$, we observe that
\begin{align*}
 \norm{\Phi_h(v_1,\rho_1) - \Phi_h(v_2,\rho_2)}{W^{1,r} \times L^r} & \leq \norm{T - T_h}{\cL(L^2 \times H^{-1}, W^{1,r} \times L^r)} \norm{G(v_1,\rho_1) - G(v_2,\rho_2)}{L^2 \times H^{-1}} \\
 & \leq C h^{\theta} \norm{G(v_1,\rho_1) - G(v_2,\rho_2)}{L^2 \times H^{-1}} \\
 & \leq C h^{\theta} \norm{(v_1 - v_2, \rho_1 - \rho_2)}{W^{1,r} \times L^r}.
\end{align*}
Using also \cref{lem:mfg_prediff,lem:mfg_banach_constant}, we conclude that we may apply \cref{cor:first_example} to obtain $h_0 > 0$ and a neighborhood $\mathcal{O}$ of $(u,m)$ in $W_0^{1,r}(\Omega) \times L^r(\Omega)$ such that, for every $0 < h \leq h_0$, there is $(u_h,m_h) \in \mathcal{O} \cap \left (V_h \times V_h \right)$ such that $\Upsilon_h(u_h,m_h) = 0$ and
\[
 \norm{(u-u_h, m-m_h)}{W^{1,r} \times L^r} \leq K \norm{\left (T - T_h \right) G(u,m)}{W^{1,r} \times L^r}.
\]
We conclude using \eqref{eq:mfg_error_1}.
\end{proof}

\begin{rem}[Nonnegative approximations of the density]
 In the case of $\mathbb{P}^1$-Lagrange finite elements, Osborne and Smears proposed a numerical scheme satisfying a discrete maximum principle, ensuring that the density $m_h$ remains nonnegative \cite{OS2024,OS2025a}. In general the scheme that we propose here may fail to satisfy this property. However, we believe that our approach may be adapted to the scheme in \cite{OS2024}. Indeed, the solutions to the scheme proposed in \cite{OS2024} can be seen as zeros of $F_h^s := I + T_h^s \circ G$  where $G$ is as in the paper and $T_h^s = (S_h^s, S_h^s)$ is the linear operator defined by $S^s_h f = v_h$, where $v_h \in V_h$ is the unique solution to
	\[
	 \int_{\Omega} \left(1 + \gamma_h(x) \right) Dv_h(x) \cdot D\phi_h(x) + \lambda v_h(x) \phi_h(x)\, dx  =  \langle f, \phi_h \rangle
	\]
	for all $\phi_h \in V_h$, and $\gamma_h$ is the artificial diffusion introduced in \cite{OS2024}. It is proved in \cite{OS2024} that the resulting scheme preserves the positivity of the density if the underlying mesh is strictly acute. From the Lax-Milgram theorem, we have $T_h^s \in \cL(H^{-1}(\Omega), H^1_0(\Omega))$ with uniformly bounded norm. Then, setting $w_h := \left(T_h^s - T_h\right) f$, with $T_h$ as in the paper, we have that $w_h$ solves
	\[
	 \int_{\Omega} Dw_h(x) \cdot D \phi_h(x) + \lambda w_h(x) \phi_h(x) \,dx = - \int_{\Omega} \gamma_h(x) D v_h(x) \cdot D \phi_h(x)\, dx.
	\]
	Since $\norm{\gamma_h}{L^\infty} = O(h)$, we deduce that $\norm{w_h}{H^1} = O(h \norm{f}{H^{-1}})$. Hence $\norm{T_h^s - T_h}{\cL(H^{-1},H^1)} = O(h)$. From this estimate and the triangle inequality, we expect that it is possible to obtain estimates on $T - T_h^s$ that allow to verify the assumptions of the generalized BRR theorem for $F_h^s$ instead of $F_h$.

	Note also that the artificial viscosity parameter $\gamma_h$ introduced in \cite{OS2024} vanishes for $h$ small enough (depending on $\lambda$ and $C_H$). It follows that our scheme is also positivity preserving for $h$ small enough if the mesh is both quasi-uniform and strictly acute, since in this case the schemes are identical.
\end{rem}

In terms of PDEs, the condition \eqref{eq:mfg_kernel} amounts to
\begin{equation}
 \begin{gathered}
  \textnormal{$(v,\rho) = (0,0)$ is the unique weak solution to}\\
  \begin{cases}
   - \Delta v + H_p(x,Du)\cdot Dv + \lambda v = dF[m](\rho) \quad & \textnormal{in } \Omega, \\
   - \Delta \rho - \diver \left( \rho H_p(x, Du) \right) + \lambda \rho = \diver \left ( m \xi v \right) \quad & \textnormal{in } \Omega, \\
   v = \rho = 0 \quad & \textnormal{on } \partial \Omega,
  \end{cases}\\
  \textnormal{for all $\xi \in \partial^\circ \mathfrak{H_p}[Du]$.}
 \end{gathered}
\end{equation}
This condition is a natural generalization of the notion of stability introduced in \cite{BC2018} for Hamiltonians of class $C^2$.
We provide two sufficient conditions for \eqref{eq:mfg_kernel}.
\begin{prop}\label{lem:mfg_kernel_mon}
 Let $(u,m)$ be a weak solution to \eqref{eq:mfg} with $m \in L^\infty(\Omega)$.
 Assume \ref{h:HJ_reg}, \ref{item:MFG_1}, \ref{item:MFG_3}, and that
 \begin{equation}\label{eq:lasry_lions}
  \int_\Omega dF[m](\rho)(x) \rho(x) \, dx > 0  \quad \textnormal{for all } \rho \in H_0^1(\Omega) \textnormal{ such that } \rho \neq 0.
 \end{equation}
 Then
 \[
  \ker_{H^1 \times L^2} \left (I + T \circ A \right) = \{0\} \quad \textnormal{for every } A \in \cA[u,m].
 \]
\end{prop}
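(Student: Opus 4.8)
The plan is to carry out the classical Lasry--Lions duality argument, adapted to the nonsmooth linearisation. Fix $A \in \cA[u,m]$ of the form \eqref{eq:mfg_def_prediff} associated with some $\xi \in \partial^\circ \mathfrak{H_p}[Du]$, and suppose $(v,\rho) \in H^1_0(\Omega) \times L^2(\Omega)$ satisfies $(v,\rho) + T \circ A(v,\rho) = 0$. Unravelling the definitions of $T$, $G$ and \eqref{eq:mfg_def_prediff}, this is equivalent to
\[
 v = -S_1\left(\mathfrak{H_p}[Du]\cdot Dv - dF[m](\rho)\right), \qquad \rho = -S_2\left(-\diver\left(\rho\,\mathfrak{H_p}[Du] + m\,\xi\,Dv\right)\right).
\]
First I would upgrade the regularity of $\rho$: by \cref{lem:mfg_nemytskii} (or directly from \ref{item:MFG_1}) we have $\xi \in L^\infty(\Omega;\RR^{d\times d})$, so that $\rho\,\mathfrak{H_p}[Du] + m\,\xi\,Dv \in L^2(\Omega;\RR^d)$ and hence its divergence belongs to $H^{-1}(\Omega)$; since $S_2$ coincides with $S_1$ on $H^{-1}(\Omega)$, the second identity then shows that $\rho \in H^1_0(\Omega)$. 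In particular $\rho$ is an admissible test function below, and $dF[m](\rho)$ is meaningful since $H^1_0(\Omega) \hookrightarrow L^r(\Omega)$ for the relevant exponents.

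Next I would write the two relations in their weak $H^1_0$-formulations and test crosswise. Using $\rho$ as a test function in the equation for $v$, and $v$ as a test function in the equation for $\rho$, gives respectively
\[
 \int_\Omega Dv\cdot D\rho + \left(H_p(x,Du)\cdot Dv\right)\rho + \lambda\, v\rho \,dx = \int_\Omega dF[m](\rho)\,\rho\,dx
\]
and
\[
 \int_\Omega D\rho\cdot Dv + \rho\left(H_p(x,Du)\cdot Dv\right) + \lambda\,\rho v \,dx = -\int_\Omega m\left(\xi(x)Dv\right)\cdot Dv\,dx .
\]
The two left-hand sides are identical (each term is symmetric in $v$ and $\rho$, recalling that $H_p(x,Du)\cdot Dv$ is scalar-valued), so the coupling terms cancel and we are left with the identity
\[
 \int_\Omega dF[m](\rho)\,\rho\,dx = -\int_\Omega m(x)\left(\xi(x)Dv(x)\right)\cdot Dv(x)\,dx .
\]

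Finally I would exploit the structure. Recall $m \ge 0$ a.e.\ (maximum principle for the Fokker--Planck equation, since $m_0 \ge 0$); and since $H(x,\cdot)$ is convex, every matrix $M$ in the Clarke Jacobian $\partial^C H_p(x,p)$ satisfies $Mq \cdot q \ge 0$ for all $q \in \RR^d$ (it is a convex combination of limits of Hessians $D^2_{pp}H(x,\cdot)$, which are positive semidefinite wherever they exist, this property being stable under limits and convex combinations), so $\left(\xi(x)Dv(x)\right)\cdot Dv(x) \ge 0$ a.e. Hence the right-hand side of the identity above is $\le 0$, that is $\int_\Omega dF[m](\rho)\rho\,dx \le 0$; since $\rho \in H^1_0(\Omega)$, the monotonicity assumption \eqref{eq:lasry_lions} forces $\rho = 0$. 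Plugging $\rho = 0$ into the first identity yields $v = -S_1(\mathfrak{H_p}[Du]\cdot Dv)$, i.e.\ $v \in H^1_0(\Omega)$ is a weak solution of $-\Delta v + H_p(x,Du)\cdot Dv + \lambda v = 0$ with homogeneous Dirichlet boundary condition; as $|H_p(x,Du)| \le C_H$ and $\lambda \ge 0$, the uniqueness statement \cite[Corollary~8.2]{GT2001} (as in \cref{prop:hj_lin_injective}) gives $v = 0$. Thus $(v,\rho) = (0,0)$, which is the claim. The points requiring care are the automatic $H^1_0$-regularity of $\rho$ — which is exactly why the identity $S_1 = S_2$ on $H^{-1}(\Omega)$ is recorded — and the sign of the Clarke Jacobian of the monotone map $H_p(x,\cdot)$; the remaining manipulations are routine bookkeeping with the weak formulations.
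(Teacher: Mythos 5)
Your proof is correct and follows essentially the same route as the paper: cross-testing the two linearized equations with $\rho$ and $v$, cancelling the identical coupling terms, using $m \geq 0$ and positive semidefiniteness of $\xi \in \partial^\circ \mathfrak{H_p}[Du]$ (from convexity of $H(x,\cdot)$) to get $\int_\Omega dF[m](\rho)\rho\,dx \leq 0$, hence $\rho = 0$ by \eqref{eq:lasry_lions}, and then $v = 0$ by uniqueness for the linear equation. The only cosmetic difference is the regularity bookkeeping (you upgrade $\rho$ to $H^1_0(\Omega)$ via $S_1 = S_2$ on $H^{-1}(\Omega)$, while the paper upgrades $(v,\rho)$ to $\left(H^2(\Omega)\cap H^1_0(\Omega)\right)\times H^1_0(\Omega)$ through the mapping properties of $T \circ A$), and you make explicit two facts the paper leaves implicit ($m \geq 0$ and the sign of the generalized Jacobian).
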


\begin{rem}[Strong Lasry-Lions condition]
 Condition \eqref{eq:lasry_lions} holds, for instance, when $F$ is strongly monotone in the sense that there exists $c_F > 0$ such that, for every $m_1, \, m_2 \in H^{1}_0(\Omega)$,
 \[
    \int_{\Omega} \left( F[m_1](x) - F[m_2](x)\right) \left( m_1(x) - m_2(x) \right)\, dx \geq c_F \norm{m_1 - m_2}{L^2}^2.
 \]
 In particular, combined with the convexity of the Hamiltonian in \ref{item:MFG_1}, this implies the Lasry-Lions monotonicity condition.
 In the case where $F[m](x) = f(x,m(x))$ with $f$ continuous and of class $C^1$ with respect to its second variable, the assumption holds if $\partial_m f(x,m) \geq c_F > 0$.
\end{rem}

\begin{proof}[Proof of \cref{lem:mfg_kernel_mon}]
 Let $(v,\rho) \in H_0^1(\Omega) \times L^2(\Omega)$ be such that
 \begin{equation}\label{eq:mfg_kernel_mon_1}
  (v,\rho) = - T \circ A (v,\rho).
 \end{equation}
 Since $T \circ A$ maps $H_0^1(\Omega) \times L^2(\Omega)$ to $\cV_1 \times H_0^1(\Omega)$, we deduce that actually $(v,\rho) \in \cV_1 \times H_0^1(\Omega)$. Then, \eqref{eq:mfg_kernel_mon_1} is equivalent to
 \begin{equation}\label{eq:mfg_kernel_mon_2}
  \begin{cases}
   - \Delta v + H_p(x,Du)\cdot Dv + \lambda v = dF[m](\rho) \quad & \textnormal{in } \Omega, \\
   - \Delta \rho - \diver \left (\rho H_p(x,Du) \right)  + \lambda \rho = \diver \left (m \xi Dv \right) \quad & \textnormal{in } \Omega, \\
   v = \rho = 0 \quad & \textnormal{on } \partial \Omega.
  \end{cases}
 \end{equation}
 Using $\rho$ as a test function in the first equation of \eqref{eq:mfg_kernel_mon_2} and $v$ as a test function for the second one, we obtain
 \[
  \int_\Omega D v(x) \cdot D\rho(x) + H_p(x,Du) \cdot Dv(x) \rho(x) + \lambda v(x) \rho(x) \, dx = \int_{\Omega} dF[m](\rho)(x) \rho(x) \, dx
 \]
 and
 \[
  \int_\Omega D \rho(x) \cdot Dv(x) + \rho(x) H_p(x,Du) \cdot Dv(x) + \lambda \rho(x) v(x) +  \left(m(x) \xi(x) Dv(x) \right ) \cdot Dv(x) \, dx = 0.
 \]
Subtracting the last two identities, using the convexity of $H$, the fact that $m \geq 0$, and the fact that $\xi \in \partial^\circ \mathfrak{H_p}[Du]$, we obtain
 \[
  0 \geq - \int_\Omega m \left( \xi Dv \right ) \cdot Dv \, dx = \int_{\Omega} dF[m](\rho)(x) \rho(x) \, dx.
 \]
 It then follows from the monotonicity assumption \eqref{eq:lasry_lions} that $\rho = 0$. Using \eqref{eq:mfg_kernel_mon_2}, we then conclude that $(v,\rho) = (0,0)$.
\end{proof}

\begin{prop}\label{lem:mfg_kernel_lambda}
 Assume \ref{h:HJ_reg}, \ref{item:MFG_1} and \ref{item:MFG_3} hold. Let $(u,m)$ be a weak solution to \eqref{eq:mfg} with $\norm{m}{L^\infty}$ bounded independently of $\lambda$ when $\lambda$ is large enough. Then
 \[
  \ker_{H^1_0 \times L^2} \left (I + T \circ A \right) = \{0\} \quad \textnormal{for every } A \in \cA,
 \]
 whenever $\lambda > 0$ is large enough.
\end{prop}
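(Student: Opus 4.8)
The plan is to show that, once $\lambda$ is large enough, every element of the kernel must vanish, by combining an energy estimate for each equation of the linearized system. Fix $A \in \cA[u,m]$, associated with some $\xi \in \partial^\circ \mathfrak{H_p}[Du]$, which by \eqref{eq:mfg_hamiltonian_lipschitz} satisfies $\module{\xi} \leq C_H$ almost everywhere, and let $(v,\rho) \in H^1_0(\Omega) \times L^2(\Omega)$ be such that $(v,\rho) + T \circ A(v,\rho) = 0$. As in the proof of \cref{lem:mfg_kernel_mon}, the first step is a regularity bootstrap. Since $\rho \in L^2(\Omega)$ and $\mathfrak{H_p}[Du] \in L^\infty(\Omega;\RR^d)$, the second component of $A(v,\rho)$, namely $-\diver(\rho\,\mathfrak{H_p}[Du] + m\xi\, Dv)$, belongs to $H^{-1}(\Omega)$, so that $\rho$, being $-S_2$ applied to it, belongs to $H^1_0(\Omega)$. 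Fixing $r > 2$ such that $H^1_0(\Omega) \hookrightarrow L^r(\Omega)$, we then have $\rho \in L^r(\Omega)$, hence $dF[m](\rho) \in L^2(\Omega)$ is well defined, the first component $\mathfrak{H_p}[Du]\cdot Dv - dF[m](\rho)$ of $A(v,\rho)$ lies in $L^2(\Omega)$, and therefore $v \in H^2(\Omega) \cap H^1_0(\Omega)$ by \ref{h:HJ_reg}. Consequently, testing against functions in $H^1_0(\Omega)$, the pair $(v,\rho)$ solves weakly
\begin{equation*}
 \begin{cases}
  -\Delta v + H_p(x,Du)\cdot Dv + \lambda v = dF[m](\rho) & \textnormal{in } \Omega,\\
  -\Delta \rho - \diver(\rho\, H_p(x,Du)) + \lambda \rho = \diver(m\xi\, Dv) & \textnormal{in } \Omega,\\
  v = \rho = 0 & \textnormal{on } \partial\Omega.
 \end{cases}
\end{equation*}

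Next I would test the first equation with $v$. Using $\module{H_p} \leq C_H$, the bound $\norm{dF[m](\rho)}{L^2} \leq C_F \norm{\rho}{L^r}$ coming from \ref{item:MFG_3}, the Sobolev inequality $\norm{\rho}{L^r} \leq C_S \norm{D\rho}{L^2}$, and Young's inequality to absorb the zero-order term into $\lambda \norm{v}{L^2}^2$, one obtains a constant $C > 0$ depending only on $C_H$, $C_F$, $C_S$ and $\Omega$ such that, for all $\lambda$ large enough,
\begin{equation*}
 \norm{Dv}{L^2}^2 \leq \frac{C}{\lambda}\,\norm{D\rho}{L^2}^2.
\end{equation*}
Then I would test the second equation with $\rho$. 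Using $\module{H_p} \leq C_H$, $\module{\xi} \leq C_H$, Young's inequality on $C_H \norm{\rho}{L^2}\norm{D\rho}{L^2}$, and the bound on $\norm{Dv}{L^2}$ just obtained, this gives
\begin{equation*}
 \left(\frac34 - \frac{M C_H \sqrt{C}}{\sqrt{\lambda}}\right)\norm{D\rho}{L^2}^2 + (\lambda - C_H^2)\,\norm{\rho}{L^2}^2 \leq 0,
\end{equation*}
where $M := \norm{m}{L^\infty}$. Since by hypothesis $M$ is bounded independently of $\lambda$, for $\lambda$ large enough both coefficients on the left are strictly positive, which forces $\rho = 0$; feeding this back into the first equation and testing with $v$ then yields $v = 0$. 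As $A \in \cA[u,m]$ was arbitrary, the claim follows.

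The main difficulty is the coupling between the two equations, neither energy identity being self-contained. The decisive point is that the first equation controls $\norm{Dv}{L^2}$ by $\lambda^{-1/2}\norm{D\rho}{L^2}$, a true gain as $\lambda \to \infty$; this makes the cross term $M C_H \norm{Dv}{L^2}\norm{D\rho}{L^2}$ appearing in the second estimate negligible for large $\lambda$, but only because $M = \norm{m}{L^\infty}$ stays bounded as $\lambda$ grows --- which is precisely the standing hypothesis and the reason it is needed. A secondary and routine point is the regularity bootstrap that justifies working with the linearized system in the usual $H^1_0$ weak formulation.
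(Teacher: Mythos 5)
Your proof is correct and follows essentially the same strategy as the paper's: after noting $\rho \in H^1_0(\Omega)$ by elliptic regularity, test each linearized equation with its own unknown, combine the two energy estimates, and use that $\norm{m}{L^\infty}$ is bounded independently of $\lambda$ so that the threshold for $\lambda$ is $\lambda$-independent. The only differences are bookkeeping: the paper bounds $\norm{v}{H^1} \leq C C_F \norm{\rho}{L^2}$ from the first equation and lets the term $\lambda \norm{\rho}{L^2}^2$ do the absorbing, whereas you extract the gain $\norm{Dv}{L^2}^2 \leq C\lambda^{-1}\norm{D\rho}{L^2}^2$ and let the gradient term absorb, and your estimate $\norm{dF[m](\rho)}{L^2} \leq C_F \norm{\rho}{L^r}$ combined with Sobolev embedding is, if anything, slightly more faithful to \ref{item:MFG_3} (which bounds $dF[m]$ only in $\cL(L^r,L^2)$ for $r>2$).
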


\begin{proof}
 Notice that $(v,\rho) \in \ker_{H^1_0 \times L^2} \left (I + T \circ A \right)$ if and only if $(v,\rho)$ is a weak solution to
 \begin{equation}\label{eq:mfg_kernel_system}
  \begin{cases}
   - \Delta v + H_p(x,Du) \cdot Dv + \lambda v = dF[m](\rho) \quad & \textnormal{in } \Omega, \\
   - \Delta \rho - \diver \left (\rho H_p(x,Du) \right) + \lambda \rho = \diver \left (m \xi Dv \right) \quad & \textnormal{in } \Omega, \\
   v = \rho = 0 \quad & \textnormal{on } \partial \Omega.
  \end{cases}
 \end{equation}
 where $\xi \in \partial^\circ \mathfrak{H_p}[Du]$. Notice that elliptic regularity implies that $\rho$ belongs to $H^1(\Omega)$. Using $\rho$ as a test-function in the second equation of \eqref{eq:mfg_kernel_system} and using Young's inequality, we have
 \begin{align*}
  \int_\Omega \module{D \rho}^2 + \lambda \module{\rho}^2 \, dx & \leq \int_{\Omega} \module{\rho H_p(x,Du)\cdot D\rho} + \module{m(\xi Dv)\cdot D\rho}\, dx \\
  & \leq \int_\Omega \module{D \rho}^2 + KC_H^2 \left (\norm{\rho}{L^2}^2 + \norm{m}{L^\infty}^2 \norm{Dv}{L^2}^2 \right ),
 \end{align*}
 where the constant $K$ is independent of $\lambda$. Similarly, using $v$ as a test-function in the first equation of \eqref{eq:mfg_kernel_system}, one obtains
 \[
  \norm{v}{H^1} \leq C \norm{dF[m](\rho)}{L^2} \leq C C_F \norm{\rho}{L^2}
 \]
 for $\lambda$ large enough, where the constant $C$ does not depend on $\lambda$. It then follows that $(v, \rho) = (0,0)$.
\end{proof}

\subsubsection{Improved error estimates}

Finally, we are going to build upon \cref{thm:mfg_error} in order to derive $H^1$-$H^1$ quasi-optimal error estimates. The main step to obtain this result is the following $H^1$-$L^2$ error estimate.
\begin{thm}[$H^1$-$L^2$ error estimate]\label{cor:mfg_improved_error}
  In addition to the assumptions of \cref{thm:mfg_error}, with \eqref{eq:mfg_kernel} replaced by
  \begin{equation}
  \ker_{H^1_0 \times L^2} \left (I + T \circ A \right) = \{0\} \quad \textnormal{for every } A \in \cA[u,m],
 \end{equation}
  assume that $F \colon L^2(\Omega) \to L^q(\Omega)$ is strictly differentiable at $m$ for every $q \in (1, 2)$ and that $m \in H^1_0(\Omega)$. Then, up to the choice of a smaller $h_0$, we also have
  \[
   \norm{u - u_h}{H^1} + \norm{m - m_h}{L^2} \leq K \left( \inf_{(v_h,\rho_h) \in V_h \times V_h} \norm{u - v_h}{H^1} + h \norm{m - \rho_h}{H^1} \right).
  \]
  In particular
 \[
  \norm{u - u_h}{H^1} + \norm{m - m_h}{L^2} = O(h).
 \]
\end{thm}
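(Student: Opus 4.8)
The plan is to upgrade the estimate of \cref{thm:mfg_error} --- which already gives the discrete solution $(u_h,m_h)\in\cO\cap(V_h\times V_h)$ together with the convergence $(u_h,m_h)\to(u,m)$ in $W^{1,r}(\Omega)\times L^r(\Omega)$ --- to an estimate in the weaker norm of $X:=H^1_0(\Omega)\times L^2(\Omega)$, by a duality (Aubin--Nitsche type) argument on the linearization of $\Upsilon$ at $(u,m)$. The two new hypotheses enter exactly here: strict differentiability of $F\colon L^2\to L^q$ for $q\in(1,2)$ gives $dF[m]\in\cL(L^2,L^q)$ so that, together with $m\in L^\infty(\Omega)$ (for the term $m\xi Dv$), every $A\in\cA[u,m]$ extends to a bounded operator from $X$ into $Y:=L^q(\Omega)\times H^{-1}(\Omega)$; and $m\in H^1_0(\Omega)$ is what makes $\inf_{\rho_h}\norm{m-\rho_h}{H^1}$ finite and of order $h$.

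\textbf{Step 1 (invertibility of the linearization in $X$).} Fix $q\in(6/5,4/3)$. Since $W^{2,q}(\Omega)\hookrightarrow\hookrightarrow H^1_0(\Omega)$ and $H^1_0(\Omega)\hookrightarrow\hookrightarrow L^2(\Omega)$ for $d\le 3$, the operator $T\circ A$ is compact on $X$ for every $A\in\cA[u,m]$; the kernel hypothesis and Fredholm's alternative then give that $I+T\circ A$ is an isomorphism of $X$, and, as in the proof of \cref{lem:mfg_banach_constant} (via \cref{prop:sufficient_banach_const} and the weak-operator sequential compactness of $\cA[u,m]$ from \cref{prop:nemytskii_basic_properties}), the inverses $(I+T\circ A)^{-1}$ and their adjoints are bounded uniformly in $A$.

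\textbf{Step 2 (error identity and duality).} From $\Upsilon(u,m)=0$ and $\Upsilon_h(u_h,m_h)=0$ one gets, with $e:=(u-u_h,m-m_h)$, the identity $\Upsilon_h(u,m)=(T_h-T)G(u,m)$, hence $e=(T_h-T)G(u,m)-T_h(G(u,m)-G(u_h,m_h))$. Using the mean value theorem for Nemytskii operators (\cref{thm:nemytskii_mean_value}) for the Hamiltonian contributions, the upper semicontinuity of $\partial^\circ\mathfrak{H_p}$ (\cref{prop:nemytskii_osc}), the strict differentiability of $\mathfrak{H}\colon L^r\to L^2$ (\cref{lem:mfg_nemytskii}) and of $F\colon L^2\to L^q$, and the $L^\infty$-bound on $m$ to factor out $Du-Du_h$ in $L^2$, one writes $G(u,m)-G(u_h,m_h)=A_h e+R_h$ with $A_h\in\cA[u,m]$, so that $(I+T\circ A_h)e=(T_h-T)G(u,m)+(T-T_h)A_h e-T_h R_h$. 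One then estimates $\norm{e}{X}$ by testing this identity against adjoint linearized solutions $((I+T\circ A_h)^{-1})^\star\zeta$, with $\zeta$ in the unit ball of $X'$: by Step 1 these are bounded in $X$ and, by elliptic regularity, carry extra ($H^2$-type) smoothness, which, combined with Galerkin orthogonality, $\norm{T-T_h}{\cL(Y,X)}\to 0$ (Céa and Aubin--Nitsche), and the $L^\infty$-bound on $m$, makes every term containing $R_h$ or $(T-T_h)A_h e$ of the form $\varepsilon(h)\norm{e}{X}$ with $\varepsilon(h)\to 0$ (for the genuinely quadratic pieces one also uses $\norm{Du-Du_h}{L^r}\to 0$ from \cref{thm:mfg_error} together with an interpolation inequality for $L^{2q}$ between $L^r$ and $L^2$). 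Absorbing these terms gives $\norm{e}{X}\le C\norm{(T-T_h)G(u,m)}{X}$ for $h$ small.

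\textbf{Step 3 (consistency error and conclusion).} Writing $G(u,m)=(g_1,g_2)$, the equations give $S_1g_1=-u$ and $S_2g_2=-m\in H^1_0(\Omega)$, so Céa's lemma yields $\norm{(S_1-S_h)g_1}{H^1}\le C\inf_{v_h}\norm{u-v_h}{H^1}$, and the Aubin--Nitsche lemma together with Céa's lemma yields $\norm{(S_2-S_h)g_2}{L^2}\le Ch\,\norm{(S_2-S_h)g_2}{H^1}\le Ch\inf_{\rho_h}\norm{m-\rho_h}{H^1}$; hence $\norm{(T-T_h)G(u,m)}{X}\le C(\inf_{v_h}\norm{u-v_h}{H^1}+h\inf_{\rho_h}\norm{m-\rho_h}{H^1})$, which is the asserted bound. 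The order $O(h)$ follows from $\inf_{v_h}\norm{u-v_h}{H^1}\le Ch\norm{u}{H^2}$ (by \eqref{eq:strong_approximation_property_mfg}, since $u\in H^2(\Omega)$) and $h\inf_{\rho_h}\norm{m-\rho_h}{H^1}=o(h)$ (by \eqref{eq:approximation_property_mfg}). The main obstacle is that $G$ fails to be locally Lipschitz from $X$ into $Y$ --- the term $\rho H_p(\cdot,Dv)$ only lands in $W^{-1,1}(\Omega)$ for $\rho\in L^2(\Omega)$ --- so \cref{cor:first_example} cannot simply be re-applied in the weaker space; the whole argument hinges on isolating the coupling term $\diver(mH_p(\cdot,Du))$, exploiting $m\in L^\infty(\Omega)$ and the exact mean value theorem, and verifying, through the duality argument and the $W^{1,r}\times L^r$ rate of \cref{thm:mfg_error}, that the linearization remainder can be absorbed into $\norm{e}{X}$.
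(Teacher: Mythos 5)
Your proposal is correct and follows essentially the same route as the paper's proof: uniform invertibility of $I+T\circ A$ on $H^1_0(\Omega)\times L^2(\Omega)$ via Fredholm's alternative and sequential compactness in the weak operator topology (the paper's \cref{lem:mfg_banach_constant_improved}), a linearization of $G$ at $(u,m)$ whose residual is measured in the weak norms $H^{-1}\times (H^2)'$ but bounded by $\epsilon \norm{e}{H^1\times L^2}$ on a strong-norm ($W^{1,r}\times L^2$) neighborhood (the paper's \cref{lem:mfg_prediff_improved}, built from \cref{thm:nemytskii_prediff}, the $L^\infty$ bound on $m$ and the strict differentiability of $F\colon L^2\to L^q$), absorption in the error identity, and Céa/Aubin--Nitsche applied to the consistency term $(T-T_h)G(u,m)$. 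Two minor deviations are worth noting: the paper gets compactness of $T\circ A$ on $H^1_0\times L^2$ by interpolating $T$ on the $H^{-s}$ scale rather than invoking $W^{2,q}$ elliptic regularity, which assumption \ref{h:HJ_reg} does not provide; and the adjoint-solution/Galerkin-orthogonality framing in your Step 2 is superfluous --- the uniform bound on $(I+T\circ A_h)^{-1}$ in $\cL(H^1_0\times L^2)$ is all the absorption argument needs.
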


The proof of \cref{cor:mfg_improved_error} relies on two technical lemmas. The first one is an extension of \cref{lem:mfg_banach_constant} for the case $r = 2$.
\begin{lem}\label{lem:mfg_banach_constant_improved}
 Assume that $d \leq 3$, that \ref{h:HJ_reg}, \ref{item:MFG_1} and \ref{item:MFG_3} hold and let $(u,m) \in H^1_0(\Omega) \times L^\infty(\Omega)$. If
 \begin{equation}\label{eq:mfg_kernel_trivial_L2}
  \ker_{H^{1}_0 \times L^2} \left (I + T \circ A \right) = \{0\} \quad \textnormal{for every } A \in \cA[u,m],
 \end{equation}
 then, for every $A \in \cA[u,m]$, the linear operator $I + T \circ A$ is surjective (hence bijective) from $H^1_0(\Omega) \times L^2(\Omega)$ onto itself and there exists $\kappa > 0$ such that
 \begin{equation}\label{eq:mfg_banach_const_L2}
  \inf_{A \in \cA[u,m]} \mathfrak{C}(I + T \circ A) \geq \kappa.
 \end{equation}
 In particular
 \[
  \sup_{A \in \cA[u,m]} \norm{\left (I + T \circ A \right)^{-1}}{\cL(H^1_0 \times L^2)} \leq \kappa^{-1}.
 \]
\end{lem}

\begin{proof}
  Let $s \in (0,1)$ and recall also that the embedding
  \[
   H^{2-s}(\Omega) \times H^{1 - s}(\Omega) \hookrightarrow H^1(\Omega) \times L^2(\Omega)
  \]
  is compact (see \cite[Theorem 7.1]{DNPV2012}).
  Recall that we have
  \[
   T \in \cL(L^2(\Omega) \times H^{-1}(\Omega), \cV_1 \times H^{1}_0(\Omega))
  \]
  and
  \[
   T \in \cL(H^{-1}(\Omega) \times \cV_1', H^1_0(\Omega) \times L^2(\Omega)).
  \]
  By interpolation \cite[Theorem 1.6]{L2018}, we deduce that
  \[
   T \in \cL \left(H^{-s}(\Omega) \left(H^{-1}(\Omega), \cV_1' \right)_{s,2}, \left(\cV_1, H^1_{0}(\Omega) \right)_{s,2} \times H_0^s(\Omega) \right).
  \]
  From the definition of the real interpolation functor, we have that
  \[
   \left(\cV_1, H^1_{0}(\Omega) \right)_{s,2} = \left(H^2(\Omega) \cap H^1_0(\Omega), H^1_{0}(\Omega) \right)_{s,2} \hookrightarrow \left( H^2(\Omega), H^1(\Omega) \right)_{s,2} = H^{2- s}(\Omega).
  \]
  Similarly,
  \[
   \left(\cV_1, H^1_{0}(\Omega) \right)_{s,2} \hookrightarrow \left(H^1_0(\Omega), H^1_{0}(\Omega) \right)_{s,2} = H^1_0(\Omega).
  \]
  It follows that
  \[
   \left(\cV_1, H^1_{0}(\Omega) \right)_{s,2} \hookrightarrow H^{2-s}(\Omega) \cap H^1_0(\Omega) = \cV_{1-s}.
  \]
  Moreover, from \cite[Theorem 1.18]{L2018}, we have
  \[
   \left(H^{-1}(\Omega), \cV_1' \right)_{s,2} = \left(H^1_0(\Omega), \cV_1 \right)_{s,2}' \hookleftarrow \cV_{s}'.
  \]
  We conclude that
  \[
   T \in \cL \left(H^{-s}(\Omega) \times \cV_{s}', \cV_{1-s} \times H_0^s(\Omega) \right).
  \]

  Since each operator $A \in \cA[u,m]$ maps $H^1_0(\Omega) \times L^2(\Omega)$ to $L^2(\Omega) \times H^{-1}(\Omega) \hookrightarrow H^{-s}(\Omega) \times \cV_{s}' $, we deduce that
  \[
   R_{H^1_0 \times L^2}(T \circ A) \subset \cV_{1-s} \times H_0^{1 - s}(\Omega),
  \]
  so that $T \circ A$ is compact as a operator from $H^1_0(\Omega) \times L^2(\Omega)$ to itself.
 The surjectivity of $I + T \circ A$ then follows from \eqref{eq:mfg_kernel_trivial_L2} and Fredholm's alternative.

 For \eqref{eq:mfg_banach_const_L2}, we first claim that the set
 \[
  \left \{ I + T \circ A : \, A \in \cA[u,m] \right \}
 \]
 is sequentially compact in the weak operator topology of $\cL(X)$, where $X := H^1_0(\Omega) \times L^2(\Omega)$. From \cref{prop:WOT_composition}, it is enough to prove the sequential compactness of $\cA[u,m]$ in the weak operator topology of $\cL(X,Y)$, where $Y:= H^{-s}(\Omega) \times \cV_s'$. Let $\delta > 2$ be such that $H^{1 + s}(\Omega) \hookrightarrow W^{1,\delta}(\Omega)$ (see \cite[Section 6]{DNPV2012}). Then, there exists $1 < q < \infty$ such that $1/2 + 1/\delta + 1/q = 1$.  Since $\partial^\circ \mathfrak{H_p}[Du]$ is bounded in $L^q(\Omega;\RR^d)$, we deduce from \cref{prop:nemytskii_basic_properties} that $\partial^\circ \mathfrak{H_p}[Du]$ is sequentially compact for the weak topology of $\cL(L^r(\Omega,\RR^d), L^{q}(\Omega,\RR^d))$. Using \cref{prop:WOT_composition}, this implies that the collection of linear operators
 \[
   W^{1,r}(\Omega) \ni v \mapsto \diver \left (m \xi Dv \right ) \in  \cV_s' \quad \textnormal{for } \xi \in \partial^\circ \mathfrak{H_p}[u]
 \]
 is sequentially compact for the weak operator topology. The claim then easily follows.
 From \cref{prop:sufficient_banach_const}, we conclude that
 \[
  \inf_{A \in \cA[u,m]} \gothC_{H^1_0 \times L^2}(I + T \circ A) \geq \kappa > 0. \qedhere
 \]
\end{proof}

The second technical lemma is a variation over \cref{lem:mfg_prediff}.
\begin{lem}\label{lem:mfg_prediff_improved}
 Assume that $d \leq 3$, that \ref{item:MFG_1} holds and let $4 \leq r < 6$. Let also $(u,m) \in W^{1,r}_0(\Omega) \times L^\infty(\Omega)$ and assume that $F \colon L^2(\Omega) \to L^q(\Omega)$ is Fréchet differentiable at $m$ for every $q \in (1, 2)$. Then, for every $\epsilon > 0$, there exists $\delta > 0$ such that, whenever $(v,\rho) \in B_{W^{1,r}_0 \times L^2}\left ((u,m), \delta \right)$, there exists $A \in \cA[u,m]$ satisfying
 \[
  \norm{G(v,\rho) - G(u,m) - A(v - u,\rho - m)}{H^{-1} \times H^{-2}} \leq \epsilon \norm{(v,\rho) - (u,m)}{H^1 \times L^2}.
 \]
\end{lem}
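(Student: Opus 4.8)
The plan is to follow the scheme of the proof of \cref{lem:mfg_prediff}, taking advantage of the fact that the target space $H^{-1}(\Omega)\times H^{-2}(\Omega)$ is strictly weaker than $L^2(\Omega)\times H^{-1}(\Omega)$, which is exactly what permits measuring the increment only in $H^1\times L^2$. Since $d\le 3$ one has the continuous embedding $L^{6/5}(\Omega)\hookrightarrow H^{-1}(\Omega)$ and the boundedness of $\diver\colon L^{6/5}(\Omega;\RR^d)\to H^{-2}(\Omega)$, so it suffices to control every contribution in $L^{6/5}$. Note also that, by \ref{item:MFG_3} together with the assumed Fréchet differentiability of $F\colon L^2(\Omega)\to L^{6/5}(\Omega)$ at $m$, the operator $dF[m]$ appearing in \eqref{eq:mfg_def_prediff} extends to a bounded operator $L^2(\Omega)\to L^{6/5}(\Omega)\hookrightarrow H^{-1}(\Omega)$, so that each $A\in\cA[u,m]$ indeed maps $H^1_0(\Omega)\times L^2(\Omega)$ into $H^{-1}(\Omega)\times H^{-2}(\Omega)$.

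Fix $\epsilon>0$ and $(u,m)$ as in the statement, and recall $Du\in L^r\hookrightarrow L^2$. Writing $A\in\cA[u,m]$ for the operator attached to a selection $\xi\in\partial^\circ\mathfrak{H_p}[Du]$ to be chosen below, I would split $G(v,\rho)-G(u,m)-A(v-u,\rho-m)$ into its two components. The first component is $\bigl(\mathfrak{H}[Dv]-\mathfrak{H}[Du]-\mathfrak{H_p}[Du]\cdot D(v-u)\bigr)-\bigl(F[\rho]-F[m]-dF[m](\rho-m)\bigr)$, while, performing the same algebraic cancellation as in the proof of \cref{lem:mfg_prediff} (which is simpler here since we compare with the fixed point $(u,m)$), the second component equals $-\diver$ applied to
\[
 (\rho-m)\bigl(\mathfrak{H_p}[Dv]-\mathfrak{H_p}[Du]\bigr)+m\bigl(\mathfrak{H_p}[Dv]-\mathfrak{H_p}[Du]-\xi D(v-u)\bigr).
\]
For the $\mathfrak{H}$-term in the first component I would use that, by \ref{item:MFG_1}, $z\mapsto H(x,z)$ is $C^{1,1}$ with $H_p(x,\cdot)$ being $C_H$-Lipschitz, so that $|H(x,Dv)-H(x,Du)-H_p(x,Du)\cdot D(v-u)|\le\tfrac{C_H}{2}|D(v-u)|^2$ pointwise; then, after $L^{6/5}\hookrightarrow H^{-1}$ and interpolating $\|D(v-u)\|_{L^{12/5}}$ between $\|D(v-u)\|_{L^2}$ and $\|D(v-u)\|_{L^r}$ (with exponent $\theta=r/(6(r-2))<1/2$ for $r\in[4,6)$), using $\|v-u\|_{W^{1,r}}\le\delta$ to absorb the surplus powers into $\delta$, this term is bounded by $C\delta\,\|v-u\|_{H^1}$. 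For the $F$-term I would use Fréchet differentiability of $F\colon L^2\to L^{6/5}$ at $m$ together with $L^{6/5}\hookrightarrow H^{-1}$ to bound it by $o(\|\rho-m\|_{L^2})$ as $\rho\to m$ in $L^2$. In the second component, after $\diver\colon L^{6/5}\to H^{-2}$, the term $(\rho-m)(\mathfrak{H_p}[Dv]-\mathfrak{H_p}[Du])$ is handled by Hölder's inequality ($\tfrac56=\tfrac12+\tfrac13$), the Lipschitz bound on $H_p$ from \ref{item:MFG_1}, and interpolation of $\|D(v-u)\|_{L^3}$ between $L^2$ and $L^r$, again absorbing both factors into $\delta$, so that it is at most $C\delta\,\|\rho-m\|_{L^2}$.

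The decisive term is the last one, $m\bigl(\mathfrak{H_p}[Dv]-\mathfrak{H_p}[Du]-\xi D(v-u)\bigr)$. Here I would apply \cref{thm:nemytskii_prediff} to the Nemytskii operator $\mathfrak{H_p}\colon L^2(\Omega;\RR^d)\to L^{6/5}(\Omega;\RR^{d\times d})$, noting that \ref{item:nemytskii_lip}-\ref{item:nemytskii_growth} hold for this pair of exponents because $H_p$ and its Clarke Jacobian $\partial^C H_p$ are bounded by $C_H$ (by \ref{item:MFG_1}, using \eqref{eq:mfg_hamiltonian_lipschitz}). Given $\epsilon'>0$, \cref{thm:nemytskii_prediff} produces $\delta>0$ such that, once $\delta$ is small enough that $Dv$ lies in the corresponding $L^2$-ball around $Du$, there is $\xi\in\partial^\circ\mathfrak{H_p}[Du]$ with $\|\mathfrak{H_p}[Dv]-\mathfrak{H_p}[Du]-\xi D(v-u)\|_{L^{6/5}}<\epsilon'\|D(v-u)\|_{L^2}\le\epsilon'\|v-u\|_{H^1}$; this is the $\xi$ that defines $A$, and since $\|m\|_{L^\infty}<\infty$ this term is $\le\|m\|_{L^\infty}\epsilon'\|v-u\|_{H^1}$. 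Choosing $\epsilon'$, and then $\delta$, small enough, all the contributions above add up to at most $\epsilon\,\|(v,\rho)-(u,m)\|_{H^1\times L^2}$, which is the claim.

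The main obstacle is precisely this last point: one must recognize that \cref{thm:nemytskii_prediff} has to be invoked with the pairing $L^2\to L^{6/5}$ rather than $L^r\to L^{6/5}$, so that the resulting estimate carries $\|D(v-u)\|_{L^2}$ (controlled by $\|v-u\|_{H^1}$) instead of $\|D(v-u)\|_{L^r}$, and one must check that the boundedness of $H_p$ and of $\partial^C H_p$ really does yield \ref{item:nemytskii_growth} with $p=2$, $q=6/5$. The $C^{1,1}$-plus-interpolation bookkeeping in the first component — extracting one full power of $\|v-u\|_{H^1}$ from a quadratic pointwise bound by hiding the remaining powers in $\delta$ — is the other slightly delicate step, but it becomes routine once the Sobolev exponents are lined up.
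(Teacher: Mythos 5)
Your proposal is correct and follows essentially the same route as the paper: the same splitting of $G(v,\rho)-G(u,m)-A(v-u,\rho-m)$, the same key step of invoking \cref{thm:nemytskii_prediff} for $\mathfrak{H_p}$ with domain exponent $2$ (so the increment is measured in $L^2$, hence in $H^1\times L^2$), and the same Hölder/embedding bookkeeping to land in $H^{-1}\times H^{-2}$. The only differences are cosmetic: you fix the target exponent $q=6/5$ and treat the $\mathfrak{H}$-term via the pointwise $C^{1,1}$ Taylor bound plus interpolation, where the paper works with a general $q$ satisfying $1/q=1/r+1/2$ and simply quotes differentiability of $\mathfrak{H}$ and $F$.
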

\begin{proof}
 The proof follows the same lines as the one of \cref{lem:mfg_prediff}. Let $1 < q < 2$ be such that $H^1(\Omega) \hookrightarrow L^{q'}(\Omega)$, with $\frac{1}{q} + \frac{1}{q'} = 1$ and $\frac{1}{r} + \frac{1}{2} = \frac{1}{q}$.
Using the differentiability of $\mathfrak{H}$ and $F$, we have
\begin{align*}
 \norm{\mathfrak{H}[Dv] - \mathfrak{H}[Du] - \mathfrak{H_p}[Du] \cdot (Dv - Du)}{H^{-1}} & \leq C \norm{\mathfrak{H}[Dv] - \mathfrak{H}[Du] - \mathfrak{H_p}[Du] \cdot (Dv - Du)}{L^q} \\
 & \leq \frac{\epsilon}{4} \norm{Dv - Du}{L^2}
\end{align*}
and
\begin{align*}
 \norm{F[\rho] - F[m] - dF[m](\rho - m)}{H^{-1}} & \leq C  \norm{F[\rho] - F[m] - dF[m](\rho - m)}{L^q} \\
 & \leq \frac{\epsilon}{4} \norm{\rho - m}{L^2}
\end{align*}
whenever $(v,\rho) \in B_{W^{1,r}_0 \times L^2}((u,m),\delta_1)$ for some $\delta_1 > 0$. From \cref{thm:nemytskii_prediff}, for every $C > 0$, there exists $\delta_2 > 0$ such that for every $(v,\rho) \in  B_{W^{1,r}_0 \times L^2}((u,m),\delta_2)$ there exists $\xi \in \partial^\circ \mathfrak{H_p}[Du]$ such that
\[
 \norm{\mathfrak{H_p}[Dv] - \mathfrak{H_p}[Du] - \xi (Dv - Du)}{L^q} \leq \frac{\epsilon}{4 C \norm{m}{L^\infty}}\norm{Dv - Du}{L^2}.
\]
Then, there exists $0 < \delta \leq \min\{\delta_1, \delta_2 \}$ such that
\begin{align*}
 & \norm{\diver \left (\rho \mathfrak{H_p}[Dv] - m \mathfrak{H_p}[Du] - (\rho - m) \mathfrak{H_p}[Du] - m \xi \left (Dv - Du \right) \right )}{H^{-2}} \\
 & \quad \leq \sup_{\norm{\phi}{H^2} \leq 1} \int_{\Omega} \left (\rho \mathfrak{H_p}[Dv] - m \mathfrak{H_p}[Du] - (\rho - m) \mathfrak{H_p}[Du] - m \xi \left (Dv - Du \right) \right ) \cdot D \phi \, dx \\
 & \quad \leq C \norm{\rho \mathfrak{H_p}[Dv] - m \mathfrak{H_p}[Du] - (\rho - m) \mathfrak{H_p}[Du] - m \xi \left (Dv - Du \right)}{L^q} \\
 & \quad \leq C \left ( \norm{(\rho - m) \left (\mathfrak{H_p}[Dv] - \mathfrak{H_p}[Du] \right )}{L^q} + \norm{m \left ( \mathfrak{H_p}[Dv] - \mathfrak{H_p}[Du] - \xi \left(Dv - Du \right) \right )}{L^q} \right ) \\
 & \quad \leq C \left ( C_H \norm{Dv - Du}{L^r} \norm{\rho - m}{L^2} + \norm{m}{L^\infty} \norm{\mathfrak{H_p}[Dv] - \mathfrak{H_p}[Du] - \xi \left(Dv - Du \right)}{L^q} \right ) \\
 & \quad \leq \frac{\epsilon}{2} \left (\norm{\rho - m}{L^2} + \norm{Dv - Du}{L^2} \right)
\end{align*}
for all $(v,\rho) \in  B_{W^{1,r}_0 \times L^2}((u,m),\delta)$. The result then follows easily.
\end{proof}

\begin{proof}[Proof of \cref{cor:mfg_improved_error}]
 Let $\epsilon > 0$ to be fixed later. From \cref{thm:mfg_error}, we know that
 \[
  \lim_{h \to 0} \norm{u_h - u}{W^{1,r}} + \norm{m_h - m}{L^r} = 0.
 \]
 \cref{lem:mfg_prediff_improved} then implies that there exists $h_0 > 0$ such that, for every $0 < h \leq h_0$, there exists $A_h \in \cA[u,m]$ such that
\[
 \norm{G(u_h,m_h) - G(u,m) - A_h(u_h - u, m_h - m)}{H^{-1} \times \cV_1'} \leq \epsilon \norm{(u_h - u, m_h - m)}{H^1 \times L^2}.
\]
Moreover, using \cref{lem:mfg_banach_constant_improved}, we have
\[
 \norm{(u_h - u, m_h - m)}{H^1 \times L^2} \leq \kappa^{-1} \norm{\left (I + T \circ A_h \right)(u_h - u, m_h - m)}{H^1 \times L^2}.
\]
On the other hand, since $\Upsilon(u,m) = \Upsilon_h(u_h,m_h) = 0$, we also have
\[
 (u_h,m_h) - (u,m) = T_h \circ G(u_h,m_h) - T \circ G(u,m),
\]
and it follows that
\begin{align*}
 & \norm{\left (I + T\circ A_h \right)(u_h - u, m_h - m)}{H^1 \times L^2} \\
  & \qquad  = \norm{T_h \circ G(u_h,m_h) - T \circ G(u,m) - T \circ A_h (u_h - u, m_h - m)}{H^1 \times L^2} \\
  & \qquad \leq \norm{(T_h - T) \circ G(u_h,m_h)}{H^1 \times L^2} + \norm{T \left (G(u_h,m_h) - G(u,m) -  A_h (u_h - u, m_h - m) \right)}{H^1 \times L^2} \\
  & \qquad \leq \norm{(T_h - T) \circ G(u,m)}{H^1 \times L^2} + \norm{T \left (G(u_h,m_h) - G(u,m) -  A_h (u_h - u, m_h - m) \right)}{H^1 \times L^2} \\
  & \qquad \qquad + \norm{(T - T_h) \left(G(u,m) - G(u_h,m_h) \right)}{H^1 \times L^2} \\
  & \qquad \leq  \norm{(T_h - T) \circ G(u,m)}{H^1 \times L^2} + \epsilon \norm{T}{\cL(H^{-1} \times \cV_1', H^1 \times L^2)} \norm{(u_h -u, m_h -m)}{H^1 \times L^2} \\
  & \qquad \qquad + \norm{T - T_h}{\cL(L^2 \times H^{-1}, H^1 \times L^2)} \norm{G(u,m) - G(u_h,m_h)}{L^2 \times H^{-1}}
\end{align*}
for all $0 < h \leq h_0$.
Using \ref{item:MFG_1}, we deduce that there exists $L_G > 0$, depending on $\norm{m}{L^\infty}$, such that
\[
 \norm{G(u,m) - G(u_h,m_h)}{L^2 \times H^{-1}} \leq L_G \norm{(u - u_h, m - m_h)}{H^1 \times L^2}.
\]
Since $\norm{T - T_h}{\cL(L^2 \times H^{-1}, H^1 \times L^2)} = O(h)$, we may choose $h_0$ small enough so that
\[
  \norm{T - T_h}{\cL(L^2 \times H^{-1}, H^1 \times L^2)} \leq \frac{\kappa}{3 L_G} \quad \textnormal{for all } 0 < h \leq h_0.
\]
It follows that
\begin{align*}
 \norm{(u_h - u, m_h - m)}{H^1 \times L^2} & \leq \left (\kappa^{-1} \epsilon \norm{T}{\cL(H^{-1} \times (H^{2})', H^1 \times L^2)} + \frac{1}{3} \right) \norm{(u_h -u, m_h -m)}{H^1 \times L^2} \\ & \qquad + \kappa^{-1} \norm{(T_h - T) \circ G(u,m)}{H^1 \times L^2}.
\end{align*}
Choosing $\epsilon := \kappa/\left(3 \norm{T}{\cL(H^{-1} \times (H^{2})', H^1 \times L^2)} \right)$, we obtain
\[
 \norm{(u_h - u, m_h - m)}{H^1 \times L^2} \leq  \frac{3}{\kappa} \norm{(T_h - T) \circ G(u,m)}{H^1 \times L^2}.
\]
The conclusion then follows from Céa's lemma \cite[Lemma 2.28]{EG2004}, the Aubin-Nitsche lemma \cite[Lemma 2.31]{EG2004}, \eqref{eq:strong_approximation_property_mfg} and \eqref{eq:aubin_nitsche}.
\end{proof}

\begin{cor}[{$H^1$-$H^1$ quasi-optimal error estimate}]\label{cor:quasi_optimal_mfg}
 Under the assumptions of \cref{cor:mfg_improved_error} and up to the choice of a smaller $h_0 > 0$, there exists $C > 0$ such that
 \[
 \norm{m - m_h}{H^1} + \norm{u - u_h}{H^1} \leq C \left( \inf_{(v_h,\rho_h) \in V_h \times V_h}  \norm{u - v_h}{H^1} + \norm{m - \rho_h}{H^1} \right)
\]
for all $0 < h \leq h_0$.
\end{cor}

\begin{rem}
 This error estimate is consistent with the rate of convergence observed in the first author's PhD thesis \cite[Chapter 3]{B_thesis} and in \cite{OS2024}.
\end{rem}

\begin{proof}[Proof of \cref{cor:quasi_optimal_mfg}]
Let $a \colon H^1_0(\Omega) \times H^1_0(\Omega) \to \RR$ be the continuous bilinear form defined by
\[
 a(\mu,\rho) = \int_\Omega D \mu(x) \cdot D \rho(x) + \mu(x) H_p(x,Du(x)) \cdot D\rho(x) + \lambda \mu(x) \rho(x) \, dx
\]
and let $\eta \geq 0$ be such that the bilinear form
\[
 H^1_0(\Omega) \times H^1_0(\Omega) \ni (\mu,\rho) \mapsto a(\mu,\rho) + \eta \langle \mu, \rho \rangle_{L^2} \in \RR
\]
is coercive. Notice that
\[
 a(m,\rho) + \eta \langle m, \rho \rangle_{L^2} = \langle \lambda m_0 + \eta m, \rho \rangle_{L^2} \quad \textnormal{for all } \rho \in H^1_0(\Omega).
\]
Using the Lax-Milgram theorem, there exists a unique $\tilde m_h \in V_h$ such that
\begin{equation}\label{eq:quasi_optimal_1}
 a(\tilde m_h, \rho_h) + \eta \langle \tilde m_h, \rho_h \rangle_{L^2} =  \langle \lambda m_0 + \eta m, \rho_h \rangle_{L^2} \quad \textnormal{for all } \rho_h \in V_h
\end{equation}
and Céa's lemma \cite[Lemma 2.28]{EG2004} implies that
\begin{equation}
 \norm{m - \tilde m_h}{H^1} \leq C \inf_{\rho_h \in V_h} \norm{m - \rho_h}{H^1}.
\label{eq:conseq_Cea}
\end{equation}
Then, using \eqref{eq:quasi_optimal_1} and the fact that
\[
  \int_{\Omega} Dm_h(x) \cdot D\rho_h(x) + m_h(x) H_p(x,Du_h(x)) \cdot D \rho_h(x) + \lambda m_h(x) \rho_h(x) \, dx = \lambda \langle m_0, \rho_h \rangle_{L^2}
\]
for all $\rho_h \in V_h$, we have
\begin{align*}
 a(\tilde m_h - m_h, \rho_h) & + \eta \langle \tilde m_h - m_h, \rho_h \rangle_{L^2} \\ & \quad  = \eta \langle (m - m_h), \rho_h \rangle_{L^2} + \langle m_h (H_p(\cdot,Du) - H_p(\cdot,Du_h)), D\rho_h \rangle_{L^2} \\
 & \quad = \eta \langle (m - m_h), \rho_h \rangle_{L^2} + \langle (m_h - m) (H_p(\cdot,Du) - H_p(\cdot,Du_h)), D\rho_h \rangle_{L^2} \\ & \qquad + \langle m (H_p(\cdot,Du) - H_p(\cdot,Du_h)), D\rho_h \rangle_{L^2}
\end{align*}
for all $\rho_h \in V_h$. With the choice $\rho_h = \tilde m_h - m_h$, we deduce
\begin{multline*}
 \norm{\tilde m_h - m_h}{H^1}^2 \leq C \bigg( \eta \norm{m - m_h}{L^2}\norm{\tilde m_h - m_h}{L^2} + 2 \norm{H_p}{L^\infty} \norm{m - m_h}{L^2} \norm{\tilde m_h - m_h}{H^1} \\ + C_H \norm{m}{L^\infty} \norm{\tilde m_h - m_h}{H^1} \norm{u - u_h}{H^1} \bigg),
\end{multline*}
where we have used the fact that $H_p$ is bounded and Lipschitz continuous. Using Young's inequality, we conclude that
\[
 \norm{\tilde m_h - m_h}{H^1} \leq C \left(\norm{m - m_h}{L^2} + \norm{u - u_h}{H^1} \right).
\]
 It follows from~\cref{cor:mfg_improved_error} and~\eqref{eq:conseq_Cea} that
\begin{align*}
 \norm{m - m_h}{H^1} & \leq \norm{m - \tilde m_h}{H^1} + \norm{\tilde m_h - m_h}{H^1} \\
 & \leq \norm{m - \tilde m_h}{H^1} + C \left(\norm{m - m_h}{L^2} + \norm{u - u_h}{H^1} \right) \\
 & \leq C \left( \inf_{(v_h,\rho_h, \tilde \rho_h) \in V_h \times V_h \times V_h}  \norm{u - v_h}{H^1} + \norm{m - \rho_h}{H^1} + h\norm{m - \tilde \rho_h}{L^2} \right) \\
 & \leq C \left( \inf_{(v_h,\rho_h) \in V_h \times V_h}  \norm{u - v_h}{H^1} + \norm{m - \rho_h}{H^1} \right)
\end{align*}
where the last inequality holds for $h$ small enough. In the end, we have obtained that
\[
 \norm{m - m_h}{H^1} + \norm{u - u_h}{H^1} \leq C \left( \inf_{(v_h,\rho_h) \in V_h \times V_h}  \norm{u - v_h}{H^1} + \norm{m - \rho_h}{H^1} \right).\qedhere
\]
\end{proof}

\appendix

\section{Set-valued mappings}
\label{section:set_valued}

Let $X$ and $Y$ be Banach spaces. For a set-valued mapping $F \colon X \rightrightarrows Y$ we recall that the graph of $F$ is defined by
\[
 \gph F = \left \{ (x,y) \in X \times Y : y \in F(x) \right \}.
\]
The domain of $F$ is defined by $\dom F = \left \{ x \in X : F(x) \neq \varnothing \right \}$.

\begin{defi}\label{defi:continuity_set_valued}
 Let $F \colon X \rightrightarrows Y$ be a set-valued mapping.
 \begin{itemize}
 \item We define the \emph{upper-limit} of $F$ at $\bar x \in X$ by
 \[
  \limsup_{x \to \bar x} F(x) = \left \{ y \in Y : \exists x_n \to \bar x,\, \exists y_n \in F(x_n),\, \textnormal{s.t. } y_n \to y \right \}.
 \]

 \item We say that $F$ is \emph{upper-semicontinuous} at $\bar x \in X$ if, for every $\epsilon > 0$, there exists $\delta > 0$ such that
 \[
  F(x) \subset \left \{ y \in Y : d(F(\bar x), y) < \epsilon \right \} \quad \textnormal{for all } x \in B_X(\bar x, \delta),
 \]
 or, equivalently, if $\limsup_{x \to \bar x} F(x) \subset F(\bar x)$.
 \end{itemize}
\end{defi}

In \cref{section:nemytskii}, we shall also deal with the measurability of set-valued maps and measurable selections. We therefore recall here the necessary definitions and results.
\begin{prop}[{\cite[Lemma 4.50]{AB2006}}]\label{prop:caratheodory_measurable}
 Let $(\Omega, \cF)$ be a measurable space, $X$ and $Y$  metric spaces, with $X$ separable, and $f \colon \Omega \times X \to Y$ be a Carathéodory function\footnote{That is, such that $\Omega \ni \omega \mapsto f(\omega,x) \in Y$ is measurable for every $x \in X$ and $X \ni x \mapsto f(\omega,x) \in Y$ is continuous for every $\omega \in \Omega$.}. Then $f$ is $(\cF \otimes \cB(X), \cB(Y))$-measurable.
\end{prop}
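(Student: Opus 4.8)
The plan is to discretize the $x$-variable using the separability of $X$ and to realize $f$ as a pointwise limit of jointly measurable maps. First I would fix a countable dense subset $\{x_k\}_{k\ge 1}$ of $X$. For each integer $m\ge 1$ the open balls $B_X(x_k,1/m)$ cover $X$, so I can form the Borel partition $A_{m,1}:=B_X(x_1,1/m)$ and $A_{m,k}:=B_X(x_k,1/m)\setminus\bigcup_{j<k}B_X(x_j,1/m)$ for $k\ge 2$; then $\bigcup_k A_{m,k}=X$, each $A_{m,k}\in\cB(X)$, and $x\in A_{m,k}$ forces $d_X(x,x_k)<1/m$.

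Next I would define $f_m\colon\Omega\times X\to Y$ by $f_m(\omega,x):=f(\omega,x_k)$ whenever $x\in A_{m,k}$. For any Borel set $B\subset Y$,
\[
 f_m^{-1}(B)=\bigcup_{k\ge 1}\Bigl(\{\omega\in\Omega:f(\omega,x_k)\in B\}\times A_{m,k}\Bigr),
\]
and each slice $\{\omega\in\Omega:f(\omega,x_k)\in B\}$ lies in $\cF$ because $\omega\mapsto f(\omega,x_k)$ is measurable, while $A_{m,k}\in\cB(X)$; hence $f_m$ is $(\cF\otimes\cB(X),\cB(Y))$-measurable.

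Finally I would pass to the limit. For fixed $(\omega,x)$, writing $k(m,x)$ for the index with $x\in A_{m,k(m,x)}$, one has $d_X(x,x_{k(m,x)})<1/m\to 0$, so by continuity of $x\mapsto f(\omega,x)$ we get $f_m(\omega,x)=f(\omega,x_{k(m,x)})\to f(\omega,x)$ in $Y$. Thus $f$ is the pointwise limit of the measurable maps $f_m$, and since $Y$ is metric this forces $f$ to be measurable: for each closed $F\subset Y$, setting $F^\delta:=\{y\in Y:d_Y(y,F)<\delta\}$,
\[
 f^{-1}(F)=\bigcap_{j\ge 1}\bigcup_{N\ge 1}\bigcap_{m\ge N}f_m^{-1}\!\bigl(F^{1/j}\bigr)\in\cF\otimes\cB(X),
\]
and the closed sets generate $\cB(Y)$. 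The argument is essentially routine once the partition is in place; the one step deserving care is this last lemma on pointwise limits, where metrizability of $Y$ is genuinely used (a closed set being the intersection of its shrinking open neighborhoods), and without which the statement itself can fail.
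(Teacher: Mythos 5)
Your proof is correct and is essentially the standard argument behind the cited result \cite[Lemma 4.50]{AB2006}, which the paper invokes without proof: discretize the $x$-variable via a countable dense set, check joint measurability of the piecewise-in-$x$ approximants, and pass to the pointwise limit using continuity in $x$ and the closed-set/neighborhood characterization of measurability for maps into a metric space. No gaps; the one delicate step (measurability of pointwise limits, where metrizability of $Y$ enters) is exactly the one you spell out.
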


\begin{defi}[{\cite[Definition 14.1]{RW1998}}]
  Consider a measurable space $(\Omega, \cF)$, a complete metric space $X$ and a set-valued map $F \colon \Omega \rightrightarrows X$ with closed values. The map $F$ is called (weakly) $\cF$-measurable, if for every open subset $ O \subset X$, we have
  \[
   F^{-1}(O) = \left \{ \omega \in \Omega : F(\omega) \cap O \neq \varnothing \right \} \in \cF.
  \]
\end{defi}

\begin{thm}[{\cite[Theorem 14.8]{RW1998}}]\label{thm:graph_measurability}
 Let $(\Omega,\cF)$ be a measurable space and let $F \colon \Omega \rightrightarrows \RR^n$ have closed values. If $F$ is $\cF$-measurable, then $\gph F \in \cF \otimes \cB(\RR^n)$. Conversely, if $\cF$ if complete with respect to some measure and $\gph F \in \cF \otimes \cB(\RR^n)$, then $F$ is $\cF$-measurable.
\end{thm}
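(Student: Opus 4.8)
The plan is to reduce everything to the distance function
$d_F(\omega,x):=d\!\left(x,F(\omega)\right)=\inf_{y\in F(\omega)}\module{x-y}$ (with the convention $\inf\varnothing=+\infty$), and to the elementary observation that, since each $F(\omega)$ is closed, one has $(\omega,x)\in\gph F$ if and only if $F(\omega)\neq\varnothing$ and $d_F(\omega,x)=0$. I would also use the standard reformulation of $\cF$-measurability: because $\RR^n$ is separable, every open set is a countable union of open balls with rational radii and centres in a fixed countable dense set, so $F$ is $\cF$-measurable if and only if $F^{-1}\!\left(\{y:\module{y-x}<r\}\right)\in\cF$ for all $x\in\RR^n$ and $r>0$, i.e. if and only if $\{\omega:d_F(\omega,x)<r\}\in\cF$ for all such $x,r$; equivalently, if and only if $\omega\mapsto d_F(\omega,x)$ is $\cF$-measurable for every $x$.

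For the first implication, assume $F$ is $\cF$-measurable. Then, by the reformulation above, $\omega\mapsto d_F(\omega,x)$ is $\cF$-measurable for every $x\in\RR^n$, while for each fixed $\omega$ the map $x\mapsto d_F(\omega,x)$ is $1$-Lipschitz, hence continuous. Thus $d_F$ is a Carathéodory function on $\Omega\times\RR^n$, and it is real-valued on the $\cF$-measurable set $\Omega_0:=F^{-1}(\RR^n)=\{\omega:F(\omega)\neq\varnothing\}$. Applying \cref{prop:caratheodory_measurable} (with the separable space $\RR^n$ in the second slot), $d_F$ is $(\cF\otimes\cB(\RR^n),\cB(\RR))$-measurable on $\Omega_0\times\RR^n$, so $\gph F=\{(\omega,x)\in\Omega_0\times\RR^n:d_F(\omega,x)=0\}\in\cF\otimes\cB(\RR^n)$.

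For the converse, suppose $\cF$ is complete with respect to a measure $\mu$ and $\gph F\in\cF\otimes\cB(\RR^n)$. Given an open set $O\subset\RR^n$, the set $\gph F\cap(\Omega\times O)$ again belongs to $\cF\otimes\cB(\RR^n)$, and $F^{-1}(O)=\{\omega:F(\omega)\cap O\neq\varnothing\}$ is precisely its projection onto $\Omega$. I would then invoke the measurable projection theorem (von Neumann–Aumann): the projection onto $\Omega$ of a member of $\cF\otimes\cB(\RR^n)$ lies in the $\mu$-completion of $\cF$, which equals $\cF$ by hypothesis. Hence $F^{-1}(O)\in\cF$ for every open $O$, i.e. $F$ is $\cF$-measurable (taking $O=\RR^n$ also shows $\Omega_0\in\cF$, closing the minor gap left by possibly empty values). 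The main obstacle is this last step: the whole converse rests on the measurable projection theorem, a genuinely nontrivial input (it is exactly where completeness is used and cannot be dropped), which I would cite as a black box; the remainder is routine manipulation of distance functions and open balls, essentially reproducing the argument of \cite[Theorem 14.8]{RW1998}.
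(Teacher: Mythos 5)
This statement is quoted directly from \cite[Theorem 14.8]{RW1998} and the paper supplies no proof of its own, so the comparison is with the cited reference; your argument is correct and is essentially the standard one given there: the forward implication via joint measurability of the Carath\'eodory distance function $(\omega,x)\mapsto d\left(x,F(\omega)\right)$ (your restriction to $\Omega_0=\{\omega : F(\omega)\neq\varnothing\}\in\cF$ and the appeal to \cref{prop:caratheodory_measurable} handle the empty-value issue correctly), and the converse via the von Neumann--Aumann measurable projection theorem, which is precisely where completeness is used. The only point worth flagging is that the projection theorem you invoke as a black box is normally stated for complete $\sigma$-finite measure spaces, so it should be cited in that form, consistent with the setting of \cite{RW1998}.
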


\begin{defi}
 Let $(\Omega, \cF)$ be a measurable space and $X$ a Polish space. Consider a set-valued mapping $G \colon \Omega \rightrightarrows X$. A $(\cF,\cB(X))$-measurable map $g \colon \Omega \to X$ satisfying $g(\omega) \in G(\omega)$ for every $\omega \in \Omega$ is called a measurable selection of $G$.
\end{defi}

We recall the following measurable selection theorem.
\begin{thm}[{Kuratowski and Ryll-Nardzewski, \cite[Theorem 8.1.3]{AF1990}}]\label{thm:KRN}
 Let $X$ be a Polish space and $(\Omega, \cF)$ a measurable space. Let $F \colon \Omega \rightrightarrows X$ be $\cF$-measurable  with closed values, be such that $\dom F = \Omega$. Then there exists a $(\cF,\cB(X))$-measurable selection of $F$.
\end{thm}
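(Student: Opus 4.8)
The plan is to obtain the selection as the uniform limit of a sequence of "discretized" approximate selections, following the classical successive-approximation scheme. Fix a complete metric $d$ inducing the topology of $X$ and a countable dense set $\{x_1, x_2, \dots\} \subset X$. I will construct measurable maps $f_n \colon \Omega \to X$, each taking only countably many values, such that $d(f_n(\omega), F(\omega)) < 2^{-n}$ for every $\omega \in \Omega$ and $d(f_{n+1}(\omega), f_n(\omega)) < 3\cdot 2^{-n-1}$ for every $\omega \in \Omega$ and $n \geq 0$.

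For the base step, set $A_i := F^{-1}(B(x_i,1)) = \{\omega : F(\omega) \cap B(x_i,1) \neq \varnothing\}$, which belongs to $\cF$ by the assumed measurability of $F$ (applied to the open set $B(x_i,1)$); density of $\{x_i\}$ together with $F(\omega) \neq \varnothing$ gives $\bigcup_i A_i = \Omega$, and I define $f_0(\omega) := x_i$ on the measurable set $A_i \setminus \bigcup_{j<i} A_j$. For the inductive step, assume $f_n$ has been built. On each of the countably many measurable level sets $\{f_n = c\}$, the triangle inequality and density show that the sets $\{\omega : f_n(\omega) = c,\ F(\omega)\cap B(x_i,2^{-n-1}) \neq \varnothing\}$, taken over those $i$ with $d(x_i,c) < 3\cdot 2^{-n-1}$, are in $\cF$ (again by measurability of $F$) and cover $\{f_n = c\}$: indeed, picking $y \in F(\omega) \cap B(c,2^{-n})$ and then $x_i$ with $d(x_i,y) < 2^{-n-1}$ yields such an index. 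Disjointifying and selecting the least admissible index on each piece gives a measurable, countably-valued $f_{n+1}$ with the two stated estimates.

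Since $\sum_{n\geq 0} 3\cdot 2^{-n-1} < \infty$, the sequence $\bigl(f_n(\omega)\bigr)_n$ is uniformly Cauchy, hence converges pointwise by completeness of $X$ to some $f(\omega)$; the map $f$ is $(\cF,\cB(X))$-measurable as a pointwise limit of $\cF$-measurable maps into the metric space $X$. Finally, from $\lvert d(f(\omega),F(\omega)) - d(f_n(\omega),F(\omega))\rvert \leq d(f(\omega),f_n(\omega)) \to 0$ one gets $d(f(\omega),F(\omega)) = 0$, and closedness of $F(\omega)$ forces $f(\omega) \in F(\omega)$. Thus $f$ is the desired measurable selection.

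I expect the main difficulty to be entirely in the bookkeeping of the inductive step: one must arrange the radii so that the updated approximation is simultaneously measurable, countably-valued, within $2^{-n-1}$ of $F(\omega)$, and within a summable distance of the previous approximation. The only structural hypothesis that enters is exactly $F^{-1}(O) \in \cF$ for open $O$, which is what makes the sets $A_i$ and their inductive analogues measurable; everything else is routine metric-space approximation.
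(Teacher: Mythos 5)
Your proof is correct: it is the classical Kuratowski--Ryll-Nardzewski successive-approximation argument (countably-valued measurable approximate selections $f_n$ with $d(f_n(\omega),F(\omega))<2^{-n}$ and summable increments, then a uniform limit landing in the closed set $F(\omega)$), and the constants and the use of weak measurability on open balls all check out. The paper itself gives no proof---it cites the result from Aubin--Frankowska---and your argument is essentially the proof found in that cited reference, so there is nothing to compare beyond noting the agreement.
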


The following proposition collects the facts that we will use to prove measurability of some mappings.
\begin{prop}\label{prop:measurability_tools}
 Let $(\Omega, \cF)$ be a measurable space and let $F \colon \Omega \rightrightarrows \RR^n$ be closed-valued and measurable. Then,
 \begin{enumerate}[label={\rm (\roman*)}]
  \item \label{item:measurability_reunion} if $G \colon \Omega \rightrightarrows \RR^n$ is closed-valued and measurable, then $\Omega \ni \omega \mapsto F(\omega) \cup G(\omega) \subset \RR^m$ is also measurable;

  \item \label{item:measurability_intersection} if $\mathcal{I}$ is a countable set and $F_i \colon \Omega \rightrightarrows \RR^n$ is  closed-valued and measurable for each $i \in \cI$, then $\bigcap_{i \in \mathcal{I}} F_i$ is also measurable;

  \item \label{item:measurability_product} if $G_1, \dots , G_N \colon \Omega \rightrightarrows \RR^n$ are measurable, then $ \Omega \ni \omega \mapsto G_1(\omega) \times  \dots \times  G_N(\omega) \subset \left (\RR^n \right)^N$ is also measurable;

  \item \label{item:measurablility_convex_hull} the set-valued map $\co F$ is measurable;

  \item \label{item:measurability_composition} if $G \colon \Omega \times \RR^n \rightrightarrows \RR^m$ is closed valued, with $G(\omega,\cdot)$ upper semicontinuous for every $\omega \in \Omega$, and is such that the set-valued map $\Omega \ni \omega \mapsto \gph G(\omega, \cdot) \subset \RR^n \times \RR^m$ is measurable, then $\Omega \ni \omega \mapsto G(\omega, F(\omega)) \subset \RR^m$ is also measurable;

  \item \label{item:measurability_projection} if $g \colon \Omega \to \RR^n$ is measurable, then the projection mapping\footnote{For a closed set $C \subset \RR^n$, we define the set-valued projection mapping $\Pi_C \colon \RR^n \rightrightarrows \RR^n$ by
  \[
   \Pi_C(y) = \left \{z \in \RR^d : \module{y - z} = d(y,C)\right \}.
  \]
  } $\Omega \ni \omega \to \Pi_{F(\omega)}(g(\omega)) \in \RR^m$ is measurable.
 \end{enumerate}

\end{prop}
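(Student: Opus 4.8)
The plan is to dispatch the six items in an order that lets later ones feed on earlier ones, using throughout the elementary fact that $\RR^n$ and its finite products are second countable: if $\{B_j\}_j$ is a countable base of the target space, then every open $O$ equals $\bigcup_{j\in J}B_j$ for some at most countable $J$, whence $F^{-1}(O)=\bigcup_{j\in J}F^{-1}(B_j)$, so $F$ is measurable as soon as $F^{-1}(B_j)\in\cF$ for all $j$. Item \ref{item:measurability_reunion} is then immediate from $(F\cup G)^{-1}(O)=F^{-1}(O)\cup G^{-1}(O)$, and item \ref{item:measurability_product} follows by testing against basic boxes $O_1\times\dots\times O_N$ with the $O_i$ from countable bases, using $(G_1\times\dots\times G_N)^{-1}(O_1\times\dots\times O_N)=\bigcap_{i=1}^N G_i^{-1}(O_i)$. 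Item \ref{item:measurability_intersection} is the one genuinely classical ingredient: stability of closed-valued measurable multifunctions under countable intersections in a $\sigma$-compact (or Polish) space, which I would invoke from the literature, e.g.\ \cite[Section 8.2]{AF1990} or \cite[Chapter 14]{RW1998}. By hand it reduces to the finite case $F_1\cap F_2$ (handled through the distance maps $\omega\mapsto d(y,F_i(\omega))$, which are measurable for fixed $y$ once one has a Castaing representation from \cref{thm:KRN}) and then writes the countable intersection as the decreasing intersection of the finite ones.

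For item \ref{item:measurablility_convex_hull} I would use Carathéodory's theorem: in $\RR^n$,
\[
 \co F(\omega)=\Big\{\textstyle\sum_{i=1}^{n+1}\lambda_i x_i:\ \lambda\in\Delta_n,\ x_i\in F(\omega)\Big\},
\]
where $\Delta_n$ is the unit simplex, so $\co F(\omega)$ is the image of the multifunction $\omega\mapsto\Delta_n\times F(\omega)^{n+1}$ — measurable by item \ref{item:measurability_product} — under the continuous map $g(\lambda,x_1,\dots,x_{n+1})=\sum_i\lambda_i x_i$. It therefore suffices to record the image lemma: if $H\colon\Omega\rightrightarrows Z$ is measurable closed-valued and $g\colon Z\to\RR^m$ is continuous, then $\omega\mapsto\overline{g(H(\omega))}$ is measurable, since for open $O$ one has $\{\omega:\overline{g(H(\omega))}\cap O\neq\varnothing\}=\{\omega:H(\omega)\cap g^{-1}(O)\neq\varnothing\}=H^{-1}(g^{-1}(O))\in\cF$, because $g^{-1}(O)$ is open. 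When $F$ has compact values — the only situation occurring in this paper, see \cref{lem:clarke_measurable} — $\co F(\omega)$ is already compact, so the closure is harmless and the conclusion holds verbatim.

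Item \ref{item:measurability_composition} is then assembled from the previous ones. Set $H(\omega):=\gph G(\omega,\cdot)\cap\big(F(\omega)\times\RR^m\big)$. The map $\omega\mapsto F(\omega)\times\RR^m$ is measurable by item \ref{item:measurability_product} (pairing $F$ with the constant multifunction $\RR^m$), $\omega\mapsto\gph G(\omega,\cdot)$ is measurable by hypothesis, and $\gph G(\omega,\cdot)$ is closed because $G(\omega,\cdot)$ is upper semicontinuous with closed values; hence $H$ is measurable by item \ref{item:measurability_intersection}. Since $G(\omega,F(\omega))$ is the image of $H(\omega)$ under the coordinate projection $\RR^n\times\RR^m\to\RR^m$, the image lemma gives measurability of $\omega\mapsto\overline{G(\omega,F(\omega))}$; upper semicontinuity together with closedness of the values makes $G(\omega,F(\omega))$ itself closed in the cases at hand (for instance when $F(\omega)$ is compact, as for $F(\omega)=\{u(\omega)\}$ or $F(\omega)=[u(\omega),v(\omega)]$ in \cref{section:nemytskii}), so the closure may be dropped.

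Finally, for item \ref{item:measurability_projection} I would write, using that $F(\omega)$ is closed,
\[
 \Pi_{F(\omega)}(g(\omega))=F(\omega)\cap\overline{B}\big(g(\omega),\,\rho(\omega)\big),\qquad \rho(\omega):=d\big(g(\omega),F(\omega)\big).
\]
The function $\rho$ is measurable: $(\omega,y)\mapsto d(y,F(\omega))$ is a Carathéodory function (measurable in $\omega$, via $d(y,F(\omega))=\inf_k|y-f_k(\omega)|$ for a Castaing representation $\{f_k\}$ of $F$ coming from \cref{thm:KRN}, and $1$-Lipschitz in $y$), hence $(\cF\otimes\cB(\RR^n),\cB(\RR))$-measurable by \cref{prop:caratheodory_measurable}, and $\rho$ is its composition with the measurable map $\omega\mapsto(\omega,g(\omega))$. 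The multifunction $\omega\mapsto\overline{B}(g(\omega),\rho(\omega))$ is then a measurable closed-valued multifunction (it is Carathéodory in the same sense), so item \ref{item:measurability_intersection} applied to it and $F$ yields the measurability of $\omega\mapsto\Pi_{F(\omega)}(g(\omega))$. I expect the main obstacle to be item \ref{item:measurability_intersection}, the countable-intersection theorem, since it is the only non-bookkeeping input; a secondary point requiring a word of care rather than difficulty is checking that the images $\co F(\omega)$ and $G(\omega,F(\omega))$ are genuinely closed in the cases actually invoked, so that the stated closed-valued conclusions hold without taking closures.
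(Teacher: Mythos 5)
Your argument is essentially correct, but it is a genuinely different route from the paper's: the paper offers no proof at all and simply cites \cite[Proposition 14.11, Exercise 14.12, Theorem 14.13(b) and Exercise 14.17]{RW1998}, whereas you reconstruct the six items from scratch, using second countability of $\RR^n$, Carath\'eodory's theorem together with an image lemma for continuous maps, and intersections with distance balls. What your approach buys is self-containedness and a clear view of where closedness or compactness of the values actually matters; what it costs is that the one genuinely nontrivial ingredient, the countable-intersection statement \ref{item:measurability_intersection}, is still imported from the literature (which is fine, and is exactly what the paper does wholesale), while your sketched by-hand reduction -- ``write the countable intersection as the decreasing intersection of the finite ones'' -- is not itself a proof, since measurability of a decreasing intersection of closed-valued maps is precisely the delicate point (it uses $\sigma$-compactness of $\RR^n$ or completeness of the $\sigma$-algebra). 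Two smaller remarks: in \ref{item:measurability_projection} the measurability of $\omega\mapsto \overline{B}(g(\omega),\rho(\omega))$ is asserted rather than proved; it follows, e.g., from the formula $d\bigl(z,\overline{B}(g(\omega),\rho(\omega))\bigr)=\max\{\module{z-g(\omega)}-\rho(\omega),0\}$ and the standard characterization of measurability through distance functions, or, more in the spirit of the paper, one can bypass the ball entirely by applying \cref{thm:implicit_measurability} to $\Pi_{F(\omega)}(g(\omega))=\{\xi\in F(\omega):\module{\xi-g(\omega)}=\rho(\omega)\}$ (also, plain \cref{thm:KRN} gives one selection, not a Castaing representation, though the measurability of $\omega\mapsto d(y,F(\omega))$ is immediate from $\{\omega: d(y,F(\omega))<r\}=F^{-1}(B(y,r))$). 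Finally, the closure caveats you raise for \ref{item:measurablility_convex_hull} and \ref{item:measurability_composition} are harmless: with the hit-open-sets definition of measurability, a set and its closure meet exactly the same open sets, so the conclusions hold without any closedness of $\co F(\omega)$ or $G(\omega,F(\omega))$.
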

\begin{proof}
 See \cite[Proposition 14.11, Exercise 14.12, Theorem 14.13(b) and Exercise 14.17]{RW1998}.
\end{proof}

\begin{thm}[{\cite[Example 14.15]{RW1998}}]\label{thm:implicit_measurability}
 Let $(\Omega, \cF)$ be a measurable space and $g \colon \Omega \times \RR^d \to \RR^n$ be a Carathéodory function. Let also $\Xi \colon \Omega \rightrightarrows \RR^d$ and $G \colon \Omega \rightrightarrows \RR^n$ be $\cF$-measurable and closed valued. Then the set-valued map
 \[
  F \colon \Omega \ni \omega \mapsto \left \{ \xi \in \Xi(\omega) : g(\omega, \xi) \in G(\omega) \right \} \subset \RR^d \textnormal{  is $\cF$-measurable.}
 \]
\end{thm}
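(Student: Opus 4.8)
The plan is to reduce the pointwise membership constraint $g(\omega,\xi)\in G(\omega)$ to the vanishing of a single scalar Carathéodory function and then to represent the associated zero-level-set map as a countable intersection of closures of open-valued measurable maps. First I would introduce $\varphi\colon\Omega\times\RR^d\to[0,+\infty]$ defined by $\varphi(\omega,\xi):=d\bigl(g(\omega,\xi),G(\omega)\bigr)$, with the convention $d(\cdot,\varnothing)\equiv+\infty$; since $G$ is closed valued we have $g(\omega,\xi)\in G(\omega)\Leftrightarrow\varphi(\omega,\xi)=0$, so that $F(\omega)=\{\xi\in\Xi(\omega):\varphi(\omega,\xi)\le 0\}$. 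The first thing to check is that $\varphi$ is Carathéodory: continuity in $\xi$ is clear because $g(\omega,\cdot)$ is continuous and $y\mapsto d(y,G(\omega))$ is $1$-Lipschitz, and measurability in $\omega$ follows from $\{\omega:d(y,G(\omega))<r\}=G^{-1}(B_{\RR^n}(y,r))\in\cF$ together with \cref{prop:caratheodory_measurable}, which makes $(\omega,y)\mapsto d(y,G(\omega))$ jointly measurable; composing with the (jointly measurable) map $(\omega,\xi)\mapsto(\omega,g(\omega,\xi))$ gives joint measurability of $\varphi$, hence $\varphi(\cdot,\xi)$ is $\cF$-measurable for every $\xi$.

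Next I would show that $\omega\mapsto\{\xi\in\RR^d:\varphi(\omega,\xi)\le 0\}$ is a closed-valued $\cF$-measurable set-valued map. Fix a countable dense set $D\subset\RR^d$ and put $H_k(\omega):=\{\xi:\varphi(\omega,\xi)<1/k\}$, which is open. For any open $O\subset\RR^d$, continuity of $\varphi(\omega,\cdot)$ yields $H_k(\omega)\cap O\ne\varnothing$ iff some $\xi\in D\cap O$ satisfies $\varphi(\omega,\xi)<1/k$, so $H_k^{-1}(O)=\bigcup_{\xi\in D\cap O}\{\omega:\varphi(\omega,\xi)<1/k\}\in\cF$; thus $H_k$ is $\cF$-measurable and so is its pointwise closure $\overline{H_k}$ (its preimages of open sets coincide with those of $H_k$). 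A short limiting argument using continuity of $\varphi(\omega,\cdot)$ shows $\bigcap_{k\ge1}\overline{H_k}(\omega)=\{\xi:\varphi(\omega,\xi)\le 0\}$, and \cref{prop:measurability_tools}-\ref{item:measurability_intersection} gives measurability of this intersection.

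To conclude, $F(\omega)=\Xi(\omega)\cap\{\xi:\varphi(\omega,\xi)\le 0\}$ is the intersection of two closed-valued $\cF$-measurable maps, so a final application of \cref{prop:measurability_tools}-\ref{item:measurability_intersection} shows that $F$ is $\cF$-measurable. (In particular $F$ is closed valued, consistently with $F(\omega)=\Xi(\omega)\cap g(\omega,\cdot)^{-1}(G(\omega))$.)

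The main obstacle I expect is the second step: the threshold in $\{\varphi(\omega,\cdot)\le 0\}$ is not an ``open'' constraint, so measurability cannot be read off directly from the Carathéodory property of $\varphi$; one has to approximate from above by the open-valued maps $H_k$, verify that passing to closures preserves measurability, and check that the countable intersection recovers exactly the zero set. The degenerate case $G(\omega)=\varnothing$ (where $\varphi\equiv+\infty$ and $F(\omega)=\varnothing$) should be tracked throughout, but it is handled automatically by the $[0,+\infty]$-valued conventions.
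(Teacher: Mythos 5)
Your argument is correct. The paper does not prove this statement at all — it is quoted directly from Rockafellar--Wets (Example 14.15), so there is no internal proof to compare against; what you have written is a valid self-contained proof along the standard lines: reduce the membership constraint to the vanishing of the Carathéodory function $\varphi(\omega,\xi)=d\bigl(g(\omega,\xi),G(\omega)\bigr)$, handle the closed level set $\{\varphi(\omega,\cdot)\le 0\}$ by approximating with the open-valued maps $H_k$, pass to closures (whose preimages of open sets coincide with those of $H_k$), recover the zero set as $\bigcap_k\overline{H_k}$ by continuity, and finish with the countable-intersection rule from \cref{prop:measurability_tools}. All the individual steps check out, including the degenerate case $G(\omega)=\varnothing$ and the fact that closed-valuedness of the intermediate maps is available exactly where \cref{prop:measurability_tools}-\ref{item:measurability_intersection} is invoked. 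Two cosmetic remarks only: in the paper's notation $B_{\RR^n}(y,r)$ denotes the \emph{closed} ball, so the identity $\{\omega: d(y,G(\omega))<r\}=G^{-1}(\cdot)$ should be written with the open ball $\{z:\module{z-y}<r\}$ (which is what the definition of measurability via preimages of open sets requires anyway); and you correctly identified that the closure trick is genuinely needed, since nonemptiness of $\{\varphi(\omega,\cdot)\le 0\}\cap O$ cannot be tested on a countable dense set directly.
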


\begin{defi}[Support function]\label{defi:support_function}
 Let $H$ be an Hilbert space and $C \subset H$. The \emph{support function} $\sigma_C \colon H \to \RR$ of $C$ is defined by
\[
 \sigma_C(u) := \sup_{\xi \in C}\, \langle \xi, u \rangle.
\]
\end{defi}

The support function provides a useful characterization of closed convex sets.
\begin{prop}[{\cite[Proposition 7.11]{BC2017}}]\label{prop:support_characterization}
 Let $H$ be an Hilbert space and $C \subset H$ be closed and convex. Then $\xi \in C$ if and only if
 \[
  \langle \xi, u \rangle \leq \sigma_C(u) \quad \textnormal{for all } u \in H.
 \]
\end{prop}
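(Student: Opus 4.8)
The plan is to establish the two implications separately. The forward implication is immediate: if $\xi \in C$, then $\langle \xi, u \rangle \leq \sup_{\eta \in C} \langle \eta, u \rangle = \sigma_C(u)$ for every $u \in H$, directly from the definition of the support function. The whole content of the statement therefore lies in the converse, which I would prove by contraposition using a separation argument tailored to the Hilbert space structure.

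For the converse, assume $\xi \notin C$ (we may assume $C \neq \varnothing$, since otherwise $\sigma_C \equiv -\infty$ and both sides fail for every $\xi$, so the equivalence is vacuous). Since $C$ is a nonempty closed convex subset of the Hilbert space $H$, the projection theorem furnishes a unique $p := \Pi_C(\xi) \in C$, together with the variational inequality $\langle \xi - p, \eta - p \rangle \leq 0$ for all $\eta \in C$. Put $u := \xi - p$; this is nonzero because $\xi \notin C$ whereas $p \in C$. The variational inequality rewrites as $\langle u, \eta \rangle \leq \langle u, p \rangle$ for all $\eta \in C$, hence $\sigma_C(u) \leq \langle u, p \rangle$. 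On the other hand,
\[
 \langle \xi, u \rangle = \langle p, u \rangle + \langle \xi - p, u \rangle = \langle p, u \rangle + \norm{u}{H}^2 > \langle p, u \rangle \geq \sigma_C(u).
\]
This contradicts the hypothesis $\langle \xi, u \rangle \leq \sigma_C(u)$, so $\xi$ must belong to $C$.

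The single key step is the strict separation of the point $\xi$ from the closed convex set $C$, which in a Hilbert space is conveniently produced by the metric projection together with its variational characterization; in the more general Banach space setting one would instead invoke the Hahn-Banach separation theorem applied to the compact convex set $\{\xi\}$ and the closed convex set $C$, represent the separating functional, and argue identically. I do not anticipate any real obstacle here: the result is a standard consequence of convex separation, and the present proof reduces to a short computation once the projection is at hand.
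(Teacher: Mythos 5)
Your proof is correct. The paper itself offers no argument for this proposition — it is quoted directly from \cite[Proposition 7.11]{BC2017} — and your projection-based separation argument (forward implication from the definition of $\sigma_C$, converse by projecting $\xi$ onto $C$ and using the variational inequality $\langle \xi - p, \eta - p\rangle \leq 0$ to contradict the hypothesis with $u = \xi - p$) is exactly the standard proof of that cited result, so there is nothing to object to.
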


\begin{prop}\label{prop:support_measurable}
 Let $(\Omega, \cF)$ be a measurable space and $F \colon \Omega \rightrightarrows \RR^n$ with convex and compact values. If $\Omega \ni \omega \mapsto \sigma_{F(\omega)}(u) \in \RR$ is measurable for every $u \in \RR^n$, then $F$ is $\cF$-measurable.
\end{prop}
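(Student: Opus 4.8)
The plan is to verify the definition of $\cF$-measurability directly, i.e.\ to show that $F^{-1}(O) = \{\omega \in \Omega : F(\omega) \cap O \neq \varnothing\} \in \cF$ for every open $O \subset \RR^n$, using the measurability of the distance functions $\omega \mapsto d(x,F(\omega))$ as the key intermediate step. First I would note that, since $\sigma_{F(\omega)}(u) \in \RR$ for every $\omega$ and $u$, each $F(\omega)$ is necessarily nonempty (the support function of $\varnothing$ equals $-\infty$); hence each $F(\omega)$ is a nonempty compact convex subset of $\RR^n$, so $\sigma_{F(\omega)}$ is $R_\omega$-Lipschitz with $R_\omega = \max_{\xi \in F(\omega)}|\xi| < \infty$, and $d(x,F(\omega))$ is well defined and finite.

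The main input is the classical duality formula
\[
 d(x, C) = \sup_{u \in B_{\RR^n}(0,1)} \big( \langle x, u \rangle - \sigma_C(u) \big), \qquad x \in \RR^n,
\]
valid for every nonempty closed convex $C \subset \RR^n$. I would prove it in two short steps using the metric projection $p := \Pi_C(x)$: the inequality ``$\leq$'' follows from $\langle x,u\rangle - \sigma_C(u) \leq \langle x - p, u\rangle \leq |x - p| = d(x,C)$ whenever $|u|\leq 1$ (using $p \in C$); the inequality ``$\geq$'', when $x \neq p$, follows by testing with $u := (x-p)/|x-p|$, for which the variational characterisation $\langle x - p, \xi - p\rangle \leq 0$ for all $\xi \in C$ gives $\sigma_C(u) \leq \langle p, u\rangle$ and hence $\langle x, u\rangle - \sigma_C(u) \geq \langle x - p, u\rangle = d(x,C)$. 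Since $\sigma_C$ is continuous, the supremum over the closed unit ball equals the supremum over the countable dense set $D := \QQ^n \cap B_{\RR^n}(0,1)$. Applying this with $C = F(\omega)$ gives $\omega \mapsto d(x, F(\omega)) = \sup_{u \in D}\big(\langle x, u\rangle - \sigma_{F(\omega)}(u)\big)$, a countable supremum of functions measurable by hypothesis, hence measurable, for every $x \in \RR^n$.

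To conclude, I would write an arbitrary open $O \subset \RR^n$ as a countable union $O = \bigcup_{k} B_{\RR^n}(x_k, r_k)$ of closed balls with $(x_k, r_k) \in \QQ^n \times \QQ_{>0}$ and $B_{\RR^n}(x_k, r_k) \subset O$ (possible since each point of $O$ lies in such a ball). Using compactness of the values, $F(\omega) \cap B_{\RR^n}(x_k, r_k) \neq \varnothing$ if and only if $d(x_k, F(\omega)) \leq r_k$, and $F(\omega) \cap O \neq \varnothing$ if and only if $F(\omega)$ meets one of these balls; therefore
\[
 F^{-1}(O) = \bigcup_{k} \big\{\omega \in \Omega : d(x_k, F(\omega)) \leq r_k \big\} \in \cF,
\]
which is exactly $\cF$-measurability of $F$.

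I expect the only genuine obstacle to be the duality formula for the distance to a convex set together with the reduction of the supremum to a countable set; everything else is bookkeeping. I would resist the seemingly quicker route of establishing $\gph F \in \cF \otimes \cB(\RR^n)$ from \cref{prop:support_characterization} and \cref{prop:caratheodory_measurable} and then invoking \cref{thm:graph_measurability}: that converse requires completeness of $\cF$, which is not assumed here, so passing through the distance functions seems essential.
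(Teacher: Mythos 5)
Your proof is correct, but it takes a genuinely different route from the paper's. The paper stays inside the set-valued toolbox of its appendix: for each fixed $u$ it applies \cref{thm:implicit_measurability} to the Carath\'eodory function $(\omega,\xi)\mapsto \sigma_{F(\omega)}(u)-\xi\cdot u$ to get measurability of the half-space-type map $F(\omega;u)=\{\xi\in\RR^n:\xi\cdot u\le\sigma_{F(\omega)}(u)\}$, then uses \cref{prop:support_characterization} together with the continuity of $\sigma_{F(\omega)}$ (compact values) to write $F(\omega)=\bigcap_{u\in\QQ^n}F(\omega;u)$, and concludes with the countable-intersection rule \cref{prop:measurability_tools}-\ref{item:measurability_intersection}. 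You instead verify the definition of weak measurability directly: you prove the convex-duality formula $d(x,C)=\sup_{\module{u}\le 1}\left(\langle x,u\rangle-\sigma_C(u)\right)$ via the metric projection, reduce the supremum to a countable dense set to get measurability of $\omega\mapsto d(x,F(\omega))$, and then cover an open set by closed rational balls, so that $F^{-1}(O)$ is a countable union of sublevel sets of these distance functions. This is in effect a self-contained proof of the classical equivalence between weak measurability and measurability of the distance functions (cf.\ \cite[Theorem 14.3]{RW1998}); it buys independence from \cref{thm:implicit_measurability} and the intersection lemma at the price of redoing the duality computation, while the paper's argument is shorter given the tools it has already assembled. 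Your observation that real-valuedness of the support function forces $F(\omega)\neq\varnothing$ is needed in either approach, and your reason for avoiding the graph route through \cref{thm:graph_measurability} (completeness of $\cF$ is not assumed) is exactly right. Two cosmetic points only: your labels for the two halves of the duality formula are interchanged relative to the displayed equality, and the case $x\in C$ should be dispatched by testing with $u=0$; neither affects correctness.
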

\begin{proof}
 For each $u \in \RR^n$, define the set-valued map $F(\cdot;u) \colon \Omega \rightrightarrows \RR^n$ by
 \[
  F(\omega;u) = \left \{ \xi \in \RR^n : \sigma_{F(\omega)}(u) - \xi \cdot u \geq 0 \right \}.
 \]
 Since $\sigma_{F(\cdot)}(u)$ is measurable and $\xi \mapsto \sigma_{F(\omega)}(u) - \xi \cdot u$ is continuous for every $u \in \RR^n$ and $\omega \in \Omega$, it follows from \cref{thm:implicit_measurability} that each $F(\cdot;u)$ is $\cF$-measurable. From \cref{prop:support_characterization} and the fact that $F$ has convex and compact values, we deduce that
 \[
  F(\omega) = \bigcap_{u \in \RR^n} F(\omega;u) \quad \textnormal{for every $\omega \in \Omega$.}
 \]
 Moreover, for every $\omega \in \Omega$, the fact that $F(\omega)$ is compact also implies the continuity of $u \mapsto \sigma_{F(\omega)}(u)$. Therefore, the mapping
 \[
  \RR^n \ni u \mapsto \sigma_{F(\omega)}(u) - \xi \cdot u \in \RR
 \]
 is continuous for every $\omega \in \Omega$ and $\xi \in \RR^n$. We conclude that
 \[
  F(\omega) = \bigcap_{u \in \QQ^n} F(\omega,u)
 \]
 and that $F$ is $\cF$-measurable by \cref{prop:measurability_tools}-\ref{item:measurability_intersection}.
\end{proof}

\section{Weak operator topology}
\label{section:WOT}
Let $X$ and $Y$ be Banach spaces. In this section we recall the definition and some of the properties of the weak operator topology of $\cL(X,Y)$. We refer to \cite[Section 3.1]{AJP2006} for an exposition in the case where $X = Y$ is an Hilbert space.

For every pair $(x,h) \in X \times Y'$, define the linear form $x \otimes h \colon \cL(X,Y) \to \RR$ by
\[
x \otimes h (T) := \left \langle h, Tx \right \rangle_{Y',Y}.
\]
The weak operator topology (WOT) on $\cL(X,Y)$ is then defined as the coarsest topology\footnote{See \cite[Section 3.1]{B2011}.} making all the maps $\left(x \otimes h \right)_{x \in X,\, h \in Y'}$ continuous.

\begin{prop}\label{prop:WOT_hausdorff}
 The weak operator topology is Hausdorff.
\end{prop}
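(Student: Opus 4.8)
The plan is to show that the weak operator topology separates points, i.e.\ that if $S, T \in \cL(X,Y)$ with $S \neq T$, then there are disjoint WOT-open sets separating them. First I would observe that $S \neq T$ means $S - T \neq 0$ as a bounded linear operator, so there exists $x \in X$ with $Sx \neq Tx$ in $Y$. Since $Y$ is a Banach space and $Sx - Tx \neq 0$, the Hahn--Banach theorem provides $h \in Y'$ with $\langle h, Sx \rangle_{Y',Y} \neq \langle h, Tx \rangle_{Y',Y}$; equivalently, $(x \otimes h)(S) \neq (x \otimes h)(T)$.

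Next I would use that the map $x \otimes h \colon \cL(X,Y) \to \RR$ is, by definition of the WOT, continuous, and that $\RR$ is Hausdorff. Pick disjoint open sets $U, V \subset \RR$ with $(x \otimes h)(S) \in U$ and $(x \otimes h)(T) \in V$ (e.g.\ open intervals around the two distinct real values). Then $(x \otimes h)^{-1}(U)$ and $(x \otimes h)^{-1}(V)$ are WOT-open by continuity, they contain $S$ and $T$ respectively, and they are disjoint because $U \cap V = \varnothing$. This exhibits the required separation, so the WOT is Hausdorff.

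The only mild subtlety worth stating explicitly is the appeal to Hahn--Banach to pass from $Sx - Tx \neq 0$ to the existence of a separating functional $h \in Y'$; this is where the Banach-space (rather than merely normed-space) hypothesis, or at least the availability of enough continuous linear functionals, is used. Everything else is a direct unwinding of the definition of the initial (coarsest) topology making a family of maps continuous, together with the fact that $\RR$ is Hausdorff. There is no real obstacle here; the proof is essentially a one-paragraph diagram chase, and I would present it as such.

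\begin{proof}
 Let $S, T \in \cL(X,Y)$ with $S \neq T$. Then there exists $x \in X$ such that $Sx \neq Tx$, i.e.\ $Sx - Tx \neq 0$ in $Y$. By the Hahn--Banach theorem, there exists $h \in Y'$ such that $\langle h, Sx - Tx \rangle_{Y',Y} \neq 0$, that is,
 \[
  (x \otimes h)(S) = \langle h, Sx \rangle_{Y',Y} \neq \langle h, Tx \rangle_{Y',Y} = (x \otimes h)(T).
 \]
 Since $\RR$ is Hausdorff, there exist disjoint open sets $U, V \subset \RR$ with $(x \otimes h)(S) \in U$ and $(x \otimes h)(T) \in V$. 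By definition of the weak operator topology, the map $x \otimes h$ is continuous, so $(x \otimes h)^{-1}(U)$ and $(x \otimes h)^{-1}(V)$ are WOT-open, disjoint, and contain $S$ and $T$ respectively. Hence the weak operator topology is Hausdorff.
\end{proof}
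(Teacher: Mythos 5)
Your proof is correct and follows essentially the same route as the paper: use Hahn--Banach to produce $h \in Y'$ distinguishing $Sx$ and $Tx$, then pull back disjoint open subsets of $\RR$ under the WOT-continuous map $x \otimes h$ (the paper separates via a threshold $\alpha$ and the preimages of $(-\infty,\alpha)$ and $(\alpha,+\infty)$, which is the same argument in slightly different clothing).
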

\begin{proof}
 Let $T_1,\, T_2 \in \cL(X,Y)$ with $T_1 \neq T_2$, we have to find $O_1,\, O_2 \subset \cL(X,Y)$, open in the weak operator topology, such that $T_1 \in O_1$, $T_2 \in O_2$ and $O_1 \cap O_2 = \varnothing$.

 Since $T_1 \neq T_2$, there exists $x \in X$ such that $T_1 x \neq T_2 x$. Then, from the Hahn-Banach theorem, there exists $h \in Y'$ and $\alpha \in \RR$ such that
 \[
  \left \langle h, T_1 x \right \rangle_{Y',Y} < \alpha < \left \langle h, T_2 x \right \rangle_{Y',Y}.
 \]
 It is then enough to set $O_1 := (x \otimes h)^{-1}((-\infty, \alpha))$ and $O_2 := (x \otimes h)^{-1}((\alpha, + \infty))$.
\end{proof}

\cref{prop:WOT_hausdorff} allows to work with sequences in the weak operator topology. By definition, a sequence $(T_n)_{n \in \NN}$ in $\cL(X,Y)$ converges to $T \in \cL(X,Y)$ in the weak operator topology if and only if
\[
 \lim_{n \to \infty} \left \langle h, T_n x \right \rangle_{Y',Y} = \left \langle h, Tx \right \rangle_{Y',Y} \quad \textnormal{for all $x \in X$ and $h \in Y'$}
\]
and we write $T_n \WOTarrow T$.

\begin{prop}\label{prop:WOT_composition}
 Let $Z$ be a Banach space, $(T_n)_{n \in \NN}$ a sequence in $\cL(X,Y)$, $T\in \cL(X,Y)$ and $S \in \cL(Y,Z)$. Assume that $T_n \WOTarrow T$. Then $S \circ T_n \WOTarrow S \circ T$.
\end{prop}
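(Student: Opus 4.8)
The plan is to reduce the weak operator convergence of the composed sequence $S \circ T_n$ to that of $T_n$ itself by pushing the test functional through the adjoint of $S$. By definition of the weak operator topology, it suffices to fix an arbitrary pair $(x,h) \in X \times Z'$ and show that $\langle h, S T_n x \rangle_{Z',Z} \to \langle h, S T x \rangle_{Z',Z}$.

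The key step is the identity $\langle h, S T_n x \rangle_{Z',Z} = \langle S^\star h, T_n x \rangle_{Y',Y}$, where $S^\star \in \cL(Z',Y')$ is the adjoint of $S$. Since $S^\star h$ is a fixed element of $Y'$ and the pair $(x, S^\star h) \in X \times Y'$, the hypothesis $T_n \WOTarrow T$ gives $\langle S^\star h, T_n x \rangle_{Y',Y} \to \langle S^\star h, T x \rangle_{Y',Y}$. Reversing the adjoint identity on the right-hand side yields $\langle S^\star h, T x \rangle_{Y',Y} = \langle h, S T x \rangle_{Z',Z}$, which is exactly the desired limit. Finally, since $h \in Z'$ and $x \in X$ were arbitrary, this establishes $S \circ T_n \WOTarrow S \circ T$, and (using \cref{prop:WOT_hausdorff} to legitimize the use of sequences) the proof is complete.

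There is no real obstacle here: the only point worth stating carefully is that the maps $x \otimes h$ defining the weak operator topology are precisely evaluations against pairs in $X \times Z'$ (respectively $X \times Y'$), so composing with $S$ on the left amounts to composing with $S^\star$ on the level of test functionals, after which the conclusion is immediate from the definition of WOT-convergence.
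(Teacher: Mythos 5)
Your proof is correct and follows essentially the same route as the paper: both pass the test functional through the adjoint, writing $\langle h, S\circ T_n x\rangle_{Z',Z} = \langle S^\star h, T_n x\rangle_{Y',Y}$ and invoking the WOT-convergence of $T_n$ against the pair $(x, S^\star h)$. No gaps.
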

\begin{proof}
 Let $x \in X$ and $h \in Z'$. Then $\left \langle h, S \circ T_n x \right \rangle_{Z', Z} = \left \langle S^\star h, T_n x \right \rangle_{Y',Y}$ where $S^\star \in \cL(Z',Y')$ is the adjoint of $S$. Using the fact that $T_n \WOTarrow T$, we have that
 \[
  \lim_{n \to \infty} \left \langle h, S \circ T_n x \right \rangle_{Z', Z} = \left \langle S^\star h, T x \right \rangle_{Y',Y} =  \left \langle h, S \circ T x \right \rangle_{Z', Z}.
 \]
 This concludes the proof.
\end{proof}

\begin{prop}\label{prop:WOT_Lp_compact}
 Let $(\Omega,\cF,\mu)$ be a complete measure space, $n,d \in \NN^*$ and $1 < p, q ,r < \infty$  be such that $\frac{1}{q} = \frac{1}{p} + \frac{1}{r}$. Let also $\Xi \subset L^r(\Omega,\cF,\mu; \RR^{n \times d})$ be convex, closed and bounded. For each $\xi \in \Xi$, define the linear operator $T_\xi \in \cL(L^p(\Omega,\cF,\mu;\RR^{d}),L^q(\Omega,\cF,\mu;\RR^n))$ by
 \[
  \left(T_\xi u \right)(x) := \xi(x) u(x) \quad \textnormal{for } x \in \Omega
 \]
 and set
 \[
  \cA := \left \{ T_\xi : \, \xi \in \Xi \right \}.
 \]
 Then $\cA$ is sequentially compact in the weak operator topology of $\cL(L^p(\Omega,\cF,\mu;\RR^{d}),L^q(\Omega,\cF,\mu;\RR^n))$.
\end{prop}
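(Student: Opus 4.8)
The plan is to take an arbitrary sequence $(T_{\xi_n})_{n \in \NN}$ in $\cA$ and extract a subsequence converging in the weak operator topology to some element of $\cA$. The natural first step is to use the fact that $\Xi$ is convex, closed and bounded in the reflexive Banach space $L^r(\Omega;\RR^{n\times d})$ (reflexivity holds since $1<r<\infty$), so $\Xi$ is weakly compact; moreover a bounded subset of a separable-dual situation—more simply, a bounded subset of a reflexive Banach space—is weakly sequentially compact by the Eberlein--Šmulian theorem. Hence, up to a subsequence, $\xi_n \rightharpoonup \bar\xi$ weakly in $L^r(\Omega;\RR^{n\times d})$, and since $\Xi$ is convex and strongly closed it is weakly closed, so $\bar\xi \in \Xi$ and $T_{\bar\xi} \in \cA$. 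The candidate limit is therefore $T_{\bar\xi}$.

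Next I would show $T_{\xi_n} \WOTarrow T_{\bar\xi}$, i.e. that for every $u \in L^p(\Omega;\RR^d)$ and every $h \in L^{q'}(\Omega;\RR^n)$ (with $\frac1q+\frac1{q'}=1$) one has $\int_\Omega h(x)\cdot(\xi_n(x)u(x))\,d\mu(x) \to \int_\Omega h(x)\cdot(\bar\xi(x)u(x))\,d\mu(x)$. The key observation is that the integrand $h(x)\cdot(\xi(x)u(x))$ is linear in $\xi(x)$; concretely, writing things out entrywise, $h\cdot(\xi u) = \sum_{i,j} h_i \xi_{ij} u_j = \langle \xi, h\otimes u\rangle_{\RR^{n\times d}}$ where $(h\otimes u)_{ij} = h_i u_j$. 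So the pairing equals $\int_\Omega \langle \xi_n(x), (h\otimes u)(x)\rangle\,d\mu(x)$, which is nothing but the duality pairing of $\xi_n \in L^r$ against the fixed element $h\otimes u$. I must check $h\otimes u \in L^{r'}(\Omega;\RR^{n\times d})$ where $\frac1r+\frac1{r'}=1$: by Hölder, since $|h\otimes u| \le |h|\,|u|$ and $\frac1{q'} = 1-\frac1q = 1-\frac1p-\frac1r$, one computes $\frac1{p} + \frac1{q'} = 1 - \frac1r = \frac1{r'}$, so $|h|\,|u| \in L^{r'}$ by Hölder with exponents $q'/r'$ and $p/r'$ (or directly: $\|h u\|_{r'} \le \|h\|_{q'}\|u\|_p$). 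Therefore $\xi_n \rightharpoonup \bar\xi$ in $L^r$ gives exactly $\int_\Omega \langle \xi_n, h\otimes u\rangle\,d\mu \to \int_\Omega \langle \bar\xi, h\otimes u\rangle\,d\mu$, which is the desired convergence.

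Finally I would assemble these pieces: the subsequence $\xi_{n_k}\rightharpoonup\bar\xi$ yields $T_{\xi_{n_k}} \WOTarrow T_{\bar\xi} \in \cA$, proving sequential compactness of $\cA$ in the WOT. I do not anticipate a serious obstacle here; the only points requiring care are (i) verifying that each $T_\xi$ is indeed bounded from $L^p$ to $L^q$, which is immediate from Hölder's inequality with $\frac1q=\frac1p+\frac1r$ and the boundedness of $\Xi$, and (ii) the exponent bookkeeping showing $h\otimes u \in L^{r'}$ so that weak convergence in $L^r$ is the right notion to test against. The mild subtlety worth flagging is that one should not confuse weak convergence in $L^r$ with WOT convergence of the associated multiplication operators a priori—the argument above shows the former implies the latter precisely because the test functionals $u\otimes h$ defining the WOT correspond, after the entrywise rewriting, to a subset of the functionals $\xi \mapsto \int\langle\xi, \Phi\rangle$ with $\Phi \in L^{r'}$ defining the weak topology on $L^r$.
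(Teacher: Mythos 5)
Your proposal is correct and follows essentially the same route as the paper: extract a weakly convergent subsequence of $(\xi_n)$ in the reflexive space $L^r$, use convexity and closedness of $\Xi$ to keep the limit in $\Xi$, and test the WOT pairing against $h\otimes u \in L^{r'}$. The only difference is that you spell out the exponent bookkeeping $\frac{1}{p}+\frac{1}{q'}=\frac{1}{r'}$, which the paper leaves implicit.
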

\begin{proof}
 Let $\left (T_{\xi_n} \right )_{n \in \NN}$ be a sequence in $\cA$. Since $L^r(\Omega,\cF,\mu; \RR^{n \times d})$ is reflexive and $\Xi$ is bounded, there exists $\xi \in  L^r(\Omega,\cF,\mu; \RR^{n \times d})$ such that $\xi_n$ converges weakly to $\xi$ up to extraction of a subsequence. Moreover, since $\Xi$ is closed and convex, we have $\xi \in \Xi$. Let $q'$ denote conjugate exponent of $q$, so that $\left (L^q(\Omega,\cF,\mu;\RR^n) \right)' = L^{q'}(\Omega,\cF,\mu;\RR^n)$.
 Then, for every $u \in L^p(\Omega,\cF,\mu;\RR^{d})$ and $h \in L^{q'}(\Omega,\cF,\mu;\RR^n)$, we have
 \[
  \left \langle h, T_{\xi_n} u \right \rangle_{L^{q'},L^q} = \int_\Omega \left(\xi_n(x) u(x)\right) \cdot h(x) \mu(dx) \xrightarrow{} \int_\Omega \left( \xi(x) u(x) \right) \cdot h(x) \mu(dx) = \left \langle h, T_{\xi} u \right \rangle_{L^{q'},L^q},
 \]
 so that $T_{\xi_n} \WOTarrow T_\xi \in \cA$.
\end{proof}

\printbibliography

\end{document}